\NewDocumentCommand\DownArrow{O{2.0ex} O{black}}{%
   \mathrel{\tikz[baseline] \draw [<-, line width=0.5pt, #2] (0,0) -- ++(0,#1);}
}
\NewDocumentCommand\UpArrow{O{2.0ex} O{black}}{%
   \mathrel{\tikz[baseline] \draw [->, line width=0.5pt, #2] (0,0) -- ++(0,#1);}
}
\newtheorem{theorem}{Theorem}[section]
\newtheorem{lemma}[theorem]{Lemma}
\newtheorem{proposition}[theorem]{Proposition}
\newtheorem{corollary}[theorem]{Corollary}
\newtheorem{definition}[theorem]{Definition}
\theoremstyle{remark}
\newtheorem{example}[theorem]{Example}
\newtheorem{remark}[theorem]{Remark}
\newtheorem{question}[theorem]{Question}
\numberwithin{equation}{section}
\DeclareMathOperator{\normdot}{\| \cdot \|}
\DeclareMathOperator{\Cn}{\mathbb{C}^{n}}
\DeclareMathOperator{\R}{\mathbb{R}}
\DeclareMathOperator{\C}{\mathbb{C}}
\DeclareMathOperator{\N}{\mathbb{N}}
\DeclareMathOperator{\Hh}{ \mathcal{H}}
\DeclareMathOperator{\Kk}{ \mathcal{K}}
\DeclareMathOperator{\Bh}{\mathcal{B}(\mathcal{H})} 
\DeclareMathOperator{\Bk}{\mathcal{B}(\mathcal{K})}
\DeclareMathOperator{\Aa}{\mathcal{A}} 
\DeclareMathOperator{\G}{\mathbb{G}}
\DeclareMathOperator{\Hbb}{\mathbb{H}}
\DeclareMathOperator{\Cc}{\mathcal{C}}
\DeclareMathOperator{\Dd}{\mathcal{D}}
\DeclareMathOperator{\GH}{\mathbb{G}\times \mathbb{H}}
\DeclareMathOperator{\rmt}{\mathrm{t}}
\DeclareMathOperator{\sfr}{\mathfrak{s}}
\DeclareMathOperator{\Ll}{\mathcal{L}}
\DeclareMathOperator{\Ss}{\mathcal{S}}
\DeclareMathOperator{\Tt}{\mathcal{T}}
\DeclareMathOperator{\Rr}{\mathcal{R}}
\DeclareMathOperator{\Gg}{\mathcal{G}}
\DeclareMathOperator{\Qq}{\mathcal{Q}}
\DeclareMathOperator{\Qqc}{\mathcal{\Tilde{Q}}}
\DeclareMathOperator{\Vv}{\mathcal{V}}
\DeclareMathOperator{\Mm}{\mathcal{M}}
\DeclareMathOperator{\Jj}{\mathcal{J}}
\DeclareMathOperator{\Jjc}{\mathcal{\Tilde{J}}}
\DeclareMathOperator{\Pp}{\mathcal{P}}
\DeclareMathOperator{\B}{\mathcal{B}}
\DeclareMathOperator{\trace}{\mathrm{Tr}}
\DeclareMathOperator{\id}{\mathrm{id}}
\newcommand{\Mn}{M_{n}}
\newcommand{\inn}[1]{\left\langle#1\right\rangle}
\newcommand{\rank}{\mathop{\operator@font rank}}
\newcommand{\tr}[1]{\trace\left(#1\right)}
\newcommand{\e}{{\varepsilon}}
\newcommand{\vertiii}[1]{{\left\vert\kern-0.25ex\left\vert\kern-0.25ex\left\vert #1 
    \right\vert\kern-0.25ex\right\vert\kern-0.25ex\right\vert}}
\newcommand{\bb}[1]{\mathbb{#1}}
\newcommand{\cl}[1]{\mathcal{#1}}
\newcommand{\nor}[1]{\left\Vert #1\right\Vert}    
\newcommand{\sca}[1]{\langle#1\rangle} %
\newcommand{\spann}{\mathrm{span}}
\begin{document}


\title{Operator systems, contextuality and non-locality}

\date{12 September 2025}
\subjclass[2010]{Primary 47L25, 81R15;
Secondary 46L07}

\author[M. Anoussis]{Michalis Anoussis}
\address{Department of Mathematics, 
University of the Aegean, Karlovassi, Samos 83200, Greece}
\email{mano@aegean.gr}

\author[A. Chatzinikolaou]{Alexandros Chatzinikolaou}
\address{
Department of Mathematics, National and Kapodistrian University of Athens, Ath\-ens 157 84, Greece}
\email{achatzinik@math.uoa.gr}


\author[I. G. Todorov]{Ivan G. Todorov}
\address{
School of Mathematical Sciences, University of Delaware, 501 Ewing Hall,
Newark, DE 19716, USA}
\email{todorov@udel.edu}

\begin{abstract}
We introduce an operator system, universal for 
the probabilistic models of a contextuality scenario, and 
identify its maximal C*-cover via the right C*-algebra of a 
canonical ternary ring of operators, arising from a hypergraph 
version of stochastic operator matrices. 
We study dilating contextuality scenarios, which have the property 
that each positive operator representation thereof
admits a dilation to a projective representation on a larger Hilbert space, 
and characterise them via the equality of the aforementioned 
universal operator system and the operator system arising from the 
canonical generators of the respective hypergraph C*-algebra. 
We characterise the no-signalling probabilistic models over 
a pair of contextuality scenarios of different types,
which arise from either the positive operator representations 
or from the projective representations of these scenarios,  
in terms of states on operator system tensor products. 
Generalising the notion of a synchronous no-signalling correlation to the 
hypergraph framework, we define coherent probabilistic models associated with 
a given contextuality scenario and characterise various classes 
thereof via different types of traces of the hypergraph C*-algebra, 
associated with the scenario. 
We establish several equivalent formulations of the Connes Embedding Problem 
in terms of no-signalling probabilistic models and hypergraph operator systems. 
\end{abstract}

\maketitle

\vspace{1truecm}

\tableofcontents


\section{Introduction}\label{s_intro}

The interactions between operator algebras and quantum information theory 
have in the past decade undergone a phase of vibrant development. 
Arguably the most striking examples in this respect are
the proof of the equivalence \cite{Fritz2012, MR2790067, Ozawa_2013}
between the Connes Embedding Problem in 
von Neumann algebra theory \cite{Connes76} and the Tsirelson Problem 
in quantum physics \cite{Tsirelson}, and the resolution of these problems 
(in the negative) in \cite{jnvwy}. At the core of the aforementioned equivalence is Kirchberg's reformulation 
of the Connes Embedding Problem in terms of the equality 
between tensor products of free group C*-algebras \cite{Kirchberg1993}
and a hierarchy of classes of no-signalling correlations 
whose prototype is the fundamental Bell Theorem on 
non-locality \cite{Bell1964}. 

More specifically, the correlated behaviour of 
two parties participating in a quantum mechanical 
experiment may be governed, depending on the resources they 
use, by several main, successively coarser, correlation types; 
this hierarchy is mathematically captured by a 
chain of inclusions of classes of conditional probability distributions, 
\begin{equation}\label{eq_hie}
\cl C_{\rm loc}\subseteq \cl C_{\rm q}\subseteq \cl C_{\rm qa}\subseteq \cl C_{\rm qc}\subseteq \cl C_{\rm ns}.
\end{equation}
Here, 
the class $\cl C_{\rm loc}$ corresponds to the use of classical physical
resources (determinism and shared randomness), 
$\cl C_{\rm q}$ to finite dimensional entanglement, 
$\cl C_{\rm qa}$ to liminal entanglement (in that the class $\cl C_{\rm qa}$ is the 
closure of the class $\cl C_{\rm q}$), 
$\cl C_{\rm qc}$ to the employment 
of the commuting operator model of quantum mechanics, while 
$\cl C_{\rm ns}$ is the most general class, corresponding to 
generalised probabilistic theories. 
Strict inclusions occur at all points of the hierarchy (\ref{eq_hie}) -- 
the inequality $\cl C_{\rm loc}\neq \cl C_{\rm q}$ is a reformulation of Bell's Theorem 
\cite{Bell1964}, 
$\cl C_{\rm q}\neq \cl C_{\rm qa}$ is due to 
the work of Slofstra \cite{Slofstra2020} (see also \cite{dpp, Musat-Rordam}), 
$\cl C_{\rm qa}\neq \cl C_{\rm qc}$ is due to the aforementioned groundbreaking 
work \cite{jnvwy}, while the last inequality, 
$\cl C_{\rm qc}\neq \cl C_{\rm ns}$, is due to Tsirelson 
(see \cite{Tsirelson} and \cite{fkptCMP}). 
The operator algebraic approach to the study of the 
hierarchy (\ref{eq_hie}) consists in 
the description of the elements of each one of its terms 
via states on tensor products of C*-algebras \cite{Fritz2012} 
and operator systems \cite{Lupini-etal}, 
or via traces on C*-algebras \cite{PAULSEN20162188, 10.1063/1.4996867}. 

Non-locality in quantum mechanics is often considered along with 
another of its fundamental features, contextuality. 
Although formally non-locality can be expressed as a special case of 
contextuality, the latter is usually studied in parallel with the former, 
and resides in the impossibility to make coherent assignments for outcomes 
of quantum mechanical experiments, independently of the 
utilised measurement devices, 
often modeled by sets of projective measurements. 
The latter viewpoint is used in \cite{Acn2015ACA}, where 
the notion of a {\it contextuality scenario} is introduced;
mathematically, this concept amounts to a hypergraph, whose 
vertices represent possible outcomes of an experiment, and whose 
hyperedges represent sets of outcomes that can be measured simultaneously.
The products of contextuality scenarios with mutually disjoint edges 
assumed to have the same cardinality (called
Bell scenarios in \cite{Acn2015ACA})
lead to the hierarchy (\ref{eq_hie}), as
positive operator valued measures (POVM's) 
can be simultaneously dilated to projection-valued measures. 

However, not all contextuality scenarios enjoy such a dilatability property.
Indeed, it was proved in \cite{doi:10.1063/5.0022344} that 
(finite dimensionally acting)
quantum magic squares do not necessarily dilate to finite dimensional 
quantum permutation matrices, and the argument
is extendable (as we show herein), allowing to conclude 
that a dilation in infinite dimensions does not either always exist.  
Thus, while sufficient for Bell scenarios, 
projective representations of contextuality scenarios 
(that is, coherent projective measurements of the simultaneously measurable outcomes)
do not capture the interplay between non-locality and contextuality
in its full generality. 

This is the starting point of the current investigation. 
Along with projective representations (PR's) of contextuality scenarios, 
studied in \cite{Acn2015ACA}, here 
we consider positive operator representations (POR's), that is,
collections of positive operators acting on a Hilbert space, 
which define a POVM
over every edge of the underlying hypergraph. 
(We note that connections of POR's with convex polytopes were recently explored in 
\cite{Bluhm-etal}.)
Given a non-trivial 
contextuality scenario $\bb{G}$, we construct an operator system 
$\cl S_{\bb{G}}$ with the (universal) property that the unital 
completely positive maps from $\cl S_{\bb{G}}$ correspond precisely to 
POR's of $\bb{G}$. 
We further identify an operator system $\cl T_{\bb{G}}$
(as an operator subsystem of the hypergraph C*-algebra $C^*(\bb{G})$
associated to $\bb{G}$ \cite{Acn2015ACA}) with the (universal) property that 
the unital completely positive maps from $\cl S_{\bb{G}}$ 
correspond precisely to POR's of $\bb{G}$ that can be dilated to 
PR's of $\bb{G}$. 
This allows us to equivalently express the property that every POR of a contextuality 
scenario can be dilated to a PR 
via the equality between $\cl S_{\bb{G}}$ and 
$\cl T_{\bb{G}}$ in the operator system category
(such contextuality scenarios are called dilating herein). 

The operator system $\cl T_{\bb{G}}$ lives inside a 
natural C*-cover, namely $C^*(\bb{G})$; we show that $C^*(\bb{G})$ is in fact 
the C*-envelope of $\cl T_{\bb{G}}$. 
In contrast, the operator system $\cl S_{\bb{G}}$ 
is not a priori given as an operator subsystem of a C*-algebra. 
We construct explicitly its universal (also known as maximal) C*-cover, 
showing that it is *-isomorphic to 
the right C*-algebra of a ternary ring of operators 
generated by $\bb{G}$-stochastic operator matrices. 
The latter notion is a generalisation of 
stochastic and bistochastic operator matrices, 
studied in \cite{Todorov2020QuantumNC} and \cite{BHTT2}, respectively. 
The identification of the C*-envelope of $\cl S_{\bb{G}}$, 
in the cases where $\bb{G}$ is not dilating, remains an open question. 

A natural contextuality scenario counterpart of the 
notion of a no-signalling correlation was introduced in \cite{Acn2015ACA}; 
for Bell scenarios, no-signalling in the contextuality paradigm reduces to 
the classical concept. 
The existence of non-dilating contextuality scenarios
leads to the necessity to consider (at least) two versions of the hierarchy  
(\ref{eq_hie}), in which correlations are defined using PR's or POR's, 
respectively, 
uncovering a new level of generality 
that allows to study non-locality and contextuality 
within the same setup. 
We characterise the main types of no-signalling correlations
via states on tensor products of the operator systems 
of the form $\cl S_{\bb{G}}$ (resp. $\cl T_{\bb{G}}$). 
These results extend to the contextuality paradigm 
the operator representation results 
for the correlation types (\ref{eq_hie}) established in 
\cite{Paulsen2013QUANTUMCN} and \cite{Lupini-etal}.

An important source of motivation for the study of 
no-signalling correlations and the associated hierarchy (\ref{eq_hie})
is their connection with non-local games. 
These are cooperative games, played by two players against a verifier, 
under the condition that no communication takes places between the players 
and they are limited to the use of physical resources of certain 
chosen type. 
Synchronous games, whose study was initiated in \cite{PAULSEN20162188}, 
are characterised by the requirement that the players' outputs 
should be perfectly correlated, provided that the inputs are so.
The latter class of games include a plethora of important examples, such as graph 
homomorphism/isomorphism games \cite{MR3471851, MR3926289} and 
linear binary constraint system games
\cite{10.1063/1.4996867}. 
Its associated perfect strategies  
are synchronous correlations \cite{PAULSEN20162188}.
We define the notion of a coherent no-signalling probabilistic model 
over a given contextuality scenario $\bb{G}$, 
which reduces to the notion of a synchronous correlation 
in the case where $\bb{G}$ is a Bell scenario. 
One of the cornerstone operator algebraic result 
in non-local game theory is the expression of 
synchronous correlations in terms of traces of a 
canonical group C*-algebra \cite{PAULSEN20162188}. 
We provide a contextuality scenario 
version of this result, showing that coherent correlations 
of quantum commuting type over $\bb{G}$
(that is, the hypergraph counterpart of the term 
$\cl C_{\rm qc}$ in the correlation chain (\ref{eq_hie}))
correspond to traces on the hypergraph C*-algebra 
$C^*(\bb{G})$. Similarly to the Bell scenario case, 
the approximate quantum type (the analogue of the class $\cl C_{\rm qa}$)
is described by amenable traces of the C*-algebra $C^*(\bb{G})$.

Finally, we provide new equivalences of the Connes Embedding and the Tsirelson
Problems in the setup of no-signalling probabilistic models over general contextuality 
scenarios. These results inscribe in a more general effort to 
identify an operator theoretic approach to the latter problems, in light of 
the fact that the proof available currently \cite{jnvwy} relies 
on complexity theory methods. 
We show that the Connes Embedding Problem is equivalent to 
the equality of appropriate tensor products between the operator 
systems of the form $\cl S_{\bb{G}}$ and $\cl T_{\bb{G}}$, 
as well as to the equality between the classes of quantum approximate and quantum commuting 
probabilistic models over general hypergraphs. 
This substantially enlarges the pool of tentative counterexamples, 
reducing the solution of the problem 
to the identification of a single hypergraph 
with distinct quantum approximate and quantum commuting correlation classes
(in either the positive operator or the projective framework). 

We now describe the organisation of the paper in more detail. 
After collecting some necessary preliminaries in Section \ref{s_prel}, 
in Section \ref{s_opsys} we define and examine the universal 
operator system $\cl S_{\bb{G}}$ of general probabilistic models 
of a (non-trivial) contextuality scenario $\bb{G}$. 
The operator system $\cl S_{\bb{G}}$
is constructed as a quotient of a finite dimensional abelian C*-algebra;
this allows the identification of its dual operator system 
as a matricial operator system if the underlying hypergraph 
is uniform.  
The description of $\cl S_{\bb{G}}$ is made concrete 
in the case the underlying hypergraph is uniformisable, in that 
the kernel with respect to which the quotient is taken is 
described explicitly as a subspace with canonical 
generators (see Proposition \ref{p_nullsubspace}). 
The universal operator systems $\cl T_{\bb{G}}$ (resp. 
$\cl R_{\bb{G}}$) of quantum (respectively classical)
probabilistic models of $\bb{G}$ are also defined, and the 
equality between different classes of probabilistic models of $\bb{G}$
is characterised via the equality of different operator system structures 
on the linear space, underlying all three operator systems 
$\cl S_{\bb{G}}$, $\cl T_{\bb{G}}$ and $\cl R_{\bb{G}}$ (Theorem \ref{p_OMINs}).

In Section \ref{s_ds} we examine the property of a 
contextuality scenario being dilating. We show that the 
scenario giving rise to quantum magic squares \cite{doi:10.1063/5.0022344} 
is not dilating and exhibit both necessary and sufficient conditions 
for dilatability. This allows us to realise, in a particular special case, 
the operator system $\cl S_{\bb{G}}$
as a modular co-product in the sense of \cite{coprod23}. 
A further, C*-algebraic, characterisation of dilating contextuality scenarios is 
given in Section \ref{ss_univcov} (see Corollary \ref{c_dicha}), 
after examining stochastic operator matrices
relative to a hypergraph, and their universal ternary rings of operators. 
The latter operator matrices reduce to stochastic 
operator matrices in the sense of 
\cite{Todorov2020QuantumNC} when the hypergraph is a Bell scenario, 
and to bistochastic operator matrices \cite{BHTT2}
in the case of magic square hypergraphs; 
in these two cases, they lie at the base of the quantum analogues 
of the correlation chain (\ref{eq_hie})
studied in \cite{Todorov2020QuantumNC} and \cite{BHTT2}.

In Section \ref{s_NSpm} we examine no-signalling probabilistic models, 
defined over a given pair of contextuality scenarios.
We distinguish probabilistic models arising from employing 
projective or general positive operator representations. 
We show that in the case where the contextuality scenario is dilating,
these two paradigms lead to the same probabilistic models of 
quantum and approximately quantum types. 
The main results in this section are the operator algebraic characterisations, 
contained in Theorems \ref{th_cteprop}, \ref{th_qcqatilde} and \ref{th_Qqcstates}. 
 
In Section \ref{ss_coherent}, we define coherent probabilistic models, and 
provide a tracial characterisation of the quantum commuting, the approximately 
quantum, the quantum and the local class in terms of traces on the hypergraph 
C*-algebra of different kind. 
Finally, Section \ref{s_CEP} is dedicated to the aforementioned 
equivalences of the Connes Embedding Problem in terms of contextuality scenarios and 
their probabilistc models. 

\smallskip

We finish this section with setting basic notation. 
For a Hilbert space $\cl H$, denote by $\cl B(\cl H)$ the 
C*-algebra of all bounded linear operators on $\cl H$. 
We write $\cl B(\cl H)^+$ for the cone of positive operators in $\cl B(\cl H)$,
denote by $I_{\cl H}$ the identity operator 
on $\cl H$, and write $I = I_{\cl H}$ when $\cl H$ is understood from the context.
We write $\cl T(\cl H)$ for the ideal of trace class operators on $\cl H$, and 
$\cl T(\cl H)^+$ for its canonical positive cone. 
We use $\langle \cdot,\cdot\rangle$ to denote both the inner product of a Hilbert space 
(assumed linear on the first variable)
and the duality between $\cl B(\cl H)$ and $\cl T(\cl H)$, arising from the 
canonical identification $\cl T(\cl H)^* = \cl B(\cl H)$ in the Banach space sense. 
If $\xi,\eta\in \cl H$, we let $\xi\eta^*$ be the rank one operator, 
given by $(\xi\eta^*)(\zeta) = \langle\zeta,\eta\rangle \xi$. 
We let $M_{n,m}$ denote the space of all $n$ by $m$ complex matrices, 
and write $M_n = M_{n,n}$. 
Making the natural identification $M_n = \cl B(\bb{C}^n)$, 
we write $I_n = I_{\bb{C}^n}$. For $A\in M_n$, we let $\tr{A}$ be the 
(non-normalised) trace of $A$. 
If $V$ is a finite set, we set $\bb{C}^V = \bb{C}^{|V|}$, 
$\bb{R}^V = \bb{R}^{|V|}$ and $M_V = M_{|V|}$, and 
let $(\delta_x)_{x\in V}$ the canonical orthonormal basis of 
$\bb{C}^V$; thus,
$ (\delta_{x}\delta_{x}^{*})_{x,x'}$ is the canonical matrix unit system in $ M_{V}$. 
We denote by $\cl X^{\rm d}$ the dual of a normed space $\cl X$; in case $\cl X$ is an 
operator system, $\cl X^{\rm d}$ is equipped with the canonical dual matrix ordering.


\section{Preliminaries}\label{s_prel}

A hypergraph is a pair $ \G=(V,E)$, where $ V$ is a finite set and 
$E$ is a non-empty set of subsets of $V$.  The elements of $V$ are called vertices and the elements of $E$, hyperedges (or simply edges). 
A \textit{contextuality scenario} \cite{Acn2015ACA}
is a hypergraph $\G=(V,E) $ such that $ \cup_{e \in E}e = V$.
Given a contextuality scenario $ \G=(V,E)$, its vertices 
will be called 
\textit{outcomes} and its edges -- \textit{measurements}.

Let $\G=(V,E)$ be a contextuality scenario. 
A \textit{probabilistic model} of $\G$ 
is an assignment $p : V \rightarrow [0,1]$ such that
\begin{align*}
    \sum_{x \in e}p(x)=1, \; \text{ for every } \; e \in E.
\end{align*}
The set of all probabilistic models of $\G$ will be denoted by $ \Gg(\G)$. 
Note that not every contextuality scenario admits a probabilistic model (see Figure \ref{fig:my_label}). So, $ \Gg(\G)$ is a convex, possibly  empty, subset of $ \R^{V}$.

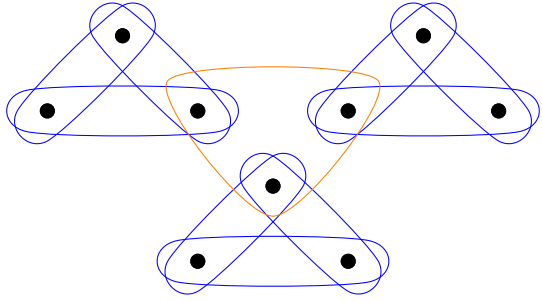
\begin{figure}
    \centering
  \begin{tikzpicture}
   \node (v1) at (0,2) {};
    \node (v2) at (1,3) {};
    \node (v3) at (2,2) {};
    \node (v4) at (4,2) {};
    \node (v5) at (5,3) {};
    \node (v6) at (6,2) {};
    \node (v7) at (3,1) {};
    \node (v8) at (2,0) {};
    \node (v9) at (4,0) {};

    \draw [blue] plot  [smooth cycle] coordinates {(1,3.4) (1.4,3) (0,1.6) (-0.4,2) };
    
     \draw [blue]  plot [smooth cycle] coordinates { (5,3.4)  (5.4,3) (4,1.6) (3.6,2)  }; 
     
      \draw [blue]  plot [smooth cycle] coordinates {(3,1.4) (3.4,1) (2,-0.4) (1.6,0) };
      
       \draw [blue]  plot [smooth cycle] coordinates { (-0.275,2.275) (-0.275,1.725) (2.275,1.725) (2.275,2.275) };
 
     \draw [blue]  plot [smooth cycle] coordinates {(3.725,2.275) (3.725,1.725) (6.275,1.725) (6.275,2.275) };
     
      \draw [blue]  plot [smooth cycle] coordinates {(1.725,0.275) (1.725,-0.275) (4.275,-0.275) (4.275,0.275) };
      
      \draw [blue]  plot [smooth cycle] coordinates {(1,3.4) (0.6,3) (2,1.6) (2.4,2) };
      
      \draw [blue]  plot [smooth cycle] coordinates {(5,3.4) (4.6,3) (6,1.6) (6.4,2) };

      \draw [blue]  plot [smooth cycle] coordinates {(3,1.4) (2.6,1) (4,-0.4) (4.4,0) };
      
      \draw [orange!100] plot [smooth cycle] coordinates {(1.6,2.4) (4.4,2.4) (3,0.6)  };

    \foreach \v in {1,2,...,9} {
        \fill (v\v) circle (0.1);
    }

    \fill (v1) circle (0.1) node [right] { };
    \fill (v2) circle (0.1) node [below left] { };
    \fill (v3) circle (0.1) node [left] { };
    \fill (v4) circle (0.1) node [below] { };
    \fill (v5) circle (0.1) node [below right] { };
    \fill (v6) circle (0.1) node [left] { };
   \fill (v7)  circle (0.1) node [right]
   { };
   \fill (v8)  circle (0.1) node [right]
   { };
   \fill (v9)  circle (0.1) node [left]
   { };

\end{tikzpicture}
    \caption{A scenario without a probabilistic model \cite{Acn2015ACA}.}
    \label{fig:my_label}
\end{figure}

A contextuality scenario that admits a probabilistic model will be called \textit{non-trivial};
all contextuality scenarios in the paper will be assumed to be non-trivial.
Let $\G=(V,E)$ be a contextuality scenario. 
A \textit{positive operator representation} \textit{(POR)} of $\G$ on a 
Hilbert space $\Hh$ is a collection $ (A_{x})_{x\in V} \subseteq \Bh^{+}$ 
such that 
\begin{align*}
     \sum_{x\in e}A_{x} = I_{\Hh}, \; \text{ for every } \; e\in E.
\end{align*}
A POR of $\G$ will be called a \textit{projective representation (PR)}, if $A_{x}$ is a projection for every $ x \in V$, \textit{finite dimensional}, if $ \Hh$ is 
finite dimensional, and \textit{classical} if $A_x A_y = A_y A_x$, $x,y\in V$.

We note that 
any probabilistic model on a contextuality scenario $\G$ 
is a POR on the 
one dimensional Hilbert space $\bb{C}$. 
Conversely, if $\Hh$ is a Hilbert space, $(A_{x})_{x\in V}$ is a POR of 
$\G$ and $\psi \in \Hh$ is a unit vector, 
then the assignment $ p(x)=\sca{A_{x}\psi,\psi}$, $x\in V$, is a probabilistic model of $\G$.

Let $(A_x)_{x\in V}$ and $(B_x)_{x\in V}$ be POR's of the 
contextuality scenario $\bb{G} = (V,E)$, 
acting on Hilbert spaces $H$ and $K$, respectively. 
We say that $(B_x)_{x\in V}$ is a {\it dilation} of $(A_x)_{x\in X}$ if there exists an 
isometry $V : H\to K$ such that $A_x = V^* B_x V$ for all $x\in V$. 
A POR $(A_x)_{x\in V}$ of $\bb{G}$ is called {\it dilatable} if
it admits a dilation to a PR of $\bb{G}$. 
We note that there exist contextuality scenarios that admit a POR but not a PR.
An example is furnished by the contextuality scenario $\G_1$ on 
Figure \ref{fig:NoPRs}: 
given a Hilbert space $\cl H$, the scenario $\G_1$ admits a unique POR  
$(A_i)_{i=1}^3$ on $\cl H$, given by
$A_1 = A_2 = A_3 = \frac{1}{2} I_{\Hh}$. 
In particular, none of the POR's of $\bb{G}_1$ admits a dilation to a PR.

\begin{figure}
    \centering
  \begin{tikzpicture}
   \node (v1) at (0,2) {};
    \node (v2) at (1,3) {};
    \node (v3) at (2,2) {};

    \draw [blue]  plot [smooth cycle] coordinates {(1,3.4) (1.4,3) (0,1.6) (-0.4,2) };

       \draw [blue]  plot [smooth cycle] coordinates { (-0.275,2.275) (-0.275,1.725) (2.275,1.725) (2.275,2.275) };

      \draw [blue]  plot [smooth cycle] coordinates {(1,3.4) (0.6,3) (2,1.6) (2.4,2) };

    \foreach \v in {1,2,3} {
        \fill (v\v) circle (0.1);
    }

    \fill (v1) circle (0.1) node [below] { };
    \fill (v2) circle (0.1) node [below left] { };
    \fill (v3) circle (0.1) node [left] { };
   
   \node at (0,1.3) {$x_1$};
\node at (2,1.3) {$x_2$};
\node at (1.7,3.2) {$x_3$};
\node at (-0.5,3) {};

\end{tikzpicture}
    \caption{A scenario $\G_1$ with no projective representation \cite{Acn2015ACA}.}
    \label{fig:NoPRs}
\end{figure}
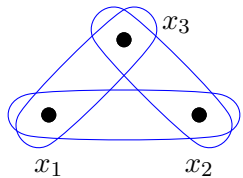

Let $\G = (V,E)$ be a contextuality scenario.
A probabilistic model $p\in \Gg(\G)$ is called {\it deterministic} 
if $ p(x)\in \{0,1\}$ for every $x \in V$, 
and {\it classical} if it is a convex combination of deterministic 
models, that is, if there exist an integer $ k \geq 0$, 
deterministic models $ p_{i} $ 
and scalars
$\lambda_{i}\in [0,1]$, $i \in [k]$, 
such that $\sum_{i=1}^{k}\lambda_{i}=1$ 
and
    \[ p(x) =\sum_{i=1}^{k}\lambda_{i}p_{i}(x), \ \ \ x\in V.   \] 
We denote the set of all classical probabilistic models of $\G$, by $\Cc(\G)$, 
and note that it is a convex polytope.
An application of Carath\'eodory's Theorem shows that $\Cc(\G)$ is closed.

A probabilistic model $p$ of the contextuality scenario $\G = (V,E)$ 
is called {\it quantum} if there exists a Hilbert space $\Hh$, 
a PR $(P_{x})_{x\in V}$ of $\bb{G}$ on a Hilbert space 
$\Hh$ and a unit vector $\psi \in \Hh$, such that 
$$p(x) = \sca{P_{x}\psi,\psi}, \ \ \ x\in V.$$
The set of all quantum models on $\G$ is denoted by $\Qq(\G)$.
We note that the elements of $\Qq(\G)$ can equivalently be defined 
using an arbitrary normal state instead of a vector state, that is, letting 
$p(x) = \sca{P_{x},\omega}$, where $\omega \in \cl T(H)^+$ has trace one; this 
follows from considering the purification of $\omega$ in the Hilbert space $H\otimes\ell^2$.

By \cite[Proposition 5.1.2]{Acn2015ACA}, 
$\Qq(\G)$ is convex; we have the inclusions 
\begin{equation}\label{eq_inccqg}
    \Cc(\G) \subseteq \Qq(\G) \subseteq  \Gg(\G).
\end{equation}
The first inclusion in (\ref{eq_inccqg}) can be strict; indeed, by a
reformulation of the Kochen-Specker Theorem in the context of hypergraphs
\cite{CABELLO1996183, Cabello2008ExperimentallyTS, Acn2015ACA}, there exists 
a contextuality scenario $\G_{KS}$ such that 
$\Cc(\G_{KS})=\emptyset$, while $ \Qq(\G_{KS}) \neq \emptyset$.
We note that it is not known if $\Qq(\G_{KS})=\Gg(\G_{KS})$ and,
more generally, if there exists a contextuality scenario $\G$ such that 
$\Cc(\G)= \emptyset$ while $\Qq(\G)=\Gg(\G) \neq \emptyset$. 
Further, a contextuality 
scenario for which the second inclusion in (\ref{eq_inccqg}) is strict
is displayed in Figure \ref{fig:NoPRs}, where 
$\Qq(\G_{1}) = \emptyset$, while $\Gg(\G_{1})$ is a singleton.

We recall some basic facts and notions 
related to operator systems, and refer the 
reader to \cite{Pa} for details. 
An \textit{operator space} is a subspace $\cl S\subseteq\cl B(\cl H)$ for some 
Hilbert space $\cl H$. We let $M_n(\cl S)$ be the 
subspace of all $n$ by $n$ matrices with entries in $\cl S$; we note that 
$M_n(\cl S)$ can be viewed as a subspace of $\cl B(\cl H^n)$  identifying $M_n(\cl B(H))$ with $\cl B(\cl H^n)$. 
If $\cl S$ and $\cl T$ are operator spaces and 
$\phi : \cl S\to \cl T$ is a linear map, 
we let $\phi^{(n)} : M_n(\cl S)\to M_n(\cl T)$ be the (linear) map, given by 
$\phi^{(n)}([a_{i,j}]_{i,j}) = [\phi(a_{i,j})]_{i,j}$.
A \textit{(concrete) operator system} is an operator space 
$\cl S \subseteq \cl B(\cl H)$,
such that $I_{\cl H}\in \cl S$ and $A\in \cl S \Rightarrow A^*\in \cl S$. 
Every operator system $\cl S$ is an {\it abstract operator system} in the sense that 
(a) $\cl S$ is a linear *-space; (b) the real vector 
space $M_n(\cl S)_h$ of all hermitian elements in the 
*-space $M_n(\cl S)$ of all $n$ by $n$ matrices with entries in $\cl S$
is equipped with a proper cone $M_n(\cl S)^+$
in that $M_n(\cl S)^+ \cap (-M_n(\cl S)^+) = \{0\}$, $n\in \bb{N}$, and
(c) the family 
$(M_n(\cl S)^+)_{n\in \bb{N}}$ is a \textit{matrix ordering} in that 
$T^* M_n(\cl S)^+ T\subseteq M_m(\cl S)^+$ for all $n,m\in\bb{N}$ and all $T\in M_{n,m}$, 
which admits an Archimedean order unit. 
If $\cl S$ and $\cl T$ are abstract operator systems, 
a linear map $\phi : \cl S\to \cl T$
is called \textit{positive} if $\phi (\cl S^+)\subseteq \cl T^+$,
\textit{completely positive} if $\phi^{(n)}$ is positive for every $n\in \bb{N}$, 
\textit{unital} if $\phi(I) = I$, 
and a \textit{complete order isomorphism} if $\phi$ is completely positive, bijective, and 
its inverse $\phi^{-1}$ is completely positive. 
By virtue of the Choi-Effros Theorem \cite[Theorem 13.1]{Pa}, 
every abstract operator system is completely order isomorphic to an
operator system. 
A \textit{state} of an operator system 
$\cl S$ is a positive unital linear functional; 
we denote by $S(\cl S)$ the (convex) set of all states of $\cl S$. 
By \cite[Corollary 4.5]{CE2}, if 
$\cl S$ is a finite dimensional operator system then its 
(matrix ordered) dual $\cl S^{\rm d}$ is an operator system.

Recall that a C*-cover of an operator system $\cl S$ is a pair
$(\cl A,\iota)$, where $\cl A$ is a unital C*-algebra and $\iota : \cl S\to \cl A$
is a unital complete order isomorphism, such that $\iota(\cl S)$ generates $\cl A$
as a C*-algebra. The C*-envelope 
$(C^{*}_{\rm e}(\cl S),\iota_{\rm e})$ of $\cl S$
is the C*-cover with the property that if $(\cl A,\iota)$ is a C*-cover of $\cl S$ then 
there exists a *-epimorphism $\pi : \cl A\to C^*_{\rm e}(\cl S)$ such that 
$\pi\circ \iota = \iota_{\rm e}$ (see e.g. \cite[Chapter 15]{Pa}).

A \textit{kernel} \cite{Kavruk2010QuotientsEA} of an operator system $\cl S$ is a 
subspace $\cl J\subseteq \cl S$ of the form $\cl J = \ker(\phi)$ for some 
completely positive map $\phi$ defined on $\cl S$; a subspace 
$\cl J \subseteq \cl S$ is a kernel if and only if there exists a family 
$\{f_{\alpha}\}_{\alpha\in \bb{A}}$ of states of $\cl S$ such that 
$\cl J = \cap_{\alpha\in \bb{A}} \ker(f_{\alpha})$
\cite[Proposition 3.1]{Kavruk2010QuotientsEA}. 
If $\cl J$ is a kernel in $\cl S$, the linear quotient $\cl S/\cl J$ admits 
a natural operator system structure \cite{Kavruk2010QuotientsEA}
with the (universal) property that, 
for every operator system $\cl T$ and every completely positive map 
$\phi : \cl S\to \cl T$ with $\cl J\subseteq \ker(\phi)$, the 
naturally induced map $\dot{\phi} : \cl S/\cl J\to \cl T$ is completely positive.

Let $\cl S$ be an operator system. 
Given a linear subspace $J\subseteq \cl S$, we define its {\it kernel cover} $\cl J$ by letting
$$\cl J = \cap\{\ker(\phi) : \phi \mbox{ a state of } \cl S \mbox{ with } J\subseteq \ker(\phi)\}.$$
By \cite[Proposition 3.1]{Kavruk2010QuotientsEA}, $\cl J$ is a kernel in 
$\cl S$. 
We will denote by $q_{\cl J}$ the canonical quotient map from an operator system $\cl S$ onto its quotient $\cl S/\cl J$. 
The following statement will be used in 
Section \ref{s_opsys}.

\begin{proposition}\label{p_cov}
Let $\cl S$ be an operator system, $J\subseteq \cl S$ be a linear subspace and $\cl J$ be its kernel cover. 
If $\cl J\neq \cl S$, then $\cl S/\cl J$ is the unique,
up to a complete order isomorphism, operator system such that, whenever 
$\cl T$ is an operator system and $\phi : \cl S\to \cl T$ is a unital completely positive map with $J\subseteq \ker(\phi)$ then 
there exists a unital completely positive map $\psi : \cl S/\cl J \to \cl T$ such that $\phi(a) = \psi(q_{\cl J}(a))$, $a\in \cl S$.
\end{proposition}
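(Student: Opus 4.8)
The plan is to show that $\cl S/\cl J$ has the stated universal property and then deduce uniqueness from that property in the standard categorical way. For the existence part, suppose $\cl T$ is an operator system and $\phi : \cl S \to \cl T$ is unital completely positive with $J \subseteq \ker(\phi)$. The key observation is that $\cl J \subseteq \ker(\phi)$ as well: indeed, if $a \in \cl J$, then by definition $f(a) = 0$ for every state $f$ of $\cl S$ that annihilates $J$; but each state $g$ of $\cl T$ pulls back to a state $g \circ \phi$ of $\cl S$ (using that $\phi$ is unital and positive, hence a contraction carrying states to states), and $g \circ \phi$ annihilates $J$ since $J \subseteq \ker(\phi)$, so $g(\phi(a)) = 0$ for all states $g$ of $\cl T$; since the states of an operator system separate its points, $\phi(a) = 0$. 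Hence $\cl J \subseteq \ker(\phi)$, and the universal property of the operator system quotient recalled just above (from \cite{Kavruk2010QuotientsEA}) furnishes a completely positive map $\psi : \cl S/\cl J \to \cl T$ with $\psi \circ q_{\cl J} = \phi$. Unitality of $\psi$ is automatic: $q_{\cl J}$ is unital (the Archimedean order unit of $\cl S/\cl J$ is $q_{\cl J}(I)$) and $\psi(q_{\cl J}(I)) = \phi(I) = I_{\cl T}$. The hypothesis $\cl J \neq \cl S$ is exactly what guarantees $q_{\cl J}(I) \neq 0$, so that $\cl S/\cl J$ is a genuine (nonzero, unital) operator system and the statement is not vacuous.

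For uniqueness, suppose $\cl S_0$ is another operator system equipped with a unital completely positive map $\rho : \cl S \to \cl S_0$ with $J \subseteq \ker(\rho)$, having the same universal property: every unital completely positive $\phi : \cl S \to \cl T$ with $J \subseteq \ker(\phi)$ factors as $\phi = \psi_0 \circ \rho$ for some unital completely positive $\psi_0 : \cl S_0 \to \cl T$. Applying the universal property of $\cl S/\cl J$ to the map $\rho$ (which kills $J$), we get a unital completely positive $\alpha : \cl S/\cl J \to \cl S_0$ with $\alpha \circ q_{\cl J} = \rho$; applying the universal property of $\cl S_0$ to the map $q_{\cl J}$ (which kills $J$), we get a unital completely positive $\beta : \cl S_0 \to \cl S/\cl J$ with $\beta \circ \rho = q_{\cl J}$. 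Then $\beta \circ \alpha \circ q_{\cl J} = \beta \circ \rho = q_{\cl J}$ and $\alpha \circ \beta \circ \rho = \alpha \circ q_{\cl J} = \rho$. Since $q_{\cl J}$ is surjective and $\rho$ has dense (in fact, by the factorization property applied to $\mathrm{id}$, full) range onto $\cl S_0$, this forces $\beta \circ \alpha = \mathrm{id}_{\cl S/\cl J}$ and $\alpha \circ \beta = \mathrm{id}_{\cl S_0}$; hence $\alpha$ is a unital completely positive bijection with completely positive inverse $\beta$, i.e. a complete order isomorphism.

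The main point requiring care is the first step of the existence argument --- verifying $\cl J \subseteq \ker(\phi)$ --- which rests on two facts that are each elementary but should be invoked explicitly: that a unital positive map between operator systems pulls states back to states, and that the states of an operator system separate points (equivalently, the order is Archimedean and the order unit is faithful in the appropriate sense). Everything else is a formal diagram chase, so I do not expect a genuine obstacle; the only subtlety is to keep track of where $\cl J \neq \cl S$ is used, namely to ensure $q_{\cl J}(I) \neq 0$ so that $\cl S/\cl J$ is a unital operator system and the universal property is non-degenerate.
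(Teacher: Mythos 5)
Your proposal is correct and follows essentially the same route as the paper: for existence, you pull states of $\cl T$ back through $\phi$ to states of $\cl S$ annihilating $J$ (hence annihilating $\cl J$ by definition of the kernel cover) and use that states separate points to get $\cl J\subseteq\ker(\phi)$, then invoke the universal property of the operator system quotient; for uniqueness, you run the standard two-way factorisation argument. The only cosmetic difference is that the paper assumes the competing object is a quotient of $\cl S$ (so surjectivity of the comparison maps is automatic), whereas your parenthetical justification that $\rho$ has full range is the one slightly under-argued point, but it does not affect the substance.
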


\begin{proof}
Set $\tilde{\cl S} = \cl S/\cl J$. We first show that $\tilde{\cl S}$ has the stated universal property. 
Suppose that $\cl T$ is an operator system and $\phi : \cl S \to \cl T$ is a completely positive map with $J\subseteq \ker(\phi)$. 
We show that $\cl J\subseteq \ker(\phi)$. Let $x\in \cl J$.
In order to show that $\phi(x) = 0$, it suffices to show that 
$t(\phi(x)) = 0$ for every state $t : \cl T\to \cl C$; 
indeed, since every $t'\in \cl T^{\rm d}$ is a linear combination of 
four states, this would imply that $t'(\phi(x)) = 0$ for every $t'\in \cl T^{\rm d}$, 
and hence that $\phi(x) = 0$. 
Let, therefore, $t\in S(\cl T)$. Then $t\circ \phi$ is a positive functional 
on $\cl S$ that annihilates $J$. Thus $(t\circ \phi)(x) = 0$ as desired.
Now the universal property of the quotient \cite{Kavruk2010QuotientsEA}
implies that 
there exists a (unique) completely positive map $\psi : \tilde{\cl S}\to \cl T$ such that 
$\phi(a) = \psi(q_{\Jj}(a))$, $a\in \cl S$. 

Suppose that $\cl R$ is a quotient operator system of $\cl S$ with a corresponding quotient map $q_{\cl R}$ 
whose kernel contains $J$, such that if
$\phi : \cl S\to \cl T$ is a completely positive map with $J\subseteq \ker(\phi)$ then 
there exists a completely positive map $\psi : \cl R \to \cl T$ such that $\phi(a) = \psi(q_{\Rr}(a))$, $a\in \cl S$.
Taking $\cl T = \tilde{\cl S}$, we obtain a unital completely positive map
$\psi : \cl R\to \tilde{\cl S}$ such that $q_{\cl R}(a) = \psi(q_{\Jj}(a))$, $a\in \cl S$.
On the other hand, taking $\cl T = \cl R$, by the previous paragraph, there exists a unital completely positive map 
$\theta : \tilde{\cl S}\to \cl R$ such that $q_{\Jj}(a) = \theta(q_{\cl R}(a))$, $a\in \cl S$.
It is now clear that $\theta$ is a (unital) complete order isomorphism. 
\end{proof}


\section{Operator systems from hypergraphs}\label{s_opsys}

In this section, we introduce operator systems, universal 
for the different types of operator representations of a contextuality 
scenario in that the latter correspond precisely to the 
unital completely positive maps defined on them. 
This allows us to express properties such as dilatability in terms of 
operator system equalities. The results will be used subsequently 
to provide operator representations of no-signalling correlations over 
product contextuality scenarios.


\subsection{The universal operator system of a positive operator representation}
\label{universalopsection}

We fix a contextuality 
scenario $\G= (V,E)$, and we write  
$E = \{e_1,e_2,\dots,e_d\}$. 
For each $e\in E$, we denote by $\ell^\infty_e$ a copy of $\ell^\infty_{|e|}$, 
and set  
$$\Ss := \ell^{\infty}_{e_1} \oplus \cdots \oplus \ell^{\infty}_{e_d}\, .$$ 
For $x\in V$, 
we denote by $\delta_{x}^{e}$ the element of $\ell^{\infty}_{e}$ 
whose $x$-th entry is $1$, and the remaining entries are zero. 
For $i,j\in [d]$ with $i < j$, 
and $x\in e_i\cap e_j$, we set 
$$\alpha_{j} = 
\underbrace{1\hspace{-0.05cm}\oplus 0 \hspace{-0.05cm}\oplus \cdots  
\oplus 0 \hspace{-0.05cm} \oplus \hspace{-0.05cm} -1 \hspace{-0.05cm}\oplus 0 
\hspace{-0.05cm}\oplus \cdots\oplus 
\hspace{-0.05cm} 0}_{d \mbox{ \tiny terms}},$$
where the entry $-1$ occupies the $j$-th coordinate, and 
$$\beta_{i,j}^{x} = 
\underbrace{0 \oplus\cdots \oplus 0 \oplus \delta_{x}^{e_{i}}\oplus 0 \oplus\cdots 
\oplus 0 \oplus -\delta_{x}^{e_{j}} \oplus 0 \oplus\cdots \oplus 0}_{d \mbox{ \tiny terms}},$$
where the term $\delta_{x}^{e_{i}}$ (resp. $-\delta_{x}^{e_{j}}$) is at the 
$i$-th (resp. $j$-th) coordinate. 
Define
\begin{equation}
\label{thekernel}
    \Jj 
    \hspace{-0.07cm} := \hspace{-0.05cm} 
    \spann \{\alpha_{j}, \beta_{i,j}^{x}
    :  i, j \in [d], \, i < j \text{ and } x\in e_{i}\cap e_{j}\} 
\end{equation}
and set 
\[ \Jjc := \{ u\in \Ss : \inn{p,u} = 0, \ p \in \Gg(\G) \} \]
where $$ \inn{p,u} = \sum_{e\in E}\sum_{x\in e} p(x) u_{x}^{e}, \quad u = \oplus_{e\in E} (u_{x}^{e})_{x\in e}. $$

Let also 
    \begin{multline}
    \mathcal{L}_{\G} = \Big\{ (\lambda_{x}^{1})_{x\in e_{1}}\oplus \cdots \oplus (\lambda_{x}^{d})_{x\in e_{d}}: \sum_{x\in e_{i}}\lambda_{x}^{i}=\sum_{x\in e_{j}}\lambda_{x}^{j} \\
    \text{ and } \lambda_{x}^{i}=\lambda_{x}^{j} \text{ for all } x\in e_{i}\cap e_{j},
    i,j\in [d]\Big\},
\end{multline}
viewed as a selfadjont subspace of $\Ss$. 
It is straightforward to check that $ \cl J^{\perp} = \Ll_{\G}$. 
Thus, if $f : \cl S\to \bb{C}$ is a positive functional that annihilates 
$\cl J$ then $f\in \mathcal{L}_{\G}^+$. 
It follows that $f$ annihilates $\tilde{\cl J}$, and hence 
$\tilde{\cl J}$ coincides with the kernel cover of $\cl J$ in the sense
of Proposition \ref{p_cov}. 
Set $\cl S_{\G} = \cl S/\tilde{\cl J}$.

\begin{remark}\label{r_Jj0}
If the hyperedges of $\bb{G}$ are mutually disjoint
then no elements of the form $\beta_{i,j}^{x}$ appear on the 
right hand side of (\ref{thekernel}). It follows 
\cite{bbf61d43a5de485bad4fe7b6c908f9a2} that $\Jj$ is a kernel in $\cl S$ and 
that the quotient $\cl S_{\G}$ coincides with the coproduct
$\ell^{\infty}_{e_1}\oplus_1 \ell^{\infty}_{e_2} \oplus_1\cdots\oplus_1 \ell^{\infty}_{e_d}$  
of the operator systems 
$\ell^{\infty}_{e_1}, \ell^{\infty}_{e_2},\cdots,\ell^{\infty}_{e_d}$
as defined in \cite{bbf61d43a5de485bad4fe7b6c908f9a2}. 
\end{remark}

\begin{remark} \label{r_null}
We will shortly exhibit examples of 
contextuality scenarios $\G$, for which the subspace $ \Jj$ is a null subspace
\cite{Kavruk2014}, that is, it does not contain non-zero positive elements. 
In this case, $\Jj$ is a 
completely proximinal kernel \cite{Kavruk2010QuotientsEA};
in particular, $\Ss_{\G} = \Ss/\Jj$ completely order isomorphically
and
\[ M_n(\Ss/\Jj)^+ \hspace{-0.1cm} = \hspace{-0.1cm}
\{ [u_{i,j} \hspace{-0.05cm} + \hspace{-0.05cm} \Jj] \hspace{-0.1cm}\in \hspace{-0.1cm}\Mn(\Ss/\Jj) 
: \exists \ k_{i,j} \in \Jj 
\text{ s.t. } [u_{i,j} \hspace{-0.05cm} + \hspace{-0.05cm} k_{i,j} ] \hspace{-0.1cm}\in \hspace{-0.1cm}\Mn(\Ss)^{+} \}. \]
\end{remark}

Let $\G=(V,E)$ be a  contextuality scenario. 
For $e\in E$, let 
$\iota_{e} : \ell^{\infty}_{e} \rightarrow \oplus_{f\in E}\ell^{\infty}_{f}$
be the natural embedding and, recalling that 
$ q : \Ss \rightarrow \Ss /\Jjc$ is the quotient map, let 
$i_{e} : \ell^{\infty}_{e} \rightarrow \Ss_{\G}$ be the map given by 
\begin{equation}\label{eq_|E|}
i_{e}(u) = |E| (q\circ \iota_{e})(u), \ \ \ u\in \ell^{\infty}_{e}.
\end{equation}
To ease the notation, we set 
$$a_{x} := i_{e}(\delta_{x}^{e}), \; \; \; x\in V,$$
and note that $a_x$ only depends on $ x\in V$ and not on $e\in E$. 
We have that 
$\Ss_{\G} = \spann\{a_{x} : x\in V\}$.
The next statement shows that 
$\Ss_{\G}$ is the universal operator system for 
positive operator representations of $\G$.

\begin{theorem}\label{universalprop}
Let $ \G = (V,E)$ be a non-trivial contextuality scenario. 
If $ \Phi: \Ss_{\G} \rightarrow \Bh$ is a unital completely positive map
then $(\Phi(a_{x}))_{x\in V}$ is a POR of $\G$.
Conversely, 
if $ (A_{x})_{x\in V} \subseteq \Bh$ is a POR of $ \G$ then there exists a 
unique unital completely positive map $ \Phi: \Ss_{\G} \rightarrow \Bh$ such that $ \Phi(a_{x})=A_{x}$, $ x\in V$.

Suppose that $\Rr$ is an operator system and   \label{uniqueness}
$r_{e} : \ell^{\infty}_{e} \rightarrow \Rr$ 
are unital completely positive maps, $e\in E$, 
with the property that 
for every POR 
$(A_{x})_{x\in V}$ of $\G$ 
there exists a unique unital completely positive map $ \Psi : \Rr \rightarrow 
\Bh$ with $ \Psi(r_{e}(\delta_{x}^{e})) = A_{x}$, $ x\in V$. Then there exists a unital complete order isomorphism $ \theta: \Rr \rightarrow \Ss_{\G}$
such that $\theta \circ r_e = i_e$, $e\in E$.
\end{theorem}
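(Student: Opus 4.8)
The plan is to prove both the existence of $\theta$ (the universal property) and the fact that $\Ss_{\G}$ itself satisfies the stated universality, then invoke a standard two-sided-inverse argument to get the complete order isomorphism.

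First I would verify that $\Ss_{\G}$, together with the maps $i_e$, satisfies exactly the hypothesis imposed on $(\Rr, r_e)$. The first half of the theorem already gives that every unital completely positive $\Phi : \Ss_{\G}\to \Bh$ yields a POR $(\Phi(a_x))_{x\in V}$ via $\Phi(a_x) = \Phi(i_e(\delta_x^e))$, and conversely every POR $(A_x)_{x\in V}$ comes from a unique unital completely positive $\Phi : \Ss_{\G}\to\Bh$ with $\Phi(a_x) = A_x$; since $a_x = i_e(\delta_x^e)$, this is precisely the condition $\Phi(i_e(\delta_x^e)) = A_x$. So $\Ss_{\G}$ enjoys the same universal property as $\Rr$.

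Next, to produce $\theta$, I would apply the universal property of $\Rr$ to a well-chosen POR living on $\Ss_{\G}$ — but $\Ss_{\G}$ is an operator system, not a priori a subspace of some $\Bh$; I would use the Choi--Effros representation to embed $\Ss_{\G}\subseteq\cl B(\cl H)$ for a suitable Hilbert space $\cl H$. Then $(a_x)_{x\in V}\subseteq \cl B(\cl H)^+$ is a POR of $\G$ (by the first part of the theorem, or directly since $a_x = i_e(\delta_x^e)$ with $i_e$ unital completely positive and $\sum_{x\in e}\delta_x^e = 1$ in $\ell^\infty_e$). Applying the universal property of $\Rr$ to this POR gives a unital completely positive map $\Psi : \Rr\to\cl B(\cl H)$ with $\Psi(r_e(\delta_x^e)) = a_x$; its corestriction (once one checks the range lands in $\Ss_{\G}$, which holds because $\Ss_{\G} = \spann\{a_x : x\in V\}$ and $\Rr = \spann\{r_e(\delta_x^e)\}$ — this spanning fact for $\Rr$ follows from the uniqueness clause in its universal property, a point I would check carefully) is the desired $\theta : \Rr\to\Ss_{\G}$ with $\theta\circ r_e = i_e$. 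Symmetrically, applying the universal property of $\Ss_{\G}$ to the POR $(r_e(\delta_x^e))_{x\in V}$ of $\G$ (again after embedding $\Rr$ in some $\cl B(\cl K)$) gives a unital completely positive $\theta' : \Ss_{\G}\to\Rr$ with $\theta'\circ i_e = r_e$. Then $\theta'\circ\theta$ and $\theta\circ\theta'$ are unital completely positive endomorphisms fixing the generating sets $\{r_e(\delta_x^e)\}$ and $\{a_x = i_e(\delta_x^e)\}$ respectively, hence are the identity maps by the uniqueness assertions; therefore $\theta$ is a unital complete order isomorphism.

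The main obstacle I anticipate is the bookkeeping around the \emph{uniqueness} clauses: to run the two-sided-inverse argument one needs that a unital completely positive self-map of $\Rr$ (resp. $\Ss_{\G}$) fixing the canonical generators must be the identity, and to make the corestriction of $\Psi$ land in $\Ss_{\G}$ one needs $\Rr$ to be spanned by the $r_e(\delta_x^e)$. Both facts are extracted from the hypothesis by taking $\Bh$-valued POR's whose range is a concrete copy of the relevant operator system, but care is needed because the universal property as stated quantifies over POR's on arbitrary Hilbert spaces, so one must first fix faithful representations $\Ss_{\G}\subseteq\cl B(\cl H)$ and $\Rr\subseteq\cl B(\cl K)$ and then check that the induced maps respect these inclusions. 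Once the spanning and uniqueness points are nailed down, the rest is the routine categorical argument for initial objects.
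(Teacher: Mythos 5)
Your argument for the final assertion (the uniqueness of $\Ss_{\G}$ as a universal object) is essentially the paper's own proof: embed $\Ss_{\G}$ in some $\cl B(\cl H)$, apply the universal property of $\Rr$ to the POR $(a_x)_{x\in V}$ to get $\theta$, apply the universal property of $\Ss_{\G}$ to the POR $(r_e(\delta_x^e))_{x\in V}$ to get the map in the other direction, and use the uniqueness clauses to see that the composites are identities. You are in fact more explicit than the paper about the two delicate bookkeeping points (that $\Rr$ is spanned by $1$ and the $r_e(\delta_x^e)$, so that the corestriction of $\Psi$ makes sense, and that $(r_e(\delta_x^e))_x$ is a consistently defined POR); the paper glosses over both, so no complaint there.

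The genuine gap is in the first half of the statement, which you invoke ("the first half of the theorem already gives\dots") rather than prove, and on which your construction of $\theta'$ crucially depends. The forward direction is indeed immediate from $a_x\in\Ss_{\G}^+$ and $\sum_{x\in e}a_x=1$, but the converse direction --- that every POR $(A_x)_{x\in V}\subseteq\Bh$ arises from a (necessarily unique, since $\Ss_{\G}=\spann\{a_x\}$) unital completely positive map on $\Ss_{\G}$ --- is the technical heart of the theorem and is not automatic from the definitions. One must (a) assemble the maps $\phi_e:\ell^\infty_e\to\Bh$, $\phi_e(\delta_x^e)=A_x$, into the single unital completely positive map $\Phi_0=\frac{1}{|E|}\sum_{e\in E}\phi_e$ on $\Ss=\oplus_{e\in E}\ell^\infty_e$, check that $\Phi_0$ annihilates the generators $\alpha_j$ and $\beta_{i,j}^x$ of $\cl J$, and then (b) show that $\Phi_0$ annihilates not merely $\cl J$ but its kernel cover $\tilde{\cl J}$, since $\Ss_{\G}$ is defined as $\Ss/\tilde{\cl J}$ and $\tilde{\cl J}$ may be strictly larger than $\cl J$; this is exactly the content of Proposition \ref{p_cov}, which converts "$\cl J\subseteq\ker\Phi_0$" into a factorisation through the quotient by $\tilde{\cl J}$ via composing with states of the target. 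Without step (b) the map $\Phi$ on $\Ss_{\G}$ is not even well defined, so the omission is load-bearing for the statement as a whole; supplying the averaging construction together with an appeal to Proposition \ref{p_cov} closes it.
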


\begin{proof}
The first statement follows from the fact that 
$a_x\in \cl S_{\G}^+$, $x\in V$, and $\sum_{x\in e} a_x = 1$, $e\in E$. 
Let $ (A_{x})_{x\in V} \subseteq \Bh$ be a positive operator representation of $ \G$. For each $e\in E$, 
the family $ (A_{x})_{x\in e}$ is a POVM, and 
hence the linear map 
$\phi_{e} : \ell^{\infty}_{e} \rightarrow \Bh$, given by 
$ \phi_{e}(\delta_x^{e}) = A_{x}$, is unital and completely positive. 
Define $\Phi_0 : \Ss \rightarrow \Bh$ by letting
$$\Phi_0\left((u_{e})_{e\in E}\right) = \frac{1}{|E|}\sum_{e\in E}\phi_{e}(u_{e}),
\ \ \ (u_{e})_{e\in E}\in \cl S,$$
and note that $\Phi_0$ is unital, completely positive, and
$\Jj \subseteq \ker \Phi_0$. 
We have that $ \Jjc \subseteq \ker \Phi_0$; hence, 
by Proposition \ref{p_cov}, there exist a unique unital completely positive 
map $\Phi: \Ss/ \Jjc \rightarrow \Bh $ such that 
$\Phi\left((u_{e})_{e\in E}+\Jjc\right) = \Phi_0\left((u_{e})_{e\in E}\right)$. 
Note that $\Phi(a_x) = \phi_{e}(\delta_{x}^{e}) = A_{x}$, $ x\in V$. 

Let $\cl R$ and $r_e$, $e\in E$, be as stated. 
Without loss of generality, assume that 
$\Ss_{\G}\subseteq \cl B(\Hh)$ for some Hilbert space $\Hh$. 
As $(a_{x})_{x\in V}$ is a positive operator representation of $ \G$, 
there exists a unital completely positive map 
$\theta : \Rr \rightarrow \Ss_{\G}$ such that 
$\Psi(r_{e}(\delta_{x}^{e})) = a_{x}$, $x\in V$. 
Swapping the roles of $\cl R$ and $\cl S_{\G}$ and using the 
universal property of $\cl S_{\G}$ established in the previous paragraphs, 
we have that there exists a 
unital completely positive map $ \Phi :\Ss_{\G} \rightarrow \Rr$  
such that $\Phi(a_x) = r_{e}(\delta_{x}^{e})$,
$x\in e$. Finally, note that 
$ (\theta \circ \Phi )(a_x) = a_x$ and 
$(\Phi \circ \theta)(r_{e}(\delta_{x}^{e})) = r_{e}(\delta_{x}^{e}) $, for every $x\in e$, 
$e\in E$. By uniqueness,
$ \Phi \circ \theta = \id_{\cl R}$ and $\theta \circ \Phi = \id_{\cl S_{\G}}$. 
It follows that $\theta$ is a unital complete order isomorphism.
\end{proof}

Our next aim is to provide a concrete description of the 
matricial order structure of the operator system $\cl S_{\G}$.
We make the identification 
$ M_{n}( \oplus_{e\in E}\ell_{{e}}^{\infty}) = 
\oplus_{e\in E} \oplus_{x\in e} M_{n}$, so that an element $ u \in  M_{n}( \oplus_{e\in E}\ell_{{e}}^{\infty}) $ is written as 
$ u = \oplus_{e\in E} (u_{x}^{e})_{x\in e}$, where $ u_{x}^{e}\in M_{n}$, $e\in E$, $x\in e$. 
To ease notation, we will often write $u_{x}^{i} = u_{x}^{e_i}$, $i\in [d]$. 
We call an element 
$\oplus_{e\in E} (u_{x}^{e})_{x\in e}$ of $M_{n}( \oplus_{e\in E}\ell_{{e}}^{\infty})$
{\it consistent} if $u_{x}^{e} = u_{x}^{f}$ whenever $e,f\in E$ and $x\in e\cap f$. 
For $T\in M_n^+$, let 
\begin{multline*}
    \Ll_{n}(T) = \left\{ \Lambda \hspace{-0.1cm} = \hspace{-0.1cm}
    \oplus_{e\in E} (\Lambda_{x}^e)_{x\in e} : 
    \Lambda \geq 0, \Lambda \mbox{ is consistent and }
     \ \sum_{x\in e} \Lambda_{x}^e = T, e\in E\right\},
\end{multline*}
and set 
$\Ll_{n} = \cup_{T\in M_n^+} \Ll_{n}(T)$,  
considered as a subset of the direct sum $\oplus_{e\in E} \oplus_{x\in e} M_{n}$.
An element $ \Lambda \in \Ll_{n}$ gives rise to a map 
(denoted in the same way) 
$\Lambda : V \rightarrow M_{n}^{+}$, 
given by $\Lambda(x) = \Lambda_{x}^e$, for $x\in e$; 
we thus have an identification between $\Ll_{n}(I)$ and the 
set of all POR's of $\bb{G}$ on the Hilbert space $\bb{C}^n$. 
For $\Lambda\in \Ll_{n}$ and 
$u \in  M_{n}( \oplus_{e\in E}\ell_{{e}}^{\infty}) $, let
\begin{align*}
    \inn{\Lambda, u} : =  \sum_{e\in E}\sum_{x\in e} \tr{u_{x}^{e}(\Lambda_{x}^{e})^{\rmt}};
\end{align*}
we thus further view $\Lambda$ as a linear functional 
(denoted in the same way) on 
$M_{n}( \oplus_{e\in E}\ell_{{e}}^{\infty})$.
Note that  if $\Lambda\in \Ll_{n}(T)$, then 
$$\inn{\Lambda, I} = 
\sum_{e\in E}\sum_{x\in e} \tr{\Lambda_{x}^{e}}
= \sum_{e\in E} \tr{T} = d \tr{T};$$
thus, if $\Lambda\in \Ll_{n}(\frac{1}{dn} I_{n})$ then 
$\Lambda$ is a state  on $ M_{n}( \oplus_{e\in E}\ell_{{e}}^{\infty})$. 
We have that 
\begin{equation}\label{eq_dL1}
\Ll_{1}(1) \equiv \Gg(\G), 
\end{equation}
and that, if $p \in \Ll_{1}$ and 
$u = \oplus_{e\in E} (u_{x}^{e})_{x\in e} \in \Ss$, then  
\[ \inn{p,u} = \sum_{e\in E}\sum_{x\in e} p(x) u_{x}^{e}.  \]

We note the canonical identification $\Mn(\Ss/\Jj) = \Mn(\Ss)/\Mn(\Jj)$, 
which will be used in the sequel.
Since $\Jj$ is a selfadjoint subspace of $\cl S$, the involution on $\cl S$
induces an involution on $\Ss /\Jj$ which, in its own turn, 
induces a canonical involution on the real vector
space $\Mn( \Ss /\Jj)_{h}$ of hermitian elements of $\Mn(\Ss/\Jj)$.

\begin{proposition}\label{p_basicprmo}
    Let $\G = (V,E)$ be a contextuality scenario
    and
    \begin{align*}
    C_{n}:= \{ u +\Mn(\Jj) \in \Mn( \Ss /\Jj)_{h} : \inn{\Lambda, u} \geq 0, \ \Lambda \in \Ll_{n}\}, \ \ \  n\in \N.
\end{align*}
    Then the family $(C_{n})_{n\in \bb{N}}$ is a matrix ordering for the 
    $*$-vector space $ \Ss / \Jj$
     with Archimedean matrix order unit $ 1 +\Jj$.
\end{proposition}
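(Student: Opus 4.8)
The plan is to verify the three defining properties of an abstract matrix ordering directly, working at the level of representatives in $\Mn(\Ss)$ and exploiting that the pairing $\inn{\Lambda, \cdot}$ against elements $\Lambda\in\Ll_n$ separates the cones. First I would check that each $C_n$ is a cone that is compatible with passing to the quotient: if $u\in\Mn(\Ss)$ and $k\in\Mn(\Jj)$, then $\inn{\Lambda, u+k} = \inn{\Lambda, u}$ for every $\Lambda\in\Ll_n$, because each $\Lambda$, viewed as a functional on $M_n(\oplus_{e}\ell^\infty_e)$, annihilates $\Mn(\Jj)$ — indeed $\Lambda$ is consistent with equal edge-sums, so it kills each generator $\alpha_j$ and $\beta_{i,j}^x$ entrywise. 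Hence $C_n$ is well defined on $\Mn(\Ss/\Jj)_h$. That $C_n + C_n\subseteq C_n$ and $\R^+ C_n\subseteq C_n$ is immediate from linearity of the pairing.

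Next I would establish the matrix ordering compatibility: $T^* C_n T\subseteq C_m$ for $T\in M_{n,m}$. The point is that $\Ll_n$ is closed under the operation $\Lambda\mapsto \Lambda'$ where $\Lambda'{}_x^e := (T^{\rmt})^* \Lambda_x^e\, \overline{T}$ — wait, more carefully: for a representative $u\in\Mn(\Ss)$, the element $T^*uT\in M_m(\Ss)$ has edge-components $(T^*uT)_x^e = T^* u_x^e T$, and one computes $\inn{\Lambda, T^*uT} = \sum_{e,x}\tr{T^* u_x^e T\,(\Lambda_x^e)^{\rmt}} = \sum_{e,x}\tr{u_x^e\,(\bar T \Lambda_x^e T^{\rmt})}$... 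I would instead note directly that $\bar T \Lambda\, T^{\rmt}$ (applied edgewise) is again consistent, positive, and has edge-sum $\bar T T\,{}^{\rmt}$... cleaner: since $\Ll_m$ contains all $\bar T \Lambda T^{\rmt}$ for $\Lambda\in\Ll_n$ (positivity and consistency are preserved by congruence, and the edge-sum transforms to $\bar T S T^{\rmt}\in M_m^+$), and $\inn{\Lambda, T^*uT} = \inn{\bar T \Lambda T^{\rmt}, u}$ up to the transpose bookkeeping, the inclusion follows. I would do this computation once carefully to fix the transposes.

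For the order unit: $1+\Jj$ lies in every $C_n$ since $\inn{\Lambda, I_{\Mn(\Ss)}} = d\,\tr{S}\geq 0$ for $\Lambda\in\Ll_n(S)$, as computed in the text. Archimedeanness — if $u+\varepsilon(1+\Jj)\in C_n$ for all $\varepsilon>0$ then $u\in C_n$ — follows from the pairing description: $\inn{\Lambda,u} + \varepsilon\, d\,\tr{S}\geq 0$ for all $\varepsilon>0$ forces $\inn{\Lambda,u}\geq 0$. Properness, $C_n\cap(-C_n)=\{0\}$, is the step I expect to be the main obstacle, since it requires showing that if $\inn{\Lambda,u}=0$ for all $\Lambda\in\Ll_n$ then $u\in\Mn(\Jj)$; equivalently that the real span of $\Ll_n$ (inside the dual of $M_n(\oplus_e\ell^\infty_e)$) has annihilator exactly $\Mn(\Jj)$. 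I would prove this by a finite-dimensional duality argument: the annihilator of $\spann_{\R}\Ll_n$ is contained in $\Mn(\Jj)$ because $\Ll_n$ already contains, for each $\Lambda_0\in\Ll_n(\tfrac{1}{dn}I_n)$ (which exists and is a state) and each self-adjoint perturbation of the edge-components by elements summing to zero over each edge with matching overlaps, enough functionals to detect the consistency and edge-sum relations; concretely, differences $\Lambda - \Lambda'$ of two elements of $\Ll_n$ with the same edge-sum $S$ span the annihilator of $\Ll_{\G}$-type relations at matrix level, and these are exactly the relations cutting out $\Mn(\Jj) = \Mn(\Jj^{\perp\perp})$. Combining with the already-known scalar-level identity $\cl J^{\perp} = \Ll_{\G}$ and $\widetilde{\cl J}$ being the kernel cover, the $n=1$ case is the statement that $C_1$ is proper, and the general case follows by the same argument applied to $M_n$-valued functionals. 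Once properness is in hand, all axioms are verified and the proof is complete.
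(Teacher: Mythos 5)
Your verification of the cone axioms, the well-definedness on the quotient (via $\Lambda$ annihilating the generators $\alpha_j$ and $\beta_{i,j}^x$), the compatibility $T^*C_nT\subseteq C_m$ through the congruence $\Lambda\mapsto a^{\rmt *}\Lambda a^{\rmt}$, and the Archimedean property of $1+\Jj$ all match the paper's argument and are correct (you should also record explicitly that $1+\Jj$ is an order unit, i.e.\ that for every hermitian $u$ some $r(I_n\otimes 1)-u$ lies in $C_n$; this follows from $M_n(\Ss)^++M_n(\Jj)\subseteq C_n$ and is in the paper, but it is a one-line addition).

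The genuine problem is your final step. Properness of the cones is \emph{not} part of the statement --- in this paper ``matrix ordering'' means only the compatibility condition together with an Archimedean matrix order unit, and properness is treated separately --- and, more importantly, properness is \emph{false} in general. Immediately after this proposition the paper exhibits a scenario (Figure \ref{fig:counterexample_hypergraph}) with a unique probabilistic model $p$, for which the element $u$ with $u_{x_4}^{e_4}=1$ and all other entries zero satisfies $\inn{q,u}=0$ for every $q\in\Ll_1$ yet $u\notin\Jj$; hence $C_1\cap(-C_1)\neq\{0\}$. Your duality argument breaks down at the claim that the annihilator of $\spann_{\R}\Ll_n$ equals $M_n(\Jj)$: while $\Jj^{\perp}=\Ll_{\G}$ as linear spaces, the set $\Ll_1$ consists only of the \emph{positive} consistent equal-edge-sum functionals, and its real span can be a proper subspace of $\Ll_{\G}$ when $\G$ is not uniformisable (in the example above it is one-dimensional). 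So ``differences of elements of $\Ll_n$ with the same edge-sum'' do not span enough to cut out exactly $M_n(\Jj)$. Delete the properness step; the proposition does not need it, and the paper handles the failure of properness afterwards by passing to the quotient by $\cl N=C_1\cap(-C_1)$.
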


\begin{proof}

Let $T\in M_n^+$ and $ \Lambda \in \Ll_{n}\left(T\right)$, and note that 
$\Lambda = \oplus_{e\in E} \oplus_{x\in e} \Lambda_{x}^{e}$, where 
$\Lambda_{x}^{e} = [\Lambda_{x}^{e}(i,j)]_{i,j=1}^{n} \in \Mn^{+}$. Since 
$\sum_{x\in e}\Lambda_{x}^{e} = T$, $e\in E$, we have that
\begin{equation}\label{eq_LamKr}
\sum_{x\in e}\Lambda_{x}^{e}(i,j)= \langle Te_j,e_i\rangle, \ \ \ i,j\in [n], e\in E.
\end{equation}
Write $ \Lambda = [\Lambda(i,j)]_{i,j=1}^n \in \Mn( \oplus_{e\in E} \ell^{\infty}_{e} )$, where $\Lambda(i,j) = \oplus_{e\in E}\left(\Lambda_{x}^{e}(i,j)\right)_{x\in e}$.
 
Let $ v = [v(i,j)]_{i,j=1}^n \in \Mn(\Jj)$, 
where $ v(i,j) = \oplus_{e\in E} (v_{x}^{e}(i,j))_{x\in e} \in \Jj$. 
We have
\begin{equation}\label{eq_Lv}
\inn{\Lambda, v} =  \sum_{i=1}^{n} \sum_{j=1}^{n} \inn{\Lambda(i,j), v(i,j)}
    = \sum_{i=1}^{n} \sum_{j=1}^{n}  \sum_{e\in E}\sum_{x\in e}  \Lambda_{x}^{e}(i,j) v_{x}^{e}(i,j).  
\end{equation}
We claim that $\langle \Lambda,W\rangle  =0$ whenever $W\in M_n(\cl J)$. 
Writing $W = [w_{i,j}]_{i,j=1}^n$, we see that it suffices to 
verify that $\inn{\Lambda(i,j), w }=0$ for each of the generators $  w \in \Jj$
and all $i,j\in [n]$.
Let $p,q\in [d]$ with $p\neq q$ and $ w = \alpha_q - \alpha_p$; thus, 
$w$ has the element $1 $ in the direct summand corresponding to 
$ e_p\in E$, the element 
$ -1$ to the one corresponding to  $ e_q \in E $, and zeros anywhere else.
Setting $e' = e_p$ and $e'' = e_q$, we have that
$ w_{x}^{e'}=1$ for all $ x\in e'$, $ w_{x}^{e''} = -1$ for all $ x\in e''$, while $ w_{x}^{e}=0$ for all $ e\not\in \{e',e''\}$. 
By (\ref{eq_LamKr}) and (\ref{eq_Lv}), 
\begin{align*}
    \inn{\Lambda(i,j), w } &=  \sum_{x\in e'} \Lambda_{x}^{e'}(i,j) w_{x}^{e'} +\sum_{x\in e''} \Lambda_{x}^{e''}(i,j) w_{x}^{e''} = \\
    &=\sum_{x\in e'} \Lambda_{x}^{e'}(i,j) -\sum_{x\in e''} \Lambda_{x}^{e''}(i,j) = 0.
\end{align*}
Further, if $w = \beta^y_{p,q}$, where $y\in e'\cap e''$, then, 
by the definition of $\cl L_n$, we have 
$$    \inn{\Lambda(i,j), w } =  \sum_{x\in e'} \Lambda_{x}^{e'}(i,j) w_{x}^{e'} +\sum_{x\in e''} \Lambda_{x}^{e''}(i,j) w_{x}^{e''} 
    = \Lambda_{y}^{e'}(i,j) -\Lambda_{y}^{e''}(i,j) =0.$$
Hence $\Ll_{n}$ annihilates every element of $ \Mn(\Jj)$, 
in other words, $0 + M_n(\Jj) \in C_{n}$, $n\in \bb{N}$. 

It is clear that $ C_{n}$ is a cone for each $ n\in \N$. 
We claim that the family $(C_{n})_{n\in \bb{N}}$ is compatible. 
Let $ a \in M_{n,m}$, $ u + \Mn(\Jj) \in C_{n}$ and $\Lambda\in \cl L_m(T)$; then 
$a^{\rmt *} \Lambda a^{\rmt}\in \cl L_n(a^{\rmt *} T a^{\rmt})$
and hence 
$\inn{\Lambda , a^{*} u a} = \inn{a^{\rmt *} \Lambda a^{\rmt}, u } \geq 0$.

Finally, we check that  $1 + \Jj$ is an Archimedean matrix order unit
for the family $(C_{n})_{n\in \bb{N}}$.  
Let $ [u_{i,j} + \Jj]_{i,j=1}^n \in \Mn(\Ss/\Jj)_{h}$, 
and set $ u =[u_{i,j}]_{i,j=1}^n \in \Mn(\Ss)$. 
Since $I_{n} \otimes 1$ is an order unit for 
$\Mn(\Ss)$, there exists an $r > 0$, such that 
$r (I_{n} \otimes 1) - u \geq 0.$
So, 
\begin{align*}
    r( I_{n} \otimes (1 + \Jj)) - [u_{i,j} + \Jj] & = \big( r ( I_{n} \otimes 1) + \Mn(\Jj) \big) -  \big( u + \Mn(\Jj) \big) =\\
    &=( r ( I_{n} \otimes 1) - u )+ \Mn(\Jj) \in  C_{n}, 
\end{align*}
since $ \Mn(\Ss)^{+} + \Mn(\Jj) \subseteq C_{n}$. 
Thus $1+ \Jj $ is a matrix order unit. Suppose that
 \begin{align*}
 ( r ( I_{n} \otimes 1) + u )+ \Mn(\Jj) = r( I_{n} \otimes (1 + \Jj)) + [u_{i,j} + \Jj] \in C_{n} \ \mbox{ for all } r>0;
 \end{align*}
this implies 
\[  r{\rm Tr}(T) + \inn{\Lambda,u } =\inn{\Lambda, r ( I_{n} \otimes 1) + u} \geq 0, \ \ \ 
\Lambda \in \Ll_{n}(T), T\in M_n^+, r > 0. \]
We conclude that 
$\inn{\Lambda, u } \geq 0$ for every $\Lambda \in \Ll_{n}$,
  that is, $ u + \Mn(\Jj) \in C_{n}$, and thus the unit is Archimedean.
\end{proof}

By Proposition \ref{p_basicprmo}, 
if the cones $C_n$ are proper, that is, if $ C_{n}\cap -C_{n} =\{0\}$, $n\in \bb{N}$,  
then $\left(\Ss/\Jj, (C_n)_{n\in \bb{N}}, 1 + \Jj\right)$ is an operator system. 
However, as we will see this is not always the case; thus, in order to obtain a 
canonical operator system from the matrix ordering $(C_n)_{n\in \bb{N}}$, 
we define 
$    \mathcal{N}: = C_{1} \cap -C_{1}$,
and set
\[ \Tilde{C}_{n}:= \{ (u_{i,j} + \mathcal{N}) \in \Mn((\Ss/\Jj) / \mathcal{N}) : [u_{i,j}]\in C_{n} \}.   \]
By \cite[Proposition 4.4.]{Araiza2020AnAC}, the quotient 
$\tilde{\cl S}_{\G}$
of $\Ss/\Jj$ by $ \mathcal{N} $ is an operator system, 
when equipped with the induced cones $ \{\Tilde{C}_{n}\}_{n}$.
The next proposition provides a more straightforward description of $\Ss_{\G}$.

\begin{proposition}\label{p_isomoofop}
    Let $\G=(V,E)$ be a contextuality scenario.
    Then $\Ss_{\G} = \tilde{\cl S}_{\G}$, 
    up to a canonical complete order isomorphism.
\end{proposition}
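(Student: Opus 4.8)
The plan is to show that $\tilde{\cl S}_{\G}$ — the operator system obtained from the matrix ordering $(C_n)_n$ of Proposition \ref{p_basicprmo} by quotienting out $\cl N = C_1\cap -C_1$ — has exactly the universal property that characterises $\cl S_{\G}$ in Theorem \ref{universalprop}, and then invoke that uniqueness. Concretely, I would first produce, for each $e\in E$, a unital completely positive map $\tilde i_e : \ell^\infty_e \to \tilde{\cl S}_{\G}$, namely the composition of the natural embedding $\iota_e : \ell^\infty_e\to\cl S$ with the two quotient maps $\cl S\to\cl S/\cl J$ and $\cl S/\cl J\to(\cl S/\cl J)/\cl N$ (scaled by $|E|$, as in \eqref{eq_|E|}). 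That these maps are unital and completely positive is immediate from the definition of the cones $C_n$ together with the fact, proved inside Proposition \ref{p_basicprmo}, that $M_n(\cl S)^+ + M_n(\cl J)\subseteq C_n$ and that quotient maps of operator systems are unital completely positive.

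The substance is the universal property: given any POR $(A_x)_{x\in V}$ of $\G$ on a Hilbert space $\Hh$, I must produce a unique unital completely positive $\Psi : \tilde{\cl S}_{\G}\to\Bh$ with $\Psi(\tilde i_e(\delta_x^e)) = A_x$. As in the proof of Theorem \ref{universalprop}, the averaged map $\Phi_0 : \cl S\to\Bh$, $\Phi_0((u_e)_e) = \tfrac1{|E|}\sum_e \phi_e(u_e)$, is unital completely positive and kills $\cl J$ (the generators $\alpha_j$, $\beta_{i,j}^x$ are all mapped to $0$ since the $\phi_e$ are unital and agree on shared vertices). It therefore descends to a unital completely positive $\bar\Phi : \cl S/\cl J\to\Bh$. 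The key point I need is that $\bar\Phi$ is completely positive \emph{as a map on the operator system $(\cl S/\cl J)/\cl N$ equipped with the cones $\tilde C_n$}, equivalently that $\bar\Phi^{(n)}$ is non-negative on $C_n$ and vanishes on $\cl N$. Vanishing on $\cl N$: if $u+\cl J\in\cl N = C_1\cap -C_1$ then $\langle\Lambda,u\rangle = 0$ for every $\Lambda\in\cl L_1$, in particular (by \eqref{eq_dL1}) $\langle p,u\rangle = 0$ for every $p\in\Gg(\G)$, so $u\in\tilde{\cl J}$, whence $\Phi_0(u) = 0$ because $\tilde{\cl J}\subseteq\ker\Phi_0$ (established in the proof of Theorem \ref{universalprop}). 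Positivity: if $[u_{ij}]+M_n(\cl J)\in C_n$, I must show $[\bar\Phi(u_{ij}+\cl J)] = \Phi_0^{(n)}([u_{ij}])\ge 0$ in $M_n(\Bh)$, i.e. $\langle(\Phi_0^{(n)}([u_{ij}]))\xi,\xi\rangle\ge 0$ for all $\xi\in\Hh^n$. The point is that the functional $W\mapsto\langle\Phi_0^{(n)}(W)\xi,\xi\rangle$ on $M_n(\cl S)$ is, after unravelling, of the form $\langle\Lambda,\cdot\rangle$ for a suitable $\Lambda\in\cl L_n$ built from the Stinespring dilation of $\Phi_0$ restricted to each $\ell^\infty_e$ — more precisely, writing the positive functional $W\mapsto\tfrac1{|E|}\sum_e\langle\phi_e^{(n)}(W_e)\xi,\xi\rangle$ and noting that each $\phi_e^{(n)}$ applied to the diagonal matrix units of $\ell^\infty_e$ produces the positive matrices $[\langle A_x(i,j)\text{-block}\,\xi\rangle]$, one assembles a consistent family $(\Lambda_x^e)$ with $\sum_{x\in e}\Lambda_x^e$ independent of $e$, hence an element of $\cl L_n$. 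Then $\langle\Phi_0^{(n)}([u_{ij}])\xi,\xi\rangle = \tfrac1{|E|}\langle\Lambda,[u_{ij}]\rangle\ge 0$ since $[u_{ij}]+M_n(\cl J)\in C_n$ and the extra $\tfrac1{|E|}$ scaling is harmless. This gives the desired unital completely positive $\Psi : \tilde{\cl S}_{\G}\to\Bh$; uniqueness follows from $\tilde{\cl S}_{\G} = \spann\{\tilde i_e(\delta_x^e) : x\in V\}$.

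With the universal property in hand, the uniqueness clause of Theorem \ref{universalprop} (applied with $\cl R = \tilde{\cl S}_{\G}$ and $r_e = \tilde i_e$) yields a unital complete order isomorphism $\theta : \tilde{\cl S}_{\G}\to\cl S_{\G}$ with $\theta\circ\tilde i_e = i_e$, which is the assertion. I expect the main obstacle to be the positivity verification in the previous paragraph: one must check carefully that the matrix-indexed data coming out of the $\phi_e^{(n)}$ (for a \emph{fixed} vector $\xi$, or more invariantly, for a fixed positive functional on $M_n(\Bh)$) genuinely assembles into an element of $\cl L_n$, i.e. that consistency on overlapping vertices and the common-sum condition both hold. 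Consistency holds because $\phi_e(\delta_x^e) = A_x = \phi_f(\delta_x^f)$ whenever $x\in e\cap f$ — the same fact that made $\Phi_0$ kill the $\beta_{i,j}^x$ — and the common-sum condition holds because $\sum_{x\in e}\phi_e(\delta_x^e) = \sum_{x\in e}A_x = I$ for every $e$, so the associated matrix sums all equal the fixed positive matrix determined by $\xi$ (or by the chosen functional). Once this bookkeeping is set up cleanly the rest is routine, and the scaling by $|E|$ versus $\tfrac1{|E|}$ cancels exactly as in Theorem \ref{universalprop}.
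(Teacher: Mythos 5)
Your proof is correct and follows essentially the same route as the paper: one verifies that $\tilde{\cl S}_{\G}$ satisfies the universal property of Theorem \ref{universalprop} by descending the averaged map $\Phi_0$ through both quotients, checking that $\cl N$ is killed and that positivity on the cones $C_n$ is detected by elements of $\cl L_n$ assembled from the representation, and then invokes uniqueness. The only (harmless) differences are that you deduce $\cl N\subseteq\ker$ by observing that elements of $\cl N$ lift into $\tilde{\cl J}$, and test matrix positivity against vector states rather than arbitrary states of $M_n(\Bh)$; the paper instead builds the requisite elements of $\cl L_1$ and $\cl L_n$ directly from states composed with the $\phi_e^{(n)}$.
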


\begin{proof}
We show that $\tilde{\cl S}_{\G}$ 
satisfies the universal property of 
$\Ss_{\G}$. Let $ (A_{x})_{x\in V} \subseteq \Bh$ be a 
POR  of $\G$, and let
$ \phi_{e} : \ell^{\infty}_{e} \rightarrow \Bh$ be the 
unital completely positive maps, given by 
$ \phi_{e}(\delta_{x}^{e})= A_{x}$, $x\in e$, $e\in E$. 
Define the map $ \Phi_{0} : \Ss \rightarrow \Bh$ by letting
    \begin{align*}
        \Phi_0 ((u_{e})_{e\in E})= \frac{1}{|E|} \sum_{e\in E} \phi_{e}(u_{e})
    \end{align*}
and note that $\cl J \subseteq \ker \Phi_0$. 
Let $ \Phi_{1} : \Ss/ \cl J \rightarrow \Bh$ be the 
induced map, given by 
$ \Phi_{1}(u + \cl J) = \Phi_0(u)$, $u\in \Ss$. 

We claim that $\cl N \subseteq \ker \Phi_{1}$. 
Indeed, let $ u + \cl J \in \cl N$, that is, $ u + \cl J \in C_{1}$ and $ u + \cl J \in -C_{1}$. This implies that $ \sca{p,u}=0 $ for all $ p \in \Ll_{1}$. We will show that $\frac{1}{|E|} \sum_{e\in E} \phi_{e}(u_{e}) =0$. 
Let $s : \Bh \to \bb{C}$ be a state; then 
the states $s\circ \phi_{e} : \ell^{\infty}_{e} \to \C$, $e\in E$,
form a compatible family in the sense that 
if $x\in e\cap e'$ for some $e,e'\in E$ then 
$(s\circ \phi_{e})(\delta_{x}^{e}) = (s\circ \phi_{e})(\delta_{x}^{e'})$. 
Hence, $ p := \oplus_{e\in E}(s \circ \phi_{e}(\delta_{x}^{e}))_{x\in e}$ 
defines an element in $ \Ll_1$ and thus 
\[ s\left( \frac{1}{|E|} \sum_{e\in E} \phi_{e}(u_{e})\right) 
= \frac{1}{|E|} \sum_{e\in E} s\circ \phi_{e}(u_{e}) = \sca{p,u}=0,\]
    and since $ s$ was arbitrary, we conclude that $\cl N \subseteq \ker \Phi_{1}$. 
    
Let $\Phi : \tilde{\cl S}_{\G}\to \cl B(H)$ be the map, given by 
$\Phi((u+\cl J) + \cl N) = \Phi_0(u)$. 
Note that $\Phi_{}$ is  unital; 
we will show that it is completely positive. 
For an element $ [u_{i,j} + \cl J]_{i,j} + M_{n}(\cl N) \in \Tilde{C_{n}}$,
we can ensure that $ [u_{i,j} + \cl J]_{i,j} \in C_{n}$ which implies that 
$ \sca{\Lambda, [u_{i,j}]} \geq 0$ for all $ \Lambda \in \Ll_{n}$. 
We will show that $ [\Phi_0(u_{i,j})] \in M_{n}(\Bh)^{+}$, equivalently that 
$f([\Phi_0(u_{i,j})]_{i,j}) \geq 0$ for every state $ f $ on $\Mn(\Bh)$. 
Pick such a state $f$ and write the element 
$u = [u_{i,j}]_{i,j} \in \Mn(\Ss)$ as
$ u = \oplus_{e\in E} \oplus_{x\in e}[u_{x}^{e}(i,j)]_{i,j} $, 
using the identification 
$ \Mn(\oplus_{e\in E}\ell^{\infty}_{e}) = \oplus_{e\in E}\oplus_{x\in e} \Mn$. 
Then 
\begin{align*}
    [\Phi_0(u_{i,j})]_{i,j} = \frac{1}{|E|} \sum_{e\in E} \sum_{x\in e} [u_{x}^{e}(i,j)\phi_{e}(\delta_{x}^{e})]_{i,j},
\end{align*}
equivalently,
\[ \left[\Phi_0(u_{i,j})\right]_{i,j}\textbf{} = 
\left[\frac{1}{|E|} \sum_{e\in E} \phi_{e}(u_{e}(i,j))\right]_{i,j} 
= \frac{1}{|E|} \sum_{e\in E} \phi_{e}^{(n)}\left([u_{e}(i,j)]_{i,j}\right). \]
where $ u_{e}(i,j) = (u_{x}^{e}(i,j))_{x\in e}$.
Let $f_{i,j} : \cl B(\cl H)\to \bb{C}$ be the map, given by 
$f_{i,j}(T) = f(\epsilon_{i,j}\otimes T)$ (here $\{\epsilon_{i,j}\}_{i,j=1}^{n}$
is the canonical matrix unit system in $M_n$). 
We note that $ f \circ \phi_{e}^{(n)} : \Mn(\ell^{\infty}_{e}) \rightarrow \C$ are compatible states that give rise to an element 
$\Lambda = (\Lambda_x)_{x\in V} \in \Ll_{n}$, 
where $\Lambda_x = [f_{i,j}(A_x)]_{i,j}$. 
Thus, 
\[ \left(f \circ \Phi_0^{(n)}\right) (u) =
\frac{1}{|E|} \sum_{e\in E} \left(f\circ \phi_{e}^{(n)}\right)
\left([u_{e}(i,j)]_{i,j}\right) = \sca{\Lambda, u } \geq 0. \]
The proof is complete.
\end{proof}

Now we demostrate a hypergraph for which the cones $C_1$ introduced in Proposition \ref{p_basicprmo} are not proper, equivalently that the subspace $\Jj$ defined in (\ref{thekernel}) is not a kernel.

\begin{example}
    Let $\G=(V,E)$ be the hypergraph in figure (\ref{fig:counterexample_hypergraph}) with 
    $ V= \{x_1,x_2,x_3,$ $x_4,x_5\}$ and \( 
E=\{e_1,e_2,e_3,e_4\} \) for $  
e_1=\{x_1,x_2\} $, $  e_2=\{x_2,x_3\} $, $ e_3=\{x_1,x_2,x_3\} $, $ e_4=\{x_2,x_4,x_5\} $. We note that this scenario has a unique probabilistic model, namely $ p \in \Gg(\G)$ with $ p(x_2)=1$ and $ p(x_i)=0$ for all $ i\neq 2$. Pick the vector $ u = \oplus_{i=1}^{4}(u_{x_{j}}^{e_i})_{x_j\in V} \in \Ss $  that has $ u_{x_4}^{e_4}=1$ and zero elsewhere. Then, $ \langle p, u\rangle =0$ for all models $ p$ (there is only one), and thus for all elements of $ \cl L_{1}$. Thus $ u \in C_1 \cap - C_1$. Note however that $ u \notin \Jj$ because if not, then $ u  $ would be a linear combination of the elements $ \alpha_{j}$ and $ \beta_{i,j}^{x}$ as in (\ref{thekernel}) and only $e_4$ contains $ x_4, x_5$. This means that $ u_{x_4}^{e_4}= u_{x_5}^{e_4}$ in this decomposition yielding a contradiction. 

\end{example}

\begin{figure} \label{}
\centering
\begin{tikzpicture}

  \node (vL)  at (1,   3.00) {};  
  \node (vT)  at (2,   3.35) {};  
  \node (vR)  at (3,   3.00) {};  
  \node (vB1) at (1.25,1.15) {};  
  \node (vB2) at (2.75,1.15) {};  

  \draw[green!60!black]
    plot[smooth cycle]
      coordinates {
        (0.65,2.85)   
        (3.35,2.85)   
        (2.00,3.65)   
      };

  \draw[purple]
    plot[smooth cycle]
      coordinates {
        (1.00,0.95)   
        (3.00,0.95)   
        (2.00,3.75)   
      };

  \draw[red,  line width=0.35pt] (vT) -- (vL); 
  \draw[blue, line width=0.35pt] (vT) -- (vR); 

  \foreach \p in {vL,vT,vR,vB1,vB2} \fill (\p) circle (0.1);

  \node[above left]   at (vL)  {$x_{1}$};
  \node at (2,3.92) {$x_{2}$};
  \node[above right]  at (vR)  {$x_{3}$};
  \node at  (0.75,1.35) {$x_{4}$};
  \node   at (3.25,1.35) {$x_{5}$};

  \node[red!70!]  at (1.25,3.65) {$e_{1}$};
  \node[blue!70!black] at (2.75,3.65) {$e_{2}$}; 

  \node[green!50!black] at (2,3.00) {$e_{3}$};

  \node[purple] at (2.00,2.15) {$e_{4}$};

\end{tikzpicture}
\caption{A contextuality scenario for which the cones in Proposition \ref{p_basicprmo} are not proper.}
\label{fig:counterexample_hypergraph}
\end{figure}

\begin{remark} \label{orderembeddingremark}
As a composition of completely positive maps, the maps $i_{e}$ defined in (\ref{eq_|E|}) are 
completely positive. Note also that $i_{e}$ is unital. 
The maps $ i_{e}$ may however fail to be  (complete) order embeddings. 
    Consider the contextuality scenario $\G_{1}=(V,E)$ of 
    Figure \ref{fig:NoPRs}, that is, 
    $ V = \{x_{1},x_{2},x_{3}\}$ and $E= \{ e_{1},e_{2},e_{3}\}$ with $e_1 =\{x_1,x_2\}$, $e_2 = \{x_2,x_3 \}$ and $ e_3 = \{ x_1,x_3\}$. 
It can be easily shown that the space 
$ \Jj$ defined in (\ref{thekernel}) is a null subspace, and so 
$ \Ss_{\G_{1}} 
= \ell^{\infty}_{e_{1}}\oplus\ell^{\infty}_{e_{2}}\oplus\ell^{\infty}_{e_{3}}/\Jj$. 
Let $ u = \begin{pmatrix}
            2\\
            -1
        \end{pmatrix}$, 
        considered as an element of $\ell^{\infty}_{e_{1}}$;
then
    \begin{align*}
        i_{e_{1}}(u) &=  3 \begin{pmatrix}
            2\\
            -1
        \end{pmatrix} \oplus
        \begin{pmatrix}
            0\\
            0
        \end{pmatrix} \oplus 
        \begin{pmatrix}
            0\\
            0
        \end{pmatrix} + \Jj.
    \end{align*}
    If $p \in \Gg(\G_{1})$ is the (only) probabilistic model of $ \G_{1}$, that is, 
    $p(x_{i})= \frac{1}{2}$, $i =1,2,3$, then 
    \[ \inn{p,i_{e_{1}}(u)}= 3\cdot  \frac{1}{2} \cdot 2 + 3 \cdot \frac{1}{2} \cdot (-1)=  \frac{3}{2} \geq 0,\]
which implies (see corollary \ref{p_dualSG}) that $ f (i_{e_{1}}(u)) \geq 0$ for every state $ f $ of $\Ss_{\G_{1}}$. Hence $i_{e_{1}}( u) \in (\Ss/\Jj)^{+}$. Nevertheless, $ u $ is not a positive element in $\ell^{\infty}_{e_{1}}$. 
Thus, the spaces $\ell_{e}^{\infty}$ cannot in general be 
canonically identified with operator subsystems of $ \Ss_{\G}$. 
\end{remark}

In the rest of this section, we will require some 
standard notions from hypergraph theory. If $r\in \bb{N}$, we
say that a hypergraph $\G =(V,E)$ is
\textit{$r$-uniform} 
is every edge contains precisely $r$ vertices and \textit{uniform}, if it is $r$-uniform for some $ r\in \N$. (We note that the Kochen-Specker scenario  (see \cite[Fig. 2] {Acn2015ACA}) is $4$-uniform.)
 Let $x\in V$ and $\lambda \in \N$. 
 The hypergraph that is obtained from $\G$ by \textit{multiplying $x$ by $\lambda$} \cite{berge1984hypergraphs}
arises by replacing the vertex $x \in V$ with the vertices $ (x,1),\cdots,(x,\lambda) $ and every hyperedge $e \in E$ that contains $x$ with the hyperedge $ (e \setminus \{x\}) \cup \{ (x,1),\cdots,(x,\lambda)\}$.
Let $h : V \rightarrow \N$ be a function. The \textit{$h$-multiplication} 
$ \G^{(h)} = (V^{(h)},E^{(h)})$ of $\G$ is the hypergraph obtained by multiplying each vertex $x \in V$ by $h(x)$.
A hypergraph $\G = (V,E)$ is called \textit{uniformisable} \cite{Beckenbach2019}, 
if there exists a function $ h : V \rightarrow \N$ such that $ \G^{(h)}$ uniform.
Clearly, any uniform hypergraph $\G$ is uniformisable with $h(x)=1$, $x\in V$.

\begin{corollary}\label{p_dualSG}
Let $\G$ be a contextuality scenario. 
The following are equivalent for a function $p : V\to [0,1]$:
    \begin{enumerate}
        \item $ p \in \Gg(\G)$;
        \item there is a state $ s : \Ss_{\G} \rightarrow \C$ such that 
        $s(a_{x}) = p(x)$,  $x\in V$.
    \end{enumerate}
Moreover, the following are equivalent:
\begin{enumerate}
        \item[(a)] the hypergraph $\G$ is uniform;
        \item[(b)] $\mathcal{L}_{\G}$ is an operator subsystem of $\cl S$. 
\end{enumerate}
If $\G$ is uniform then $\Ss_{\G}^{\rm d} = \mathcal{L}_{\G}$ up to a 
canonical complete order isomorphism. 
\end{corollary}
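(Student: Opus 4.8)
The plan is to prove the three assertions of Corollary~\ref{p_dualSG} in order, relying on the universal property of $\Ss_\G$ from Theorem~\ref{universalprop} and the identification $\cl J^\perp = \Ll_\G$ already established in the text.

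\textbf{Equivalence of (i) and (ii).} Suppose $p\in\Gg(\G)$. As observed right after the definition of POR, $p$ is a POR of $\G$ on $\bb{C}$, so by Theorem~\ref{universalprop} there is a (unique) unital completely positive map $\Phi:\Ss_\G\to\bb{C}$ with $\Phi(a_x)=p(x)$; since a ucp map into $\bb{C}$ is precisely a state, $s:=\Phi$ works. Conversely, if $s$ is a state of $\Ss_\G$ with $s(a_x)=p(x)$, then again by Theorem~\ref{universalprop} (applied with $\Hh=\bb{C}$) the family $(s(a_x))_{x\in V}=(p(x))_{x\in V}$ is a POR on $\bb{C}$, which is exactly the statement that $p\in\Gg(\G)$. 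I would also note in passing that $p(x)=s(a_x)\in[0,1]$ automatically, since $0\le a_x\le 1$ in $\Ss_\G$.

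\textbf{Equivalence of (a) and (b).} First I would compute, for $\lambda=\oplus_{e\in E}(\lambda_x^e)_{x\in e}\in\Ll_\G$, what it means for $\lambda\ge 0$ in $\cl S$: this says $\lambda_x^e\ge 0$ for all $x\in e$, $e\in E$. So $\Ll_\G\cap\cl S^+ = \Ll_1(c)$ for a common column-sum value $c\ge 0$ (using the consistency built into $\Ll_\G$), and by (\ref{eq_dL1}) the positive cone of $\Ll_\G$ is $\bigcup_{c\ge 0} c\cdot\Gg(\G)$. For (b) to hold, $\Ll_\G$ must contain an Archimedean order unit, and the only natural candidate is the image of $\1\in\cl S$, i.e.\ the element $\mathbf{1}_E:=\oplus_{e\in E}(1)_{x\in e}$. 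This lies in $\Ll_\G$ iff all edges have the same cardinality (the condition $\sum_{x\in e_i}1=\sum_{x\in e_j}1$), i.e.\ iff $\G$ is uniform. If $\G$ is uniform, $\mathbf 1_E\in\Ll_\G$, and since it is an order unit for $\cl S$ its restriction is an order unit for $\Ll_\G$; Archimedeanness is inherited from $\cl S$ because $\Ll_\G\cap\cl S^+$ is the cone on $\Ll_\G$. Matrix ordering is inherited, so $\Ll_\G$ is an operator subsystem. Conversely, if $\G$ is not uniform, I would show no element of $\Ll_\G$ can serve as an order unit: given any $\lambda\in\Ll_\G$, its ``value'' $c(\lambda):=\sum_{x\in e}\lambda_x^e$ is independent of $e$ and defines a linear functional on $\Ll_\G$; one checks $\lambda\ge 0$ forces $c(\lambda)\ge 0$, and one can exhibit a nonzero $\mu\in\Ll_\G$ (using that two edges have different sizes) with $c(\mu)=0$ but $\mu\not\ge 0$ and $\mu$ not dominated by any multiple of a would-be unit; more cleanly, an order unit $u_0$ of $\Ll_\G$ would have to satisfy $c(u_0)>0$, and then $\Ll_\G$ would be spanned by the positive cone, forcing every element of $\Ll_\G$ with $c=0$ to be a difference of positives with $c=0$, i.e.\ to lie in the span of $\Gg(\G)-\Gg(\G)$ scaled — but in the non-uniform case $\Ll_\G$ contains ``value-zero'' vectors outside that span. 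I expect this converse direction to be the main obstacle, and the cleanest route is probably the functional argument: $\Ll_\G$ is an operator system iff it has an order unit iff the map $c:\Ll_\G\to\bb{C}$ is strictly positive on the cone modulo its own kernel behaving well, which pins down uniformity.

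\textbf{The duality $\Ss_\G^{\rm d}=\Ll_\G$ for $\G$ uniform.} Here I would use that $\Ss_\G=\cl S/\tilde{\cl J}$ is a quotient of the finite-dimensional operator system $\cl S$ by the kernel $\tilde{\cl J}$, and that for finite-dimensional operator systems $(\cl S/\tilde{\cl J})^{\rm d}$ is canonically completely order isomorphic to the ``annihilator'' operator system $\tilde{\cl J}^\perp\subseteq\cl S^{\rm d}$; since $\cl S=\bigoplus_e\ell^\infty_e$ is a C*-algebra, $\cl S^{\rm d}$ is canonically identified with $\cl S$ itself (with the matrix order coming from the functionals $\Ll_n$, as in Proposition~\ref{p_basicprmo}), under which $\tilde{\cl J}^\perp$ corresponds to $(\tilde{\cl J})^\perp=\cl J^\perp=\Ll_\G$ — the last equality being exactly what the text records before defining $\Ss_\G$, together with the fact that the kernel cover $\tilde{\cl J}$ has the same annihilator as $\cl J$. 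Finally I would check that the dual matrix order on $\Ss_\G^{\rm d}$ matches the operator-subsystem structure of $\Ll_\G$: a functional in $\Ss_\G^{\rm d}$ is positive at level $n$ iff it is nonnegative on $C_n$, and unravelling the pairing shows this is exactly the condition $\Lambda\in\Ll_n$, i.e.\ that the corresponding element of $\Ll_\G$ (at matrix level) is positive in the sense of Proposition~\ref{p_basicprmo}; uniformity is what guarantees $\Ll_\G$ is genuinely an operator system so that this identification is complete-order. I would cite \cite[Corollary 4.5]{CE2} for the finite-dimensional duality and the fact that quotient and dual are adjoint operations for finite-dimensional operator systems.
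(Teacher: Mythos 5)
Your treatment of (i)$\Leftrightarrow$(ii) is correct and coincides with the paper's argument (Theorem \ref{universalprop} applied with $\Hh=\bb{C}$ in both directions). The other two parts each contain a gap.

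For (a)$\Leftrightarrow$(b), the direction (b)$\Rightarrow$(a) does not require the order-unit analysis you sketch, which you yourself leave unfinished (``I expect this converse direction to be the main obstacle\dots''). By the definition used in the paper, an operator subsystem of $\cl S$ is a selfadjoint subspace containing the unit \emph{of $\cl S$}; there is no latitude to choose some other order unit. Since $\mathcal{L}_{\G}$ is selfadjoint by construction, (b) holds if and only if $1_{\cl S}=\oplus_{e\in E}(1)_{x\in e}$ belongs to $\mathcal{L}_{\G}$, and the defining condition $\sum_{x\in e_i}\lambda_x^{i}=\sum_{x\in e_j}\lambda_x^{j}$ applied to this element forces $|e_i|=|e_j|$ for all $i,j$, i.e.\ uniformity. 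Your elaborate attempt to rule out other candidates for a unit answers a harder question than the one asked and, as written, does not close.

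For the duality, the step you invoke as ``the fact that the kernel cover $\tilde{\cl J}$ has the same annihilator as $\cl J$'' is exactly the point that needs proof, and it is where uniformity enters; it fails in general (the scenario of Figure~\ref{fig:counterexample_hypergraph} has $\cl J\subsetneq\tilde{\cl J}$, hence $\tilde{\cl J}^{\perp}\subsetneq\cl J^{\perp}$ in finite dimensions). The missing argument is: if $u\in\tilde{\cl J}$ then $f(u)=0$ for every positive functional annihilating $\cl J$, that is, for every $f\in\mathcal{L}_{\G}^{+}$; when $\G$ is uniform, $\mathcal{L}_{\G}$ is an operator system and hence $\mathcal{L}_{\G}={\rm span}(\mathcal{L}_{\G}^{+})$, so $f(u)=0$ for all $f\in\cl J^{\perp}$ and therefore $u\in\cl J$. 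This yields $\cl J=\tilde{\cl J}$, after which your dualisation of the quotient map (the paper cites \cite[Proposition 1.15]{Farenick_Paulsen_2012}) identifies $\Ss_{\G}^{\rm d}$ with $\cl J^{\perp}=\mathcal{L}_{\G}$ completely order isomorphically. Apart from this unproved step, your route for the duality is the same as the paper's.
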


\begin{proof}
The equivalence (i)$\Leftrightarrow$(ii) follows from 
Theorem \ref{universalprop} and the fact that 
the elements of $\Gg(\G)$ are POR's of $\G$ on $\bb{C}$. 

Since $\mathcal{L}_{\G}$ is a selfadjoint subspace of $\cl S$, it is an 
operator subsystem of $\cl S$ if and only if it contains the unit of $\cl S$, 
which is satisfied if and only if $\G$ is uniform, that is, 
we have the equivalence (a)$\Leftrightarrow$(b).

Suppose that $\G$ is uniform. We show that $\cl J = \tilde{\cl J}$. 
As the inclusion $\cl J \subseteq \tilde{\cl J}$ is trivial, is suffices to show that 
$\tilde{\cl J}\subseteq \cl J$. To this end, let $u\in \tilde{\cl J}$; 
then $p(u) = 0$ for every $p\in \cl L_{\G}^+$. Since $\cl L_{\G}$ is an 
operator subsystem of $\cl S$, we have that $\cl L_{\G} = {\rm span}(\cl L_{\G}^+)$. 
Thus, $f(u) = 0$ for every $f\in \cl L_{\G}$, that is, 
$u\in \cl J$. 
Now, the quotient map $ q : \Ss \rightarrow \Ss /\cl J$ dualises to a 
complete order embedding 
$ q^{\rm d} : \cl S_{\G}^{\rm d} \rightarrow \Ss^{\rm d}$
\cite[Proposition 1.15]{Farenick_Paulsen_2012}. 
As $\Ss^{\rm d} = \cl S$ up to a canonical complete order isomorphism,
the map $q^{\rm d}$ gives rise to a complete order embedding (denoted in the same way)
$ q^{\rm d} : \cl S_{\G}^{\rm d} \rightarrow \Ss$. 
It remains to observe that the image of $q^{\rm d}$ in $\cl S$ coincides with 
$\cl L_{\G}$. 
\end{proof}

It follows from (the proof of) Corollary \ref{p_dualSG} that
if $\G$ is uniform then the space $\cl J$ is a kernel in $\cl S$. 
In the sequel, we will strengthen this result by both weakening the 
assumption on the uniformity of $\G$ and strengthening the conclusion 
to $\cl J$ being a null space in $\cl S$ (see Remark \ref{r_null}). 
We establish some notation first.
We fix a hypergraph $ \G = (V,E)$, and write
$V = \{x_{1}, \dots,x_{m} \}$ and, as before, 
$ E = \{e_{1}, \dots,e_{d} \}$ (note that $|e_{i}|\leq m$, $i\in [d]$). For $ e\in E$, let $\Cc_{e} = \{ (\chi_{e}(x_{i})\mu_{i} )_{i=1}^{m} : \mu_{i} \in \C\}$  be the  unital $ C^*$-algebra, considered as a subspace of $\C^{m}$, with unit $1= (\chi_{e}(x_{i}))_{i=1}^{m}$, and note that $\Cc_{e}$ isomorphic to
$ \ell^{\infty}_{e} $. Here, $\chi_{e}$ 
is the characteristic function 
of the subset $e\subseteq [m]$. Thus $\Cc_{e}$ consists of vectors of length $m$, 
with non-zero entries only in the $i$-th coordinates for 
which $ x_{i} \in e$. 
After identifying $ \Cc_{e}\cong \ell^{\infty}_{e}$, we consider the elements of $ \oplus_{e\in E} \ell^{\infty}_{e}$ as families 
of $d$ vectors in $\bb{C}^m$; 
an element 
$ u \in \oplus_{e\in E} \ell_{e}^{\infty}$ thus has the form 
$u = \left((u^{1}_{x_{i}})_{i=1}^{m},\cdots,(u^{d}_{x_{i}})_{i=1}^{m}\right)$,
or
\begin{align*}
  u=  \begin{bmatrix}
    (u^{1}_{x_{1}}, u^{2}_{x_{1}}, \cdots, u^{d}_{x_{1}}) \\
    (u^{1}_{x_{2}}, u^{2}_{x_{2}}, \cdots, u^{d}_{x_{2}})\\
    \vdots\\
      (u^{1}_{x_{m}}, u^{2}_{x_{m}}, \cdots,  u^{d}_{x_{m}})
    \end{bmatrix}.
\end{align*}
Setting $ u_{x_{i}} = (u_{x_{i}}^{1}, \cdots,   u_{x_{i}}^{d})$, $ i=1,\dots, m$, 
we thus have $u= (u_{x_{i}})_{i=1}^{m}$;
in other words, we have performed the canonical shuffle 
\[   \left((u_{x}^{e})_{x\in e}\right)_{e\in E} \ \longrightarrow  
\ \left((u_{x}^{e})_{e\in E}\right)_{x\in V}. \]

\begin{proposition} \label{p_nullsubspace}
If $ \G = (V,E)$ is a uniformisable hypergraph
then $\Jj$ is a null subspace.
\end{proposition}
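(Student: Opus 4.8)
The plan is to reduce the positivity question to producing a strictly positive element of $\cl L_{\G}$, using the identity $\Jj^{\perp} = \cl L_{\G}$ recorded above --- the orthogonality being taken with respect to the non-degenerate bilinear pairing $\inn{u,v} = \sum_{e\in E}\sum_{x\in e}u_{x}^{e}v_{x}^{e}$ on the finite-dimensional space $\cl S = \oplus_{e\in E}\ell^{\infty}_{e}$. Concretely, suppose I can find $p = \oplus_{e\in E}(p_{x}^{e})_{x\in e}\in \cl L_{\G}$ with $p_{x}^{e} > 0$ for all $e\in E$ and all $x\in e$. If $u = \oplus_{e\in E}(u_{x}^{e})_{x\in e}$ is a positive element of $\cl S$ lying in $\Jj$ (so $u_{x}^{e}\geq 0$ for all $e$, $x\in e$), then $\inn{u,p} = 0$ because $p\in \cl L_{\G} = \Jj^{\perp}$; that is, $\sum_{e\in E}\sum_{x\in e}u_{x}^{e}p_{x}^{e} = 0$ is a sum of non-negative terms with strictly positive coefficients, forcing $u_{x}^{e} = 0$ for every $e\in E$ and $x\in e$, i.e. $u = 0$. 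Thus $\Jj\cap \cl S^{+} = \{0\}$, which is exactly the statement that $\Jj$ is a null subspace --- modulo the construction of $p$.

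To build $p$, I would use a multiplicity function witnessing uniformisability. Fix $h : V\to \bb{N}$ with $\G^{(h)}$ $r$-uniform for some $r\in \bb{N}$, and first record the behaviour of edge cardinalities under multiplication: since performing the $h$-multiplication replaces an edge $e\in E$ by the disjoint union $\bigcup_{x\in e}\{(x,1),\dots,(x,h(x))\}$, the corresponding edge of $\G^{(h)}$ has exactly $\sum_{x\in e}h(x)$ vertices, so $r$-uniformity yields
\[ \sum_{x\in e}h(x) = r \qquad \text{for every } e\in E. \]
Now set $p := \oplus_{e\in E}(h(x))_{x\in e}\in \cl S$. The displayed identity gives $\sum_{x\in e_{i}}p_{x}^{i} = r = \sum_{x\in e_{j}}p_{x}^{j}$ for all $i,j\in [d]$, and clearly $p_{x}^{i} = h(x) = p_{x}^{j}$ whenever $x\in e_{i}\cap e_{j}$; hence $p\in \cl L_{\G}$. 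Since $h$ takes values in $\bb{N}$, we have $p_{x}^{e} = h(x)\geq 1 > 0$ for all $e\in E$ and $x\in e$, so $p$ is the strictly positive element required in the first paragraph, and the proof is complete.

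I do not anticipate a genuine obstacle here. Once the identity $\Jj^{\perp} = \cl L_{\G}$ is granted, the whole argument rests on the single observation that the multiplicities $h(x)$ themselves assemble into an element of $\cl L_{\G}$ all of whose coordinates are positive; the only calculation involved is the elementary count showing that the $h$-multiplication turns $e$ into an edge of size $\sum_{x\in e}h(x)$, and this is routine.
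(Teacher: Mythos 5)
Your proof is correct, and it takes a genuinely different --- and considerably shorter --- route than the paper's. The paper first treats the $r$-uniform case by writing a positive element $u\in\Jj$ explicitly as a linear combination of the generators $\alpha_{j}$ and $\beta_{i,j}^{x}$ and running a coordinate-by-coordinate argument (summing entries, using $\sum_{x\in V}\sum_{j}u_{x}^{j}=0$) to force $u=0$; it then reduces the uniformisable case to the uniform one via an order embedding $\gamma$ of $\cl S$ into the corresponding space for $\G^{(h)}$ that carries $\Jj_{\G}$ into $\Jj_{\G^{(h)}}$. You instead observe that the multiplicity function $h$ itself assembles into a strictly positive element $p=\oplus_{e\in E}(h(x))_{x\in e}$ of $\cl L_{\G}=\Jj^{\perp}$ --- the identity $\sum_{x\in e}h(x)=r$ being precisely the $r$-uniformity of $\G^{(h)}$ --- and that pairing a nonnegative $u\in\Jj$ against a strictly positive annihilating functional forces $u=0$. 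Note that you only need the easy inclusion $\cl L_{\G}\subseteq\Jj^{\perp}$, which is immediate from the defining relations of $\cl L_{\G}$, so reliance on the paper's unproved ``straightforward'' equality is not a genuine dependency. Your argument also isolates the real content of the statement: $\Jj$ is a null subspace whenever $\cl L_{\G}$ contains a strictly positive element, of which uniformisability supplies an integer-valued witness; what the paper's longer computation buys in exchange is explicit structural information about how elements of $\Jj$ decompose over the generators in the uniform case, which is not needed for the conclusion.
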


\begin{proof}
We assume first that the hypergraph $\G$ is $r$-uniform.
Fix an element $u =  \left((u_{x}^{e})_{x\in e}\right)_{e\in E}$ of 
$\Jj\subseteq \Ss$, where $ (u_{x}^{e})_{x\in e} \in \ell^{\infty}_{e}$.
Recalling (\ref{thekernel}), write 
    \begin{equation} \label{beforeshuffle}
        u = \sum_{i=2}^{d}\xi_{i-1} \alpha_i + 
        \sum_{k=1}^m \sum_{i<j} \lambda^{i}_{j-i}(k) \beta_{i,j}^{x_k};
    \end{equation}    
more concretely, 
\begin{multline}
    u = \xi_{1}(1 \oplus -1\oplus\cdots\oplus 0) + \cdots +\xi_{d-1}(1 \oplus 0\oplus\cdots\oplus -1) \\
    + \sum_{k=1}^{m}\Big( \lambda_{1}^{1}(k)(\delta_{x_{k}}^{e_{1}}\oplus -\delta_{x_{k}}^{e_{2}}\oplus \cdots \oplus 0)+ \cdots + \lambda_{d-1}^{1}(k)(\delta_{x_{k}}^{e_{1}}\oplus 0\oplus \cdots \oplus-\delta_{x_{k}}^{e_{d}} )\\
   + \lambda_{1}^{2}(k)(0\oplus \delta_{x_{k}}^{e_{2}}\oplus-\delta_{x_{k}}^{e_{3}}\oplus \cdots \oplus 0) + \cdots + \lambda_{d-2}^{2}(k)(0\oplus \delta_{x_{k}}^{e_{2}}\oplus \cdots \oplus-\delta_{x_{k}}^{e_{d}})\\
   \vdots \\
   + \lambda_{1}^{d-1}(k)(0\oplus \cdots \oplus \delta_{x_{k}}^{e_{d-1}}\oplus -\delta_{x_{k}}^{e_{d}}) \Big)
\end{multline}
where, if $ x_{k} \not\in e_i\cap e_j$, 
we have set $\lambda_{l}^{i}(k) = 0$ if $i + l > d$.
It will sometimes be convenient to write $\lambda_{1}^{d-1}(x_k) = \lambda_{1}^{d-1}(k)$, 
$k\in [m]$. 

Write 
 \[\oplus_{e\in E} (v_{x}^{e})_{x\in e} = \sum_{i=2}^{d}\xi_{i-1} \alpha_i \]
 and 
 \[\oplus_{e\in E} (w_{x}^{e})_{x\in e} = \sum_{k=1}^m\sum_{i < j}\lambda^{i}_{j-i}(k) 
\beta_{i,j}^x,
 \]
so that
 \begin{equation} \label{decompose}
     u = \oplus_{e\in E} (v_{x}^{e})_{x\in e}  + \oplus_{e\in E} (w_{x}^{e})_{x\in e}.
 \end{equation}
Since $\G$ is assumed $r$-uniform, 
$\ell^{\infty}_{e} = \ell^{\infty}([r])$, $e\in E$, and 
\[ \sum_{e\in E}\sum_{x\in e} v_{x}^{e} =0 \; \; \text{and} \; \; \sum_{e\in E} \sum_{x \in e} w_{x}^{e}= 0   \]
(we note that, without this assumption, the first sum need not be zero).
Applying the canonical shuffle, discussed before the formulation of 
the proposition, we write $u= (u_{x_{i}})_{i=1}^{m}$, where 
$ u_{x_{i}} = (u_{x_{i}}^{1}, \cdots,  u_{x_{i}}^{d})$, $ i=1,\dots, m$. 

Now assume that $ u \in \Jj\cap \Ss^{+}$; this means that $ u_{x_{i}}^{j}  \in \R^{+}$ for all 
$i\in [m]$ and all $j\in [d]$.  
We will show that $ u= 0$, equivalently, that $ u_{x} = 0$ for all $ x\in V$. 
Fix $ x\in V$ and enumerate the vertices and the edges in such a way that $x$ belongs in the $k$ first hyperedges, that is, 
$ x\in e_{j}$, $j=1,\dots,k$ where $ 1\leq k \leq d$. 

\smallskip

 In the case that $1<k\leq d$, 

\begin{align} \label{u_xexpression}
  u_{x} =   \left(\sum_{i=1}^{d-1}\xi_{i} + \sum_{i=1}^{k-1}\lambda_{i}^{1},
  -\lambda_{1}^{1} -\xi_{1},\cdots,-\lambda_{k-1}^{1}- \xi_{k-1},
  0,\cdots,0\right),
\end{align}
where only the $k$ first entries are possibly non-zero.
Recalling that $u_{x}^{j} \geq 0$, we note that every entry on 
the right hand side of (\ref{u_xexpression}) is a non-negative real number.
Summing the entries gives 
\begin{align} \label{firstsum}
    \sum_{i=k}^{d-1}\xi_{i} \geq 0.
\end{align}
We claim that inequality (\ref{firstsum}) holds with an equality. It will then follow 
from the non-negativity of the entries on the 
right hand side of (\ref{u_xexpression}) 
that $u_{x}^{j}=0$ for every $ i=1,\dots,d$. As $ \sum\limits_{x\in V} \sum\limits_{j=1}^{d} u_{x}^{j} =0$, we have  
 \begin{align*}
   \sum_{x' \neq x} \sum_{j=1}^{d}u_{x'}^{j} = - \sum_{j=1}^{d}u_{x}^{j} = - \sum_{i=k}^{d-1}\xi_{i},
 \end{align*}
and since $u_{x}^{j} \geq 0 $ for all $ x$ and all $ j$, we conclude that 
$\sum\limits_{i=k}^{d-1}\xi_{i} = 0$
 by (\ref{firstsum}). The case  $ k=1$, follows by the same arguments except that in (\ref{u_xexpression}) all $ \lambda_{i}^{1} =0$.

Now relax the assumption that the hypergraph $\G$ is uniform, 
and assume instead that it is uniformisable.
Thus, there exists a function $h : V\to \bb{N}$ such that the 
$h$-multiplication $\G^{(h)}$ of $\G$ is uniform. 
Note that any of the multiplied vertices 
$(x,1),\cdots,(x,h(x))$ will participate in precisely the same hyperedges as $ x $. 
Let 
$\gamma : \oplus_{e\in E}\ell^{\infty}_{e} \rightarrow \oplus_{f\in E^{(h)}}\ell^{\infty}_{f} $ 
be the map which takes an element 
$u = (u_{x})_{x\in V}$ to the element 
$v = (v_{y})_{y\in V^{(h)}}$ with $ v_{y} = u_{x}$, for all $y = (x,1),\dots, (x,h(x))$. 
It is clear that $ \gamma$ is a (linear) order embedding.
Further, denoting temporarily by 
$\beta_{i,j}^{x}(\bb{H})$ and $\alpha_{j}(\bb{H})$ the canonical generators of 
$\cl J_{\bb{H}}$, we have that 
$\gamma\left(\alpha_{j}(\G)\right) = \alpha_{j}(\G^{(h)})$ and 
$$\gamma\left(\beta_{i,j}^{x}(\G)\right) = \sum_{l=1}^{h(x)} \beta_{i,j}^{x}(\G^{(h)}), 
\ \ \ x\in V, \ i,j\in [d], i < j.$$
It follows that $ \gamma(\Jj_{\G}) \subseteq \Jj_{\G^{(h)}}$. 
So if $ u \in \Jj_{\G}^{+}$, then $ \gamma(u) \in \Jj_{\G^{(h)}}^{+} $ 
and since $ \G^{(h)}$ is uniform, $ \Jj_{\G^{(h)}}$ 
is a null subspace by the first part of the proof, 
forcing $ \gamma(u)=0$ and thus also $ u=0$. Therefore, $ \Jj_{\G}$ is a null subspace.
 \end{proof}

We give examples of non-uniformisable hypergraphs such that the corresponding subspace $\Jj$ is not a null subspace.

\begin{example}
    Let $V=\{x,y\}$ and $E=\{e_1,e_2\}$ where $e_1=\{x\}, e_2=\{x,y\}$.
Then $(V,E)$ is not uniformisable. The space $\mathcal J$ is a  kernel, but not a null subspace.
Indeed, we have $$\cl S=\ell^\infty[1] \oplus \ell^\infty[2]= \bb C \oplus (\bb C \oplus   \bb C)$$
and so $\cl J$ is the linear span of $(1, (0,0))-(0,(1,1))$
and $\delta_x^{e_{1}}-\delta_x^{e_{2}}=(1, (0,0))-(0,(1,0))$. Thus
\begin{align*}
\cl  J  & =\mathop{\rm span} \{ (1,(-1,0)), ((1,(-1,-1))\} 
=\mathop{\rm span} \{ (1,(-1,0)), ((0,(0,1))\} \\
&= \{a(1,(-1,0))+b(0,(0,1))\mid a,b\in\bb C\}
= \{(a,(-a,b)) \mid a,b\in\bb C\}.
\end{align*} 
It follows that 
$\cl J =  \ker\phi$,
where 
$$
\phi((x,(y,z))) := \frac 12 (x+y), \quad (x,(y,z)) \in \cl S.
$$
Since the map $\phi$ is clearly (completely) positive and unital, 
$\cl J$ is a kernel. However, $\cl J$ is not a null subspace since  
$((0,(0,1))\in \cl J\cap \cl S^+$. 
\end{example}

\begin{example} \label{e_non-unif}
Let $\G =(V,E)$ with $ V= \{x_{1},\dots,x_{5}\}$ and $E= \{e_1,\dots,e_4\}$ where 
    $e_1= \{x_1,x_2\}$, $e_2= \{x_3,x_4\}$, $ e_3= \{x_2,x_3,x_5\}$ and  $e_4= \{x_1, x_4, x_5\}\}$ (see Figure \ref{fig:noPRs2}); 
    then $\Jj$ contains a non-zero positive element. Moreover, $\G$ is not uniformisable.
Indeed, we have that 
$ \Ss= \oplus_{j=1}^{4}\ell^{\infty}_{e_{j}}$.
The element $ u= (u_{x_{i}})_{i=1}^{5}$ with $ u_{x_{5} } = (0,0,1,1)$ and zero elsewhere belongs in $ \Jj$ and is clearly positive.
\end{example}

It is natural to ask whether the operator systems $\Ss_{\G}$ 
remember the underlying hypergraphs $\G$; in other words,  
if $\G, \Hbb$ are hypergraphs such that $ \Ss_{\G} = \Ss_{\Hbb}$ 
up to a unital complete order isomorphism, does it follow that $\G$ is isomorphic to $\Hbb$? 
We recall here that the hypergraphs $\G = (V,E)$ and $ \Hbb = (W,F)$ are 
called isomorphic if there is a bijection $g : V \rightarrow W $ such that for every set $ X \subseteq W$, $ g^{-1}(X) \in E$ if and only if $ X\in F$.
We point out that there are infinitely many hypergraphs 
for which this is not the case.

\begin{remark}
    For each $n \geq 3$, consider the hypergraphs $\Hbb_{n} = (W_{n},F_{n} ) $ with $ W_{n}= \{x_1,\dots,x_n\}$ and $F_n =\{\{x_1,x_2\},\{x_2,x_3\},\dots,\{x_{n-1},x_n\}, \{x_n,x_1\}\} $ (case $n=3$ is given in Figure \ref{fig:NoPRs}). Then $ \Gg(\Hbb_n) = \{p_n\}$, where $ p_{n}(x_{k}) = \frac{1}{2}$ for all $ k \in [n]$. This implies (Corollary \ref{p_dualSG}) that $ \Ss_{\Hbb_n}^{\rm d} = \C$ and hence $\cl S_{\Hbb_{n}}$ are all one-dimensional. 
    However the hypergraphs $ \Hbb_n, \Hbb_m$ are clearly not isomorphic for $ n\neq m$.
\end{remark}


\subsection{The operator system of quantum models}\label{ss_qmodel}

In this subsection, we introduce an operator system, universal for 
quantum models of a contextuality scenario. 
Recall \cite{Acn2015ACA} that, given a non-trivial contextuality scenario 
$\G$, the free hypergraph 
$C^{*}$-algebra $C^{*}(\G)$ is the universal $C^{*}$-algebra generated by orthogonal projections $p_{x}$, $x\in V$, satisfying the relations
$ \sum\limits_{x\in e}p_{x}=1$, $e\in E$; thus, the *-representations 
$\pi : C^{*}(\G)\to \cl B(H)$ of $C^{*}(\G)$
correspond precisely to PR's $(P_x)_{x\in V}$ of $\G$ on the Hilbert space $H$ via the assignment
$\pi(p_x) = P_x$, $x\in V$.

\begin{remark}\label{r_freep}
Let $ \mathcal{F}_{\G}$ be the algebraic free product 
$\ast_{e\in E}\ell^{\infty}_{e}$, amalgamated over the units 
$1^{e}$ of $ \ell^{\infty}_{e}$, $e\in E$, 
and 
\[ \cl I
= \langle \delta_{x}^{e}-\delta_{x}^{e'}: e ,e' \in E,  \; x \in e\cap e'\rangle, \]
as a  two-sided ideal, where
$\delta_{x}^{e} $ are the canonical generators of $ \ell^{\infty}_{e}$. 
Denote by $*_{\G}\ell^{\infty}_{e}$ the 
$C^{*}$-completion of the quotient $\mathcal{F}_{\G}/ \cl I$.
It follows that the *-representations of 
$*_{\G}\ell^{\infty}_{e} $ correspond precisely to PR's of the hypergraph $ \G$; 
thus, up to a canonical *-isomorphism, $*_{\G}\ell^{\infty}_{e} = C^{*}(\G)$.
\end{remark}

Let
$$\Tt_{\G} := \spann\{ p_{x}: x\in V \},$$
viewed as an operator subsystem of $C^{*}(\G)$,
and note that, up to a canonical complete order isomorphism, 
$$    \Tt_{\G} = \spann\{\delta_{x}^{e}: x\in e, \; e\in E \}$$
as an operator subsystem of $*_{\G}\ell^{\infty}_{e}$.
We do not know if the canonical *-homomorphism  
$\mathcal{F}_{\G}/ \cl I \rightarrow *_{\G}\ell^{\infty}_{e} $ is necessarily injective.

\begin{remark}\label{r_boca}
One could ask if a hypergraph variant of Boca's theorem \cite{boca}
holds true, that is, whether, given a hypergraph $\G = (V,E)$, 
a Hilbert space $ \Hh$ and unital completely positive 
maps $ \phi_{e} : \ell^{\infty}_{e} \rightarrow \Bh$, $e\in E$, 
such that $ \phi_{e}(\delta_{x}^{e}) = \phi_{e'}(\delta_{x}^{e'}) $, $ x\in e\cap e'$, 
there exists a unital completely positive 
map $ \Phi : *_{\G}\ell^{\infty}_{e} \rightarrow \Bh$ such that 
$ \Phi\circ \pi_{e}= \phi_{e} $, $e\in E$. Here $ \pi_{e}: \ell^{\infty}_{e}\rightarrow *_{\G}\ell^{\infty}_{e}$, $e\in E$ denote the canonical $*$-homomorphisms.
This is not the case:
the validity of the statement would imply that
$ \Tt_{\G} = \Ss_{\G}$, which in turn would imply that all positive operator representations always dilate into projective representations,
which does not hold in general 
(an explicit example is given in Figure \ref{fig:NoPRs},
see also Corollary \ref{c_diffo}).
\end{remark}

\begin{theorem} \label{statesonT0}
Let $ \G = (V,E)$ be a contextuality scenario. 
If $ \Phi: \Tt_{\G} \rightarrow \Bh$ is a unital completely positive map
then $(\Phi(p_{x}))_{x\in V}$ is a dilatable POR of $\G$.
Conversely, if $(A_{x})_{x\in V} \subseteq \Bh$ is a dilatable 
POR of $ \G$ then there exist a 
unique unital completely positive map $ \Phi: \Tt_{\G} \rightarrow \Bh$ such that 
$\Phi(p_{x})=A_{x}$, $ x\in V$. 
Up to a canonical complete order isomorphism, $\Tt_{\G}$ is the unique 
operator system satisfying the latter universal property. 
\end{theorem}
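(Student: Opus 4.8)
The plan is to prove the three assertions in turn, following the same pattern as the proof of Theorem \ref{universalprop}, but now working inside the concrete C*-cover $C^*(\G)$ (equivalently $*_{\G}\ell^{\infty}_{e}$) rather than with an abstract quotient.

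\emph{First direction.} Suppose $\Phi : \Tt_{\G}\to\Bh$ is unital completely positive. Since $\Tt_{\G}\subseteq C^*(\G)$ is an operator subsystem, Arveson's extension theorem gives a unital completely positive extension $\widehat\Phi : C^*(\G)\to\Bh$. Apply Stinespring dilation to $\widehat\Phi$: there are a Hilbert space $\Kk$, a $*$-representation $\pi : C^*(\G)\to\Bk$ and an isometry $W : \Hh\to\Kk$ with $\widehat\Phi(\cdot) = W^*\pi(\cdot)W$. Setting $P_x := \pi(p_x)$, the family $(P_x)_{x\in V}$ is a PR of $\G$ on $\Kk$ (because the $p_x$ satisfy the defining relations of $C^*(\G)$ and $\pi$ is a $*$-homomorphism), and $A_x := \Phi(p_x) = W^* P_x W$. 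Thus $(A_x)_{x\in V}$ is a POR of $\G$ that dilates to the PR $(P_x)_{x\in V}$, i.e.\ it is dilatable.

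\emph{Converse direction.} Let $(A_x)_{x\in V}\subseteq\Bh$ be a dilatable POR, with dilation given by an isometry $V : \Hh\to\Kk$ and a PR $(P_x)_{x\in V}$ on $\Kk$ such that $A_x = V^* P_x V$. The PR corresponds to a $*$-representation $\pi : C^*(\G)\to\Bk$ with $\pi(p_x) = P_x$. Define $\Phi : \Tt_{\G}\to\Bh$ by $\Phi(T) = V^*\pi(T)V$; this is the restriction to $\Tt_{\G}$ of a unital completely positive map on $C^*(\G)$, hence unital completely positive, and $\Phi(p_x) = A_x$. For uniqueness, observe that any unital completely positive $\Phi' : \Tt_{\G}\to\Bh$ with $\Phi'(p_x) = A_x$ is determined on all of $\Tt_{\G} = \spann\{p_x : x\in V\}$ by linearity, so $\Phi' = \Phi$.

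\emph{Uniqueness of the universal object.} Suppose $\cl R$ is an operator system together with a distinguished family $(q_x)_{x\in V}\subseteq\cl R$ with $\sum_{x\in e} q_x = 1_{\cl R}$ for $e\in E$, such that for every Hilbert space $\Hh$ and every dilatable POR $(A_x)_{x\in V}$ on $\Hh$ there is a unique unital completely positive $\Psi : \cl R\to\Bh$ with $\Psi(q_x) = A_x$. One checks first that $(q_x)_{x\in V}$ is itself a dilatable POR of $\G$: realizing $\cl R\subseteq\Bh$ concretely, the $q_x$ form a POR, and applying the universal property of $\cl R$ to $(q_x)$ itself together with the already-established universal property of $\Tt_{\G}$ (for the dilatable POR $(p_x)_{x\in V}$ of $\G$, noting $(p_x)$ is trivially dilatable by the identity representation) produces unital completely positive maps $\theta : \cl R\to\Tt_{\G}$ with $\theta(q_x) = p_x$ and $\theta' : \Tt_{\G}\to\cl R$ with $\theta'(p_x) = q_x$. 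The composites fix the generators, hence by the two uniqueness clauses $\theta'\circ\theta = \id_{\cl R}$ and $\theta\circ\theta' = \id_{\Tt_{\G}}$, so $\theta$ is a unital complete order isomorphism with $\theta\circ(x\mapsto q_x) = (x\mapsto p_x)$.

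\emph{Main obstacle.} The only subtle point is the first direction: one must be sure that dilating a unital completely positive \emph{extension} to $C^*(\G)$ (rather than only to $\Tt_{\G}$) still yields a \emph{projective} representation of $\G$ on the dilation space — this is where it is essential that $C^*(\G)$ is defined by the projection relations $\sum_{x\in e}p_x = 1$, so that any $*$-representation of it automatically sends the $p_x$ to genuine projections summing to $I$ over each edge. The Arveson--Stinespring machinery does the rest routinely.
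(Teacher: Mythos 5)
Your proof is correct and follows essentially the same route as the paper: Arveson extension plus Stinespring dilation for the forward direction, compression of the $*$-representation induced by the dilating PR for the converse, and the standard two-way universal-property argument for uniqueness (which the paper simply declares ``standard''). The only point you gloss over is that in the uniqueness step the family $(q_x)_{x\in V}$ must be a \emph{dilatable} POR before the universal property of $\Tt_{\G}$ can be applied to it; this is automatic once the universal property of $\cl R$ is read, as intended, as a bijective correspondence between unital completely positive maps and dilatable POR's.
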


\begin{proof}
Let $(A_x)_{x\in V}\subseteq \cl B(H)$ be a dilatable POR of $\G$, and let 
$ (P_{x})_{x\in V} \subseteq \cl B(K)$ be a PR of $\G$ and $V : H\to K$ be an isometry,
such that $A_x = V^*P_xV$, $x\in V$. Let $\pi : C^*(\G)\to \cl B(K)$ be the 
*-representation, such that $\pi(p_x) = P_x$, $x\in V$. 
The map $\Phi : \cl T_G\to \cl B(H)$, given by $\Phi(a) = V^*\pi(a)V$, $a\in \cl T_{\G}$, 
is unital, completely positive, and $\Phi(p_x) = A_x$, $x\in V$. 

Conversely, suppose that $\Phi : \cl T_{\G}\to \cl B(H)$ is a unital completely positive map. 
Applying Arveson's Extension Theorem, let $\tilde{\Phi} : C^*(\G)\to \cl B(H)$ be a 
unital completely positive extension. Letting $\tilde{\Phi} = V^*\pi(\cdot)V$ be a 
Stinespring dilation of $\tilde{\Phi}$, we have that $(\pi(p_x))_{x\in V}$ is a 
PR that is a dilation of $(\Phi(p_x))_{x\in V}$.

The uniqueness of the operator system with the stated universal property is 
standard. 
\end{proof}

For a linear functional $s : \Tt_{\G} \rightarrow \C$, write 
$p^s : V\to \bb{C}$ for the map, given by $p^s(x) = s(p_x)$, $x\in V$.

\begin{corollary} \label{statesonT}
    Let $\G= (V,E)$ be a contextuality scenario and $ p \in \Gg(\G)$ be a probabilistic model. 
    The following are equivalent:
\begin{enumerate}
        \item $ p \in \Qq(\G)$; 
        \item there is a state $s : \Tt_{\G} \rightarrow \C$ such that
        $p = p^s$.
\end{enumerate}
      Moreover, the correspondence $ s \mapsto p^s$ is an affine homeomorphism between the state space of $\Tt_{\G}$, equipped with the weak* topology, 
      and the set of quantum models $ \Qq(\G)$.
      In particular, the set $\Qq(\G)$ is closed.
\end{corollary}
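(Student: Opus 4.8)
The plan is to deduce everything from Theorem \ref{statesonT0}, after recording one elementary reformulation. For a function $p:V\to[0,1]$ with $\sum_{x\in e}p(x)=1$ for all $e\in E$ (so that $(p(x))_{x\in V}$ is a POR of $\G$ on the one-dimensional Hilbert space $\bb{C}$), I claim that $p\in\Qq(\G)$ if and only if the POR $(p(x))_{x\in V}$ is dilatable. Indeed, if $p(x)=\sca{P_x\psi,\psi}$ for a PR $(P_x)_{x\in V}$ on a Hilbert space $H$ and a unit vector $\psi\in H$, then $V:\bb{C}\to H$, $V(1)=\psi$, is an isometry with $V^*P_xV=p(x)$, so $(p(x))_{x\in V}$ dilates to $(P_x)_{x\in V}$; conversely, a dilation $p(x)=V^*P_xV$ with $V:\bb{C}\to K$ an isometry and $(P_x)_{x\in V}$ a PR yields the unit vector $\psi:=V(1)\in K$ with $p(x)=\sca{P_x\psi,\psi}$, so $p\in\Qq(\G)$. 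Granting this, the equivalence (i)$\Leftrightarrow$(ii) is immediate from Theorem \ref{statesonT0} with $\Hh=\bb{C}$: a state $s:\Tt_\G\to\bb{C}$ is precisely a unital completely positive map into $\cl B(\bb{C})$, and under Theorem \ref{statesonT0} these correspond bijectively to dilatable POR's $(s(p_x))_{x\in V}$ on $\bb{C}$, that is (by the reformulation above) to the quantum models $x\mapsto s(p_x)=p^s(x)$.

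For the ``moreover'' part I would verify, in order: that $s\mapsto p^s$ is affine (clear from linearity of $s\mapsto s(p_x)$); that it maps the state space of $\Tt_\G$ onto $\Qq(\G)$ (surjectivity is the content of (ii)$\Rightarrow$(i), injectivity follows since $\Tt_\G=\spann\{p_x:x\in V\}$, so a state is determined by its values $p^s(x)=s(p_x)$); and that it is a homeomorphism for the weak* topology. The last point is again a consequence of $\Tt_\G=\spann\{p_x:x\in V\}$ together with $V$ being finite: a net $(s_\alpha)$ of states converges to $s$ weak* if and only if $s_\alpha(p_x)\to s(p_x)$ for every $x\in V$, i.e.\ if and only if $p^{s_\alpha}\to p^s$ coordinatewise in $\bb{R}^V$, which is exactly the (relative) topology on $\Qq(\G)$. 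Finally, the state space of $\Tt_\G$ is weak* compact by Banach--Alaoglu (states of an operator system have norm one), so its continuous injective image $\Qq(\G)$ is compact, hence closed in $\bb{R}^V$.

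I do not expect a genuine obstacle here: once Theorem \ref{statesonT0} is available, the argument is bookkeeping. The two points that deserve a line of care are the reformulation of ``quantum model'' as ``dilatable POR on $\bb{C}$'' (so that Theorem \ref{statesonT0} applies verbatim with $\Hh=\bb{C}$), and the identification of the weak* topology on the finite-dimensional dual of $\Tt_\G$ with coordinatewise convergence of the associated probabilistic models, which is what makes the compactness argument for the closedness of $\Qq(\G)$ run.
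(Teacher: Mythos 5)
Your proof is correct and follows essentially the same route as the paper: the equivalence is obtained by identifying quantum models with dilatable POR's on $\bb{C}$ and applying Theorem \ref{statesonT0} with $\Hh=\bb{C}$, and the remaining statements (which the paper calls straightforward) are verified exactly as you do. The only difference is that you spell out the bookkeeping — injectivity via $\Tt_{\G}=\spann\{p_x : x\in V\}$, the weak* topology as coordinatewise convergence, and compactness giving closedness — which the paper omits.
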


\begin{proof}
The equivalence (i)$\Leftrightarrow$(ii) follows 
from Theorem \ref{statesonT0} and the fact that 
the elements of $\Qq(\G)$ are the dilatable POR's of $\G$ on $\bb{C}$.
The remaining statements are now straightforward.
\end{proof}

Let $ \Ss $ be an operator system and $ (\Aa, \iota)$ be a $C^{*}$-cover of $\Ss$. We say that  \cite[Definition 9.2]{Kavruk2010QuotientsEA} $\Ss$ \textit{contains enough unitaries} in $\Aa$, if the 
elements in $\Ss$ that are unitaries in $\Aa$, generate $\Aa$ as a $C^{*}$-algebra. We recall that \cite[Proposition 5.6]{Kavruk2014} if an operator system $ \Ss $ has enough unitaries in its $C^{*}$-cover $ \Aa$, then $ \Aa = C^{*}_{\rm e}(\Ss)$.

\begin{lemma} \label{l_idCstareTG}
    Let $\G$ be a contextuality scenario. Then $ \Tt_{\G}$ contains enough unitaries in $C^{*}(\G)$ and thus $C^{*}(\G) = C^{*}_{\rm e}(\Tt_{\G})$.
\end{lemma}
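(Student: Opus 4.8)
The plan is to exhibit, for each edge $e \in E$, an explicit unitary of $C^*(\G)$ that lies in the operator subsystem $\Tt_{\G} = \spann\{p_x : x \in V\}$, and then verify that the collection of these unitaries, together with the projections $p_x$ which are themselves in $\Tt_{\G}$ (being $0$-$1$ valued spectral elements), generates $C^*(\G)$ as a C*-algebra. Once ``enough unitaries'' is established, the identification $C^*(\G) = C^*_{\rm e}(\Tt_{\G})$ is immediate from \cite[Proposition 5.6]{Kavruk2014}, as recalled just before the statement.

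First I would recall that for a fixed edge $e = \{x_1, \dots, x_r\} \in E$, the projections $p_{x_1}, \dots, p_{x_r}$ are pairwise orthogonal and sum to $1$ in $C^*(\G)$, so they form a resolution of the identity. For a choice of signs, the operator $u_e := \sum_{x \in e} \varepsilon_x p_x$ with each $\varepsilon_x \in \{1, -1\}$ is a selfadjoint unitary (since $u_e^* u_e = \sum_{x \in e} \varepsilon_x^2 p_x = 1$), and it manifestly lies in $\Tt_{\G}$ as a linear combination of the $p_x$. The most convenient concrete choice is $u_e := 1 - 2 p_{x}$ for a single distinguished vertex $x \in e$: this is a selfadjoint unitary in $\Tt_{\G}$, and from it one recovers $p_x = \tfrac{1}{2}(1 - u_e)$. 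Since every generator $p_x$ of $C^*(\G)$ arises this way (pick any edge $e$ containing $x$; such an edge exists because $\cup_{e\in E} e = V$ in a contextuality scenario), the C*-algebra generated by $\{1 - 2p_x : x \in V\}$ contains all the $p_x$, hence equals $C^*(\G)$.

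The verification then splits into two routine checks: (a) each $1 - 2p_x$ is a unitary in $C^*(\G)$ and belongs to $\Tt_{\G}$ — immediate since $p_x$ is a projection and $1, p_x \in \Tt_{\G}$; and (b) these unitaries generate $C^*(\G)$ as a C*-algebra — immediate since $p_x = \tfrac12(1 - (1-2p_x))$ recovers each canonical generator and, by definition, $C^*(\G)$ is generated by $\{p_x : x \in V\}$. Therefore $\Tt_{\G}$ contains enough unitaries in $C^*(\G)$. Applying \cite[Proposition 5.6]{Kavruk2014}, we conclude $C^*(\G) = C^*_{\rm e}(\Tt_{\G})$.

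I do not anticipate a genuine obstacle here; the only subtlety worth stating carefully is the hypothesis that $\G$ is a contextuality scenario, i.e. $\cup_{e \in E} e = V$, which guarantees every vertex lies in at least one edge so that the trick $1 - 2p_x$ is available for every generator. (If one preferred, the sign-unitary $u_e = \sum_{x\in e}\varepsilon_x p_x$ gives an alternative generating family, but the single choice $1-2p_x$ already suffices and keeps the argument short.) This argument also explains, in retrospect, Remark \ref{r_boca}: the failure of the Boca-type statement is exactly the statement that $\Tt_{\G} \neq \Ss_{\G}$ in general, whereas here we are only identifying the C*-envelope of $\Tt_{\G}$ itself.
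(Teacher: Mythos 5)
Your proof is correct and follows essentially the same route as the paper: the paper's own argument simply observes that $2p_x-1$ (your $1-2p_x$ up to sign) is a unitary in $\Tt_{\G}$ for each $x\in V$ and invokes \cite[Proposition 5.6]{Kavruk2014}. Your additional checks — that $1=\sum_{x\in e}p_x\in\Tt_{\G}$ and that the generators are recovered from these unitaries — are exactly the (routine) details the paper leaves implicit.
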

\begin{proof}
    For every $x\in V$, the element $ 2p_{x}-1$ is a unitary in $ C^{*}(\G)$. 
    It follows that the operator system $\Tt_{\G}$ contains enough unitaries in $C^{*}(\G)$.
    The result now follows from \cite[Proposition 5.6]{Kavruk2014}.
\end{proof}


\subsection{The operator system of classical models}\label{ss_classical}

Let $\G = (V,E)$ be a contextuality scenario with $|E|=d $. Consider the unital abelian C*-algebra
$\cl D= \ell_{e_{1}}^{\infty} \otimes \cdots \otimes \ell_{e_{d}}^{\infty}$, 
and let $\tilde{\iota}_e : \ell_{e}^{\infty}\to \cl D$
be the natural unital embedding. 
Let 
$\cl I$ be the two-sided ideal generated by the elements
\[ \tilde{\iota}_{e_i}(\delta_x^{e_i}) - \tilde{\iota}_{e_j}(\delta_x^{e_j}), 
\ \ \ x \in e_{i}\cap e_{j}, i,j\in [d].  \]
 The quotient $\Dd_{\G} : = \cl D/\cl I$
is a unital abelian C*-algebra; we let  
 \[ d_{x} := 1 \otimes \cdots \otimes \delta_{x}^{e}\otimes \cdots \otimes 1  
 + \cl I, \; \; \; x\in V \]
(note that $d_x$ is well-defined). 
Note that
$(\Dd_{\G})^{\rm d} = (\cl D/\cl I)^{\rm d} = \cl I^{\perp}$
completely isometrically via the dual of the quotient map 
$q^{\rm d} : (\cl D/\cl I)^{\rm d} \rightarrow \cl D^{\rm d}$, and that 
a character on $\Dd_{\G}$ is a character on $\cl D$ that vanishes on $\cl I$; 
thus, the pure states of $\Dd_{G}$ 
can be identified with the pure states of $\cl D$ that vanish on $\cl I$.  

Since $\cl D \cong C(e_{1}\times \cdots \times e_{n} )$, 
its pure states are the evaluations at the points of $e_{1}\times \cdots \times e_{n}$. Thus, for a pure state $s$ on $\Dd_{\G}$, the assignment
$p(x)= s(d_{x})$, $x\in V$, is a deterministic model of $ \G$. 
Set
\begin{equation}\label{eq_RGdx}
    \Rr_{\G} := \spann\{d_{x} : x\in V\},
\end{equation}
viewed as an operator subsystem of $\Dd_{\G}$. 

\begin{proposition}\label{p_repcl}
    Let $\G=(V,E)$ be a contextuality scenario and $ p \in \Gg(\G)$ be a probabilistic model. 
    The following are equivalent:
    \begin{enumerate}
        \item $p \in \Cc(\G)$;
        \item there is a state $s : \Rr_{\G} \rightarrow \C$ such that 
        $p(x) = s(d_{x})$, $x\in V$.
    \end{enumerate}
    Moreover, the correspondence $ s \mapsto p^{s}$ is an affine homeomorphism between 
    $ S(\Rr_{\G}) $ and $ \Cc(\G)$, carrying the set of 
    pure states of $\Rr_{\G}$ onto the set of deterministic models of $\G$.
\end{proposition}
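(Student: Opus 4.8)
The plan is to carry everything out inside the ambient finite-dimensional abelian C*-algebra $\Dd_{\G}$, leaning on the description of its pure states recorded just before the statement. The first step is to promote the assignment ``pure state $s$ of $\Dd_{\G}\mapsto$ the deterministic model $x\mapsto s(d_x)$'' to a bijection: given a deterministic model $p$, for each $e\in E$ let $y_e\in e$ be the unique vertex with $p(y_e)=1$; the tuple $(y_e)_{e\in E}\in\prod_{e\in E}e$ satisfies $y_e=x\Leftrightarrow y_{e'}=x$ whenever $x\in e\cap e'$, so the corresponding point evaluation is a character of $\cl D$ that annihilates $\cl I$, i.e.\ a pure state $\chi_p$ of $\Dd_{\G}$, and $\chi_p(d_x)=p(x)$. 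One checks that $p\mapsto\chi_p$ inverts the previous assignment. Since $\Dd_{\G}$ is abelian and finite dimensional, every state of $\Dd_{\G}$ is a convex combination of the pure states $\chi_p$, with $p$ ranging over the deterministic models of $\G$. I would also record here that $\sum_{x\in e}d_x=1$ in $\Dd_{\G}$ for every $e\in E$, so that $p^s\in\Gg(\G)$ for every state $s$ of $\Rr_{\G}$.

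Granting this, (i)$\Rightarrow$(ii) is immediate: if $p=\sum_i\lambda_ip_i$ with the $p_i$ deterministic and $(\lambda_i)$ a probability vector, then, setting $s(a):=\sum_i\lambda_i\chi_{p_i}(a)$ for $a\in\Rr_{\G}$, one obtains a state $s$ of $\Rr_{\G}$ with $s(d_x)=p(x)$. For (ii)$\Rightarrow$(i), I would extend the state $s$ on the operator subsystem $\Rr_{\G}$ to a state $\tilde s$ of $\Dd_{\G}$ (states on operator subsystems of a C*-algebra always extend, e.g.\ by Hahn--Banach, since $\|s\|=s(1)=1$), write $\tilde s$ as a convex combination $\sum_i\lambda_i\chi_{p_i}$ by the previous paragraph, and conclude $p(x)=\tilde s(d_x)=\sum_i\lambda_ip_i(x)$, i.e.\ $p\in\Cc(\G)$.

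For the ``moreover'' part, consider $\Theta:S(\Rr_{\G})\to\bb{R}^V$ given by $\Theta(s)(x)=s(d_x)$. By the two implications, $\Theta$ maps $S(\Rr_{\G})$ onto $\Cc(\G)$; it is affine, injective (because $\Rr_{\G}=\spann\{d_x:x\in V\}$), and weak*-to-pointwise continuous. As $S(\Rr_{\G})$ is weak*-compact and $\Cc(\G)$ is Hausdorff, $\Theta$ is an affine homeomorphism onto $\Cc(\G)$. (If $\Cc(\G)=\emptyset$ there are no deterministic models, hence $\Dd_{\G}=\{0\}$, whence $\Rr_{\G}=\{0\}$ and $S(\Rr_{\G})=\emptyset$, and the statement is vacuous.) Finally, an affine homeomorphism of compact convex sets carries extreme points onto extreme points; the extreme points of $S(\Rr_{\G})$ are exactly its pure states, while the extreme points of $\Cc(\G)$ are exactly the deterministic models of $\G$ --- indeed $\Cc(\G)$ is the convex hull of the finite set of deterministic models, and each deterministic model $q$ is extreme there, since $q(x)\in\{0,1\}$ leaves no room for a nontrivial convex decomposition inside $[0,1]^V$. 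This yields the last assertion.

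The one genuinely nontrivial point is the passage in (ii)$\Rightarrow$(i) from an abstract state on $\Rr_{\G}$ to an honest convex combination of deterministic models: it uses both the extension of $s$ into the ambient C*-algebra $\Dd_{\G}$ and the identification of the spectrum of $\Dd_{\G}$ with the set of compatible selections $(y_e)_{e\in E}\in\prod_{e\in E}e$, equivalently with the deterministic models of $\G$. Everything else is routine bookkeeping with affine maps and weak*-compactness.
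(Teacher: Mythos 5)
Your proof is correct and follows essentially the same route as the paper's: extend the state from $\Rr_{\G}$ to the finite-dimensional abelian C*-algebra $\Dd_{\G}$, decompose it into pure states, and identify those with deterministic models. You additionally verify the converse correspondence (deterministic model $\mapsto$ character annihilating $\cl I$) and spell out the compactness/extreme-point argument for the ``moreover'' clause, which the paper leaves implicit.
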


\begin{proof}
          $ (i) \Rightarrow (ii)$ 
          Let $ p \in \Cc(\G) $ be a classical model, and write
     \[ p(x) =\sum_{i=1}^{k}\lambda_{i}p_{i}(x), \ \ \ x\in V,   \] 
     where $k \in \bb{N}$, $\lambda_{i} \in [0,1]$, $ \sum_{i=1}^{k}\lambda_{i}=1$ and 
     $ p_{i}$  is a deterministic model, $i\in [k]$. Set 
     $s_{i}(d_{x}) := p_{i}(x)$,  $x\in V$, $i \in [k]$,
     and note that each $ s_{i}$ defines a state on $ \Rr_{\G}$ by extending linearly 
     to the whole space. As a consequence, 
     $s := \sum_{i=1}^{k}\lambda_{i}s_{i}$
     is a state on $\Rr_{\G}$ with 
     \[ s(d_{x})=\sum_{i=1}^{k}\lambda_{i}s_{i}(d_{x}) = \sum_{i=1}^{k}\lambda_{i}p_{i}(x) =p(x), \ \ \ x\in V. \]

     $ (ii) \Rightarrow (i)$ Let 
     $ s : \Rr_{\G} \rightarrow \C$ be a state.  
     Since $\Dd_{\G}$ is a unital C*-algebra, its state space is the closed convex hull of the pure states. Extend $s$ to a state $ \Tilde{s}$ on $\Dd_{\G}$. Then, since $\Dd_{\G}$ is finite dimensional abelian C*-algebra, 
     $\Tilde{s} =  \sum_{i=1}^{k} \lambda_{i}s_{i}$ as a convex combination, 
     for some pure states $s_{i}$, $i\in [k]$. 
     By the paragraph before the formulation of the proposition, 
     the assignment $p_{i}(x)= s_{i}(d_{x})$, $x\in V$, is a deterministic model
     of $\G$. As
     $p(x) = \sum_{i=1}^{k}\lambda_{i} p_{i}(x)$, $x\in V$,
     we have that $ p \in \Cc(\G)$.
\end{proof}

Let $(A_{x})_{x\in V} \subseteq \Bh$ be a POR. We say that $ (A_{x})_{x\in V}$ is \textit{classically dilatable} 
if there exists a Hilbert space $\Kk$, 
an isometry $ V :\Hh \rightarrow \Kk$, 
and a  PR $ (P_{x})_{x\in V} $ with commuting entries such that 
$A_{x} = V^{*}P_{x}V$, $x\in V$.

The next proposition complements Theorem \ref{statesonT0}; as the proof is 
similar, it is omitted. 

\begin{proposition}\label{p_clasdil}
    Let $\G= (V,E)$ be a contextuality scenario and $(A_{x})_{x\in V}$ be a POR on a Hilbert space $\Hh$. Then, the following are equivalent:
    \begin{enumerate}
        \item $(A_{x})_{x\in V}$ is classically dilatable;
        \item there exists a unital completely positive 
        map $ \phi : \Rr_{\G} \rightarrow \Bh$, such that $ \phi(d_{x}) =A_{x}$, $ x\in V$.
    \end{enumerate}  
\end{proposition}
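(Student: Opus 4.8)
The plan is to transcribe the proof of Theorem \ref{statesonT0}, with the abelian C*-algebra $\Dd_\G$ playing for commuting projective representations exactly the role that $C^*(\G)$ plays for arbitrary ones. So the first step I would take is to record the relevant universal property of $\Dd_\G$: its $*$-representations $\pi$ on a Hilbert space $\Kk$ are in bijection with the PR's $(P_x)_{x\in V}$ of $\G$ on $\Kk$ with mutually commuting entries, via $\pi(d_x)=P_x$. One direction is immediate, since $\Dd_\G$ is abelian: given $\pi:\Dd_\G\to\Bk$, the projections $P_x:=\pi(d_x)$ commute and $\sum_{x\in e}P_x=\pi(1)=I_\Kk$. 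For the converse, given a commuting PR $(P_x)$ on $\Kk$, each PVM $(P_x)_{x\in e}$ yields a unital $*$-homomorphism $\ell^\infty_e\to\Bk$ with $\delta_x^e\mapsto P_x$; these have pairwise commuting ranges, hence assemble into a unital $*$-homomorphism out of the tensor product $\cl D=\ell^\infty_{e_1}\otimes\cdots\otimes\ell^\infty_{e_d}$, which kills every generator $\tilde{\iota}_{e_i}(\delta_x^{e_i})-\tilde{\iota}_{e_j}(\delta_x^{e_j})$ of $\cl I$ (both map to $P_x$) and so descends to a $*$-representation of $\Dd_\G$ sending $d_x$ to $P_x$.

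With this in hand, the implication (i)$\Rightarrow$(ii) is a compression argument exactly as in Theorem \ref{statesonT0}: if $(A_x)_{x\in V}$ is classically dilatable, choose a commuting PR $(P_x)_{x\in V}$ on $\Kk$ and an isometry $V:\Hh\to\Kk$ with $A_x=V^*P_xV$, let $\pi:\Dd_\G\to\Bk$ be the associated $*$-representation, and put $\phi=V^*\pi(\cdot)V|_{\Rr_\G}$; this map is unital and completely positive, and $\phi(d_x)=V^*P_xV=A_x$. For (ii)$\Rightarrow$(i), given a unital completely positive $\phi:\Rr_\G\to\Bh$, extend it by Arveson's Extension Theorem to a unital completely positive $\tilde{\phi}:\Dd_\G\to\Bh$, and take a minimal Stinespring dilation $\tilde{\phi}=V^*\pi(\cdot)V$ with $\pi:\Dd_\G\to\Bk$ a $*$-representation and $V:\Hh\to\Kk$ an isometry. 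Since $\Dd_\G$ is abelian, $P_x:=\pi(d_x)$ are commuting projections, and by the universal property just established $(P_x)_{x\in V}$ is a PR of $\G$; moreover $V^*P_xV=\tilde{\phi}(d_x)=\phi(d_x)=A_x$, so $(A_x)_{x\in V}$ is classically dilatable.

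The only step that I expect to need genuine care is the assembly inside the universal property of $\Dd_\G$, namely that unital $*$-homomorphisms out of the $\ell^\infty_e$ with pairwise commuting ranges extend to a $*$-homomorphism of $\cl D$ (here the tensor product is unambiguous, everything being abelian and finite dimensional). This is standard, but it is the hinge of the whole argument and should be spelled out; once it is available, the remainder is a routine repetition of the proof of Theorem \ref{statesonT0}. All the surrounding facts — that $d_x$ is a projection, that it does not depend on the choice of $e\ni x$, that $\sum_{x\in e}d_x=1$, and that $\Rr_\G$ is an operator subsystem of $\Dd_\G$ — have already been established before the statement, so no further preliminaries are needed.
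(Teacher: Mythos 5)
Your proposal is correct and follows exactly the route the paper intends: the authors omit the proof of Proposition \ref{p_clasdil} precisely because it is the proof of Theorem \ref{statesonT0} with the abelian C*-algebra $\Dd_{\G}$ (whose $*$-representations are the commuting PR's of $\G$) in place of $C^*(\G)$, which is what you carry out via Arveson extension and Stinespring dilation. The one step you flag as needing care --- assembling commuting unital $*$-homomorphisms on the $\ell^\infty_e$ into a $*$-homomorphism of $\cl D$ that kills $\cl I$ --- is indeed the only point not already on the page, and your treatment of it is fine.
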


We recall the minimal operator system structure
from \cite{PTT}, which will be used hereafter. 
Given an ordered vector *-space $V$ with an Archimedean order unit $e$, 
there exists a family $(C_n^{\min})_{n\in \bb{N}}$ 
of matricial cones over $V$ such that 
the triple $(V,(C_n^{\min})_{n\in \bb{N}},e)$ 
is an operator system, denoted ${\rm OMIN}(V)$ 
and that, 
for every operator system $\cl T$, every positive map 
$\phi : \cl T\to V$ 
is automatically completely positive.
The operator system equalities in the next proposition are understood up to 
a canonical unital complete order isomorphism.

\begin{theorem}\label{p_OMINs}
    Let $\G=(V,E)$ be a contextuality scenario. Then
    \begin{enumerate} 
       \item  $ \Gg(\G)= \Qq(\G) $ if and only if  $\mathrm{OMIN}(\Ss_{\G}) = \mathrm{OMIN}(\Tt_{\G})$;
       \item $\Qq(\G)= \Cc(\G)$ if and only if $\mathrm{OMIN}(\Tt_{\G})=\Rr_{\G}  $;
        \item $\Cc(\G) = \Gg(\G)$ if and only if $\Rr_{\G} = \mathrm{OMIN}(\Ss_{\G})$
    \end{enumerate}
\end{theorem}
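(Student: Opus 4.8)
The plan is to prove the three equivalences in Theorem~\ref{p_OMINs} by exploiting the common underlying linear structure of the three operator systems $\Ss_{\G}$, $\Tt_{\G}$, $\Rr_{\G}$, all of which are spanned by canonical generators indexed by $V$ ($a_x$, $p_x$, $d_x$ respectively) subject to the same linear relations $\sum_{x\in e}(\cdot)_x = 1$. First I would record, using Corollary~\ref{p_dualSG}, Corollary~\ref{statesonT} and Proposition~\ref{p_repcl}, that the state spaces of $\Ss_{\G}$, $\Tt_{\G}$, $\Rr_{\G}$ are affinely homeomorphic to $\Gg(\G)$, $\Qq(\G)$, $\Cc(\G)$ respectively, via $s\mapsto p^s$ (with $p^s(x)$ the value of $s$ on the relevant generator). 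Since all three are finite-dimensional operator systems with the same order-unit space, a state on the "larger" of any pair restricts to a state on the "smaller"; the containments $\Cc(\G)\subseteq\Qq(\G)\subseteq\Gg(\G)$ then say precisely that the identity map on the common linear space is unital and positive in the appropriate direction. The key point is that $\mathrm{OMIN}$ of an operator system depends only on its ordered $*$-vector-space structure (i.e.\ on $C_1$ and the order unit), and $\Rr_{\G}$ is already a minimal operator system, being an operator subsystem of the abelian C*-algebra $\Dd_{\G}$ (indeed $\mathrm{OMIN}(\Rr_{\G})=\Rr_{\G}$).

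For (iii), the cleanest route: $\Cc(\G)=\Gg(\G)$ means the state spaces of $\Rr_{\G}$ and $\Ss_{\G}$ coincide as subsets of the (common) dual vector space; equivalently the first-level cones $C_1$ of $\Rr_{\G}$ and of $\Ss_{\G}$ coincide (a positive element is detected by states). Since $\Rr_{\G}$ is already minimal and since $\mathrm{OMIN}$ is determined by $C_1$ and the unit, this is equivalent to $\mathrm{OMIN}(\Ss_{\G}) = \mathrm{OMIN}(\Rr_{\G}) = \Rr_{\G}$. Conversely, if $\Rr_{\G}=\mathrm{OMIN}(\Ss_{\G})$, then comparing units and first cones gives $C_1(\Rr_{\G})=C_1(\Ss_{\G})$, hence equal state spaces, hence $\Cc(\G)=\Gg(\G)$. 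For (ii) I would argue identically with the pair $(\Rr_{\G},\Tt_{\G})$: $\Qq(\G)=\Cc(\G)$ iff the state spaces of $\Tt_{\G}$ and $\Rr_{\G}$ agree iff $C_1(\Tt_{\G})=C_1(\Rr_{\G})$, and since taking $\mathrm{OMIN}$ does not change $C_1$ and leaves $\Rr_{\G}$ fixed, this is equivalent to $\mathrm{OMIN}(\Tt_{\G})=\Rr_{\G}$. For (i), similarly, $\Gg(\G)=\Qq(\G)$ iff the state spaces of $\Ss_{\G}$ and $\Tt_{\G}$ agree iff $C_1(\Ss_{\G})=C_1(\Tt_{\G})$ (here one must also check the units are identified, which they are, both being the class of $\1$), and since $\mathrm{OMIN}$ depends only on the order-unit-space structure, this is equivalent to $\mathrm{OMIN}(\Ss_{\G})=\mathrm{OMIN}(\Tt_{\G})$.

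In each case the forward direction (equality of correlation classes $\Rightarrow$ equality of $\mathrm{OMIN}$s) is the easy one, because $\mathrm{OMIN}$ is functorial in the ordered $*$-vector space alone. The substantive direction is the converse, and the main obstacle is justifying that equality at the first matrix level, $C_1(\cl S_1)=C_1(\cl S_2)$ together with matching order units, forces the state spaces to coincide and thus the correlation classes to coincide: here one uses that in a finite-dimensional operator system a hermitian element lies in $C_1$ iff it is nonnegative against every state, so $C_1$ and the full state space mutually determine one another, and that an affine homeomorphism of state spaces respecting the evaluation at the generators is exactly what identifies $\Gg,\Qq,\Cc$. One small subtlety worth spelling out is well-definedness/consistency of the identification of the underlying vector spaces of $\Ss_{\G}$, $\Tt_{\G}$, $\Rr_{\G}$: each is the quotient of a direct sum (or free product, or tensor product) of the $\ell^\infty_e$ by the ideal enforcing $\delta_x^{e}=\delta_x^{e'}$ for $x\in e\cap e'$, so the natural map sending $a_x\mapsto p_x\mapsto d_x$ is a well-defined unital linear bijection; the three operator system structures differ only in their matrix cones. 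With this bookkeeping in place, the three equivalences follow formally, so I would present the proof compactly, treating (iii) in full detail and noting that (i) and (ii) are proved by the identical argument applied to the pairs $(\Ss_{\G},\Tt_{\G})$ and $(\Tt_{\G},\Rr_{\G})$.
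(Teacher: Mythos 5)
Your proposal is correct and follows essentially the same route as the paper: identify the state spaces of $\Ss_{\G}$, $\Tt_{\G}$, $\Rr_{\G}$ with $\Gg(\G)$, $\Qq(\G)$, $\Cc(\G)$ via Corollaries \ref{p_dualSG}, \ref{statesonT} and Proposition \ref{p_repcl}, use that $\mathrm{OMIN}$ is determined by the underlying AOU-space (equivalently, by the state space), and that $\Rr_{\G}=\mathrm{OMIN}(\Rr_{\G})$ since it sits inside the abelian C*-algebra $\Dd_{\G}$.

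One bookkeeping claim in your write-up is false as stated and should be removed: the natural maps $a_x\mapsto p_x\mapsto d_x$ are well-defined, unital, surjective (these are the maps $\Phi$ and $\Psi$ of Remark \ref{r_RTG}), but they need \emph{not} be linear bijections for a general scenario, so the three operator systems do not in general share a common underlying vector space on which only the matrix cones differ. For instance, if $\Cc(\G)=\emptyset$ (as for the Kochen--Specker scenario) then $\Dd_{\G}$ and hence $\Rr_{\G}$ degenerate while $\Tt_{\G}$ does not; more generally $\dim\Ss_{\G}$, $\dim\Tt_{\G}$, $\dim\Rr_{\G}$ are governed by the linear spans of $\Gg(\G)$, $\Qq(\G)$, $\Cc(\G)$, which can differ. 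This does not sink the argument, because in each direction of each equivalence the needed bijectivity is supplied by the hypothesis: if the two model classes coincide then the dual spaces coincide and the canonical surjection is injective; and in the converse direction the assumed identity of operator systems is, as the paper stipulates, the statement that the canonical generator-to-generator map is a unital complete order isomorphism, which is bijective by definition. You should therefore phrase each equivalence in terms of the canonical maps rather than asserting a pre-existing identification of the underlying spaces; with that adjustment the proof is complete and matches the paper's.
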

\begin{proof}
(i) Note first that by \cite[Theorem 3.2]{PTT}, 
  if $ V$ is an AOU space then 
  $\mathrm{OMIN}(V)$ can be identified as an operator subsystem of $ C(S(V))$ 
  (here, the state space $S(V)$ of $V$ is equipped with the weak* topology). 

   By Corollaries \ref{p_dualSG} and \ref{statesonT}, 
     $\Qq(\G)= \Gg(\G)$ if and only if 
     $S(\Ss_{\G})$ is canonically affinely homeomorphic to $S(\Tt_{\G})$;
     if this happens, let $ h : S(\Tt_{\G}) \rightarrow S(\Ss_{\G}) $ be the
     corresponding homeomorphism.
    It follows that the map $\pi : C(S(\Ss_{\G})) \to C(S(\Tt_{\G}))$, given by 
    $\pi(f) =  f \circ h$, is a *-isomorphism. 
    For $x\in V$, we have 
    \begin{align*}
        \inn{\pi( a_x) ,t} = h(t)(i_{e}(\delta_{x}^{e})) =t(p_{x}) = \inn{p_{x},t}, \ \ \ t \in S(\Tt_{\G}).
    \end{align*}
    Hence, $ \pi( a_x) = p_{x}$ and since $ \pi$ is a C*-algebra isomorphism, 
    in view of the first paragraph,  
    its restriction $ \pi\arrowvert_{\mathrm{OMIN}(\Ss_{\G})} = \Phi$ is a unital complete order isomorphism onto $ \mathrm{OMIN}(\Tt_{\G})$.

Conversely, assume that the linear map 
$ \Phi : \mathrm{OMIN}(\Ss_{\G}) \rightarrow \mathrm{OMIN}(\Tt_{\G})$,
given by $\Phi(a_x) = p_x$, 
is a unital complete order isomorphism.
Then $\Phi$ is a unital order isomorphism  between $\Ss_{\G}$ and $ \Tt_{\G}$. Let 
    $h : S( \Tt_{\G}) \rightarrow S(\Ss_{\G})$ be the map, given by 
    $h(t)  =  t \circ \Phi$; we have that $h$ is an affine bijection
    and a homeomorphism. 
Now let $ p \in \Gg(\G)$. By Corollary \ref{p_dualSG}, 
there exists a state $ s$ on $\Ss_{\G}$ such that  
    $p(x)= s(a_x)$, $x\in V$.
    The state $ t := h^{-1}(s)$ of $\Tt_{\G}$ has the property
    \[ s(a_x) = t( p_{x}), \; \; \; x\in V. \]
    By Proposition \ref{statesonT},  $ (t(p_{x}))_{x\in V}$ is a quantum model. 
    Hence, $ p \in \Qq(\G)$, and so $\Gg(\G)= \Qq(\G)$.

  (ii)  The operator system structure of an 
operator system $\cl R$ of the form $\cl R = {\rm OMIN}(\cl R)$ is 
uniquely determined by the state space $S(\cl R)$ \cite{PTT}. 
By Corollary \ref{p_dualSG}, the operator system $\mathrm{OMIN}(\Ss_{\G})$ is 
determined uniquely by $\Gg(\G)$. 
Since $\Rr_{\G}$ is an operator subsystem of an abelian C*-algebra, 
we have that $\Rr_{\G} = {\rm OMIN}(\Rr_{\G})$. The statement follows. 

(iii) The proof is similar to that of (ii), and is omitted. 
\end{proof}


\section{Dilating contextuality scenarios}\label{s_ds}

In this section, we examine dilations of 
POR's of a contextuality scenario, and characterise 
the scenarios whose POR's always admit a dilation to a PR
in terms of equality of canonical operator systems.

 \begin{definition}\label{d_dilating}
 We  say that a contextuality scenario $\G$ is  dilating    
 (resp. classically dilating),     
 if every POR  of $\G$ dilates to a PR of $\G$ (resp. PR of $\G$ with commuting entries). 
\end{definition}

\begin{remark}\label{r_RTG}
Recall the operator system $\Rr_{\G}$ (resp. the C*-algebra $\Dd_{\G}$) 
defined in Subsection \ref{ss_classical},
and note that the elements $d_x$, $x\in V$, defined therein,  
are projections in $ \Dd_{\G}$ such that $\sum_{x\in e}d_{x} =1$, for each $ e\in E$.
By the universal property of the free hypergraph C*-algebra, there exists a unital *-homomorphism $\pi : C^{*}(\G) \rightarrow \Dd_{\G}$, such that $ \pi(p_{x}) = d_{x} $, $ x\in V$. The restriction of $\pi$ to $ \Tt_{\G}$ is a unital completely positive map 
$ \Psi : \Tt_{\G} \rightarrow \Rr_{\G}$ such that 
$\Psi(p_{x}) = d_{x}$, $x\in V$. 
Similarly, by the universal property of  $\Ss_{\G}$, 
there is a unital completely positive map  $ \Phi : \Ss_{\G} \rightarrow \Tt_{\G}$, 
such that $\Phi(a_x) = p_{x}$, $x\in V$. 
We have the following diagram:
\begin{align*}
    \Ss_{\G} \xrightarrow{\Phi} \Tt_{\G} \xrightarrow{\Psi} \Rr_{\G}.
\end{align*}
\end{remark}

The maps $\Phi$ and $\Psi$, defined in Remark \ref{r_RTG}, will be used
subsequently without further mention.

\begin{theorem}\label{th_dila}
    Let $ \G=(V,E)$ be a contextuality scenario. 
    The following are equivalent:
\begin{enumerate}
\item    $ \G$ is dilating; 
       \item the map $\Phi : \Ss_{\G} \rightarrow \Tt_{\G}$
    is a complete order isomorphism.        
   \end{enumerate} 
   \end{theorem}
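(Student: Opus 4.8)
The plan is to deduce both implications directly from the universal properties of $\Ss_\G$ and $\Tt_\G$ recorded in Theorems~\ref{universalprop} and~\ref{statesonT0}, so that essentially no hands-on computation with the cones of $\Ss_\G$ is needed. Throughout I use that $\Phi$ is the unital completely positive map with $\Phi(a_x)=p_x$ (Remark~\ref{r_RTG}).

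For the implication (ii)$\Rightarrow$(i), I would start from an arbitrary POR $(A_x)_{x\in V}$ of $\G$ on a Hilbert space $\Hh$. By Theorem~\ref{universalprop} there is a unital completely positive map $\Psi_0:\Ss_\G\to\Bh$ with $\Psi_0(a_x)=A_x$. If $\Phi$ is a complete order isomorphism, then $\Phi^{-1}:\Tt_\G\to\Ss_\G$ is unital and completely positive, hence so is $\Psi_0\circ\Phi^{-1}:\Tt_\G\to\Bh$, and $(\Psi_0\circ\Phi^{-1})(p_x)=\Psi_0(a_x)=A_x$. Theorem~\ref{statesonT0} then says that $(A_x)_{x\in V}$ is a dilatable POR, and since $(A_x)_{x\in V}$ was arbitrary, $\G$ is dilating.

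For the implication (i)$\Rightarrow$(ii), I would realise $\Ss_\G$ concretely as an operator subsystem of some $\Bh$. Then $(a_x)_{x\in V}\subseteq\Bh^+$ satisfies $\sum_{x\in e}a_x=1$ for all $e\in E$, so it is a POR of $\G$ on $\Hh$; since $\G$ is dilating it is dilatable. By Theorem~\ref{statesonT0} there is a unital completely positive map $\Theta_0:\Tt_\G\to\Bh$ with $\Theta_0(p_x)=a_x$. Its range equals $\spann\{a_x:x\in V\}=\Ss_\G$, and because $\Ss_\G$ is an operator subsystem of $\Bh$ (so its matrix cones are those inherited from $M_n(\Bh)$), $\Theta_0$ corestricts to a unital completely positive map $\Theta:\Tt_\G\to\Ss_\G$ with $\Theta(p_x)=a_x$.

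To finish I would verify that $\Theta$ is inverse to $\Phi$: by construction $(\Theta\circ\Phi)(a_x)=a_x$ and $(\Phi\circ\Theta)(p_x)=p_x$ for every $x\in V$; since $\{a_x\}_{x\in V}$ spans $\Ss_\G$, $\{p_x\}_{x\in V}$ spans $\Tt_\G$, and all four maps are linear, this gives $\Theta\circ\Phi=\id_{\Ss_\G}$ and $\Phi\circ\Theta=\id_{\Tt_\G}$; hence $\Phi$ is a bijection whose inverse $\Theta$ is completely positive, i.e.\ a complete order isomorphism. The only point requiring a little care is the corestriction step — that a completely positive map into $\Bh$ whose image lies in the operator subsystem $\Ss_\G$ is completely positive as a map into $\Ss_\G$ — but this is immediate from the definition of the induced matrix order on an operator subsystem; everything else is bookkeeping with the universal properties already in hand.
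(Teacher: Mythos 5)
Your proposal is correct and follows essentially the same route as the paper: (ii)$\Rightarrow$(i) is identical, and for (i)$\Rightarrow$(ii) you simply invoke Theorem~\ref{statesonT0} (whose proof is exactly the dilation-plus-compression argument the paper writes out inline) to produce the inverse map. If anything, your version is slightly more careful, since you explicitly construct $\Theta$ and check on the spanning sets $\{a_x\}$ and $\{p_x\}$ that it is a two-sided inverse of $\Phi$, whereas the paper writes $\Phi^{-1}(p_x)=W^*\pi(p_x)W$ and leaves the bijectivity implicit.
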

   
    \begin{proof}
        $(i) \Rightarrow (ii) $ 
        Assume, without loss of generality, 
        that $\Ss_{\G}\subseteq \cl B(\cl H)$ as an operator subsystem, 
        for some Hilbert space $\cl H$. 
        Since $ \G$ is assumed to be dilating, 
        there exist a Hilbert space $ \Kk$, an isometry $ W :\Hh \rightarrow \Kk$ and a PR $ (P_{x})_{x\in V}$ of $\G$ on $\cl K$, 
        such that $ a_{x}= W^{*}P_{x}W$, $ x\in V$.
        Let $\pi :  C^{*}(\G) \rightarrow \Bk$ be the (unique) 
        *-homomorphism, such that $\pi(p_x) = P_x$, $x\in V$, whose existence is 
        ensured by the universal property of $C^{*}(\G)$.
        We notice that $\Phi^{-1}(p_x) = W^{*} \pi(p_{x}) W$, $x\in V$;
        thus, $\Phi^{-1}$ is completely positive.

        $(ii) \Rightarrow (i)$ 
        Let $\cl H$ be a Hilbert space and 
        $ (A_{x})_{x\in V} \subseteq \Bh$ be a POR of $\G$. 
        By the universal property of $\Ss_{G}$ 
(see Theorem \ref{universalprop}) and the assumption that 
        $\cl S_{\G} = \cl T_{\G}$, there exists a 
        unital completely positive map $ \phi : \Tt_{\G} \rightarrow \Bh$ such that $ \phi(p_{x}) = A_{x}$, for all $x\in V$. 
By Theorem \ref{statesonT0}, $(A_x)_{x\in V}$ admits a dilation to a PR.         
    \end{proof}

\begin{corollary}
    If $\G$ is a dilating contextuality scenario then $ \Gg(\G) = \Qq(\G)$.
\end{corollary}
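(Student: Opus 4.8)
The plan is to deduce the statement from Theorem \ref{th_dila} together with Theorem \ref{p_OMINs}(i). Since the inclusions $\Cc(\G)\subseteq \Qq(\G)\subseteq \Gg(\G)$ of (\ref{eq_inccqg}) hold for every contextuality scenario, it suffices to establish the reverse inclusion $\Gg(\G)\subseteq \Qq(\G)$.

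First I would invoke Theorem \ref{th_dila}: as $\G$ is dilating, the canonical unital completely positive map $\Phi:\Ss_{\G}\to \Tt_{\G}$ from Remark \ref{r_RTG}, determined by $\Phi(a_x)=p_x$ for $x\in V$, is a complete order isomorphism. In particular $\Phi$ restricts to a unital order isomorphism of the underlying Archimedean order-unit spaces, so $\mathrm{OMIN}(\Ss_{\G})=\mathrm{OMIN}(\Tt_{\G})$ up to a canonical unital complete order isomorphism, because the $\mathrm{OMIN}$ construction depends only on that order-unit space structure. Theorem \ref{p_OMINs}(i) then yields $\Gg(\G)=\Qq(\G)$.

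Alternatively, unwinding the proof of Theorem \ref{p_OMINs}(i), one can argue directly: given $p\in \Gg(\G)$, Corollary \ref{p_dualSG} produces a state $s$ on $\Ss_{\G}$ with $s(a_x)=p(x)$ for all $x\in V$; since $\Phi$ is a unital order isomorphism, $\Phi^{-1}$ is positive and unital, hence $t:=s\circ\Phi^{-1}$ is a state on $\Tt_{\G}$, and $t(p_x)=s(\Phi^{-1}(p_x))=s(a_x)=p(x)$ for every $x\in V$. By Corollary \ref{statesonT}, $p=p^{t}\in \Qq(\G)$, giving $\Gg(\G)\subseteq \Qq(\G)$ and therefore equality in view of (\ref{eq_inccqg}). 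There is essentially no obstacle in this argument; the only (very mild) point to check is that a complete order isomorphism between operator systems induces an order isomorphism of the underlying order-unit spaces and that its inverse is then automatically positive, both immediate from the definitions.
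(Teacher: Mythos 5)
Your proposal is correct and follows essentially the same route as the paper, which proves the corollary by combining Theorem \ref{th_dila} with Theorem \ref{p_OMINs}; your observation that a complete order isomorphism $\Phi$ forces $\mathrm{OMIN}(\Ss_{\G})=\mathrm{OMIN}(\Tt_{\G})$ is exactly the (unstated) link in the paper's one-line argument. The alternative direct verification via Corollaries \ref{p_dualSG} and \ref{statesonT} is also sound, but it is merely an unwinding of the same reasoning.
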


\begin{proof}
The statement follows from Theorem \ref{th_dila} and Theorem \ref{p_OMINs}. 
\end{proof}

The following corollary provides a characterisation of dilatability
in terms of the generators in $\Ss_{\G}$ and will be useful in the sequel.

\begin{corollary}\label{corollaryC*env}
Let $ \G$ be a contextuality scenario. 
The following are equivalent:
\begin{enumerate}
\item 
$\G$ is dilating; 

\item 
there exists a C*-cover $(\Aa,\iota)$ of
$\Ss_{\G}$ such that
$\iota( a_x)$ is a projection, $x\in V$;

\item 
$\iota_{\rm e}( a_x)$ is a projection in 
$ C^{*}_{\rm e}(\Ss_{\G})$, $x\in V$.
\end{enumerate}
\end{corollary}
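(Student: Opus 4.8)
The plan is to prove the chain of implications $(i)\Rightarrow(iii)\Rightarrow(ii)\Rightarrow(i)$, using Theorem \ref{th_dila} as the main engine and the standard theory of the C*-envelope, together with Lemma \ref{l_idCstareTG}, which already identifies $C^*_{\rm e}(\Tt_{\G}) = C^*(\G)$.

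For $(i)\Rightarrow(iii)$: assuming $\G$ is dilating, Theorem \ref{th_dila} gives that $\Phi : \Ss_{\G}\to\Tt_{\G}$ is a unital complete order isomorphism. Hence $\Ss_{\G}$ and $\Tt_{\G}$ have the same C*-envelope, so by Lemma \ref{l_idCstareTG} we have $C^*_{\rm e}(\Ss_{\G}) = C^*(\G)$ via an identification carrying $\iota_{\rm e}(a_x)$ to $p_x$; since each $p_x$ is a projection in $C^*(\G)$, it follows that $\iota_{\rm e}(a_x)$ is a projection in $C^*_{\rm e}(\Ss_{\G})$. I would state this carefully: the complete order isomorphism $\Phi$ induces a $*$-isomorphism of the respective C*-envelopes intertwining $\iota_{\rm e}^{\Ss}$ with $\iota_{\rm e}^{\Tt}\circ\Phi$, and $\iota_{\rm e}^{\Tt}(p_x) = p_x$ under the identification $C^*_{\rm e}(\Tt_{\G}) = C^*(\G)$.

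The implication $(iii)\Rightarrow(ii)$ is immediate, taking $(\Aa,\iota) = (C^*_{\rm e}(\Ss_{\G}),\iota_{\rm e})$.

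For $(ii)\Rightarrow(i)$: suppose $(\Aa,\iota)$ is a C*-cover of $\Ss_{\G}$ with each $\iota(a_x)$ a projection. The relations $\sum_{x\in e}a_x = 1$ pass through $\iota$, so $(\iota(a_x))_{x\in V}$ is a PR of $\G$ on any Hilbert space on which $\Aa$ is faithfully (non-degenerately) represented; equivalently, by the universal property of $C^*(\G)$ there is a unital $*$-homomorphism $\rho : C^*(\G)\to \Aa$ with $\rho(p_x) = \iota(a_x)$. The composition $\iota^{-1}\circ\rho|_{\Tt_{\G}} : \Tt_{\G}\to \Ss_{\G}$ is unital completely positive and sends $p_x\mapsto a_x$; since $\Phi : \Ss_{\G}\to\Tt_{\G}$ is unital completely positive with $\Phi(a_x)=p_x$ and $\{a_x\}$, $\{p_x\}$ span the respective operator systems, the two maps are mutually inverse, so $\Phi$ is a complete order isomorphism, and Theorem \ref{th_dila} yields that $\G$ is dilating. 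The only point requiring a little care — and the place I expect the mildest friction — is checking that $\iota^{-1}\circ\rho|_{\Tt_{\G}}$ is genuinely the inverse of $\Phi$ on the nose rather than merely a one-sided inverse; this follows because a unital completely positive map on $\Ss_{\G}$ (resp.\ $\Tt_{\G}$) is determined by its values on the generators $a_x$ (resp.\ $p_x$), so $\Phi\circ(\iota^{-1}\circ\rho|_{\Tt_{\G}}) = \id_{\Tt_{\G}}$ and $(\iota^{-1}\circ\rho|_{\Tt_{\G}})\circ\Phi = \id_{\Ss_{\G}}$ both hold on generators, hence everywhere.
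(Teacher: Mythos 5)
Your proof is correct, but you close the cycle in the opposite direction from the paper, and the tools differ in each leg. The paper proves $(i)\Rightarrow(ii)\Rightarrow(iii)\Rightarrow(i)$: for $(i)\Rightarrow(ii)$ it simply takes $\cl A = C^*(\G)$ with $\iota = \iota_0\circ\Phi$; for $(ii)\Rightarrow(iii)$ it uses the extremal property of the C*-envelope (the $*$-epimorphism $\pi:\cl A\to C^*_{\rm e}(\Ss_{\G})$ carries the projections $\iota(a_x)$ to projections); and for $(iii)\Rightarrow(i)$ it dilates an arbitrary POR directly via Arveson extension and Stinespring, as in the proof of Theorem \ref{th_dila}. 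You instead prove $(i)\Rightarrow(iii)$ by invoking functoriality of the C*-envelope under unital complete order isomorphisms together with Lemma \ref{l_idCstareTG}, take $(iii)\Rightarrow(ii)$ for free, and close with $(ii)\Rightarrow(i)$ by using the universal property of $C^*(\G)$ to manufacture a ucp map $\Tt_{\G}\to\Ss_{\G}$ inverting $\Phi$ on the spanning set $\{p_x\}$, then citing Theorem \ref{th_dila}(ii)$\Rightarrow$(i). Both routes are sound; yours trades the Arveson--Stinespring dilation argument for the algebraic observation that a ucp inverse of $\Phi$ can be read off from any C*-cover in which the $a_x$ become projections, and it avoids the extremal property of the envelope at the cost of needing its invariance under complete order isomorphism. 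The one point you flagged as delicate --- that the two compositions are genuinely two-sided inverses --- is indeed unproblematic for exactly the reason you give: linear maps agreeing on the spanning sets $\{a_x\}$ and $\{p_x\}$ agree everywhere, which is also how the paper handles the analogous uniqueness step in Theorem \ref{universalprop}.
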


    \begin{proof}
$(i) \Rightarrow (ii)$ 
By Theorem \ref{th_dila}, the map $\Phi : \Ss_{\G} \to \Tt_{\G}$ is a 
completely order isomorphism; 
thus, (ii) is fulfilled with $\cl A = C^{*}(\G)$ and $\iota = \iota_0\circ\Phi$,
where $\iota_0 : \Tt_{\G}\to C^{*}(\G)$ is the inclusion. 

$(ii) \Rightarrow (iii)$ 
Letting $\pi : \cl A\to C^*_{\rm e}(\Ss_{\G})$ be the 
*-epimorphism, arising from the extremal property of $C^*_{\rm e}(\Ss_{\G})$, 
we have that $\iota_{\rm e}( a_x) = \pi(\iota(p_x))$
is a projection, $x\in V$. 

$(iii) \Rightarrow (i)$     
follows by a standard application of Arveson's and Stinespring's theorems
(see the proof of Theorem \ref{th_dila}). 
\end{proof}

\begin{remark} \label{p_findimclassical} 
    Let $\G=(V,E)$ be a contextuality scenario, $\Hh$ be a finite dimensional 
    Hilbert space, and 
    $ A =(A_{x})_{x\in V} \subseteq \Bh$ be a POR of $\G$. 
    If $A$ is classically dilatable, then it admits a dilation acting
    on a finite dimensional Hilbert space.
Indeed, by Proposition \ref{p_clasdil}, there exists a 
    unital completely positive map $\phi : \Rr_{\G} \rightarrow \Bh$ 
    with $ \phi(d_{x})= A_{x}$, $ x\in V$. Note that $ \Rr_{\G}$ is an  operator subsystem
    in $\Dd_{\G}$, and $ \Dd_{\G}$ is an abelian finite dimensional $C^{*}$-algebra. 
    By Arveson's and Stinespring's theorems, 
    $A$ dilates into a PR with commuting entries acting on a finite dimensional space.
\end{remark}

\begin{example}\label{ex_qms}
\rm 
Let $n\in \bb{N}$. 
A \textit{quantum magic square (QMS)} of size $n\times n$ is 
a matrix $[a_{k,j}]_{k,j=1}^n$
with entries in $\cl B(\cl H)^+$ for some Hilbert space $\cl H$, such that 
$$\sum_{k=1}^n a_{i,k} = \sum_{k=1}^n a_{k,j} = I_{\cl H}, \ \ \ i,j \in [n].$$
A {\it QMS} is called a \textit{quantum permutation matrix (QPM)} if its entries are projections. 
Let $\G_n = (V,E)$, where $V= [n] \times [n]$ and 
$E = \{ \{i\}\times [n], [n]\times \{j\}: i,j \in [n]\}$; 
we call $\G_n$ the \textit{QMS scenario}. 
It is clear that quantum magic squares 
(resp. quantum permutation matrices) are precisely the 
positive operator (resp. projective) representations of $\G_n$.

The universal property of the operator system $ \Ss_{\G_{n}}$ 
(see Theorem \ref{universalprop}) implies that $ \Ss_{\G_{n}}$ 
is canonically completely order isomorphic to the operator system 
$\Ss_{X}$ defined before \cite[Theorem 4.1]{BHTT2} (for $X=[n]$). 
Note that the C*-algebra $C^*(\G_{n})$ coincides with the 
the quantum permutation group $C(S_{X}^+)$ \cite{Wang1998QuantumSG}; 
in view of Theorem \ref{statesonT0}, $ \Tt_{\G_{n}}$ is canonically 
completely order isomorphic to the operator system $ \Pp_{X}$  
(see the discussion before \cite[Proposition 4.4]{BHTT2}).
We finally note that the Birkhoff-von Neumann theorem implies that 
$ \Cc(\G_{n}) = \Qq(\G_{n}) = \Gg(\G_{n})$ and hence, by virtue of Theorem \ref{p_OMINs}, 
we have that $ \Rr_{\G_{n}} = \mathrm{OMIN}(\Tt_{\G_{n}}) = \mathrm{OMIN}(\Ss_{\G_{n}})$
up to canonical complete order isomorphisms. 

Let $ S_{n}$ be the permutation group of $n$ elements.
A quantum magic square $ A \in \Mn(\B(\ell^{2}([s])))$ is called \textit{semiclassical}, if 
$A = \sum_{\pi \in S_{n}}P_{\pi}\otimes q_{\pi}$,
where $ P_{\pi} \in \Mn$ are permutation matrices and $\{q_{\pi}\} \subseteq \B(\ell^{2}([s]))$ is a POVM. 
It was shown in \cite[Theorem 12]{doi:10.1063/5.0022344} 
that a quantum magic square acting on a finite dimensional Hilbert space 
is semiclassical if and only if it admits a dilation to a
quantum permutation matrix with commuting entries, acting on a finite dimensional Hilbert space. 
It thus follows from Proposition \ref{p_clasdil}, Proposition \ref{p_findimclassical} and 
        \cite[Theorem 12]{10.1063/1.4996867} that, 
    for a linear map $ \phi: \Rr_{\G_{n}} \rightarrow M_{s}$, the following are equivalent:
    \begin{enumerate}
        \item $ \phi: \Rr_{\G_{n}} \rightarrow M_{s}$ is a 
        unital completely positive map; 
        \item $ [ \phi(d_{i,j})]_{i,j=1}^{n} \in \Mn(\B(\ell^{2}([s])))$ is a semiclassical quantum magic square.
    \end{enumerate}
    \end{example}

It was shown in \cite[Theorem 19]{doi:10.1063/5.0022344}
that there exists a quantum magic square with $n\geq 3$
(acting on a finite dimensional Hilbert space)
that does not admit a dilation to a quantum permutation matrix acting on a finite dimensional Hilbert space.
In the sequel we will strengthen this result and 
note its consequences for the operator systems examined in Section \ref{s_opsys}.

\begin{corollary}\label{c_dilationsinfdim}
    The QMS scenario $ \G_{n}$ is not classically dilating if $n \geq  3$. 
    In particular, the  scenario $ \G_{3}$ is not dilating, and hence the operator systems 
    $\Ss_{\G_{3}}$ and $\Tt_{\G_{3}}$ are not completely order isomorphic. 
    \end{corollary}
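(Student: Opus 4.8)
The plan is to derive Corollary \ref{c_dilationsinfdim} from a strengthening of \cite[Theorem 19]{doi:10.1063/5.0022344} that rules out dilations with commuting entries in \emph{all} dimensions, not merely finite ones, and then feed this into the operator-system machinery already developed. First I would recall, as noted in the introduction and formalised in Example \ref{ex_qms}, that the positive operator representations of $\G_n$ are exactly the quantum magic squares, and its projective representations with commuting entries are exactly the quantum permutation matrices with commuting entries. By the Birkhoff--von Neumann theorem (again Example \ref{ex_qms}) we have $\Rr_{\G_n} = \mathrm{OMIN}(\Tt_{\G_n}) = \mathrm{OMIN}(\Ss_{\G_n})$, so ``$\G_n$ classically dilating'' is the statement that every quantum magic square dilates to a commuting quantum permutation matrix, equivalently (via Proposition \ref{p_clasdil}) that the identity map on the linear space underlying $\Ss_{\G_n}$, $\Tt_{\G_n}$, $\Rr_{\G_n}$ is a complete order isomorphism between $\mathrm{OMIN}(\Ss_{\G_n})$ and $\Rr_{\G_n}$.

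The key point is the reduction to finite dimensions. Given a quantum magic square $A = [a_{i,j}]$ acting on a Hilbert space $\Hh$, the map $\phi : \mathrm{OMIN}(\Ss_{\G_n}) = \Rr_{\G_n} \to \Bh$ sending $d_{i,j} \mapsto a_{i,j}$ is automatically completely positive, since $\Rr_{\G_n}$ carries the minimal operator system structure (any \emph{positive} map out of an operator system into $\Rr_{\G_n}$ is completely positive, and dually, by the defining property of $\mathrm{OMIN}$ recalled before Theorem \ref{p_OMINs}, positivity into $\Bh$ from $\Rr_{\G_n}$ already gives complete positivity --- but in any case $\phi$ being unital with $\phi(d_{i,j})\ge 0$ is enough). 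The relevant observation, which is Proposition \ref{p_findimclassical}, is that if $A$ is classically dilatable then it admits a classical dilation on a \emph{finite dimensional} Hilbert space: indeed $\Rr_{\G_n}$ sits inside the finite dimensional abelian C*-algebra $\Dd_{\G_n}$, so Arveson's extension theorem extends $\phi$ to a ucp map $\Dd_{\G_n}\to \Bh$, and Stinespring's theorem dilates it using a *-representation of the finite dimensional C*-algebra $\Dd_{\G_n}$, which lives on a finite dimensional space. Hence: \emph{every} quantum magic square (in any dimension) is classically dilatable if and only if every quantum magic square admits a finite dimensional classical dilation. But by \cite[Theorem 12]{doi:10.1063/5.0022344} the latter holds iff every quantum magic square is semiclassical, and by \cite[Theorem 19]{doi:10.1063/5.0022344} there exists, for $n\ge 3$, a (finite dimensional) quantum magic square that is \emph{not} semiclassical. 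Therefore $\G_n$ is not classically dilating for $n\ge 3$.

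For the second assertion, I would specialise to $n=3$ and argue by contradiction: if $\G_3$ were dilating, then every POR of $\G_3$ --- i.e.\ every quantum magic square of size $3$ --- would dilate to a PR, i.e.\ to a quantum permutation matrix. For the \emph{specific} non-semiclassical quantum magic square $A$ furnished by \cite[Theorem 19]{doi:10.1063/5.0022344}, which acts on a finite dimensional space, a result of \cite{doi:10.1063/5.0022344} (this is where I expect the main obstacle: pinning down exactly which statement of that paper shows that \emph{this particular} $A$ has no projective dilation at all, even a non-commuting one in arbitrary dimension --- one must check whether the cited theorem already delivers this or whether a short supplementary argument specific to size $3$ is needed, e.g.\ using that $C^*(\G_3) = C(S_3^+)$ is itself finite dimensional since $S_3^+ = S_3$, so a PR of $\G_3$ automatically has commuting entries and lives in finite dimensions, collapsing ``dilating'' to ``classically dilating'' for $n=3$) shows $A$ admits no dilation to a quantum permutation matrix. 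This contradicts $\G_3$ being dilating. Finally, by Theorem \ref{th_dila}, $\G_3$ failing to be dilating is equivalent to the canonical map $\Phi : \Ss_{\G_3}\to \Tt_{\G_3}$ not being a complete order isomorphism, so $\Ss_{\G_3}$ and $\Tt_{\G_3}$ are not completely order isomorphic. I expect the whole argument to be short modulo the bookkeeping in that last paragraph, where the cleanest route is probably to observe that $C^*(\G_3)\cong C^*(\G_3)$ finite-dimensionality forces PR's of $\G_3$ to be commuting and finite dimensional, thereby identifying ``$\G_3$ dilating'' with ``$\G_3$ classically dilating'' and reducing everything to the first paragraph.
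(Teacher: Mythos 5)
Your proposal is correct and follows essentially the same route as the paper: reduce to finite dimensions via Remark \ref{p_findimclassical}, invoke the characterisation of finite-dimensional commuting dilations as semiclassicality together with the existence of non-semiclassical quantum magic squares for $n\ge 3$, and for $n=3$ use that $C^{*}(\G_{3})=C(S_{3}^{+})=C(S_{3})$ is abelian to collapse ``dilating'' to ``classically dilating'', then conclude via Theorem \ref{th_dila}. The only (inconsequential) slips are the assertion that representations of the finite-dimensional algebra $C(S_{3})$ act on finite-dimensional spaces --- only commutativity of the entries is actually needed --- and the ``if and only if'' concerning infinite-dimensionally acting magic squares, which is not required since the contradiction already arises from finite-dimensional ones.
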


    \begin{proof}
       Assume that $ \G_{n}$, for $ n \geq 3$ is classically dilating. 
       By Remark \ref{p_findimclassical}, any finite dimensional POR of $\G_{n}$, 
       that is, any quantum magic square of size $n\times n$, 
       admits a dilation into a finite dimensional quantum permutation matrix with commuting entries. 
       However, by \cite[Theorem 12]{10.1063/1.4996867} these matrices are exactly the semiclassical 
       quantum magic squares and for any $ n \geq 3$ and $ s \geq 2$, we can find a 
       QMS $ A \in \Mn( \B(\ell^{2}([s])))$ that is not 
       semiclassical \cite[Corollary 15]{10.1063/1.4996867}, a contradiction. 

Note that, for $ n \leq 3$,  the quantum permutation group $ C^{*}(\G_{n})= C(S_{n}^{+})$ is the abelian $C^{*}$-algebra $C(S_{n})$ \cite{Banica2006QuantumPG} which coincides with $\Dd_{\G_{n}}$. In particular, if $\G_{3}$ were dilating, it would be classically dilating which, by the previous paragraph, does not hold true. 
\end{proof}

We next strengthen Corollary \ref{c_dilationsinfdim} by extending 
some of the considerations of \cite{10.1063/1.4996867}. 
Let $ n\in \N$. Denote by $\Mm^{(n)}_{\mathfrak{s} }$  the set of quantum magic squares 
$ A= [A_{i,j}]_{i,j}\in \Mn(\B(\ell^{2}(\mathfrak{s})))$,  
for an arbitrary cardinal $ \mathfrak{s}$. Similarly, let 
$ \Pp^{(n)}_{\mathfrak{s}}$ denote the set of $n\times n$ quantum permutation matrices acting on $ \ell^{2}(\mathfrak{s})$. Let $ \Mm^{(n)}:= \bigsqcup_{\mathfrak{s}}\Mm^{(n)}_{\mathfrak{s}}$ and 
$ \Pp^{(n)}:= \bigsqcup_{\mathfrak{s}}\Pp^{(n)}_{\mathfrak{s}}  $ (disjoint unions). 
Finally, denote by $ \mathrm{ncconv}{(\Pp^{(n)})}_{\mathfrak{s}} $ 
the set of 
dilatable quantum magic squares in $\Mm^{(n)}_{\mathfrak{s} }$. 
We write
$M_{\sfr} = \B(\ell^{2}([\mathfrak{s}]))$ and view the elements of $M_{\sfr}$ 
as $\sfr\times \sfr$ matrices.

For $ A=[A_{i,j}] \in \Mm^{(n)}_{\mathfrak{s} }$, define 
$$    {\rm col}(A) := \sum_{i,j=1}^{n} e_{i}\otimes e_{j} \otimes A_{i,j} \in \C^{n} \otimes \Cn \otimes \; M_{\sfr}$$
and
$$ {\rm diag}(A) := \sum_{i,j=1}^{n}E_{i,i}\otimes E_{j,j} \otimes A_{i,j} \in M_{n}\otimes M_{n} \otimes M_{\sfr},$$
and let  
$\phi : \Mm^{(n)}_{\mathfrak{s} }\to (\Mn \otimes \Mn \otimes M_{\sfr})_{\rm sa}$ be the mapping, 
given by 
\begin{align*}
    \phi(A) = {\rm diag}(A) - {\rm col}(A) {\rm col}(A)^{*}, \ \ \ A\in \Mm^{(n)}_{\mathfrak{s}}.
\end{align*}
Finally, we let $ \mathcal{Z}^{(n)}$ denote the vector space of all matrices in $\Mn$ with zero diagonal.

We say that an element $ A \in \Mm^{(n)}_{\sfr}$ can be {\it positively perturbed}
if there exists an $ X \in (\mathcal{Z}^{(n)} \otimes \mathcal{Z}^{(n)} \otimes M_{\sfr})_{\rm sa} $ such that $ \phi(A) +X \geq 0$.
The following is a generalisation of \cite[Proposition 21 (ii)]{10.1063/1.4996867} in infinite dimensions.

\begin{lemma} \label{l_criterdilate}
    Every $ A \in  \mathrm{ncconv}{(\Pp^{(n)})}_{\mathfrak{s}}$ can be positively perturbed.
\end{lemma}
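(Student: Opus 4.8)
The plan is to build the perturbation $X$ explicitly out of a dilation of $A$. Fix $A=[A_{i,j}]_{i,j=1}^{n}\in\mathrm{ncconv}(\Pp^{(n)})_{\sfr}$; by definition there are a Hilbert space $\Kk$, an isometry $V:\ell^{2}([\sfr])\to\Kk$ and a quantum permutation matrix $[P_{i,j}]_{i,j=1}^{n}$ on $\Kk$ with $A_{i,j}=V^{*}P_{i,j}V$ for all $i,j$. Since each $P_{i,j}$ is a projection and $\sum_{j}P_{i,j}=\sum_{i}P_{i,j}=I_{\Kk}$, the rows and columns of $[P_{i,j}]$ are mutually orthogonal, i.e. $P_{i,j}P_{i,l}=0$ for $j\neq l$ and $P_{i,j}P_{k,j}=0$ for $i\neq k$. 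I would write $Q:=I_{\Kk}-VV^{*}\geq 0$ for the defect projection of $V$, so that $A_{i,j}A_{k,l}=V^{*}P_{i,j}(I_{\Kk}-Q)P_{k,l}V=V^{*}P_{i,j}P_{k,l}V-V^{*}P_{i,j}QP_{k,l}V$ for all $i,j,k,l$.

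The heart of the argument is the auxiliary positive operator obtained from $Q$:
\[
W:=\sum_{i,j=1}^{n}e_{i}\otimes e_{j}\otimes V^{*}P_{i,j}Q,\qquad Y:=WW^{*},
\]
where $W$ is regarded as a bounded operator from $\Kk$ into $\C^{n}\otimes\C^{n}\otimes\ell^{2}([\sfr])$, so that $Y$ is a positive element of $\Mn\otimes\Mn\otimes M_{\sfr}=\B(\C^{n}\otimes\C^{n}\otimes\ell^{2}([\sfr]))$. A one-line expansion gives $Y=\sum_{i,j,k,l}E_{i,k}\otimes E_{j,l}\otimes V^{*}P_{i,j}QP_{k,l}V$. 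The key step will be to check that the component of $Y$ and of $\phi(A)$ in the summand indexed by $E_{i,k}\otimes E_{j,l}$ coincide whenever $i=k$ or $j=l$. This is exactly where the quantum permutation relations enter: combining the above factorisation of $A_{i,j}A_{k,l}$ with $P_{i,j}^{2}=P_{i,j}$, $P_{i,j}P_{i,l}=0$ $(j\neq l)$ and $P_{i,j}P_{k,j}=0$ $(i\neq k)$ yields $A_{i,j}-A_{i,j}^{2}=V^{*}P_{i,j}QP_{i,j}V$, $-A_{i,j}A_{i,l}=V^{*}P_{i,j}QP_{i,l}V$ for $j\neq l$, and $-A_{i,j}A_{k,j}=V^{*}P_{i,j}QP_{k,j}V$ for $i\neq k$; reading off the components of $\phi(A)=\mathrm{diag}(A)-\mathrm{col}(A)\mathrm{col}(A)^{*}$ and matching them against those of $Y$ then gives the assertion.

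Granting this, I would set $X:=Y-\phi(A)$. Both $Y$ and $\phi(A)$ are self-adjoint, hence so is $X$, and by the previous step every component of $X$ indexed by a quadruple with $i=k$ or $j=l$ vanishes; therefore $X\in(\mathcal{Z}^{(n)}\otimes\mathcal{Z}^{(n)}\otimes M_{\sfr})_{\rm sa}$. Since $\phi(A)+X=Y\geq 0$, this exhibits $A$ as positively perturbable, and the proof is complete.

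The only real obstacle is the bookkeeping behind the matching step, i.e. recognising that the defect projection $Q$ is the correct auxiliary object: it makes $Y=WW^{*}$ automatically positive, and, through the orthogonality of the rows and columns of the dilating quantum permutation matrix, it forces the "constrained" components of $Y$ (those indexed by $i=k$ or $j=l$) to coincide with those of $\phi(A)$, so that the discrepancy $Y-\phi(A)$ is automatically supported in $\mathcal{Z}^{(n)}\otimes\mathcal{Z}^{(n)}\otimes M_{\sfr}$. Everything else is a direct computation, and — this being the point of the generalisation over \cite[Proposition 21(ii)]{10.1063/1.4996867} — nothing in the scheme uses finite-dimensionality of $\Kk$ or of $\ell^{2}([\sfr])$.
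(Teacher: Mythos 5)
Your proposal is correct and is essentially the paper's own argument: the paper decomposes $\Kk=\Hh\oplus\Hh^{\perp}$, writes $P_{i,j}$ in block form with off-diagonal entry $B_{i,j}$, and sets $X=DD^{*}-\phi(A)$ with $D=\sum_{i,j}e_{i}\otimes e_{j}\otimes B_{i,j}^{*}$, which is exactly your $Y=WW^{*}$ since $V^{*}P_{i,j}Q$ is the operator $B_{i,j}^{*}$ in disguise. The matching of the components with $i=k$ or $j=l$ via the orthogonality relations of the quantum permutation matrix is the same computation in both versions.
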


\begin{proof}
    Let $ A \in  \mathrm{ncconv{(\Pp^{(n)})}}_{\mathfrak{s}}  $ and write $ \Hh = \ell^{2}(\sfr) $. 
    Thus, there exists a Hilbert space $ \Kk$ containing $\Hh$ 
    and a QPM $ P=[P_{i,j}]_{i,j=1}^{n} \in \Mn(\Bk)$ such that, if 
    $ Q_{\Hh}$ is the projection onto $\Hh$, then 
    $A_{i,j} = Q_{\Hh} P_{i,j} \arrowvert_{\Hh}$. 
    With respect to the decomposition $ \Kk= \Hh\oplus \Hh^{\perp}$ we may write

\begin{align*}
    P_{i,j} = \begin{bmatrix}
        A_{i,j} & B_{i,j}^{*}\\
        B_{i,j} & C_{i,j}
    \end{bmatrix},
\end{align*}
    where $ B_{i,j} \in \B(\Hh,\Hh^{\perp})$ and $ C_{i,j} \in \B(\Hh^{\perp})$.
Since $ P$ is a QPM, we have that $ P_{i,j}^{2}=P_{i,j}$, $ P_{i,j}P_{i,k} = 0$ and 
$P_{j,i}P_{k,i}=0 $, whenever $ j \neq k$. Hence,
\begin{align*}
    B_{i,j}^{*}B_{i,j}& = A_{i,j} - A_{i,j}^{2},\\
    B_{i,k}^{*}B_{i,j}& = - A_{i,k}A_{i,j}, \ \mbox{ and }\\
    B^{*}_{j,i}B_{k,i}& = - A_{j,i}A_{k,i}.
\end{align*}
Letting $D =\sum_{i,j=1}^{n} e_{i}\otimes e_{j} \otimes B_{i,j}^{*}  $ and 
$X = DD^{*} - \phi(A)$,
we have that $ \phi(A) +X = DD^{*} \geq 0$.
\end{proof}

\begin{proposition}\label{p_nondilg3}
    For every $n \in \N$, $ n\geq 3$ there is a cardinal $\sfr$ and a QMS $ A \in \Mm^{(n)}_{\sfr}$ that 
    cannot be positively perturbed.
    In particular, $A$ does not admit a dilation into a QPM. 
\end{proposition}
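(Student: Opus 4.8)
The plan is to reduce the statement to producing a single quantum magic square that cannot be positively perturbed, the failure of dilation then being immediate from Lemma~\ref{l_criterdilate}. Indeed, that lemma says every member of $\mathrm{ncconv}(\Pp^{(n)})_{\sfr}$ can be positively perturbed; contrapositively, a QMS $A\in\Mm^{(n)}_{\sfr}$ that cannot be positively perturbed does not lie in $\mathrm{ncconv}(\Pp^{(n)})_{\sfr}$, i.e.\ it admits no dilation to a QPM on any Hilbert space, which is exactly the ``in particular'' clause. So it suffices, for each $n\ge 3$, to exhibit a finite cardinal $\sfr$ and a QMS $A\in\Mm^{(n)}_{\sfr}$ that is not positively perturbable.

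For $n=3$ I would take the $3\times 3$ quantum magic square $A^{(3)}\in\Mm^{(3)}_{\sfr}$ (with $\sfr$ finite) produced in \cite[Theorem~19]{doi:10.1063/5.0022344}: the failure of a finite-dimensional dilation is established there precisely by checking, through the positive-perturbation criterion, that $\phi(A^{(3)})$ admits no perturbation from $\mathcal{Z}^{(3)}\otimes\mathcal{Z}^{(3)}\otimes M_{\sfr}$ rendering it positive. (If one wanted a self-contained argument here, one would instead exhibit a positive witness $W\in(\Delta_3\otimes M_3+M_3\otimes\Delta_3)\otimes M_{\sfr}$, where $\Delta_3\subseteq M_3$ is the diagonal subalgebra, with $\langle W,\phi(A^{(3)})\rangle<0$; being orthogonal to $\mathcal{Z}^{(3)}\otimes\mathcal{Z}^{(3)}\otimes M_{\sfr}$, such a $W$ obstructs positive perturbability of $A^{(3)}$.)

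For $n>3$ I would bootstrap from the case $n=3$ by a compression. Define $A=[A_{i,j}]_{i,j=1}^n\in\Mm^{(n)}_{\sfr}$ by $A_{i,j}=A^{(3)}_{i,j}$ for $i,j\in[3]$, $A_{i,i}=I$ for $4\le i\le n$, and $A_{i,j}=0$ otherwise; inspection of the row and column sums shows $A$ is an $n\times n$ QMS with positive entries. Put $P=Q\otimes Q\otimes I$ with $Q=\sum_{i\in[3]}E_{i,i}\in M_n$. Under the natural identification $P(M_n\otimes M_n\otimes M_{\sfr})P\cong M_3\otimes M_3\otimes M_{\sfr}$ one has $P\,\mathrm{diag}(A)\,P=\mathrm{diag}(A^{(3)})$ and $P\,\mathrm{col}(A)=\mathrm{col}(A^{(3)})$ (on the target side), hence $P\,\mathrm{col}(A)\mathrm{col}(A)^*P=\mathrm{col}(A^{(3)})\mathrm{col}(A^{(3)})^*$ and therefore $P\phi(A)P=\phi(A^{(3)})$; moreover $QZQ$ has zero diagonal for every $Z\in\mathcal{Z}^{(n)}$, so $P(\mathcal{Z}^{(n)}\otimes\mathcal{Z}^{(n)}\otimes M_{\sfr})P\subseteq\mathcal{Z}^{(3)}\otimes\mathcal{Z}^{(3)}\otimes M_{\sfr}$. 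Consequently, were $\phi(A)+X\ge 0$ for some $X\in(\mathcal{Z}^{(n)}\otimes\mathcal{Z}^{(n)}\otimes M_{\sfr})_{\rm sa}$, compression by $P$ would yield $\phi(A^{(3)})+PXP\ge 0$ with $PXP\in(\mathcal{Z}^{(3)}\otimes\mathcal{Z}^{(3)}\otimes M_{\sfr})_{\rm sa}$, contradicting the choice of $A^{(3)}$. Thus $A$ cannot be positively perturbed, and by the first paragraph it admits no dilation to a QPM.

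The QMS verification for the padded matrix and the compression identities are routine linear algebra; the one place where I expect to need care is the bookkeeping in $P\phi(A)P=\phi(A^{(3)})$, in keeping track of the two tensor legs carrying the $\mathcal{Z}$-structure. The genuinely non-trivial ingredient, which I would import wholesale from \cite{doi:10.1063/5.0022344}, is the existence of a $3\times 3$ quantum magic square failing the positive-perturbation test (equivalently, the existence of the separating witness $W$ described above); everything else in the argument is bookkeeping plus the contrapositive of Lemma~\ref{l_criterdilate}.
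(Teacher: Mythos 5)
Your proposal is correct and follows essentially the same route as the paper: the base case $n=3$ is imported from the quantum magic squares reference, and the extension to $n>3$ is achieved by padding with identity diagonal blocks and compressing $\phi(A)$ back to the $3\times 3$ corner, which is exactly the paper's inductive step $A'=A\oplus I_{\sfr}$ with the compression $(V\otimes V\otimes I_{\sfr})^*(\cdot)(V\otimes V\otimes I_{\sfr})$, merely carried out in one shot rather than one size at a time. The appeal to the contrapositive of Lemma~\ref{l_criterdilate} for the ``in particular'' clause is likewise the intended argument.
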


\begin{proof}
    We use induction on $n $. By \cite[Proposition 23]{10.1063/1.4996867}, there exist a finite cardinal $\sfr$
    and an element $ A \in \Mm^{(3)}_{\sfr}$ that cannot be positively perturbed; in other words, 
    the claim holds for $ n=3$. Suppose that 
    the claim holds true for $n=k$, where $ k \geq 3$, 
    and let $ A \in \Mm^{(k)}_{\sfr}$ be such that
    \begin{align*}
        \phi(A) +X \ngeq 0, \ \ \ X\in (\mathcal{Z}^{(k)} \otimes \mathcal{Z}^{(k)} \otimes M_{\sfr})_{\rm sa}.
    \end{align*}
 Set $A' = A \oplus I_{\sfr}$, viewed as an element of $\Mm_{\sfr}^{(k+1)}$.
Let $ V = \begin{bmatrix}
        I_{k}\\
        0
    \end{bmatrix}$ (viewed as an element of $M_{k+1,k}(\C)$), 
    and $ X' \in (\mathcal{Z}^{(k+1)} \otimes \mathcal{Z}^{(k+1)} \otimes M_{\sfr})_{\rm sa} $. Then
    \begin{align*}
        X:= (V\otimes V\otimes I_{\sfr})^{*}X'(V\otimes V\otimes I_{\sfr}) \in (\mathcal{Z}^{(k)} \otimes \mathcal{Z}^{(k)} \otimes M_{\sfr})_{\rm sa}
    \end{align*}
and 
\begin{align*}
    (V\otimes V\otimes I_{\sfr})^{*}(\phi(A') +X')(V\otimes V\otimes I_{\sfr})=\phi(A) +X \ngeq 0,
\end{align*}
which implies that $\phi(A')+X' \ngeq 0$. The proof is complete.
\end{proof}

The following corollary is an immediate consequence of Theorem \ref{th_dila} and Proposition \ref{p_nondilg3}. 

\begin{corollary}\label{c_diffo}
The operator systems    $\Ss_{\G_{n}}$ and $\Tt_{\G_{n}}$ are not completely order isomorphic if $n\geq 3$. 
    \end{corollary}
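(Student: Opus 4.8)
The plan is to read the corollary off directly from Proposition~\ref{p_nondilg3} together with Theorem~\ref{th_dila}. First I would record what Proposition~\ref{p_nondilg3} supplies: for every $n\geq 3$ there are a cardinal $\sfr$ and a quantum magic square $A\in \Mm^{(n)}_{\sfr}$ that does not dilate to a quantum permutation matrix (equivalently, by the contrapositive of Lemma~\ref{l_criterdilate}, $A$ cannot be positively perturbed and so $A\notin \mathrm{ncconv}(\Pp^{(n)})_{\sfr}$). Since quantum magic squares, respectively quantum permutation matrices, are exactly the positive operator representations, respectively projective representations, of the QMS scenario $\G_{n}$ (as noted in Example~\ref{ex_qms}), this says precisely that $\G_{n}$ admits a POR that does not dilate to a PR; that is, $\G_{n}$ is not dilating in the sense of Definition~\ref{d_dilating}, for every $n\geq 3$.

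Next I would invoke Theorem~\ref{th_dila}, which asserts that a contextuality scenario $\G$ is dilating if and only if the canonical unital completely positive map $\Phi:\Ss_{\G}\to \Tt_{\G}$ (determined by $\Phi(a_{x})=p_{x}$) is a complete order isomorphism. Applying this to $\G=\G_{n}$ with $n\geq 3$ and combining it with the previous paragraph, $\Phi$ fails to be a complete order isomorphism, and hence $\Ss_{\G_{n}}$ and $\Tt_{\G_{n}}$ are not completely order isomorphic. This matches the treatment already used for $\G_{3}$ in Corollary~\ref{c_dilationsinfdim}, and once the two inputs are in place the deduction is essentially one line.

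The only step deserving care is the passage from ``$\Phi$ is not a complete order isomorphism'' to the absence of \emph{any} complete order isomorphism $\Ss_{\G_{n}}\to\Tt_{\G_{n}}$, since $\Phi$ is the distinguished comparison map attached to the pair and the statement is understood relative to it. To obtain the stronger abstract assertion, the natural route is through C*-envelopes: Lemma~\ref{l_idCstareTG} gives $C^{*}_{\rm e}(\Tt_{\G_{n}})=C^{*}(\G_{n})$, so an abstract unital complete order isomorphism $\theta:\Ss_{\G_{n}}\to\Tt_{\G_{n}}$ would lift to a $*$-isomorphism $C^{*}_{\rm e}(\Ss_{\G_{n}})\cong C^{*}(\G_{n})$ carrying $\iota_{\rm e}(a_{x})$ to $\theta(a_{x})\in\Tt_{\G_{n}}\subseteq C^{*}(\G_{n})$, whereupon Corollary~\ref{corollaryC*env} would force $\G_{n}$ to be dilating provided the family $(\theta(a_{x}))_{x\in V}$ is a projective representation. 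Verifying this last compatibility — that an abstract isomorphism must send the canonical generators of $\Ss_{\G_{n}}$ to projections in $C^{*}(\G_{n})$ — is the one genuine obstacle I anticipate; everything else is a direct citation of Proposition~\ref{p_nondilg3}, Theorem~\ref{th_dila}, Lemma~\ref{l_criterdilate} and Corollary~\ref{corollaryC*env}.
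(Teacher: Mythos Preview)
Your proposal is correct and matches the paper's own one-line proof, which simply cites Theorem~\ref{th_dila} and Proposition~\ref{p_nondilg3}. The concern you raise in your third paragraph about an \emph{abstract} complete order isomorphism is not an issue in context: throughout the paper (see, e.g., Theorem~\ref{p_OMINs}, Corollary~\ref{c_dilationsinfdim}, and the standing phrase ``up to a canonical complete order isomorphism''), the statement that $\Ss_{\G}$ and $\Tt_{\G}$ are or are not completely order isomorphic is understood relative to the canonical map $\Phi$ sending $a_{x}\mapsto p_{x}$, so Theorem~\ref{th_dila} applies directly without the additional C*-envelope argument you sketch.
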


We next single out a class of dilating contextuality scenarios.
Let $\Ss$ be an operator system, $e$ denote its unit and $\Aa$ be a unital $C^*$-algebra.  We call $\Ss$ an \textit{operator $\Aa$-system} \cite{Pa} if $\Ss$ is an $\Aa$-bimodule such that $ (a \cdot s)^{*} = s^{*} \cdot a^{*}$, $a \cdot e =  e \cdot a$ and $[a_{i,j}] \cdot [s_{i,j}] \cdot [a_{i,j}]^{*} \in \Mn(\Ss)^{+} $
 for all $[a_{i,j}] \in M_{n,m}(\Aa)$,  $ [s_{i,j}] \in M_{m}(\Ss)^{+}$,  $ s \in \Ss$ and $ a \in \Aa$. 
A \textit{faithful} operator $\Aa$-system \cite{coprod23} is an operator $\Aa$-system $\Ss$  such that $ a \cdot e \neq 0$ for all $a \neq 0$ in $\Aa$.

Let $ \Ss$ and $ \Tt$ be two operator $\Aa$-systems. A linear map $ \phi: \Ss \rightarrow \Tt$ is called an {\it$ \Aa$-bimodule map}, if for every $s \in \Ss$ and $ a_{1}, a_{2} \in \Aa$,    $\phi(a_{1} \cdot s \cdot a_{2} )= a_{1} \cdot \phi(s) \cdot a_{2}$.
Let $ \Ss_{1}$ and $ \Ss_{2}$ be two faithful operator $ \Aa$-systems. The {\it amalgamated coproduct} of $\Ss_{1}$ and $\Ss_{2}$ \cite[Definition 3.5]{coprod23} is the unique faithful operator $ \Aa$-system $\Ss_{1} \oplus_{\Aa} \Ss_{2}$, along with unital complete order embeddings $\phi_{i}:\Ss_{i} \hookrightarrow \Ss_{1} \oplus_{\Aa} \Ss_{2} $, $i= 1, 2$ that are also $\Aa$-bimodule maps,  such that the following universal property holds: For every operator $\Aa$-system $ \Rr $ and unital completely positive $\Aa$-bimodule maps $ \psi_{i} : \Ss_{i} \rightarrow \Rr$, $ i=1, 2$, there exists a unique unital completely positive $\Aa$-bimodule map $ \Psi :\Ss_{1} \oplus_{\Aa} \Ss_{2} \rightarrow \Rr $ such that $ \Psi \circ \phi_{i}= \psi_{i}$ for $ i =1, 2$.

We will show that, if the edges of $\G$ have  the same 
pairwise intersections then $\G$ is dilating. 
In fact, we will prove that $\Ss_{\G}$ is in this case an amalgamated 
coproduct of operator $\Aa$-systems.

\begin{theorem}\label{projectivedilation}  
Let $\G=(V,E)$ be a contextuality scenario, such that $e' \cap e''  = \bigcap_{e\in E}e \neq \emptyset$ for all $ e', e'' \in E$ with $ e' \neq e''$. Then $\G$ is dilating. In particular, there exists a unital abelian C*-algebra $\Aa$, such that 
    $\Ss_{\G} = \oplus_{\Aa} \ell^{\infty}_{e_{j}}$ and the subspace $\Jj$ in \ref{thekernel} is a completely proximinal kernel.
\end{theorem}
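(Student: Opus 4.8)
The plan is to realise $\Ss_{\G}$ directly as an amalgamated coproduct of the C*-algebras $\ell^{\infty}_{e}$ and to read off the remaining assertions from that. Write $W:=\bigcap_{e\in E}e$, so the hypothesis says $W\neq\emptyset$ and $e\cap e'=W$ for all distinct $e,e'\in E$; thus each edge splits as a disjoint union $e=W\sqcup A_{e}$ with the private parts $(A_{e})_{e\in E}$ pairwise disjoint. First I would dispose of the degenerate case: if $A_{e_{0}}=\emptyset$ for some $e_{0}\in E$, the relation at $e_{0}$ forces every POR of $\G$ to vanish on $V\setminus W$ and to restrict to a POVM on $W$, whence $\G$ is dilating by Naimark's theorem, $\Ss_{\G}$ reduces to a copy of $\ell^{\infty}_{W}$, and $\Jj$ is checked by hand to be a completely proximinal kernel. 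So I would assume henceforth that $A_{e}\neq\emptyset$ for every $e\in E$. Then $\G$ admits a strictly positive probabilistic model (put total weight $\varepsilon$ on $W$ and total weight $1-\varepsilon$ on each $A_{e}$), and $\G$ is uniformisable — multiplying the private vertices of each edge, one edge at a time, equalises the cardinalities $|W|+|A_{e}|$ — so Proposition \ref{p_nullsubspace} and Remark \ref{r_null} already give that $\Jj$ is a null subspace and a completely proximinal kernel, with $\Ss_{\G}=\cl S/\Jj$. It therefore remains to exhibit $\Aa$ and to prove that $\G$ is dilating.

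Set $\Aa:=\ell^{\infty}(W)\oplus\C$, a finite dimensional abelian C*-algebra, and embed it unitally in each $\ell^{\infty}_{e}$ by sending the minimal projection at $w\in W$ to $\delta_{w}^{e}$ and the minimal projection of the $\C$-summand to $\sum_{x\in A_{e}}\delta_{x}^{e}$; since $A_{e}\neq\emptyset$ this embedding is faithful, so each $\ell^{\infty}_{e}$ is a faithful operator $\Aa$-system, the $\Aa$-bimodule action being multiplication by the corresponding (central) subalgebra. The crucial step is to equip $\Ss_{\G}$ itself with a matching operator $\Aa$-system structure. I would let $\Aa$ act on $\cl S=\oplus_{e\in E}\ell^{\infty}_{e}$ through the diagonal embedding $a\mapsto\oplus_{e}\iota_{e}(a)$, and check that $\Jj$ — equivalently $\Ll_{\G}=\Jj^{\perp}$ — is invariant under this action, which is a short computation from $e\cap e'=W$. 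The action then descends to $\Ss_{\G}=\cl S/\Jj$, and one verifies that the descended data form an operator $\Aa$-system, that it is faithful — this is where the strictly positive probabilistic model is needed, since $a\cdot(\mathbf 1+\Jj)\neq0$ for $0\neq a\in\Aa$ amounts to separating the points of $\Aa$ by such models — and that the canonical unital completely positive maps $i_{e}:\ell^{\infty}_{e}\to\Ss_{\G}$ are $\Aa$-bimodule maps.

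Next I would compare the two universal properties. The universal property of the amalgamated coproduct \cite{coprod23}, applied to the faithful operator $\Aa$-system $\Ss_{\G}$ and the $\Aa$-bimodule maps $i_{e}$, yields a unital completely positive $\Aa$-bimodule map $\Theta:\oplus_{\Aa}\ell^{\infty}_{e}\to\Ss_{\G}$ with $\Theta\circ\phi_{e}=i_{e}$, where $\phi_{e}$ denote the coproduct embeddings. Conversely, the elements $\phi_{e}(\delta_{x}^{e})$, $x\in V$, are well defined (they agree over $W$ because the $\phi_{e}$ agree on $\Aa$) and constitute a POR of $\G$ inside a faithful representation of $\oplus_{\Aa}\ell^{\infty}_{e}$, so Theorem \ref{universalprop} yields a unital completely positive map $\Lambda:\Ss_{\G}\to\oplus_{\Aa}\ell^{\infty}_{e}$ with $\Lambda(a_{x})=\phi_{e}(\delta_{x}^{e})$; inspecting the generators $a_{x}$ shows $\Lambda$ is again an $\Aa$-bimodule map. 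Now $\Theta\circ\Lambda$ fixes every $a_{x}$, hence equals $\id_{\Ss_{\G}}$ by the uniqueness in Theorem \ref{universalprop}, while $\Lambda\circ\Theta$ is an $\Aa$-bimodule map fixing every $\phi_{e}(\delta_{x}^{e})$, hence equals the identity by the uniqueness in the coproduct's universal property. Thus $\Theta$ and $\Lambda$ are mutually inverse complete order isomorphisms and $\Ss_{\G}=\oplus_{\Aa}\ell^{\infty}_{e_{j}}$.

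Finally, to see that $\G$ is dilating I would use the construction of the coproduct in \cite{coprod23}: $\oplus_{\Aa}\ell^{\infty}_{e}$ sits inside the amalgamated free product $\ast_{\Aa}\ell^{\infty}_{e}$ as the operator subsystem $\spann\bigcup_{e\in E}\ell^{\infty}_{e}$, which generates $\ast_{\Aa}\ell^{\infty}_{e}$ as a C*-algebra; transporting this along $\Lambda$ exhibits a C*-cover of $\Ss_{\G}$ in which every $a_{x}$ is a projection, so $\G$ is dilating by Corollary \ref{corollaryC*env}. (Equivalently, a comparison of universal properties identifies $\ast_{\Aa}\ell^{\infty}_{e}$ with $C^{*}(\G)$ and $\oplus_{\Aa}\ell^{\infty}_{e}$ with $\Tt_{\G}$, so $\Ss_{\G}\cong\Tt_{\G}$, in agreement with Theorem \ref{th_dila}.) The claim about $\Jj$ was settled in the first step. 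I expect the principal obstacle to be the second step — manufacturing the operator $\Aa$-system structure on $\Ss_{\G}$ and checking faithfulness and the bimodule identities — which is precisely where the hypothesis enters, through the strictly positive probabilistic model (requiring both $\bigcap_{e}e\neq\emptyset$ and all private parts nonempty); the degenerate case, when some private part is empty, must be dispatched on its own as noted above.
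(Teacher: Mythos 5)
Your identification of $\Ss_{\G}$ with the amalgamated coproduct is sound and in places more explicit than the paper's: you use the same algebra $\Aa\cong\ell^{\infty}(W)\oplus\C$, you equip $\Ss_{\G}$ with the descended $\Aa$-module structure (the invariance of $\Jj$ under the diagonal action is indeed a short computation from $e\cap e'=W$), and you obtain the isomorphism by playing the two universal properties against each other, which is essentially the paper's ``straightforward to verify'' step carried out in detail. Your observation that the non-degenerate scenario is uniformisable, so that Proposition \ref{p_nullsubspace} already makes $\Jj$ a null subspace and hence a completely proximinal kernel, is an attractive alternative to the paper's appeal to \cite[Proposition 3.6]{coprod23}. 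Splitting off the degenerate case (some edge equal to $W$) is also warranted, since the paper's claims that the embeddings $\e_{j}$ are injective and the $\Aa$-actions faithful silently require all private parts to be nonempty; but note that in that case your own assertions are left unverified, and the coproduct description cannot hold literally when one edge equals $W$ while another is strictly larger (an $\ell^{\infty}_{e}$ of dimension $>|W|$ cannot embed completely order isomorphically into $\Ss_{\G}\cong\ell^{\infty}_{W}$), so ``checked by hand'' is carrying real weight there.

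The genuine gap is in the dilation step, which is the headline claim. You derive dilatability from the assertion that $\oplus_{\Aa}\ell^{\infty}_{e}$ sits inside the amalgamated free product $\ast_{\Aa}\ell^{\infty}_{e}$ as $\spann\bigcup_{e\in E}\ell^{\infty}_{e}$, attributing this to the construction in \cite{coprod23}. The way that reference is used in the paper (existence of the coproduct plus complete proximinality of a quotient kernel) points to a quotient construction rather than a free-product realisation, and the realisation you need is not formal: showing that the span of the factors inside $\ast_{\Aa}\ell^{\infty}_{e}$ has the required universal property amounts to extending families of unital completely positive maps on the $\ell^{\infty}_{e}$ with a common restriction to $\Aa$ to a unital completely positive map on $\ast_{\Aa}\ell^{\infty}_{e}$, i.e.\ an amalgamated Boca-type theorem. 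That is precisely how the paper proves dilatability: given a POR it builds the maps $\phi_{e_{j}}$ on $\Cc_{e_{j}}$ with common restriction $L$ on $\Aa$, invokes \cite[Theorem 3.2]{davidson_kakariadis_2019} to extend them to $\ast_{\Aa}\Cc_{e_{j}}$, and then applies Stinespring, the canonical generators being projections in the amalgamated free product. So either you import that extension theorem (in which case your route and the paper's essentially coincide, with the two conclusions established in the opposite order), or the key step that every $a_{x}$ is a projection in some C*-cover of $\Ss_{\G}$ — condition (ii) of Corollary \ref{corollaryC*env} — remains unproved: the abstract coproduct structure of $\Ss_{\G}$ by itself does not supply it.
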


    \begin{proof}
Suppose that the hypergraph $ \G = (V,E)$ has $V = \{x_{1}, \dots,x_{m} \}$ and $ E = \{e_{1}, \dots,e_{d} \}$, (so that $ |e_{j}|\leq m$ for each $j\in [d]$).  We identify $\ell^{\infty}_{e_{j}}$ with 
the unital C*-algebra 
            $$ \Cc_{e_{j}} := \{ (\chi_{e_{j}}(x_{i})\mu_{i} )_{i=1}^{m} : \mu_{i} \in \C\}, $$
where $\chi_{e_{j}}$ is the characteristic function of $e_j$. 
We view the algebras $\ell^\infty_{e_j}$ as linear  subspaces of $\C^m$. 
Set $f = \cap_{e\in E} e$ and 
define 
        $$ \Aa:= C^{*}\{ 1_{m}, (\chi_f(x_i)\mu_i)_{i=1}^{n} : \mu_{i} \in \C\} \subseteq \ell^{\infty}([m]),$$ 
    where $ 1_{m} $ is the unit in $\ell^{\infty}([m])$; 
    thus, $\Aa$ is generated by $1_m$ and only those canonical basis elements 
    $\delta_{x}^{m}$ of $ \C^{m}$ that satisfy $x \in f$. 
Clearly, $ \Aa$ is a unital C*-subalgebra of $ \ell^{\infty}([m])$.

We now prove that $ \G$ is dilating. 
Let  $(A_{x_{i}})_{i=1}^{m}\subseteq \Bh $ be a POR.  
For each $j=1,\cdots,d$, define the maps 
$\e_{j} : \Aa \rightarrow \Cc_{e_{j}}$ by letting
 $$            \e_{j}(\chi_{f}(x_{i})\mu_{i})_{i=1}^{m} = (\chi_{f}(x_{i})\mu_i)_{i=1}^{m}, \ \ 
            \e_{j}(1_m) = 1_{e_{j}}.              $$
We note the maps $\e_{j}$ are injective unital $*$-homomorphisms between $C^{*}$-algebras. Define the  maps 
    $\phi_{e_{j}} : \Cc_{e_{j}} \rightarrow \Bh$
    by letting
    $\phi_{e_{j}}(\delta_x^{e_{j}}) = A_{x}$, 
    and the linear map $ L :\Aa \rightarrow \Bh$ by letting
    $ L(1_{m})= I_{\Hh}$ and $ L(\delta_x^m)= A_{x} $ for $x \in f$. 
     We note that $ \phi_{e_{j}}$ are unital completely positive maps on $ \Cc_{e_{j}}$ that restrict to a common linear map of $\Aa$, 
namely $L$. By \cite[Theorem 3.2.]{davidson_kakariadis_2019}, there 
exists a unital completely positive map  $\Phi : *_{\Aa}\Cc_{e_{j}} \rightarrow \Bh$ 
such that $ \Phi\arrowvert_{\Cc_{e_{j}}} = \phi_{e_j}$, $j\in [d]$
(here $*_{\Aa}\Cc_{e_{j}}$ denotes the free product of the $C^{*}$-algebras $ \Cc_{e_{j}}$, amalgamated over the unital $C^{*}$-algebra $\Aa$, see \cite{davidson_kakariadis_2019}).
Noting that the basis elements $ \delta_{x_i}^{e_{j}}$ of $ \Cc_{e_{j}}$ are projections, 
a standard application of Stinespring's Dilation Theorem shows the existence of a dilation 
and completes the first part of the proof. 

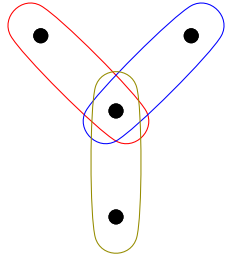
\begin{figure}
    \centering
  \begin{tikzpicture}

    \node (v2) at (1,3) {};
    \node (v3) at (2,2) {};
    \node (v4) at (2,0.59) {};
    \node (v5) at (3,3) {};

      \draw  [red] plot [smooth cycle] coordinates {(1,3.4) (0.6,3) (2,1.6) (2.4,2) };

     \draw [blue] plot [smooth cycle] coordinates { (3,3.4)  (3.4,3) (2,1.6) (1.6,2)  }; 
     
     \draw [olive] plot [smooth cycle] coordinates { (2.275,0.315)  (2.275,2.315) (1.725,2.315) (1.725,0.315)  };

    \foreach \v in {2,3,4,5} {
        \fill (v\v) circle (0.1);
    }

    \fill (v2) circle (0.1) node [below right] { };
    \fill (v3) circle (0.1) node [left] { };
    \fill (v4) circle (0.1) node [below] { };
    \fill (v5) circle (0.1) node [below right] { };

\end{tikzpicture}
    \caption{A dilating contextuality scenario.}
    \label{fig:my_label2}
\end{figure}

    Now note that $\Cc_{e_{j}} $ is a faithful operator $\Aa$-system for every $j\in [d]$ with module action $\Cc_{e_{j}} \times \Aa \rightarrow \Cc_{e_{j}}$ 
    given by pointwise multiplication, 
    $$     (\chi_{e_{j}}(x_{i})\mu_{i} )_{i=1}^{m} \cdot (\chi_f(x_{i})\nu_{i})_{i=1}^{m} 
    := (\chi_f(x_{i}) \mu_{i} \nu_{i})_{i=1}^{m}.$$
    Also, $ (\chi_{e_{j}}(x_{i})\mu_{i} )_{i=1}^{m} \cdot 1_{m} =  (\chi_{e_{j}}(x_{i})\mu_{i} )_{i=1}^{m} $. 
    For faithfulness, simply note that the unit of $\Cc_{e_{j}}$ is $ 1_{e_{j}}:=( \chi_{e_j}(x_i) )_{i=1}^m $; so letting $(\chi_f(x_i)\nu_{i})_{i=1}^m \in \Aa$ we have $( \chi_{e_j}(x_i) )_{i=1}^m \cdot  (\chi_f(x_{i})\nu_{i})_{i=1}^{m} =(\chi_f(x_{i})\nu_{i})_{i=1}^{m} $.
     
Using \cite[Theorem 3.3]{coprod23}, we form the operator $\Aa$-system coproduct 
$ \oplus_{\Aa} \ell^{\infty}_{e_{j}}$. It is straightforward to verify that $ \oplus_{\Aa} \ell^{\infty}_{e_{j}}$ satisfies the universal property of $\Ss_{\G}$ and thus, 
by Proposition \ref{uniqueness}, $ \Ss_{\G} = \oplus_{\Aa} \ell^{\infty}_{e_{j}}$
 completely order isomorphically.
Finally, the claim that $\Jj$ is a completely proximinal kernel 
follows from \cite[Proposition 3.6]{coprod23}.
\end{proof}

An example of a contextuality scenario satisfying the assumptions of 
Theorem \ref{projectivedilation}, in particular,  
of a dilating scenario, is given in Figure \ref{fig:my_label2}.

\begin{remark} \label{r_bellscenarios}
    Let $X,  A$ be finite sets and $\mathbb{B}_{X,A}$ be the Bell scenario \cite{Acn2015ACA}, 
that is, $\mathbb{B}_{X,A} = (V,E)$, where 
$ V = X \times A$ and $ E= \{ \{x\} \times A: x\in X\}$.  We note that Bell scenarios $ \bb{B}_{X,A}$ are dilating. 
Indeed, 
in this case, $e' \cap e''=  \bigcap_{e\in E} e = \emptyset$, for all $ e', e'' \in E$, 
and hence a POR $ E=(E_{x,a})_{(x,a)\in X \times A}$ of $ \bb{B}_{X,A}$ is in fact a family of POVM's;
Theorem \ref{projectivedilation} now implies 
that any family of POVM's admits a dilation to a family of PVM's. In particular, $\Aa = \C $ and the same arguments show that the operator system $ \Ss_{\bb{B}_{X,A}} $ is unitally completely order isomorphic to   the unital coproduct $ \underbrace{\ell^{\infty}_{A}\oplus_{1}\cdots \oplus_{1}\ell^{\infty}_{A}}_{|X| \ {\rm times}}$ (see also \cite{Paulsen2013QUANTUMCN} and 
\cite{Lupini-etal}).
\end{remark}

We conclude this section by giving a necessary condition for a contextuality scenario to be dilating.

\begin{proposition} \label{testfordilating}
    Let $\G = (V,E)$ be a dilating contextuality scenario.
    For every $ x\in V$   
    there exists a probabilistic model $ p \in \Gg(\G)$ such that $ p(x) =1$.
    \end{proposition}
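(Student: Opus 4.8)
The plan is to move the question to the universal operator system $\cl S_{\G}$ and to exploit that, for a dilating scenario, the canonical generators $a_{y}$ become projections in a concrete realisation.

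First I would fix $x\in V$ and invoke Corollary \ref{corollaryC*env}: since $\G$ is dilating, $\iota_{\mathrm e}(a_{x})$ is a projection in $C^{*}_{\mathrm e}(\cl S_{\G})$. Representing $C^{*}_{\mathrm e}(\cl S_{\G})$ by a faithful unital $*$-representation on a Hilbert space $\cl H$ and identifying $\cl S_{\G}$ with its image, this realises $\cl S_{\G}$ as a concrete operator system in $\cl B(\cl H)$ in which every $a_{y}$, $y\in V$, is an orthogonal projection. Since $a_{x}$ is a non-zero projection, I would pick a unit vector $\psi$ in the range of $a_{x}$, so that $a_{x}\psi=\psi$.

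Next I would take the vector functional $s(\cdot)=\langle\,\cdot\,\psi,\psi\rangle$, restricted to $\cl S_{\G}$; this is a state of $\cl S_{\G}$, and $s(a_{x})=\langle a_{x}\psi,\psi\rangle=\|\psi\|^{2}=1$. Finally, applying Corollary \ref{p_dualSG} (the implication that a state of $\cl S_{\G}$ yields a probabilistic model via its values on the $a_{y}$), the function $p:V\to[0,1]$ defined by $p(y):=s(a_{y})$ belongs to $\Gg(\G)$, and $p(x)=s(a_{x})=1$, as desired. One could run the argument equally well inside $C^{*}(\G)=C^{*}_{\mathrm e}(\cl T_{\G})$, using Theorem \ref{th_dila} to identify $a_{x}$ with the generating projection $p_{x}$ of $C^{*}(\G)$ and Lemma \ref{l_idCstareTG}; then one needs a state $\omega$ of $C^{*}(\G)$ with $\omega(p_{x})=1$ (which exists precisely because $p_{x}\neq 0$, taking $\omega$ to be a vector state along a unit vector in the range of $p_{x}$ in a faithful representation), and restricts $\omega$ to $\cl T_{\G}\cong\cl S_{\G}$.

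The main obstacle — the one step that genuinely uses the dilating hypothesis rather than general operator-system formalities — is obtaining $a_{x}$ as a projection, via Corollary \ref{corollaryC*env} (equivalently Theorem \ref{th_dila}); everything after that (a non-zero projection in a concrete operator system admits a state evaluating to $1$ on it, and states of $\cl S_{\G}$ correspond through their values on the generators to elements of $\Gg(\G)$) is routine. The only further point deserving a moment's care is the fact that $a_{x}\neq 0$ — equivalently that $p_{x}\neq 0$ in $C^{*}(\G)$, equivalently that some probabilistic model assigns positive weight to $x$ — which guarantees that the range of $a_{x}$ actually contains a unit vector.
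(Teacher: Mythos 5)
Your argument is essentially the paper's: both proofs invoke Corollary \ref{corollaryC*env} to realise the generators $a_y$ as projections in $C^{*}_{\rm e}(\cl S_{\G})$, produce a state of $\cl S_{\G}$ taking the value $1$ at $a_x$ (the paper via norm-attainment on the positive element $a_x$ of norm one, you via a vector state along a unit vector in the range of the projection --- the same device), and then translate back to a probabilistic model through Corollary \ref{p_dualSG}.

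However, the point you set aside as ``deserving a moment's care'', namely that $a_x\neq 0$, is precisely where the argument is incomplete --- and the paper's own proof has the same gap, since it passes silently from ``$a_x$ is a projection'' to ``$\|a_x\|=1$'', which fails for the zero projection. Moreover, this cannot be repaired in general: take $V=\{x,y\}$ and $E=\{\{x\},\{x,y\}\}$ (a scenario the paper itself uses elsewhere). The only POR on any Hilbert space is $A_x=I_{\cl H}$, $A_y=0$, which is already a PR, so the scenario is dilating; yet the unique probabilistic model has $p(y)=0$, so no model attains $p(y)=1$, and correspondingly $a_y=0$ in $\cl S_{\G}$. So $a_x\neq 0$ is not a ``fact'' for dilating scenarios, and the statement requires an extra non-degeneracy hypothesis (e.g.\ that every vertex receives positive weight under some probabilistic model, equivalently that $a_x\neq 0$ for every $x\in V$), under which your proof --- and the paper's --- goes through verbatim. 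This is a defect of the proposition as stated rather than of your route specifically, but as written your proposal asserts $a_x\neq0$ without justification, and no justification exists in general.
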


    \begin{proof}
        Since $\G$ is dilating, by Proposition \ref{corollaryC*env} the generators 
        $(a_{x})_{x\in V}$ of the operator system $ \Ss_{\G}$
        are projections in the C*-envelope $ C^{*}_{\rm e}(\Ss_{\G})$. 
        It follows that $\|a_x\| = 1$ for all $x\in V$.
        By Corollary \ref{p_dualSG},
        there exists a probabilistic model of $\G$ such that $ p(x)=1$.
    \end{proof}

\begin{remark}
Note that for the contextuality scenario $\G_{0}$ of Figure \ref{fig:noPRs2}, 
there is no probabilistic model $ p $ such that $ p(x_5)=1$. Indeed, if there were such a probabilistic model, then we would have that $p(x_i)= 0$ for all $ i=1,2,3,4$, 
contradicting the fact that $\{x_3,x_4\}$ is a hyperedge. 
Thus, according to our test in Proposition \ref{testfordilating}, 
this contextuality scenario is not dilating; in particular, 
$ \Ss_{\G_0} \neq \Tt_{\G_{0}}$.
\end{remark}

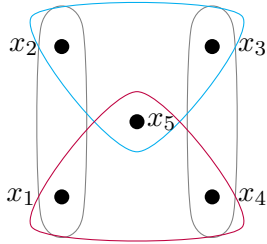
\begin{figure}
    \centering
  \begin{tikzpicture}
   \node (v1) at (0,2) {};
    \node (v2) at (1,3) {};
    \node (v3) at (2,2) {};
    \node (v4) at (4,2) {};
    \node (v5) at (5,3) {};
    \node (v6) at (6,2) {};
    \node (v7) at (3,1) {};
    \node (v8) at (2,0) {};
    \node (v9) at (4,0) {};

     
      
 
     \draw [gray]  plot [smooth cycle] coordinates {(3.725,2.275) (4.275,2.275)  (4.275,-0.275)(3.725,-0.275) };
     
      \draw [gray]  plot [smooth cycle] coordinates {(1.725,2.275) (2.275,2.275) (2.275,-0.275) (1.725,-0.275) };
      
      

      
      \draw [cyan] plot [smooth cycle] coordinates {(1.6,2.4) (4.4,2.4) (3,0.6)  };

      \draw [purple] plot [smooth cycle] coordinates {(1.6,-0.4) (4.4,-0.4) (3,1.4)  };


    \fill (v3) circle (0.1) node [left =5] { $x_2$};
    \fill (v4) circle (0.1) node [right =6] { $x_3$ };
   \fill (v7)  circle (0.1) node [right]
   {$x_5$};
   \fill (v8)  circle (0.1) node [left =6]
   { $x_1$};
   \fill (v9)  circle (0.1) node [right=6]
   {$x_4$ };

\end{tikzpicture}
    \caption{A non-dilating contextuality scenario.}
    \label{fig:noPRs2}
\end{figure}


\section{The universal C*-cover of $\cl S_{\G}$}\label{ss_univcov}

Let $\G = (V,E)$ be a non-trivial contextuality scenario. 
Extending the concepts of a stochastic operator matrix and 
of a quantum magic square, in this section
we define the notion of a $\G$-stochastic operator matrix, 
which allows us to obtain a natural 
C*-cover for the operator system $\Ss_{\G}$ defined in Subsection 
\ref{universalopsection}. 

\begin{definition}
    A \emph{$\G$-stochastic operator matrix} over a contextuality scenario $ \G = (V,E)$ is a 
    positive operator $A=[A_{x,x'}]_{x,x'\in V}$ in $M_{V}(\Bh) $ such that 
    $\sum_{x\in e}A_{x,x} = I_{\cl H}$, for every  $e\in E$.
\end{definition}

\begin{remark}\label{r_conntoPOR}
If $ (A_{x})_{x\in V} \subseteq \Bh$ is a POR of $\G$, 
then the assignment $ A_{x,x'}:= \delta_{x,x'}A_{x}$
defines a (diagonal) $\G$-stochastic operator matrix. 
Conversely, the diagonal of any $\G$-stochastic operator matrix 
determines a POR of $\G$.
\end{remark}

\begin{remark}\label{r_scGst}
Let $X, A$ be finite sets and $\mathbb{B}_{X,A}$ be the Bell scenario, 
that is, $\mathbb{B}_{X,A} = (V,E)$, where 
$ V = X \times A$ and $ E= \{ \{x\} \times A: x\in X\}$. 
The $\mathbb{B}_{X,A}$-stochastic matrices coincide with the 
stochastic operator matrices over $(X,A)$ introduced in 
\cite[Section 5]{Todorov2020QuantumNC}. 
Further, recalling the contextuality scenario $\G_n$ of the 
magic square, introduced in Example \ref{ex_qms}, we have that 
the $\G_n$-stochastic matrices are precisely the bistochastic 
operator matrices of size $n\times n$, defined in 
\cite[Section 3]{BHTT2}.
\end{remark}

The next proposition is a hypergraph analogue of 
\cite[Theorem 3.1]{Todorov2020QuantumNC}.

\begin{proposition} \label{theoremgstoch}
    Let $ \G=(V,E)$ be a 
    contextuality scenario and $A \in M_{V}(\Bh)^{+}$. 
    The following are equivalent:
    \begin{enumerate}
\item $A$ is a $\G$-stochastic operator matrix;
\item there exist a Hilbert space $\Kk$ and a family
        $ (U_{x})_{x\in V}\subseteq \B(\Hh,\Kk)$, such that 
        $ \sum_{x\in e} U^{*}_{x} U_{x} = I_{\Kk}$, 
        $ e\in E$, and 
        $A_{x,x'} = U^{*}_{x} U_{x'}$, $x,x' \in V$.
    \end{enumerate}
\end{proposition}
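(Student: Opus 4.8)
The plan is to regard this as the hypergraph counterpart of the standard dilation theorem for stochastic operator matrices, and to obtain the factorisation in (ii) from the positive square root of $A$ (equivalently, from a Stinespring dilation of the completely positive map $M_V\to\Bh$ that $A$ determines via the Choi correspondence). I would organise the argument as the two implications.

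The implication $(ii)\Rightarrow(i)$ is routine and I would dispatch it by a direct computation. Given $\Kk$ and $(U_x)_{x\in V}\subseteq\B(\Hh,\Kk)$ with $A_{x,x'}=U_x^*U_{x'}$, for any finite family $(\xi_x)_{x\in V}$ in $\Hh$ we have
\[
\sum_{x,x'\in V}\langle A_{x,x'}\xi_{x'},\xi_x\rangle=\sum_{x,x'\in V}\langle U_{x'}\xi_{x'},U_x\xi_x\rangle=\Bigl\|\sum_{x\in V}U_x\xi_x\Bigr\|^2\geq0,
\]
so $A=[A_{x,x'}]\in M_V(\Bh)^{+}$; and $\sum_{x\in e}A_{x,x}=\sum_{x\in e}U_x^*U_x=I_{\Hh}$ for each $e\in E$, so $A$ is a $\G$-stochastic operator matrix.

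For $(i)\Rightarrow(ii)$ I would use the identification $M_V(\Bh)=\B(\bb{C}^V\otimes\Hh)$, under which an operator matrix $[B_{x,x'}]$ sends $\delta_{x'}\otimes\xi$ to $\sum_{x}\delta_x\otimes B_{x,x'}\xi$. Set $\Kk=\bb{C}^V\otimes\Hh$, let $A^{1/2}\in\B(\Kk)$ be the positive square root of $A$, and define $U_x\in\B(\Hh,\Kk)$ by $U_x\xi=A^{1/2}(\delta_x\otimes\xi)$ (this is bounded, with $\|U_x\|\leq\|A\|^{1/2}$). Using that $A^{1/2}$ is self-adjoint and that $A(\delta_{x'}\otimes\xi')=\sum_{z}\delta_z\otimes A_{z,x'}\xi'$, one computes
\[
\langle U_x^*U_{x'}\xi',\xi\rangle=\langle A^{1/2}(\delta_{x'}\otimes\xi'),A^{1/2}(\delta_x\otimes\xi)\rangle=\langle A(\delta_{x'}\otimes\xi'),\delta_x\otimes\xi\rangle=\langle A_{x,x'}\xi',\xi\rangle
\]
for all $\xi,\xi'\in\Hh$, whence $A_{x,x'}=U_x^*U_{x'}$ for all $x,x'\in V$. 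Consequently $\sum_{x\in e}U_x^*U_x=\sum_{x\in e}A_{x,x}=I_{\Hh}$ for every $e\in E$, which is precisely the relation required in (ii). (One may, if desired, cut $\Kk$ down to the closure of $A^{1/2}(\bb{C}^V\otimes\Hh)$, or instead run the GNS/Kolmogorov construction on $\bb{C}^V\otimes\Hh$ equipped with the semi-definite form $(\eta,\zeta)\mapsto\langle A\eta,\zeta\rangle$; all of these yield the same factorisation up to a reduction.)

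I do not expect a genuine obstacle: the mathematical content is the Kolmogorov factorisation of a positive operator matrix, exactly as in the stochastic-operator-matrix case from which this statement is modelled. The only points that need care are the bookkeeping in the identification $M_V(\Bh)\cong\B(\bb{C}^V\otimes\Hh)$ and keeping the index conventions straight so that $U_x^*U_{x'}$ reproduces $A_{x,x'}$ and not $A_{x',x}$; once those are fixed, the $\G$-stochasticity relation passes through verbatim to $\sum_{x\in e}U_x^*U_x=I_{\Hh}$, and conversely.
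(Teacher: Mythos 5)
Your proof is correct. The implication $(ii)\Rightarrow(i)$ is the same routine verification as in the paper (note that the identity $\sum_{x\in e}U_x^*U_x=I$ lives in $\cl B(\cl H)$, as you correctly write, rather than in $\cl B(\cl K)$ as the statement's typo suggests). For $(i)\Rightarrow(ii)$ you take a mildly different, more elementary route: the paper passes through the Choi correspondence to get a completely positive map $\Phi_A:M_V\to\cl B(\cl H)$, invokes Stinespring's Dilation Theorem, uses the structure of representations of $M_V$ to write the dilation space as $\bb{C}^V\otimes\cl K$, and reads off the operators $U_x$ from the columns of the Stinespring isometry; you instead factor $A=(A^{1/2})^*A^{1/2}$ directly on $\bb{C}^V\otimes\cl H$ and set $U_x\xi=A^{1/2}(\delta_x\otimes\xi)$. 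The two constructions produce the same Kolmogorov factorisation, but yours avoids citing Choi and Stinespring and keeps the dilation space completely explicit, at the (negligible) cost of not exhibiting $\cl K_1$ in the canonical form $\bb{C}^V\otimes\cl K$ with $\pi(T)=T\otimes I_{\cl K}$; your index bookkeeping is consistent and the computation $\langle U_x^*U_{x'}\xi',\xi\rangle=\langle A(\delta_{x'}\otimes\xi'),\delta_x\otimes\xi\rangle=\langle A_{x,x'}\xi',\xi\rangle$ does recover $A_{x,x'}$ with the correct orientation.
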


\begin{proof}
$(i) \Rightarrow (ii)$
Let $A = [A_{x,x'}] \in M_{V}(\Bh)$ be a $\G$-stochastic operator matrix. 
Let $ \Phi_{A} :M_{V} \rightarrow \Bh$ be the map, given by 
$\Phi_{A}(\delta_{x}\delta_{x'}^{*}) :=A_{x,x'}$, $x,x'\in V$.
By Choi's theorem, $\Phi_{A}$ is completely positive.
By Stinespring's Dilation Theorem, 
there exists a Hilbert space $\Kk_{1}$, a unital *-homomorphism $ \pi : M_{V} \rightarrow \B(\Kk_{1})$ and a bounded operator 
$ U \in \B(\Hh,\Kk_{1})$ with $ \nor{U}^{2}=\nor{\Phi_{A}(1)}$ such that 
\[ \Phi_{A}(T) = U^{*}\pi(T) U, \ \ \ T\in \cl B(\cl H). \]
 Up to unitary equivalence, we write
 $\Kk_{1} = \C^{V}\otimes \Kk$, 
 where $ \Kk$ is a Hilbert space and  
 $\pi( T )=  T \otimes I_{\Kk}$, $ T \in M_{V}$. 
 Further, write 
$U(\xi) = \sum_{x\in V} \delta_{x}\otimes U_{x}(\xi)$,
 where $ U_{x} : \Hh \rightarrow \Kk$ is a bounded operator, $x\in V$.
Let $ \xi , \eta \in \Hh $ and note that, if $ x,x'\in V$, then 
\begin{align*}
    \sca{A_{x,x'}\xi, \eta}& = \sca{\Phi_{A}(\delta_{x}\delta_{x'}^{*})\xi,\eta} 
    = \sca{U^{*} \pi(\delta_{x}\delta_{x'}^{*}) U \xi , \eta}
    = \sca{(\delta_{x}\delta_{x'}^{*} \otimes I_{\Kk}) U \xi, U\eta} \\
    & = \left\langle(\delta_{x}\delta_{x'}^{*} \otimes I_{\Kk}) 
    \left(\sum_{z\in V}\delta_{z}\otimes U_{z}(\xi)\right), \left(\sum_{w\in V}\delta_{w}\otimes U_{w}(\eta) \right)\right\rangle\\
    & = \left\langle\delta_{x} \otimes U_{x'}(\xi),\sum_{w\in V}\delta_{w}\otimes U_{w}(\eta)\right\rangle
    =\sca{U_{x'}(\xi), U_{x}(\eta)}
    = \sca{U_{x}^{*}U_{x'}\xi,\eta}.
\end{align*}
It follows that 
$A_{x,x'} = U_{x}^{*}U_{x'}$, $x,x'\in V$.

$(ii) \Rightarrow (i)$ is straightforward. 
\end{proof}

Recall that a {\it ternary ring of operators (TRO)} is a 
closed subspace $\cl U\subseteq \cl B(\cl H,\cl K)$, where $\cl H$ and $\cl K$
are Hilbert spaces, such that $S,T,R\in \cl U \Rightarrow ST^*R\in \cl U$. 
Our next aim is to identify a TRO, universal for $\G$-stochastic matrices.
Its construction follows the steps from 
\cite[Section 5]{Todorov2020QuantumNC}; for the convenience of the 
reader, we provide the basic details. 
Let $ \Vv$ be the ternary ring \cite{hestenes, zettl} 
generated by elements $ (v_{x})_{x\in V}$, 
satisfying the relations 
\begin{align}
    \sum_{x\in e}[v_{x'},v_{x},v_{x}] = v_{x'}, \; \; \; x'\in V, e\in E .
\end{align}
The non-degenerate ternary representations 
$ \theta : \Vv \rightarrow \B(\Hh,\Kk)$ 
are in bijective correspondence with the families
$ (U_{x})_{x\in V} \subseteq \B(\Hh,\Kk)$ such that 
\begin{align} \label{familyofop}
    \sum_{x\in e} U_{x}^{*}U_{x} = I_{\cl H}, \ \ \ e\in E,
\end{align}
via the assignment $\theta(v_{x}) = U_{x}$, $x\in V$.
Since the contextuality scenario $\G$ is assumed non-trivial, the collection  $\mathcal{F} $ of all non-degenerate ternary representations of $ \Vv$ is non-empty.  
Let $\hat{\theta} := \oplus_{ \theta \in \mathcal{F}}\theta $ 
be the direct sum of all (equivalence classes) of non-degenerate ternary representations of $\Vv$
on Hilbert spaces of dimension bounded by the cardinallity of $\Vv$, define
$\nor{u}_{0}:= \nor{\hat{\theta} (u)}$, 
and note that $\nor{u}_{0} < \infty$ for every $u\in \Vv$. 
Indeed, let $ \theta \in \mathcal{F}$ and set $U_{x} := \theta(v_{x})$, $x\in V$. 
Note that for every $ x \in V$, there exists an $e \in E$ such that $x\in e$. 
So, for $ x\in V$, relation (\ref{familyofop}) implies that 
\[ U_{x}^{*} U_{x} \leq \sum_{x'\in e}U_{x'}^{*} U_{x'} = I,\]
hence, $\nor{\theta(v_{x})} = \nor{U_{x}} \leq I$. 
This implies that $\nor{\hat{\theta}(v_{x})} =\sup_{\theta \in \mathcal{F} }\nor{\theta(v_{x})} \leq 1$ for every $ x \in V$.
Since $ (v_{x})_{x\in V}$ are the generators of $\Vv$, we have that $ \nor{u}_{0} :=\nor{\hat{\theta}(u)} < \infty$ for every $ u \in \Vv$.

Let $\mathcal{N} =\{ u \in \Vv : \nor{u}_{0}=0\}$ and note that $\mathcal{N}$ is a ternary ideal of $\Vv$. Now $ \normdot_{0}$ induces a norm on 
$ \Vv/\mathcal{N}$ denoted by $ \normdot$, 
and the completion with respect to this norm gives rise to a TRO, which we denote by $ \Vv_{\G}$.
We note that every ternary representation 
$\hat{\theta}$ of $\Vv$ gives rise to a canonical 
ternary representation of $ \Vv_{\G}$ onto a concrete TRO and $ \nor{u} = \nor{\hat{\theta}(u)}$ continues to hold for every $ u \in \Vv$. 
So, $\hat{\theta}$ induces a faithful (isometric) representation of $ \Vv_{\G}$ as a concrete TRO in 
$\B(\Hh,\Kk)$, for some Hilbert spaces $\Hh$ and $\Kk$. 
Identify $ \Vv_{\G}$ with its image under $ \hat{\theta}$ and let $\Cc_{\G}$ be the the right C*-algebra of $\Vv_{\G}$, that is, 
\[ \Cc_{\G} = \overline{\spann}\{S^{*}T : S,T \in \Vv_{\G} \}. \] 
Let $g_{x,x'} := v_{x}^{*}v_{x'}$, $x,x'\in V$, viewed as elements of 
$\Cc_{\G}$, and
\begin{align}
    \mathcal{P}_{\G} :=\spann\{ g_{x,x'} : x,x' \in V\},
\end{align}
viewed as an operator subsystem of $ \Cc_{\G}$; note that 
\begin{equation}\label{eq_gxx}
\sum_{x\in e} g_{x,x} = 1, \; \; e\in E.
\end{equation}

\begin{theorem}\label{th_CstarGst}
    Let 
    $ \phi : \Pp_{\G} \rightarrow \Bh $ be a linear map. The following are equivalent.
    \begin{enumerate}
        \item $\phi $ is unital and completely positive;
        \item $[\phi(g_{x,x'})]_{x,x'\in V}$ is a $\G$-stochastic operator matrix;
        \item there exists a representation $ \pi: \Cc_{\G} \rightarrow \Bh$ such that $ \pi\arrowvert_{\Pp_{\G}} = \phi$.
    \end{enumerate}
\end{theorem}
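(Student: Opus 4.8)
The plan is to prove $(iii)\Rightarrow(i)\Rightarrow(ii)\Rightarrow(iii)$, following the template of \cite[Section 5]{Todorov2020QuantumNC} and relying on Proposition \ref{theoremgstoch} together with the relation (\ref{eq_gxx}). The implication $(iii)\Rightarrow(i)$ is immediate: if $\pi:\Cc_\G\to\Bh$ is a unital $*$-representation with $\pi|_{\Pp_\G}=\phi$, then $\phi$ is completely positive as the restriction of a $*$-homomorphism, and it is unital because, by (\ref{eq_gxx}), the unit $1=\sum_{x\in e}g_{x,x}$ of $\Cc_\G$ already lies in $\Pp_\G$, so $\phi(1)=\pi(1)=I_{\Hh}$.

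For $(i)\Rightarrow(ii)$ I would first record that $[g_{x,x'}]_{x,x'\in V}$ is positive in $M_{V}(\Cc_\G)$. Realising $\Vv_\G$ concretely as a TRO in $\B(\Hh_0,\Kk_0)$, so that $\Cc_\G\subseteq\B(\Hh_0)$, one computes
\[
\Big\langle [v_x^{*}v_{x'}]_{x,x'}\,(\xi_x)_{x},(\xi_x)_{x}\Big\rangle
= \Big\| \sum_{x\in V} v_x\xi_x \Big\|^{2} \ \ge\ 0, \qquad (\xi_x)_{x}\in\Hh_0^{|V|},
\]
so that $[g_{x,x'}]=[v_x^{*}v_{x'}]\ge 0$. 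Complete positivity of $\phi$ then yields $[\phi(g_{x,x'})]_{x,x'\in V}\in M_{V}(\Bh)^{+}$, while unitality of $\phi$ and (\ref{eq_gxx}) give $\sum_{x\in e}\phi(g_{x,x})=\phi(1)=I_{\Hh}$ for every $e\in E$; hence $[\phi(g_{x,x'})]$ is a $\G$-stochastic operator matrix.

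The substantive step is $(ii)\Rightarrow(iii)$. Assuming $A=[\phi(g_{x,x'})]$ is $\G$-stochastic, Proposition \ref{theoremgstoch} supplies a Hilbert space $\Kk$ and operators $(U_x)_{x\in V}\subseteq\B(\Hh,\Kk)$ satisfying $\sum_{x\in e}U_x^{*}U_x=I_{\Hh}$ for $e\in E$ and $\phi(g_{x,x'})=U_x^{*}U_{x'}$. Since the $(U_x)_x$ satisfy (\ref{familyofop}), they determine a non-degenerate ternary representation $\theta:\Vv\to\B(\Hh,\Kk)$ with $\theta(v_x)=U_x$ (after, if necessary, replacing $\Kk$ by $\overline{\spann}\{U_x\xi:x\in V,\ \xi\in\Hh\}$). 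By a routine cardinality reduction one may assume $\theta$ acts on a space of dimension at most $|\Vv|$, so that $\theta$ is a subrepresentation of $\hat\theta$; hence $\|\theta(u)\|\le\|u\|_0$ for $u\in\Vv$, and $\theta$ descends to $\Vv/\mathcal N$ and extends to a ternary representation $\tilde\theta:\Vv_\G\to\B(\Hh,\Kk)$. This in turn induces a unital $*$-representation $\pi:\Cc_\G\to\Bh$ of the right C*-algebra, determined by $\pi(S^{*}T)=\tilde\theta(S)^{*}\tilde\theta(T)$; it is unital since $\pi(1)=\sum_{x\in e}U_x^{*}U_x=I_{\Hh}$. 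Finally $\pi(g_{x,x'})=\pi(v_x^{*}v_{x'})=U_x^{*}U_{x'}=\phi(g_{x,x'})$, so $\pi|_{\Pp_\G}=\phi$, as required.

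The only delicate point, which I expect to be the main obstacle, is in $(ii)\Rightarrow(iii)$: one must verify that the concretely given family $(U_x)_x$ genuinely defines a representation of the TRO $\Vv_\G$ — and hence of its right C*-algebra $\Cc_\G$ — rather than merely of the abstract ternary ring $\Vv$. This is precisely the non-degeneracy and cardinality bookkeeping built into the construction of $\Vv_\G$, and it is handled exactly as in \cite[Section 5]{Todorov2020QuantumNC}. All remaining steps are formal.
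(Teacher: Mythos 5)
Your proof is correct and follows essentially the same route as the paper: positivity of $[g_{x,x'}]$ plus unitality and (\ref{eq_gxx}) for $(i)\Rightarrow(ii)$, Proposition \ref{theoremgstoch} and the universal property of $\Vv_{\G}$ and its right C*-algebra for $(ii)\Rightarrow(iii)$, and restriction of a $*$-homomorphism for $(iii)\Rightarrow(i)$. Your extra care in checking that the family $(U_x)_{x\in V}$ descends to a representation of $\Vv_{\G}$ (rather than merely of the abstract ternary ring $\Vv$) makes explicit a step the paper treats as routine, but the argument is the same.
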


    \begin{proof}
        $(i) \Rightarrow (ii) $ 
We claim that the matrix $[g_{x,x'}]_{x,x'\in V}$ is a positive element 
of $M_n(\Pp_{\G})$. To see this, assume that $\cl C_{\G}\subseteq \cl B(\cl K)$
as a unital C*-subalgebra, for some Hilbert space $\cl K$, and let 
$\theta : \cl V_{\G}\to \cl B(\cl K,\cl K')$ be a ternary representation, 
for a suitable Hilbert space $\cl K'$, such that 
$\cl C_{\G} = \overline{{\rm span}}(\theta(\cl V_{\G})^*\theta(\cl V_{\G}))$. 
Writing $V= \{x_{1},\dots,x_{m}\}$ and $T_i = \theta(v_{x_i})$, $i\in [m]$, 
we have that 
\[ [g_{x,x'}]_{x,x'\in V} =  \begin{bmatrix}
T_1 & \dots & T_m\\
 &   \bigcirc &
\end{bmatrix}^{*} \cdot \begin{bmatrix}
T_1 & \dots & T_m\\
 &   \bigcirc &
\end{bmatrix},\]
and hence 
$[g_{x,x'}]_{x,x'\in V}\in M_V(\cl B(\cl K))^+$. 
Since $\Pp_{\G}\subseteq \cl B(\cl K)$ as an operator subsystem, 
we have that $[g_{x,x'}]_{x,x'\in V}\in M_V(\Pp_{\G})^+$.
As $ \phi$ is completely positive, the matrix  $[\phi(g_{x,x'})]_{x,x'\in V}$ is positive; as $\phi$ is unital, by (\ref{eq_gxx}), 
$ \sum_{x\in e}\phi(g_{x,x}) = \phi(1)= I_{\cl H}$, $ e\in E$.

$(ii) \Rightarrow (iii)$ By Proposition \ref{theoremgstoch}, 
there exist a Hilbert space $ \Kk$ and operators $ U_{x} \in \cl B(\cl H,\cl K)$, $x \in V$, 
with $ \sum_{x\in e} U_{x}^{*}U_{x} = I_{\cl H}$, $ e\in E$, such that 
\[ \phi(v_{x}^{*}v_{x})= U_{x}^{*}U_{x'}, \; \; x,x' \in V. \]
The family $(U_x)_{x\in V}$ yields a non-degenerate representation $ \theta$ of $\Vv_{\G}$, which in turn gives rise to a unital *-representation $ \pi $ of $\Cc_{\G}$ via the assignment
\[ \pi(S^{*}T)= \theta(S)^{*}\theta(T), \; \; S,T \in \Cc_{\G}; \]
so, $\pi $ extends $ \phi$.

$(iii) \Rightarrow (i)$  $\phi$ is the restriction of a unital *-homomorphism, 
hence it is unital and completely positive.
    \end{proof}

We recall that the {\it universal C*-cover} of an 
operator system $\cl S$ is a C*-cover $(C^*_{\rm u}(\cl S), \iota_{\rm u})$
of $\cl S$ with the property that, if $\cl H$ is a Hilbert space and 
$\phi : \cl S\to \cl B(\cl H)$ is a unital completely positive map 
then there exists a unique *-representation $\pi : C^*_{\rm u}(\cl S)\to \cl B(\cl H)$
such that $\pi\circ\iota_{\rm u} = \phi$. 
We let $\cl B_{\G}$ be the C*-subalgebra of $\Cc_{\G}$, generated by
the elements $g_{x,x}$, $x\in V$.

\begin{corollary}\label{c_unicsta}
The following hold true:
\begin{enumerate}
\item   $\Ss_{\G} \subseteq \Pp_{\G}$ up to a canonical complete order 
embedding;

\item $C^{*}_{\rm u}(\Pp_{\G})$ is canonically *-isomorphic to $\Cc_{\G}$; 

\item $C^{*}_{\rm u}(\Ss_{\G})$ is is canonically *-isomorphic to $\cl B_{\G}$.
\end{enumerate}
\end{corollary}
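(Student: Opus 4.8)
The plan is to treat the three parts in order, in each case exhibiting a concrete C*-cover and verifying the relevant universal property, then appealing to uniqueness.

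For (i), I would produce a unital completely positive map $\iota : \Ss_{\G}\to\Pp_{\G}$ and a unital completely positive retraction $r : \Pp_{\G}\to\Ss_{\G}$, and deduce that $\iota$ is a complete order embedding. Since $g_{x,x}=v_x^*v_x\ge 0$ and $\sum_{x\in e}g_{x,x}=1$ by (\ref{eq_gxx}), the family $(g_{x,x})_{x\in V}$ is a POR of $\G$ (in a faithful representation of $\Cc_{\G}$), so Theorem \ref{universalprop} provides $\iota$ with $\iota(a_x)=g_{x,x}$, whose range lies in $\Pp_{\G}=\spann\{g_{x,x'}\}$. Conversely, realising $\Ss_{\G}$ concretely, the diagonal matrix $[\delta_{x,x'}a_x]_{x,x'\in V}$ is a $\G$-stochastic operator matrix by Remark \ref{r_conntoPOR}, so Theorem \ref{th_CstarGst} yields a unital completely positive $r : \Pp_{\G}\to\Ss_{\G}$ with $r(g_{x,x'})=\delta_{x,x'}a_x$. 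Then $r\circ\iota$ fixes each generator $a_x$, hence equals $\id_{\Ss_{\G}}$ by the uniqueness clause of Theorem \ref{universalprop}; since $r$ is completely positive, $\iota^{(n)}$ reflects positivity, so $\iota$ is the asserted complete order embedding.

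For (ii), I would check that the inclusion $\Pp_{\G}\hookrightarrow\Cc_{\G}$ is a C*-cover having the universal property of $C^*_{\rm u}(\Pp_{\G})$. It is a C*-cover because $\Cc_{\G}=\overline{\spann}\{S^*T:S,T\in\Vv_{\G}\}$ is generated by $\Pp_{\G}$: writing a spanning element $S^*T$ with $S,T$ ternary monomials in the $v_x$, the central junction is of the form $v^*v$, so after concatenation $S^*T$ becomes an alternating word $v_{b_1}^*v_{b_2}v_{b_3}^*\cdots v_{b_{2\ell}}=g_{b_1,b_2}g_{b_3,b_4}\cdots g_{b_{2\ell-1},b_{2\ell}}$, a product of elements of $\Pp_{\G}$. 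The universal property is then precisely the equivalence (i)$\Leftrightarrow$(iii) of Theorem \ref{th_CstarGst}, uniqueness of the extending $*$-representation being immediate since $\Pp_{\G}$ generates $\Cc_{\G}$. Uniqueness of the universal C*-cover gives the canonical $*$-isomorphism.

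For (iii), using the embedding from (i) I identify $\Ss_{\G}$ with $\spann\{g_{x,x}:x\in V\}\subseteq\Cc_{\G}$, so the C*-algebra it generates is, by definition, $\cl B_{\G}$; hence $(\cl B_{\G},\iota)$ is a C*-cover of $\Ss_{\G}$. To verify its universal property, given a unital completely positive $\psi : \Ss_{\G}\to\Bh$, the tuple $A_x:=\psi(a_x)$ is a POR, the diagonal matrix $[\delta_{x,x'}A_x]$ is $\G$-stochastic, and Proposition \ref{theoremgstoch} supplies operators $(U_x)_{x\in V}$ with $U_x^*U_{x'}=\delta_{x,x'}A_x$ and $\sum_{x\in e}U_x^*U_x=I$; the associated ternary representation of $\Vv_{\G}$ induces a $*$-representation $\pi$ of $\Cc_{\G}$ with $\pi(g_{x,x'})=\delta_{x,x'}A_x$, and $\pi|_{\cl B_{\G}}$ extends $\psi$, uniquely so since $\{g_{x,x}\}$ generates $\cl B_{\G}$. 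Again uniqueness of the universal C*-cover yields $C^*_{\rm u}(\Ss_{\G})\cong\cl B_{\G}$. The one step that needs genuine care is the complete order embedding in (i); everything else is bookkeeping with the universal properties of Theorems \ref{universalprop} and \ref{th_CstarGst} and Proposition \ref{theoremgstoch}.
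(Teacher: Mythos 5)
Your proof is correct and follows essentially the same route as the paper's: all three parts rest on Theorem \ref{th_CstarGst}, Proposition \ref{theoremgstoch} and the universal property of $\Ss_{\G}$ from Theorem \ref{universalprop}. The only organisational difference is that you establish the complete order embedding in (i) directly, via the ucp retraction $g_{x,x'}\mapsto\delta_{x,x'}a_x$, and then feed it into (iii), whereas the paper proves (ii) and (iii) first and deduces (i) from them; along the way you also make explicit some details the paper leaves implicit, such as the verification that $\Pp_{\G}$ generates $\Cc_{\G}$.
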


\begin{proof}
(ii) is immediate from Theorem \ref{th_CstarGst} and
the universal property of $C^{*}_{\rm u}(\Pp_{\G})$.

(iii) 
Write $\cl A$ for the C*-subalgebra of $\cl C_{\G}$, generated by 
the set $\{g_{x,x} : x\in V\}$. 
Let $\cl H$ be a Hilbert space and $\phi : \Ss_{\G} \to \cl B(\cl H)$
be a unital completely positive map. 
Set $A_{x,x'} = \delta_{x,x'}\phi(g_{x,x})$, $x,x'\in V$.
By Remark \ref{r_conntoPOR}, 
the matrix $(A_{x,x'})_{x,x'\in V}$ is $\G$-stochastic. 
By Theorem \ref{th_CstarGst}, there exists a *-representation 
$\pi : \cl C_{\G}\to \cl B(\cl H)$, such that 
$\pi(g_{x,x'}) = A_{x,x'}$, $x,x'\in V$. The restriction of $\pi$ to 
$\cl A$ is a *-homomorphism that extends $\phi$. By the universal property of 
$C^{*}_{\rm u}(\Ss_{\G})$, it is *-isomorphic to $\cl A$. 

(i) The claim follows from (i) and (ii), and the 
facts that $\cl P_{\G}\subseteq C^{*}_{\rm u}(\Pp_{\G})$ and 
$\cl S_{\G}\subseteq C^{*}_{\rm u}(\Ss_{\G})$. 
\end{proof}


\begin{corollary}\label{c_dicha}
Let $\G$ be a non-trivial contextuality scenario. The following are equivalent:
\begin{enumerate}
\item $\G$ is dilating; 

\item there exists a unital completely positive map 
$\psi : C^*(\G) \to \cl C_{\G}$ such that 
$\psi(p_x) = g_{x,x}$, $x\in V$. 
\end{enumerate}
\end{corollary}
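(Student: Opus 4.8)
The proof consists of the two implications; $(\mathrm{ii})\Rightarrow(\mathrm{i})$ is short, and the work is in $(\mathrm{i})\Rightarrow(\mathrm{ii})$.

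For $(\mathrm{ii})\Rightarrow(\mathrm{i})$, I would restrict $\psi$ to the operator subsystem $\Tt_{\G}=\spann\{p_x:x\in V\}$ of $C^*(\G)$. By Corollary~\ref{c_unicsta}, the canonical complete order embedding $\Ss_{\G}\hookrightarrow\cl C_{\G}$ realises $\Ss_{\G}$ as $\spann\{g_{x,x}:x\in V\}$ and carries $a_x$ to $g_{x,x}$; since $\psi(p_x)=g_{x,x}$, the map $\psi|_{\Tt_{\G}}$ takes values in this copy of $\Ss_{\G}$, and as $\Ss_{\G}\hookrightarrow\cl C_{\G}$ is a complete order embedding it is a unital completely positive map $\Tt_{\G}\to\Ss_{\G}$ sending $p_x$ to $a_x$. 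This map and the map $\Phi:\Ss_{\G}\to\Tt_{\G}$ of Remark~\ref{r_RTG} are mutually inverse on the spanning sets $\{a_x\}$ and $\{p_x\}$, hence everywhere, so $\Phi$ is a complete order isomorphism and Theorem~\ref{th_dila} gives that $\G$ is dilating.

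For $(\mathrm{i})\Rightarrow(\mathrm{ii})$, assume $\G$ is dilating. By Theorem~\ref{th_dila}, $\Phi:\Ss_{\G}\to\Tt_{\G}$ is a complete order isomorphism, so $\Phi^{-1}:\Tt_{\G}\to\Ss_{\G}$, $p_x\mapsto a_x$, is unital completely positive; composing it with the embedding $\Ss_{\G}\hookrightarrow\cl C_{\G}$ of Corollary~\ref{c_unicsta} yields a unital completely positive map $\iota_0:\Tt_{\G}\to\cl C_{\G}$ with $\iota_0(p_x)=g_{x,x}$. Since $C^*(\G)=C^*_{\mathrm e}(\Tt_{\G})$ by Lemma~\ref{l_idCstareTG}, the task is to extend $\iota_0$ to a unital completely positive map on all of $C^*(\G)$ whose image still lies in $\cl C_{\G}$; keeping the image inside $\cl C_{\G}$ (rather than merely in some $\B(\Hh)\supseteq\cl C_{\G}$, which Arveson's extension theorem would give for free) is the crux of the matter. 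Note, regardless of dilatability, that there is a canonical $*$-epimorphism $\pi_p:\cl C_{\G}\to C^*(\G)$ with $\pi_p(g_{x,x'})=p_xp_{x'}$, obtained via Theorem~\ref{th_CstarGst} from the ternary representation $v_x\mapsto p_x$ of $\Vv_{\G}$ (legitimate since the $p_x$ are projections with $\sum_{x\in e}p_x^*p_x=\sum_{x\in e}p_x=I$); thus $(\mathrm{ii})$ amounts to producing a completely positive section of $\pi_p$ carrying $p_x$ to $g_{x,x}$, and the plan is to build it from a dilation. Representing $\cl C_{\G}$ faithfully and non-degenerately, with $v_x$ acting as $U_x:\Hh\to\Kk$, $\sum_{x\in e}U_x^*U_x=I_{\Hh}$, $g_{x,x'}=U_x^*U_{x'}$, the diagonal family $(g_{x,x})_{x\in V}$ is a POR of $\G$ on $\Hh$ by equation~\eqref{eq_gxx} and Remark~\ref{r_conntoPOR}; by dilatability it dilates, along an isometry $W:\Hh\to\Hh'$, to a PR $(P_x)_{x\in V}$ on $\Hh'\supseteq\Hh$, inducing a $*$-representation $\pi:C^*(\G)\to\B(\Hh')$ with $\pi(p_x)=P_x$ and a unital completely positive map $\psi_0:=W^*\pi(\,\cdot\,)W:C^*(\G)\to\B(\Hh)$ with $\psi_0(p_x)=g_{x,x}$.

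The remaining point — which I expect to be the main obstacle — is that $\psi_0(C^*(\G))\subseteq\cl C_{\G}$. My plan is to realise the dilation inside the universal ternary representation underlying $\cl C_{\G}$: since $(P_x)$ is a PR, $v_x\mapsto P_x$ is itself a non-degenerate ternary representation of $\Vv_{\G}$, hence unitarily equivalent to a subrepresentation of the universal one, giving intertwining isometries $J:\Hh'\to\Hh$, $J':\Hh'\to\Kk$ with $U_xJ=J'P_x$ and with the ``complementary'' parts of the $U_x$ annihilating $J$ and $J'$. A telescoping computation with these intertwiners then yields $J^*(g_{x_1,x_2}g_{x_2,x_3}\cdots g_{x_{n-1},x_n})J=P_{x_1}P_{x_2}\cdots P_{x_n}$ for every word, so that $\psi_0(p_{x_1}\cdots p_{x_n})=(JW)^*(g_{x_1,x_2}\cdots g_{x_{n-1},x_n})(JW)$; choosing the dilation minimally makes $JW$ compatible with the faithful copy of $\cl C_{\G}$ (equivalently, makes $v_x\mapsto P_xW$ recover the original ternary representation), forcing $\psi_0$ to carry the dense span of the words into $\cl C_{\G}$, and hence $\psi_0(C^*(\G))\subseteq\cl C_{\G}$ by continuity. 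Setting $\psi=\psi_0$ then completes the proof, the delicate verification being the compatibility of the minimal dilation with the ternary structure.
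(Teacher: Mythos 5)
Your $(\mathrm{ii})\Rightarrow(\mathrm{i})$ is correct, though it takes a slightly different route from the paper: rather than reducing to Theorem~\ref{th_dila}, the paper starts from an arbitrary POR $(A_x)_{x\in V}$, uses Theorem~\ref{th_CstarGst} to obtain a $*$-representation $\pi$ of $\cl C_{\G}$ with $\pi(g_{x,x})=A_x$, composes with $\psi$ and applies Stinespring; your argument via the complete order embedding $\Ss_{\G}\cong\spann\{g_{x,x}:x\in V\}\subseteq\cl C_{\G}$ and the inverse of $\Phi$ is an equally valid (and arguably cleaner) alternative.

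For $(\mathrm{i})\Rightarrow(\mathrm{ii})$, the first half of what you do --- represent $\cl C_{\G}$ concretely, note that $(g_{x,x})_{x\in V}$ is a POR, dilate it to a PR $(P_x)_{x\in V}$ and compress the induced representation of $C^*(\G)$ along the dilation isometry --- is exactly the paper's proof; the paper simply takes $\psi=W^*\pi(\cdot)W$ at that point and does not discuss the issue you isolate, namely that the compression of a general element of $C^*(\G)$ lies in $\cl C_{\G}$. Your attempt to supply that verification is where the proposal has a genuine gap. First, a PR-induced ternary representation need not be unitarily equivalent to a subrepresentation of the universal one (multiplicity obstructions; at the very least one must amplify the chosen faithful copy of $\cl C_{\G}$). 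More seriously, the endgame is unsubstantiated: granting intertwiners $J,J'$ satisfying the full sub-bimodule relations $U_xJ=J'P_x$ and $U_x^*J'=JP_x$, the telescoping does give $\psi_0(p_{x_1}\cdots p_{x_n})=(JW)^*\,g_{x_1,x_2}\cdots g_{x_{n-1},x_n}\,(JW)$, but nothing forces $(JW)^*c\,(JW)\in\cl C_{\G}$ for $c\in\cl C_{\G}$; the only transparent way this could be automatic is if $JW$ were the identity of $\Hh$ or an element of the unital algebra $\cl C_{\G}$, and $JW=I_{\Hh}$ is impossible unless $\Hh'=W\Hh$ (since $J$ is isometric on all of $\Hh'$), i.e.\ unless the dilation is trivial. ``Choosing the dilation minimally'' is not shown to yield any compatibility of $JW$ with $\cl C_{\G}$, and no substitute mechanism is offered --- you flag this yourself as the delicate point. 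So, as written, the containment $\psi_0(C^*(\G))\subseteq\cl C_{\G}$ remains unproved in your proposal; be aware that the paper's own proof does not argue this either, but simply declares the compression to be the required map into $\cl C_{\G}$.
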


\begin{proof}
$(i)\Rightarrow(ii)$ 
Assume, without loss of generality, that 
$\cl C_{\G}\subseteq \cl B(\cl H)$ as a unital C*-subalgebra,
for some Hilbert space $\cl H$. 
Using the fact that $\G$ is dilating, let $(P_x)_{x\in V}$ be 
a PR of $\G$ on a Hilbert space $\cl K$ that dilates $(g_{x,x})_{x\in V}$. 
Let $\rho : C^*(\G)\to \cl B(\cl K)$ the *-homomorphism, 
such that $\rho(p_x) = P_x$, $x\in V$. 
We can take $\psi$ to be the compression of $\rho$ to $\cl H$. 

$(ii)\Rightarrow(i)$  
Suppose that $(A_x)_{x\in V}$ is a POR of $\G$. By Theorem \ref{th_CstarGst}, 
there exists a canonical *-homomorphism $\pi : \cl C_{\G}\to \cl B(\cl H)$,
such that $\pi(g_{x,x}) = A_x$, $x\in V$. 
Thus, $\psi\circ \pi : C^*(\G)\to \cl B(\cl H)$ is unital and completely 
positive; an application of Stinespring's Theorem yields 
a dilation of $(A_x)_{x\in V}$ that is a PR of $\G$. 
\end{proof}


\section{No-signalling probabilistic models}\label{s_NSpm}

In this section, we examine no-signalling probabilistic models, 
defined over the product of two contextuality scenarios, 
from the viewpoint of operator systems, characterising 
different types of no-signalling models via states on 
operator system tensor products. 
We start with some definitions. 

Let $\G=(V,E)$ and $ \Hbb =(W,F)$ be two contextuality scenarios. 
Writing $E \dot{\times} F = \{e\times f : e\in E, f\in F\}$, we call 
the contextuality scenario
$ \G \times \Hbb = (V\times W,E \dot{\times} F)$ the 
{\it product} of $\G$ and $\Hbb$. 
We set $\Gg(\G,\Hbb) := \Gg(\GH)$ (and write $\Gg = \Gg(\G,\Hbb)$ when no confusion arises).
A probabilistic model $p $ of $\GH $ is called {\rm no-signalling} \cite{Acn2015ACA} if 
$$    \sum_{x\in e}p(x,y) = \sum_{x\in e'}p(x,y), \; \;  y \in W, \;  e,e' \in E,$$
and
$$    \sum_{y\in f}p(x,y) = \sum_{y\in f'}p(x,y), \; \;  x \in V, \;  f,f' \in F.$$
We let $\Gg_{\rm ns}(\G,\Hbb)$ be 
the set of all no-signalling probabilistic models of $\GH$
(and write $\Gg_{\rm ns} = \Gg_{\rm ns}(\G,\Hbb)$ in case 
no confusion arises).
It is straightforward that the notion of a no-signalling probabilistic model
reduces to the notion of a no-signalling correlation in the case where 
$\G$ and $\Hbb$ are Bell scenarios (see Remark \ref{r_bellscenarios}).

We note that the inclusion $\Gg_{\rm ns} \subseteq \Gg$ can be strict.
Indeed, let $\G = (V,E)$, where 
$ V= \{x_1,x_2,x_3\}$, and $ E= \{e_1,e_2\}$ with $ e_1 = \{x_1,x_2\}$ and $e_2 = \{ x_2,x_3\}$. 
By the discussion after \cite[Definition 3.1.2]{Acn2015ACA},
$ \G \times \G$ admits a probabilistic model that fails to be no-signalling.

\begin{definition}
Let $ \G=(V,E)$ and $\Hbb=(W,F)$ be contextuality scenarios.
A no-signalling probabilistic model $ p$ of $\GH$ is called 
\begin{enumerate}
    \item \emph{deterministic}, if $p(x,y) \in  \{0,1 \} $ for all $ (x,y) \in V\times W$; 
    \item \emph{classical} if there exist an $m \in \N$ and 
    deterministic probabilistic models $ p_{i}^{1} $ of $ \G$ and $ p_{i}^{2}$ of $\Hbb$, $i=1,\dots, m$, 
    such that 
    \begin{align*}
        p(x,y) = \sum_{i=1}^{m}\lambda_{i}p_{i}^{1}(x)p_{i}^{2}(y), \; \; x\in V, \; y \in W,
    \end{align*}
as a convex combination;

\item a \emph{commuting probabilistic model}
(resp. a \emph{generalised commuting probabilistic model}), if there exist a Hilbert space $ \Hh$, 
a unit vector $ \xi  \in  \Hh $ and PR's (resp. POR's) 
$ (A_{x})_{x\in V}$ and $ (B_{y})_{y\in W}$ of $ \G$ and $\Hbb$ respectively, 
such that $ A_{x}B_{y}= B_{y} A_{x}$ for every $ x \in V$ and $y \in W$ and
\begin{equation}\label{eq_dqcmo}
    p(x,y) = \sca{A_{x}B_{y}\xi,\xi}, \; \; \; (x,y) \in V \times W;
\end{equation}

\item a \emph{tensor probabilistic model} 
(resp. \textrm{generalised tensor probabilistic model}), if 
there exist Hilbert spaces 
$\Hh_{\G}$ and $ \Hh_{\Hbb}$, 
and PR's (resp. POR's) 
$ (A'_{x})_{x\in V}\subseteq \B(\Hh_{\G})$ and $ (B'_{y})_{y\in W}\subseteq \B(\Hh_{\Hbb})$ of $ \G$ and $\Hbb$, respectively, such that (\ref{eq_dqcmo}) is fulfilled with 
$\Hh = \Hh_{\G} \otimes \Hh_{\Hbb}$, $A_x = A_x'\otimes I_{\Hh_{\Hbb}}$ and $B_y = I_{\Hh_{\G}}\otimes B_y'$, 
$x\in V$, $y\in W$.
\end{enumerate}
\end{definition}

The set of all classical no-signalling probabilistic models of $\GH$ 
will be denoted by $ \Cc(\G,\Hbb)$;
it is clear that $ \Cc(\G,\Hbb)$ is a convex polytope.
We let $ \Qq_{\rm qs}(\G,\Hbb)$ (resp. $\Qqc_{\rm qs}(\G,\Hbb)$)
the set of all tensor probabilistic models
(resp. generalised tensor probabilistic models) of $\GH$. 
We further denote by $ \Qq_{\rm q}(\G,\Hbb)$ (resp. $ \Qqc_{\rm q}(\G,\Hbb)$) 
the set of all tensor probabilistic models 
(resp. generalised tensor probabilistic models), 
such that the Hilbert spaces $ \Hh_{\G} $ and $ \Hh_{\Hbb}$ can be 
chosen to be finite dimensional.
We denote the closure of  $ \Qq_{\rm q}(\G,\Hbb)$ (resp. $ \Qqc_{\rm q}(\G,\Hbb)$) 
by $\Qq_{\rm qa}(\G,\Hbb) $ (resp. $ \Qqc_{\rm qa}(\G,\Hbb)$).
The set of all commuting probabilistic models
(resp. generalised commuting probabilistic models) 
will be denoted by $ \Qq_{\rm qc}(\G,\Hbb)$ (resp. $ \Qqc_{\rm qc}(\G,\Hbb)$).
We drop the dependence on $\G$ and $\Hbb$ in case no confusion arises, and 
note the inclusions
\begin{equation}\label{eq_lonin}
\Cc \subseteq \Qq_{\rm q} \subseteq \Qq_{\rm qs } \subseteq \Qqc_{\rm qs} \subseteq \Qqc_{\rm qc} \subseteq \Gg_{\rm ns} \subseteq \Gg, 
\ \ \ \ \Qq_{\rm q}\subseteq \Qqc_{\rm q } \subseteq \Qqc_{\rm qs} \ 
\mbox{ and }  \ 
    \Qq_{\rm qc} \subseteq \Qqc_{\rm qc}. 
\end{equation}



As we saw in Corollary \ref{p_dualSG}, for a non-trivial contextuality 
scenario $\G$, the states of the universal operator system 
$ \Ss_{\G}$ are in bijective correspondence to the probabilistic models of $\G$. 
We next relate the no-signalling models of a product scenario 
$\GH$ with states on operator system tensor products.
We fix contextuality scenarios $\G=(V,E)$  and $ \Hbb=(W,F)$ and recall that
$ \Ss_{\G}$ and $ \Ss_{\Hbb}$ denote their universal operator systems 
(see Subsection \ref{universalopsection}). 
Recall, further, that 
$ \delta_{x}^{e}$ denotes the 
respective canonical basis element of the $e$-th summand $\ell^{\infty}_{e}$ in the quotient 
$\Ss_{\G} = \oplus_{e\in E} \ell^{\infty}_{e} / \Jjc_{\G}$, and that $a_x$ stands for its image 
in $\Ss_{\G}$ under the quotient map. 
Similarly, we denote by $ \delta_{y}^{f}$ the corresponding element of 
$ \ell^{\infty}_{f}$, and let $b_y$ be its image in $\Ss_{\Hbb} = \oplus_{f\in F} \ell^{\infty}_{f} / \Jjc_{\Hbb}$.

For a linear functional $ s : \Ss_{\G} \otimes \Ss_{\Hbb} \rightarrow \C$, let 
\begin{align*}
    p^{s}(x,y) := s(a_{x} \otimes b_{y}), \; \; x\in e, \; y \in f.
\end{align*}


\begin{theorem}\label{th_cteprop}
Let $ \G$ and $\Hbb$ be contextuality scenarios and $p \in  \Gg(\G,\Hbb)$ be a probabilistic model. The following are equivalent:
\begin{enumerate}
    \item $ p \in \Gg_{\rm ns}(\G,\Hbb)$; 
    \item there exists a state $s$ on $ \Ss_{\G} \otimes_{\max}\Ss_{\Hbb} $ such that $p = p^s$.
\end{enumerate}
Moreover, the map $ s \mapsto p^{s}$ is an affine isomorphism between the states of 
$ \Ss_{\G} \otimes_{\max}\Ss_{\Hbb}$ and $ \Gg_{\rm ns}(\G,\Hbb)$.
\end{theorem}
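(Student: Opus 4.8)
The plan is to prove the equivalence (i)$\Leftrightarrow$(ii) and then note that the affine isomorphism statement follows immediately from the bijectivity of the correspondence $s\mapsto p^s$ together with its obvious affinity and the fact that both sides are convex sets. The key tool is the universal property of the maximal tensor product of operator systems: states on $\Ss_\G\otimes_{\max}\Ss_\Hbb$ are in bijective correspondence with pairs of unital completely positive maps $\Phi:\Ss_\G\to\Bh$, $\Psi:\Ss_\Hbb\to\Bh$ with commuting ranges, together with a unit vector (or normal state) on $\Hh$; more precisely, by the defining property of $\otimes_{\max}$, a state $s$ on $\Ss_\G\otimes_{\max}\Ss_\Hbb$ corresponds to a ucp map into $\Bh$, which by Stinespring-type arguments decomposes into a commuting pair. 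Combining this with Theorem \ref{universalprop}, which identifies ucp maps out of $\Ss_\G$ with POR's of $\G$, will translate states on the max tensor product into pairs of commuting POR's, i.e.\ into generalised commuting probabilistic models, and hence (after checking the no-signalling identities) into elements of $\Gg_{\rm ns}(\G,\Hbb)$.

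First I would prove (ii)$\Rightarrow$(i). Given a state $s$ on $\Ss_\G\otimes_{\max}\Ss_\Hbb$, set $p=p^s$. One checks directly that $p$ is a probabilistic model of $\GH$: for a product edge $e\times f$, $\sum_{(x,y)\in e\times f}p(x,y)=s\big(\sum_{x\in e}a_x\otimes\sum_{y\in f}b_y\big)=s(1\otimes 1)=1$, using $\sum_{x\in e}a_x=1$ in $\Ss_\G$ and likewise in $\Ss_\Hbb$. For the no-signalling conditions, fix $y\in W$ and $e,e'\in E$; then $\sum_{x\in e}p(x,y)=s\big((\sum_{x\in e}a_x)\otimes b_y\big)=s(1\otimes b_y)=s\big((\sum_{x\in e'}a_x)\otimes b_y\big)=\sum_{x\in e'}p(x,y)$, and symmetrically in the second variable; positivity of $p(x,y)$ follows since $a_x\otimes b_y\in(\Ss_\G\otimes_{\max}\Ss_\Hbb)^+$ and $s$ is a state, while $p(x,y)\le 1$ follows from $a_x\otimes b_y\le 1\otimes 1$. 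Hence $p\in\Gg_{\rm ns}(\G,\Hbb)$.

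For (i)$\Rightarrow$(ii), given $p\in\Gg_{\rm ns}$, the plan is to build a state on the max tensor product. Define a linear functional on the algebraic tensor product $\Ss_\G\odot\Ss_\Hbb$ on elementary tensors of generators by $a_x\otimes b_y\mapsto$ the value determined by $p$, extended appropriately; the subtlety is that $\{a_x\otimes b_y\}$ is not a basis, so one must check this is well-defined. The cleanest route is to use the no-signalling structure to produce the functional via the dual description: by Corollary \ref{p_dualSG} (or rather the identifications preceding it), probabilistic models of a scenario correspond to states, and the max tensor product has the property that its state space is the set of functionals whose ``slices'' are states. Concretely, I would define, for each fixed $y$, the map $x\mapsto p(x,y)$; the no-signalling condition in the first variable guarantees the marginal $p_\Hbb(y):=\sum_{x\in e}p(x,y)$ (independent of $e$) is a probabilistic model of $\Hbb$, and then $x\mapsto p(x,y)/p_\Hbb(y)$ (where defined) is a probabilistic model of $\G$. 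This yields, via Theorem \ref{universalprop}, ucp maps, and one assembles a ucp map $\Ss_\G\to(\Ss_\Hbb)^{\rm d}$ or directly a functional on $\Ss_\G\otimes_{\max}\Ss_\Hbb$ by invoking the universal property of $\otimes_{\max}$ against the commuting pair of representations coming from realising $p$ as a (generalised) commuting model --- but since we only assume $p\in\Gg_{\rm ns}$, not that it is a commuting model, the argument must instead go through the tensor-product functional directly: one shows the bilinear form $(u,v)\mapsto\langle\!\langle p,u\otimes v\rangle\!\rangle$ is well-defined on $\Ss_\G\odot\Ss_\Hbb$ because $\Gg_{\rm ns}$ annihilates $\tilde{\cl J}_\G\odot\Ss_{\Hbb}+\Ss_\G\odot\tilde{\cl J}_\Hbb$ (this is where no-signalling enters, since a general element of $\Gg$ need not annihilate these), and then verifies it is a state on $\otimes_{\max}$ by testing against arbitrary ucp maps $\Ss_\G\otimes_{\max}\Ss_\Hbb\to M_n$, i.e.\ against arbitrary commuting pairs $(\Lambda,M)$ of matricial POR's, using the matrix-ordering description from Proposition \ref{p_basicprmo}.

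The main obstacle I anticipate is precisely the well-definedness and positivity of the candidate state on $\Ss_\G\otimes_{\max}\Ss_\Hbb$ in the direction (i)$\Rightarrow$(ii): one has the functional naturally on the (quotiented) space $\Ss\otimes\Ss'$ and must check it descends to the quotient defining $\Ss_\G\otimes_{\max}\Ss_\Hbb$ and is positive against the full max cone, not just the minimal one. I expect this to be handled by the universal property of $\otimes_{\max}$ --- testing positivity amounts to checking $\langle\Lambda\otimes M, u\rangle\ge 0$ for all matricial $\G$-POR's $\Lambda$ and $\Hbb$-POR's $M$ with $u$ in the appropriate cone --- combined with the no-signalling identities to reduce to products of evaluations, mirroring the analogous arguments in \cite{Fritz2012} and \cite{Lupini-etal} for the Bell-scenario case. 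The affine homeomorphism/isomorphism conclusion is then routine: injectivity of $s\mapsto p^s$ holds because the $a_x\otimes b_y$ (together with $1\otimes 1$) span $\Ss_\G\otimes\Ss_\Hbb$, surjectivity is exactly (i)$\Rightarrow$(ii), and affinity is clear from the definition.
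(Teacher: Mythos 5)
Your direction (ii)$\Rightarrow$(i) coincides with the paper's argument and is fine. The gap is in (i)$\Rightarrow$(ii), precisely at the point you flag as "the main obstacle": your proposed way of certifying that the candidate functional is a state on $\Ss_{\G}\otimes_{\max}\Ss_{\Hbb}$ — "testing against arbitrary ucp maps $\Ss_{\G}\otimes_{\max}\Ss_{\Hbb}\to M_n$", i.e.\ against commuting pairs of matricial POR's — does not verify positivity of a \emph{functional}. Composing with, or pairing against, ucp maps out of the tensor product is how one tests whether an \emph{element} lies in a cone; positivity of $s$ requires showing $s(u)\geq 0$ for every $u$ in the max cone, equivalently that the associated map $\Ss_{\G}\to\Ss_{\Hbb}^{\rm d}$ is completely positive. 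You mention this last reformulation in passing, but never carry it out, and none of the alternatives you float (the marginal/conditioning route also breaks down when $\sum_{x\in e}p(x,y)=0$) closes the gap.

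The paper's mechanism, which your sketch is missing, is a duality argument: since $\Ss_{\G}$ and $\Ss_{\Hbb}$ are finite dimensional, $(\Ss_{\G}\otimes_{\max}\Ss_{\Hbb})^{\rm d}=\Ss_{\G}^{\rm d}\otimes_{\min}\Ss_{\Hbb}^{\rm d}$, and the duals of the quotient maps embed $\Ss_{\G}^{\rm d}$ and $\Ss_{\Hbb}^{\rm d}$ completely order isomorphically into the ambient abelian C*-algebras $\cl S=\oplus_{e}\ell^{\infty}_{e}$ and $\cl T=\oplus_{f}\ell^{\infty}_{f}$. One defines $s$ on $\cl S\otimes\cl T$ by $s(\delta_x^{e}\otimes\delta_y^{f})=p(x,y)$ (well-defined because these form a basis, and positive because the algebra is commutative and $p\geq 0$), and then uses no-signalling to show that every slice $L_y(s)$ annihilates $\cl J_{\G}$ and hence, being positive, its kernel cover $\tilde{\cl J}_{\G}$ (and symmetrically in the other variable). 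This places $s$ inside $\Ss_{\G}^{\rm d}\otimes_{\min}\Ss_{\Hbb}^{\rm d}$, where positivity is inherited from $\cl S\otimes_{\min}\cl T$ by injectivity of $\otimes_{\min}$, and the duality identity then reads off $s$ as a state on the \emph{max} tensor product. This conversion of a max-cone positivity problem into a min-cone problem inside a commutative algebra is the essential idea absent from your proposal.
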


\begin{proof}
$(ii) \Rightarrow (i) $ 
The fact that $p^s$ has non-negative values is a consequence of the fact that 
$a_x\otimes b_y\in (\Ss_{\G} \otimes_{\max}\Ss_{\Hbb})^+$, $x\in V$, $y\in W$. 
For $e, e'\in E$, we have 
\begin{eqnarray*}
    \sum_{x\in e}p^{s}(x,y) 
    & = & 
    s\left( \left(\sum_{x\in e}a_{x}\right)\otimes b_{y}\right) 
    = s( 1 \otimes b_{y} )\\
    & = & s\left(\sum_{x\in e'}a_{x} \otimes b_{y}\right) =\sum_{x\in e'}p^{s}(x,y);
\end{eqnarray*}
similarly, we see that 
$$\sum_{y\in f}p^{s}(x,y) =\sum_{y\in f'}p^{s}(x,y), \; \;  x \in V, \;  f,f' \in F.$$

$(i) \Rightarrow (ii)$ 
Recall that 
$\cl S = \oplus_{e\in E} \ell^{\infty}_e$, and write 
$\cl T = \oplus_{f\in F} \ell^{\infty}_f$. 
Let $q_{\G} : \cl S\to \cl S_{\G}$ and $q_{\bb{H}} : \cl T\to \cl S_{\bb{H}}$
be the quotient maps. 
We have that 
$q_{\G}^{\rm d} : \cl S_{\G}^{\rm d} \to \cl S^{\rm d}$ and 
$q_{\bb{H}}^{\rm d} : \cl S_{\bb{H}}^{\rm d} \to \cl T^{\rm d}$
are complete order embeddings; 
identifying $\cl S^{\rm d}$ (resp. $\cl T^{\rm d}$) with $\cl S$
(resp. $\cl T$), we consider $\cl S_{\G}^{\rm d}$
(resp. $\cl S_{\bb{H}}^{\rm d}$) as an operator system in 
$\cl S$ (resp. $\cl T$). 
Further,  by \cite[Proposition 1.16]{Farenick_Paulsen_2012},
$(\cl S_{\G}\otimes_{\max}\cl S_{\bb{H}})^{\rm d} = 
\cl S_{\G}^{\rm d}\otimes_{\min}\cl S_{\bb{H}}^{\rm d}$, 
and thus 
$(\cl S_{\G}\otimes_{\max}\cl S_{\bb{H}})^{\rm d}$ can be viewed as 
an operator system in $\cl S\otimes_{\min}\cl T$. 

Write $s : \cl S\otimes_{\max}\cl T\to \bb{C}$ for the positive linear functional,
given by 
$s(\delta_x^e\otimes \delta_y^f) = p(x,y)$; 
since $\delta_x^e\otimes \delta_y^f$, $x\in V$, $y\in W$, $e\in E$, $f\in F$, 
form a linear basis of $\cl S\otimes\cl T$, the element 
$s\in (\cl S\otimes_{\max}\cl T)^{\rm d}$
is well-defined. 
We show that, in fact, 
$s\in \cl S_{\G}^{\rm d}\otimes_{\min} \cl S_{\bb{H}}^{\rm d}$. 
To this end, it suffices to show that, for every $x\in V$ and $y\in W$, the slices
$L^x(s)$ and $L_y(s)$ of $s$, viewed as an elements of $\cl S$ and 
$\cl T$, respectively, belong to $\cl S_{\G}^{\rm d}$ and 
$\cl S_{\bb{H}}^{\rm d}$, respectively.  

Fix $y\in W$. By the no-signalling assumption, $L_y(s)$ annihilates 
the space $\cl J_{\G}$. Since $L_y(s)$ is a positive functional, 
it annihilates the kernel cover of $\cl J_{\G}$, namely $\tilde{\cl J}_{\G}$,
and hence $L_y(s)\in \cl S_{\G}^{\rm d}$. 
By symmetry, the slices along vertices $x\in V$ belong to 
$\cl S_{\bb{H}}^{\rm d}$.
We have thus shown that 
$s\in \cl S_{\G}^{\rm d}\otimes_{\min} \cl S_{\bb{H}}^{\rm d}$. 
By the definition of $s$, we have that $p = p^s$, and the proof is completed
using the first paragraph of the proof of this implication. 
\end{proof}

\begin{theorem}\label{th_qcqatilde}
Let $ \G$ and $\Hbb$ be contextuality scenarios and $ p\in \Gg(\G,\Hbb)$ be a probabilistic model. Then the following are equivalent:
\begin{enumerate}
    \item $p \in  \Qqc_{\rm qc}(\G,\Hbb)$ (resp. $p \in  \Qqc_{\rm qa}(\G,\Hbb)$); 
    \item there exists a state $s$ on $ \Ss_{\G} \otimes_{\rm c}\Ss_{\Hbb} $ 
    (resp. $ \Ss_{\G} \otimes_{\min}\Ss_{\Hbb}$) such that 
    $p = p^s$; 
    \item there exists a state $s$ on $\cl B_{\G}\otimes_{\max} \cl B_{\Hbb}$ 
    (resp. $\cl B_{\G}\otimes_{\min} \cl B_{\Hbb}$)
    such that $p = p^s$. 
\end{enumerate}
Moreover, the map $ s \mapsto p^{s}$ is an affine isomorphism between the 
state space of 
$ \Ss_{\G} \otimes_{\rm c}\Ss_{\Hbb}$ (resp. $ \Ss_{\G} \otimes_{\min}\Ss_{\Hbb}$) 
and $\Qqc_{\rm qc}(\G,\Hbb)$ (resp. $\Qqc_{\rm qc}(\G,\Hbb)$).
\end{theorem}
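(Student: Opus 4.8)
The plan is to prove Theorem~\ref{th_qcqatilde} by combining the operator-system representation results already established (Theorems~\ref{universalprop} and~\ref{statesonT0}, Corollary~\ref{c_unicsta}) with the standard translation between commuting/tensor correlations and states on the commuting tensor product $\otimes_{\rm c}$ and the minimal tensor product $\otimes_{\min}$. I will treat the commuting and the approximately-quantum cases in parallel, each time proving the cycle $(i)\Rightarrow(iii)\Rightarrow(ii)\Rightarrow(i)$, and then deriving the affine-isomorphism statement from the fact that each implication is explicitly given by the same map $s\mapsto p^s$.

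For $(i)\Rightarrow(iii)$ in the commuting case: given a generalised commuting probabilistic model $p(x,y)=\langle A_xB_y\xi,\xi\rangle$ with commuting POR's $(A_x)$, $(B_y)$ on a common $\cl H$, I would use Remark~\ref{r_conntoPOR} to promote $(A_x)$ and $(B_y)$ to $\G$- and $\Hbb$-stochastic operator matrices (diagonal ones), invoke Theorem~\ref{th_CstarGst} to obtain commuting $*$-representations $\pi_{\G}:\cl C_{\G}\to\cl B(\cl H)$ and $\pi_{\Hbb}:\cl C_{\Hbb}\to\cl B(\cl H)$ with $\pi_{\G}(g_{x,x})=A_x$, $\pi_{\Hbb}(g_{y,y})=B_y$. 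Restricting to the C*-subalgebras $\cl B_{\G}$, $\cl B_{\Hbb}$ (generated by the diagonal generators; these are $C^*_{\rm u}(\Ss_{\G})$, $C^*_{\rm u}(\Ss_{\Hbb})$ by Corollary~\ref{c_unicsta}(iii)) gives commuting $*$-homomorphisms, hence a $*$-homomorphism $\cl B_{\G}\otimes_{\max}\cl B_{\Hbb}\to\cl B(\cl H)$; composing with the vector state at $\xi$ produces the required state $s$ on $\cl B_{\G}\otimes_{\max}\cl B_{\Hbb}$ with $p=p^s$ (using $a_x=q(g_{x,x})$ under the embedding $\Ss_{\G}\subseteq\cl B_{\G}$ of Corollary~\ref{c_unicsta}(i)). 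In the approximately-quantum case, one starts from $p\in\Qqc_{\rm qa}$, approximates by finite-dimensional generalised tensor models, represents each via finite-dimensional $*$-representations of $\cl B_{\G}$ and $\cl B_{\Hbb}$ whose tensor product factors through $\cl B_{\G}\otimes_{\min}\cl B_{\Hbb}$, takes the corresponding states, and passes to a weak*-limit point (the state space being weak*-compact) to obtain a state $s$ on $\cl B_{\G}\otimes_{\min}\cl B_{\Hbb}$ with $p=p^s$.

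For $(iii)\Rightarrow(ii)$: restrict the given state to the operator subsystem $\Ss_{\G}\otimes\Ss_{\Hbb}$ sitting inside $\cl B_{\G}\otimes\cl B_{\Hbb}$, and check that the restricted matricial order structure coincides with $\otimes_{\rm c}$ (resp. $\otimes_{\min}$). For $\otimes_{\min}$ this is immediate since the minimal tensor product is spatial and $\cl B_{\G}\otimes_{\min}\cl B_{\Hbb}$ is a C*-cover of $\Ss_{\G}\otimes_{\min}\Ss_{\Hbb}$. For the commuting tensor product one uses the universal/injectivity property of $\otimes_{\rm c}$ with respect to C*-covers: since $\cl B_{\G}=C^*_{\rm u}(\Ss_{\G})$ and $\cl B_{\Hbb}=C^*_{\rm u}(\Ss_{\Hbb})$, the inclusion $\Ss_{\G}\otimes_{\rm c}\Ss_{\Hbb}\hookrightarrow\cl B_{\G}\otimes_{\max}\cl B_{\Hbb}$ is a complete order embedding (this is the defining feature of $\otimes_{\rm c}$ via maximal tensor products of universal C*-covers), so states on the latter restrict to states on the former. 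For $(ii)\Rightarrow(i)$: given a state $s$ on $\Ss_{\G}\otimes_{\rm c}\Ss_{\Hbb}$, use the GNS construction together with the fact that a state on the commuting tensor product corresponds to a pair of commuting unital completely positive maps $\pi_{\G}:\Ss_{\G}\to\cl B(\cl H)$, $\pi_{\Hbb}:\Ss_{\Hbb}\to\cl B(\cl H)$ with commuting ranges and a cyclic vector $\xi$ implementing $s$; by Theorem~\ref{universalprop}, $(\pi_{\G}(a_x))_{x}$ and $(\pi_{\Hbb}(b_y))_{y}$ are commuting POR's, so $p^s(x,y)=\langle\pi_{\G}(a_x)\pi_{\Hbb}(b_y)\xi,\xi\rangle$ is a generalised commuting model. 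In the $\otimes_{\min}$ case, Theorem~\ref{th_cteprop} (or rather a variant of its argument) together with the standard identification of states on $\min$-tensor products with approximately-quantum-type data gives $p\in\Qqc_{\rm qa}$; more directly, a state on $\Ss_{\G}\otimes_{\min}\Ss_{\Hbb}$ extends to a state on $\cl B_{\G}\otimes_{\min}\cl B_{\Hbb}$ by injectivity of $\min$, and one approximates this by states of finite-dimensional tensor form.

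The main obstacle I anticipate is establishing cleanly that $\Ss_{\G}\otimes_{\rm c}\Ss_{\Hbb}$ is completely order embedded in $\cl B_{\G}\otimes_{\max}\cl B_{\Hbb}$, i.e.\ that one may replace the abstract commuting tensor product of the operator systems by the maximal C*-tensor product of their \emph{universal} C*-covers --- this is exactly where Corollary~\ref{c_unicsta}(iii), identifying $\cl B_{\G}$ with $C^*_{\rm u}(\Ss_{\G})$, is indispensable, and the argument must carefully invoke the known functoriality of $\otimes_{\rm c}$ under $C^*_{\rm u}(\cdot)$. A secondary technical point is the limiting argument in the approximately-quantum case: one must ensure that the finite-dimensional approximants' states can all be viewed as living on the single C*-algebra $\cl B_{\G}\otimes_{\min}\cl B_{\Hbb}$ before extracting a weak*-limit, which follows since every finite-dimensional tensor representation of the relevant stochastic matrices factors through a $*$-representation of $\cl C_{\G}\otimes_{\min}\cl C_{\Hbb}$ by Theorem~\ref{th_CstarGst} and hence restricts to $\cl B_{\G}\otimes_{\min}\cl B_{\Hbb}$. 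The final "moreover" clause then follows formally: $s\mapsto p^s$ is affine and weak*-continuous, it is surjective onto the stated correlation class by $(ii)\Rightarrow(i)$ and $(i)\Rightarrow(ii)$, and it is injective because the elements $a_x\otimes b_y$ span $\Ss_{\G}\otimes\Ss_{\Hbb}$.
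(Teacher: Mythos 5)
Your proposal is correct and follows essentially the same route as the paper: both hinge on Theorem \ref{universalprop} to turn commuting POR's into (commuting-range) maps out of $\Ss_{\G}$ and $\Ss_{\Hbb}$, on the identification $\cl B_{\G}=C^*_{\rm u}(\Ss_{\G})$ from Corollary \ref{c_unicsta}, and on the complete order embedding $\Ss_{\G}\otimes_{\rm c}\Ss_{\Hbb}\subseteq \cl B_{\G}\otimes_{\max}\cl B_{\Hbb}$ (the paper cites \cite[Theorem 6.4]{KAVRUK2011267} for exactly this), with GNS supplying the converse and the qa case handled, as in the paper, by the standard modification of \cite[Theorem 2.8]{Paulsen2013QUANTUMCN}. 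The only difference is cosmetic: you traverse the cycle $(i)\Rightarrow(iii)\Rightarrow(ii)\Rightarrow(i)$, entering at the C*-algebra level via Theorem \ref{th_CstarGst}, while the paper proves $(i)\Rightarrow(ii)$ first and then passes to $(iii)$ through the same embedding.
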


\begin{proof}
$(i) \Rightarrow (ii) $ Let $ p \in \Qq_{\rm qc}$; by definition, 
there exist a Hilbert space $ \Hh$, a unit vector $ \xi \in \Hh$ and positive operator representations 
$ (A_{x})_{x\in V}$ and $ (B_{y})_{y\in W}$ of $ \G$  and $ \Hbb$, respectively, such that 
$ A_{x}B_{y}=B_{y}A_{x}$ for all $ x,y$, and 
$    p(x,y)= \sca{A_{x}B_{y}\xi,\xi}$, $x\in V$, $y\in W$.
By Theorem \ref{universalprop}, there exist 
unital completely positive maps $ \phi : \Ss_{\G} \rightarrow \Bh$, 
and $ \psi : \Ss_{\Hbb} \rightarrow \Bh$, with with  $\phi(a_{x})=A_{x} $ and $ \psi(b_{y})=B_{y}$,
$x\in V$, $y\in W$. 
We have that $ \phi$ and $\psi$ have commuting ranges. 
Let $s : \cl S_{\G}\otimes\cl S_{\bb{H}}\to \bb{C}$ be the linear functional, 
given by 
\begin{align*}
     s(a_{x}\otimes b_{y}) = \sca{\phi(a_{x})\psi(b_{y})\xi,\xi}, \ \ \ x\in V,y\in W;
\end{align*}
we have that $s$ a state on $ \Ss_{\G}\otimes_{\rm c} \Ss_{\Hbb}$. 
It is straightforward that $p = p^s$. 

$(ii) \Leftrightarrow (iii)$ 
By Corollary \ref{c_unicsta}, 
$\cl B_{\G} = C^{*}_{\rm u}(\Ss_{\G})$, and now 
the statement follows from the complete order embedding 
\cite[Theorem 6.4]{KAVRUK2011267}
\begin{equation}\label{eq_incinu}
\Ss_{\G}\otimes_{\rm c}\Ss_{\Hbb} \subseteq 
\cl B_{\G}\otimes_{\max} \cl B_{\Hbb}.
\end{equation}

$(ii) \Rightarrow (i)$ Suppose that  $ s$ is a state on $ \Ss_{\G} \otimes_{\rm c} \Ss_{\Hbb}$ such that $p=p^{s}$. 
Using (\ref{eq_incinu}), 
extend $ s$ to a state $ \Tilde{s}$ on 
$\cl B_{\G}\otimes_{\max} \cl B_{\Hbb}$. 
Now the GNS representation of $\Tilde{s}$ produces unital *-representations 
$ \pi $ and $\rho$ of 
$\cl B_{\G}$ and  $\cl B_{\Hbb}$, respectively, on a Hilbert space $ \Hh$, 
with commuting ranges, and a unit vector $\xi \in \Hh$
  such that 
\begin{align*}
    \Tilde{s}(u \otimes v) = \sca{\pi(u)\rho(v)\xi,\xi}, \; \; \; 
    u \in \cl B_{\G}, \; 
    v\in \cl B_{\Hbb}.
\end{align*}
It follows that $ (\pi(a_{x}))_{x\in V}$ and $ (\rho(b_{y}))_{y\in W}$ 
are operator representations of $\G$ and $\Hbb$ respectively, and thus $ p \in \Qqc_{\rm qc}(\G,\Hbb)$.

The characterisations of the elements of $\Qqc_{\rm qa}(\G,\Hbb)$ can be proved 
using a direct modification of the proof of \cite[Theorem 2.8.]{Paulsen2013QUANTUMCN}; 
the detailed arguments are omitted.
\end{proof}

Let $ \G = (V,E)$ and $\Hbb= (W,F) $ be contextuality scenarios. 
We write $q_y$, $y\in W$, for the canonical generators of $C^{*}(\Hbb)$, 
and recall that the spaces $\Tt_{\G} = \spann\{ 1,p_{x}: x\in V \}$ and 
$\Tt_{\Hbb} =\spann\{ 1,q_{y}: y\in W \}$ are 
operator systems in $C^{*}(\G)$ and $C^{*}(\Hbb)$, respectively.
Recall, further, that the 
\emph{enveloping tensor product} $\cl S \otimes_{ \rm e} \Tt$
of two operator systems $\cl S$ and $\cl T$
is the operator system tensor product arising from the inclusion 
$\Ss \otimes \Tt \subseteq C^{*}_{\rm e}(\Ss) \otimes_{\max} C^{*}_{\rm e}(\Tt)$ \cite{KAVRUK2011267}.
For a linear functional  $s : \Tt_{\G} \otimes_{ \rm e} \Tt_{\Hbb }\to \bb{C}$, 
let $\hat{p}^s : V\times W\to \bb{C}$ be given by $\hat{p}^s(x,y) = s(p_x\otimes q_y)$, $x\in V$, $y\in W$.

\begin{theorem}\label{th_Qqcstates}
Let $ \G$ and $\Hbb$ be contextuality scenarios and $ p\in \Gg(\G,\Hbb)$ is a probabilistic model. The following are equivalent:
\begin{enumerate}
    \item $p \in  \Qq_{\rm qc}(\G,\Hbb)$ (resp. $p \in  \Qq_{\rm qa}(\G,\Hbb)$);  
\item there exists a state $s$ on $\Tt_{\G} \otimes_{ \rm e} \Tt_{\Hbb } $ 
(resp. $ \Tt_{\G} \otimes_{\min}\Tt_{\Hbb} $) such that $p = \hat{p}^s$;
\item there exists a state $s$ on $C^{*}(\G)\otimes_{\max}C^{*}(\Hbb)$ 
    (resp. $C^{*}(\G)\otimes_{\min}C^{*}(\Hbb)$) such that $p = \hat{p}^s$.
\end{enumerate}
Moreover, the map $ s \mapsto \hat{p}^{s}$ is an affine isomorphism between the state space of 
$ \Tt_{\G} \otimes_{\rm e}\Tt_{\Hbb}$ (resp. $\Tt_{\G} \otimes_{\min}\Tt_{\Hbb}$) and 
$ \Qq_{\rm qc}(\G,\Hbb)$ (resp. $ \Qq_{\rm qa}(\G,\Hbb)$).
\end{theorem}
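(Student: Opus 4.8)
The plan is to mirror the structure of the proof of Theorem~\ref{th_qcqatilde}, replacing the universal operator systems $\Ss_{\G}, \Ss_{\Hbb}$ and the C*-algebras $\cl B_{\G}, \cl B_{\Hbb}$ by the ``projective'' operator systems $\Tt_{\G}, \Tt_{\Hbb}$ and their C*-envelopes $C^{*}(\G), C^{*}(\Hbb)$, which equals $C^{*}_{\rm e}(\Tt_{\G})$ by Lemma~\ref{l_idCstareTG}. First I would establish the equivalence $(ii)\Leftrightarrow(iii)$: by definition of the enveloping tensor product and Lemma~\ref{l_idCstareTG}, we have a unital complete order embedding $\Tt_{\G}\otimes_{\rm e}\Tt_{\Hbb}\subseteq C^{*}(\G)\otimes_{\max}C^{*}(\Hbb)$, so a state on the left extends (by Arveson/Krein) to a state on the right, and conversely restricts; since the generators $p_x, q_y$ lie in the subsystem, $\hat p^s$ is unchanged, giving the correspondence. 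For the minimal tensor product one uses instead that $\Tt_{\G}\otimes_{\min}\Tt_{\Hbb}\subseteq C^{*}(\G)\otimes_{\min}C^{*}(\Hbb)$ completely order isomorphically (injectivity of $\otimes_{\min}$ for operator systems sitting inside C*-algebras).

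Next I would prove $(i)\Leftrightarrow(iii)$. For the quantum commuting case: if $p\in\Qq_{\rm qc}$ then by definition there are \emph{PR's} $(A_x)_{x\in V}$, $(B_y)_{y\in W}$ with commuting ranges and a unit vector $\xi$ with $p(x,y)=\sca{A_xB_y\xi,\xi}$; the PR's induce *-representations $\pi:C^{*}(\G)\to\Bh$ and $\rho:C^{*}(\Hbb)\to\Bh$ with commuting ranges (by the universal property of $C^{*}(\G)$), so $u\otimes v\mapsto\sca{\pi(u)\rho(v)\xi,\xi}$ is a state on $C^{*}(\G)\otimes_{\max}C^{*}(\Hbb)$ realising $p$. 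Conversely a state on $C^{*}(\G)\otimes_{\max}C^{*}(\Hbb)$ gives, via the GNS construction, commuting *-representations $\pi,\rho$ and a cyclic vector $\xi$, and $(\pi(p_x))_{x\in V}$, $(\rho(q_y))_{y\in W}$ are PR's of $\G$, $\Hbb$ with commuting ranges, so $p\in\Qq_{\rm qc}$. The affine homeomorphism claim follows because both ends are compact convex in the weak* topology and the map $s\mapsto\hat p^s$ is affine, continuous and (by the equivalences) bijective.

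For the approximately quantum case I would follow the template of \cite[Theorem~2.8]{Paulsen2013QUANTUMCN}: one shows that a state $s$ on $\Tt_{\G}\otimes_{\min}\Tt_{\Hbb}$ extends to a state on $C^{*}(\G)\otimes_{\min}C^{*}(\Hbb)$, and that states on the latter with $\hat p^s$ a probabilistic model over $\GH$ correspond exactly to weak* limits of tensor states $\sca{(A_x'\otimes I)(I\otimes B_y')\xi,\xi}$ with $A_x', B_y'$ \emph{finite dimensional} PR's — here one uses that $C^{*}(\G)\otimes_{\min}C^{*}(\Hbb)$ embeds into $\B(\Hh_{\G}\otimes\Hh_{\Hbb})$ and that finite-dimensional *-representations of $C^{*}(\G)$ (i.e.\ finite-dimensional PR's) are weak* dense in its state space, so that $\Qq_{\rm qa}(\G,\Hbb)$, being the closure of $\Qq_{\rm q}(\G,\Hbb)$, is exactly the set of $\hat p^s$ for $s$ a state on the min-tensor product. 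The slicing argument for no-signalling is not needed here since tensor models are automatically no-signalling.

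The main obstacle I expect is the approximately quantum direction: one must argue carefully that every state on $C^{*}(\G)\otimes_{\min}C^{*}(\Hbb)$ whose associated function $\hat p^s$ lies in $\Gg(\G,\Hbb)$ actually arises as a weak* limit of \emph{finite-dimensional} tensor-product vector states, rather than merely an arbitrary (possibly infinite-dimensional) one. This is handled by combining the fact that the minimal tensor product of C*-algebras has faithful representations as spatial tensor products on $\Hh_1\otimes\Hh_2$, Kirchberg-type density of finite-dimensional representations for $C^{*}(\G)$ when restricted to the relevant finite-dimensional operator system $\Tt_{\G}$, and a standard approximation/compactness argument identifying the closure; since this is a verbatim adaptation of \cite[Theorem~2.8]{Paulsen2013QUANTUMCN} and of the argument already used for $\Qqc_{\rm qa}$ in Theorem~\ref{th_qcqatilde}, I would state it and refer to those sources rather than reproduce the details.
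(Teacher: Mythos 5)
Your proposal is correct and follows essentially the same route as the paper: the equivalence $(i)\Leftrightarrow(iii)$ via commuting PR's giving rise to (and arising from, via GNS) representations of $C^{*}(\G)\otimes_{\max}C^{*}(\Hbb)$, the equivalence $(ii)\Leftrightarrow(iii)$ via $C^{*}(\G)=C^{*}_{\rm e}(\Tt_{\G})$ (Lemma~\ref{l_idCstareTG}) together with the definition of $\otimes_{\rm e}$ and injectivity of $\otimes_{\min}$, and the approximately quantum case by adapting \cite[Theorem~2.8]{Paulsen2013QUANTUMCN}. The paper's proof is just a terser version of the same argument.
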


\begin{proof}
$(i)\Leftrightarrow (iii)$
By definition, $p\in \Qq_{\rm qc}$ if and only if there exist a Hilbert 
space $\cl H$, a unit vector $\xi\in \cl H$ and commuting PR's
$(P_x)_{x\in V}$ and $(Q_y)_{y\in W}$ of $\G$ and $\Hbb$ respectively, 
on $\cl H$, such that 
$p(x,y) = \langle P_x Q_y \xi,\xi\rangle$, $x\in V$, $y\in W$. 
The claim follows from the fact that such commuting PR's give rise to 
*-representations of $C^{*}(\G)\otimes_{\max}C^{*}(\Hbb)$, and vice versa.

$(ii) \Leftrightarrow (iii)$ 
follows from the fact that $ C^{*}(\G) = C^{*}_{\rm e}(\Tt_{\G})$ (Lemma \ref{l_idCstareTG}).

The characterisations of the elements of $\Qq_{\rm qa}(\G,\Hbb)$ can again be proved 
using a direct modification of the proof of \cite[Theorem 2.8.]{Paulsen2013QUANTUMCN}.
\end{proof}

The following characterisation of the classical probabilistic models follows 
easily from Proposition \ref{p_repcl}; the proof is omitted. 

\begin{theorem}\label{th_Ccstates}
Let $ \G$ and $\Hbb$ be two contextuality scenarios and $ p\in \Gg(\G,\Hbb)$ is a probabilistic model. 
The following are equivalent
\begin{enumerate}
    \item $p \in  \Cc(\G,\Hbb)$;
    \item there exists a state $s$ on $\Dd_{\G}\otimes_{\min}\Dd_{\Hbb}$ such that 
$p(x,y) = s(d_{x} \otimes d_{y})$, $x\in V$, $y \in W$;
\item there exists a state $s$ on $\Rr_{\G} \otimes_{ \rm min} \Rr_{\Hbb } $ such that 
$p(x,y) = s(d_{x} \otimes d_{y})$, $x\in V$, $y \in W$.
\end{enumerate}
Moreover, the map $ s \mapsto p^{s}$ is an affine isomorphism between $ \Cc(\G, \Hbb)$ and 
the state space of $ \Rr_{\G} \otimes_{\rm min}\Rr_{\Hbb}$.
\end{theorem}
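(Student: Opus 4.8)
The plan is to follow the template already laid out in Proposition \ref{p_repcl}, which handled the single-scenario case, and combine it with the elementary fact that classical no-signalling models over $\GH$ are precisely the convex combinations of products of deterministic models of $\G$ and $\Hbb$. First I would establish (ii)$\Leftrightarrow$(iii). The operator system $\Rr_{\G}$ is an operator subsystem of the finite-dimensional abelian C*-algebra $\Dd_{\G}$, and similarly for $\Hbb$; since both are (minimal) function systems, the minimal tensor product $\Rr_{\G}\otimes_{\min}\Rr_{\Hbb}$ embeds completely order isomorphically into $\Dd_{\G}\otimes_{\min}\Dd_{\Hbb}$, which is itself an abelian C*-algebra (a tensor product of finite-dimensional abelian C*-algebras). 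A state on $\Rr_{\G}\otimes_{\min}\Rr_{\Hbb}$ extends, by Arveson's extension theorem, to a state on $\Dd_{\G}\otimes_{\min}\Dd_{\Hbb}$; conversely any state on the latter restricts to one on the former. Since the generators $d_x\otimes d_y$ lie in $\Rr_{\G}\otimes_{\min}\Rr_{\Hbb}$, the two conditions describe the same functions $p$, giving (ii)$\Leftrightarrow$(iii).

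Next I would prove (i)$\Rightarrow$(iii). Let $p\in\Cc(\G,\Hbb)$, so $p(x,y)=\sum_{i=1}^m\lambda_i p_i^1(x)p_i^2(y)$ as a convex combination of products of deterministic models $p_i^1$ of $\G$ and $p_i^2$ of $\Hbb$. By the paragraph preceding Proposition \ref{p_repcl}, each deterministic model $p_i^1$ arises as $p_i^1(x)=s_i^1(d_x)$ for a (pure) character $s_i^1$ of $\Dd_{\G}$, and likewise $p_i^2(y)=s_i^2(d_y)$ for a character $s_i^2$ of $\Dd_{\Hbb}$. Then $s_i^1\otimes s_i^2$ is a character of $\Dd_{\G}\otimes_{\min}\Dd_{\Hbb}$ with $(s_i^1\otimes s_i^2)(d_x\otimes d_y)=p_i^1(x)p_i^2(y)$, so $s:=\sum_{i=1}^m\lambda_i(s_i^1\otimes s_i^2)$ is a state on $\Dd_{\G}\otimes_{\min}\Dd_{\Hbb}$ with $s(d_x\otimes d_y)=p(x,y)$.

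Finally, for (iii)$\Rightarrow$(i): given a state $s$ on $\Dd_{\G}\otimes_{\min}\Dd_{\Hbb}$, since this C*-algebra is finite-dimensional abelian, $s$ is a convex combination $s=\sum_{i=1}^m\lambda_i t_i$ of pure states (characters) $t_i$. Each character of $\Dd_{\G}\otimes_{\min}\Dd_{\Hbb}$ factors as $t_i=s_i^1\otimes s_i^2$ with $s_i^1$ a character of $\Dd_{\G}$ and $s_i^2$ a character of $\Dd_{\Hbb}$; by the remark before Proposition \ref{p_repcl}, $p_i^1(x):=s_i^1(d_x)$ and $p_i^2(y):=s_i^2(d_y)$ are deterministic models of $\G$ and $\Hbb$, and $p(x,y)=\sum_i\lambda_i p_i^1(x)p_i^2(y)$, i.e. $p\in\Cc(\G,\Hbb)$. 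The ``moreover'' statement then records that $s\mapsto p^s$ is affine, injective (the $d_x\otimes d_y$ span $\Rr_{\G}\otimes\Rr_{\Hbb}$ and separate states through the C*-algebra), and surjective onto $\Cc(\G,\Hbb)$ by the equivalences just proved. The only mild subtlety — the ``main obstacle,'' such as it is — is checking that every deterministic no-signalling model of $\GH$ is genuinely a product $p_i^1\cdot p_i^2$ (so that the class $\Cc(\G,\Hbb)$ as defined via product deterministic models matches what the state-space description produces), but this is immediate: a $\{0,1\}$-valued no-signalling model on $\GH$ picks out, for each edge $e\times f$, a single vertex $(x_0,y_0)$; the no-signalling conditions force the selected vertex to depend on $e$ and $f$ separately, yielding the factorisation.
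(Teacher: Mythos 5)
Your proposal is correct and is exactly the argument the paper has in mind: the paper omits the proof, stating only that it ``follows easily from Proposition \ref{p_repcl}'', and your write-up fills in precisely that route (characters of the finite-dimensional abelian algebra $\Dd_{\G}\otimes_{\min}\Dd_{\Hbb}$ factor as products of characters, which correspond to deterministic models, plus injectivity of $\otimes_{\min}$ and state extension for (ii)$\Leftrightarrow$(iii)). Your closing remark on factorising deterministic no-signalling models of $\GH$ is true but not needed, since $\Cc(\G,\Hbb)$ is defined directly via products of deterministic models of the factors.
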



\begin{corollary}\label{c_dilapro}
Let $\G = (V,E)$ and $\Hbb = (W,F)$ be dilating contextuality scenarios. 
Then 
$\Tilde{\Qq}_{\rm q}(\G,\Hbb) = \Qq_{\rm q}(\G,\Hbb)$ and 
$\Tilde{\Qq}_{\rm qa}(\G,\Hbb) = \Qq_{\rm qa}(\G,\Hbb)$.
\end{corollary}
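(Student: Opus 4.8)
The plan is to prove the two equalities separately. In both, the inclusions $\Qq_{\rm q}(\G,\Hbb)\subseteq\Tilde{\Qq}_{\rm q}(\G,\Hbb)$ and $\Qq_{\rm qa}(\G,\Hbb)\subseteq\Tilde{\Qq}_{\rm qa}(\G,\Hbb)$ are already contained in the chain (\ref{eq_lonin}), so only the reverse inclusions carry content, and it is there that the dilating hypothesis enters, through Theorem \ref{th_dila}.

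For the approximately quantum case, I would argue as follows. Since $\G$ and $\Hbb$ are dilating, Theorem \ref{th_dila} provides canonical complete order isomorphisms $\Ss_{\G}\cong\Tt_{\G}$ and $\Ss_{\Hbb}\cong\Tt_{\Hbb}$ carrying $a_x$ to $p_x$ and $b_y$ to $q_y$. Because the minimal operator system tensor product is functorial for complete order isomorphisms, these induce a complete order isomorphism $\Ss_{\G}\otimes_{\min}\Ss_{\Hbb}\cong\Tt_{\G}\otimes_{\min}\Tt_{\Hbb}$ sending $a_x\otimes b_y$ to $p_x\otimes q_y$. Pulling a state $s$ on $\Ss_{\G}\otimes_{\min}\Ss_{\Hbb}$ back along the inverse isomorphism produces a state $s'$ on $\Tt_{\G}\otimes_{\min}\Tt_{\Hbb}$ with $\hat{p}^{s'}(x,y)=s'(p_x\otimes q_y)=s(a_x\otimes b_y)=p^{s}(x,y)$, and conversely; so by the equivalence (i)$\Leftrightarrow$(ii) of Theorem \ref{th_qcqatilde} (for $\Tilde{\Qq}_{\rm qa}$ and $\Ss_{\G}\otimes_{\min}\Ss_{\Hbb}$) and of Theorem \ref{th_Qqcstates} (for $\Qq_{\rm qa}$ and $\Tt_{\G}\otimes_{\min}\Tt_{\Hbb}$) the two classes agree.

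For the quantum case, I would start from $p\in\Tilde{\Qq}_{\rm q}(\G,\Hbb)$, witnessed by finite dimensional Hilbert spaces $\Hh_{\G},\Hh_{\Hbb}$, positive operator representations $(A'_x)_{x\in V}\subseteq\B(\Hh_{\G})$ and $(B'_y)_{y\in W}\subseteq\B(\Hh_{\Hbb})$, and a unit vector $\xi\in\Hh_{\G}\otimes\Hh_{\Hbb}$ with $p(x,y)=\langle(A'_x\otimes B'_y)\xi,\xi\rangle$. As $\G$ is dilating, $(A'_x)$ dilates to a projective representation $(P_x)_{x\in V}$ of $\G$ on a Hilbert space $\Kk_{\G}\supseteq\Hh_{\G}$ via an isometry $W_{\G}$, and similarly $(B'_y)$ dilates to $(Q_y)_{y\in W}$ on $\Kk_{\Hbb}$ via $W_{\Hbb}$; then $\eta=(W_{\G}\otimes W_{\Hbb})\xi$ satisfies $\langle(P_x\otimes Q_y)\eta,\eta\rangle=\langle(W_{\G}^{*}P_xW_{\G}\otimes W_{\Hbb}^{*}Q_yW_{\Hbb})\xi,\xi\rangle=p(x,y)$, exhibiting $p$ as a tensor probabilistic model built from projective representations. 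The remaining — and, I expect, hardest — point is that $\Kk_{\G}$ and $\Kk_{\Hbb}$ may be taken finite dimensional, i.e. that a positive operator representation of a dilating scenario on a finite dimensional Hilbert space dilates to a projective representation on a finite dimensional Hilbert space. To obtain this I would dilate the associated unital completely positive map, which by Theorem \ref{th_dila} may be regarded as a map $\Tt_{\G}\to M_n$ with $p_x\mapsto A'_x$, by applying Naimark's dilation theorem to each edge's POVM and reconciling the resulting projections on the shared outcomes by means of the dilating hypothesis, with a dimension count keeping the ambient space finite dimensional. This reconciliation is the real obstacle: the dilations furnished by the proofs of Theorems \ref{th_dila} and \ref{statesonT0} are obtained by Arveson's and Stinespring's theorems inside the generally infinite dimensional $C^{*}(\G)$ (for instance the dilations of the scenarios of Theorem \ref{projectivedilation} are built from amalgamated free products), so finite dimensionality has to be arranged by a more hands-on construction. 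Once it is, $\eta\in\Kk_{\G}\otimes\Kk_{\Hbb}$ is a finite dimensional witness and $p\in\Qq_{\rm q}(\G,\Hbb)$, finishing the proof.
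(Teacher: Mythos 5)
Your argument for the approximately quantum equality is correct and is precisely the paper's: Theorem \ref{th_dila} gives unital complete order isomorphisms $\Ss_{\G}\cong\Tt_{\G}$ and $\Ss_{\Hbb}\cong\Tt_{\Hbb}$ carrying generators to generators, the minimal tensor product is functorial under these, and the state-space descriptions of $\Tilde{\Qq}_{\rm qa}$ and $\Qq_{\rm qa}$ in Theorems \ref{th_qcqatilde} and \ref{th_Qqcstates} then coincide. Nothing to add there.

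For the quantum equality, your main step --- dilate $(A'_x)$ and $(B'_y)$ separately to PR's $(P_x)$, $(Q_y)$ via isometries $U_{\G}$, $U_{\Hbb}$, set $\tilde{\xi}=(U_{\G}\otimes U_{\Hbb})\xi$ and compute $\langle (P_x\otimes Q_y)\tilde{\xi},\tilde{\xi}\rangle=p(x,y)$ --- is verbatim the paper's proof. The point of divergence is that you then demand that the dilation spaces be finite dimensional, whereas the paper dilates to a priori arbitrary Hilbert spaces $\tilde{H},\tilde{K}$ and directly concludes $p\in\Qq_{\rm q}(\G,\Hbb)$; it does not discuss dimensions at all. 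You are not misreading the definitions: with $\Qq_{\rm q}$ defined through finite-dimensional PR's, the computation as it stands literally certifies only $p\in\Qq_{\rm qs}(\G,\Hbb)$. But be aware that the step you single out as \emph{the real obstacle} is not supplied by the paper either, and your own sketch for closing it (per-edge Naimark dilations reconciled on shared outcomes, with a dimension count) is not carried out and is not obviously available for a general dilating scenario --- the reconciliation is automatic only in the Bell case, where the edges are disjoint and each edge's Naimark dilation can be rotated onto a common isometry. So do not present the finite-dimensional dilatability of a finite-dimensional POR as established; either adopt the paper's argument as it stands, or isolate the finite-dimensional simultaneous dilation claim as an explicit lemma and prove it, which your proposal does not yet do.
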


\begin{proof}
The equality 
$\Tilde{\Qq}_{\rm qa}(\G,\Hbb) = \Qq_{\rm qa}(\G,\Hbb)$ 
is a direct consequence of Theorems \ref{th_qcqatilde}, \ref{th_Qqcstates} and \ref{th_dila}. 
Assume that $p\in \Tilde{\Qq}_{\rm q}(\G,\Hbb)$ and let 
$(A_x)_{x\in V}$ (resp. $(B_y)_{y\in W}$) be a POR of $\G$
(resp. $\bb{H}$) on a finite dimensional 
Hilbert space $H$ (resp. $K$), and $\xi\in H\otimes K$
be a unit vector, such that 
$p(x,y) = \langle (A_x\otimes B_y)\xi,\xi\rangle$, $x\in V$, $y\in W$. 
Let $(P_x)_{x\in X}$ (resp. $(Q_y)_{y\in W}$) be a PR of $\G$ (resp. $\bb{H}$)
on a Hilbert space $\tilde{H}$ (resp. $\tilde{K}$) and 
$U_{\G} : H\to \tilde{H}$ (resp. $U_{\bb{H}} : K\to \tilde{K}$) 
be an isometry, such that 
$A_x = U_{\G}^*P_x U_{\G}$, $x\in V$ (resp. $B_y = U_{\bb{H}}^*Q_y U_{\bb{H}}$, $y\in W$). 
Setting $\tilde{\xi} = (U_{\G}\otimes U_{\bb{H}})\xi$, we have that 
$\tilde{\xi}$ is a unit vector in $\tilde{H}\otimes \tilde{K}$ and 
$$p(x,y) = \langle (P_x\otimes Q_y)\tilde{\xi},\tilde{\xi}\rangle, \ \ \ x\in V, y\in W.$$
Thus, $p\in \Qq_{\rm q}(\G,\Hbb)$. 
We have shown that 
$\Tilde{\Qq}_{\rm q}(\G,\Hbb)\subseteq \Qq_{\rm q}(\G,\Hbb)$
and since the reverse inclusion is trivial, we have that 
$\Tilde{\Qq}_{\rm q}(\G,\Hbb) = \Qq_{\rm q}(\G,\Hbb)$. 
\end{proof}

Let $\G = (V,E)$ and $ \Hbb = (W,F)$ be contextuality scenarios. 
Recall that the \textit{Foulis-Randall product} $\G \otimes \Hbb$ 
of $\G$ and $\Hbb$ is the hypergraph with the properties that 
$ \Gg(\G \otimes \Hbb)$ coincides with the 
set of no-signalling models of $\GH$ and that 
the projective representations of $\G \otimes \Hbb$ necessarily satisfy 
the respective no-signalling conditions \cite{Acn2015ACA}. 
Consider the following universal $C^{*}$-algebra given by generators and relations:
\begin{multline} \label{univers_nosign}
\cl A_{\G,\Hbb} = \Biggl
     \langle (p_{x,y})_{x\in V,y\in W} \Big| p_{x,y}=p_{x,y}^{*}=p_{x,y}^{2}, \\
     \sum_{(x,y)\in e\times f}p_{x,y}=1, e\in E,f\in F,
    \sum_{x \in e}p_{x,y}= \sum_{x'\in e'}p_{x',y} , \sum_{y\in f}p_{x,y} =\sum_{y'\in f'}p_{x,y'} \Biggl \rangle.
\end{multline}
It is easy to see that, if
$\mathcal{I}$ is the closed ideal of $C^{*}(\GH)$ generated by the elements 
\begin{align*}
    \sum_{x \in e}p_{x,y}- \sum_{x'\in e'}p_{x',y}, \; \; \; \; \sum_{y\in f}p_{x,y} -\sum_{y'\in f'}p_{x,y'},  
\end{align*}
then $\cl A_{\G,\Hbb} = C^{*}(\GH)/ \mathcal{I}$.
By the arguments in Proposition 5.2.1 in \cite{Acn2015ACA} and the universal property 
of $\cl A_{\G,\Hbb}$,
we have that $\cl A_{\G,\Hbb}= C^{*}(\G \otimes \Hbb)$. 
In the next proposition, we provide a concrete 
C*-algebraic visualisation of $\G\otimes\Hbb$ and of the enveloping tensor product 
$\cl T_{\G}\otimes_{\rm e} \cl T_{\Hbb}$, appearing in the description 
of the class $\cl Q_{\rm qc}(\G,\Hbb)$ in Theorem \ref{th_Qqcstates}.

\begin{proposition}\label{p_maxotHbb}
Let $\G = (V,E)$ and $ \Hbb =(W,F)$ be contextuality scenarios. Then 
$C^{*}(\G\otimes \Hbb) = C^{*}(\G) \otimes_{\max} C^{*}(\Hbb)$, up to a canonical 
*-isomorphism. In particular, $ \Tt_{\G\otimes \Hbb} = \Tt_{\G} \otimes_{\rm e} \Tt_{\Hbb}$, 
up to a canonical unital complete order isomorphism.
\end{proposition}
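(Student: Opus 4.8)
The plan is to prove both statements by establishing the first one, $C^*(\G\otimes\Hbb) = C^*(\G)\otimes_{\max} C^*(\Hbb)$, and then deducing the operator system identity as a corollary of it together with Lemma~\ref{l_idCstareTG}. For the $C^*$-algebra identity, I would use the universal property of $C^*(\G\otimes\Hbb)$, which by the discussion preceding the proposition equals $\cl A_{\G,\Hbb}$, the universal $C^*$-algebra on projections $p_{x,y}$ subject to the edge relations $\sum_{(x,y)\in e\times f} p_{x,y} = 1$ plus the no-signalling relations. So it suffices to exhibit, inside $C^*(\G)\otimes_{\max}C^*(\Hbb)$, a family of projections $\hat p_{x,y}$ satisfying exactly these relations and generating, and conversely a *-homomorphism back.

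\textbf{Forward direction.} In $C^*(\G)\otimes_{\max}C^*(\Hbb)$, set $\hat p_{x,y} := p_x\otimes q_y$. These are projections (tensor products of commuting — indeed tensor-factorwise — projections), and they generate the tensor product since $\{p_x\}$ generate $C^*(\G)$, $\{q_y\}$ generate $C^*(\Hbb)$. The edge relation holds: for $e\in E$, $f\in F$,
\[
\sum_{(x,y)\in e\times f} p_x\otimes q_y = \Big(\sum_{x\in e} p_x\Big)\otimes\Big(\sum_{y\in f} q_y\Big) = 1\otimes 1 = 1.
\]
For no-signalling: $\sum_{x\in e} p_x\otimes q_y = 1\otimes q_y = \sum_{x'\in e'} p_{x'}\otimes q_y$, and symmetrically in the second variable. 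Hence the universal property of $\cl A_{\G,\Hbb} = C^*(\G\otimes\Hbb)$ yields a unital *-homomorphism $\Theta : C^*(\G\otimes\Hbb)\to C^*(\G)\otimes_{\max}C^*(\Hbb)$ sending $p_{x,y}\mapsto p_x\otimes q_y$, and it is surjective because its range contains all generators.

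\textbf{Reverse direction.} Here I would build commuting *-representations of $C^*(\G)$ and $C^*(\Hbb)$ into $C^*(\G\otimes\Hbb)$ and invoke the defining property of $\otimes_{\max}$. Define $P_x := \sum_{y\in f} p_{x,y}$ for any fixed $f\in F$; by the no-signalling relations this is independent of the choice of $f$, and using the edge relations one checks $\sum_{x\in e} P_x = \sum_{x\in e}\sum_{y\in f} p_{x,y} = 1$ for each $e\in E$. The elements $P_x$ are self-adjoint; to see $P_x$ is a projection and that $P_x P_{x'} = 0$ for $x\neq x'$ in a common edge, note that within a fixed edge $e\times f$ the family $(p_{x,y})_{(x,y)\in e\times f}$ is a partition of unity into projections in a unital $C^*$-algebra, hence mutually orthogonal, so $p_{x,y}p_{x',y'} = 0$ whenever $(x,y)\neq(x',y')$; summing over $y,y'\in f$ gives $P_x P_{x'} = \sum_{y}p_{x,y}$ if $x=x'$ (i.e. $P_x$) and $0$ if $x\neq x'$. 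This gives $P_x^2 = P_x$. By the universal property of $C^*(\G)$ there is a *-homomorphism $\pi_{\G} : C^*(\G)\to C^*(\G\otimes\Hbb)$, $p_x\mapsto P_x$; symmetrically $\pi_{\Hbb} : C^*(\Hbb)\to C^*(\G\otimes\Hbb)$, $q_y\mapsto Q_y := \sum_{x\in e} p_{x,y}$. One then checks $P_x Q_y = p_{x,y} = Q_y P_x$: fixing $e\ni x$, $f\ni y$, $P_x Q_y = \big(\sum_{y'\in f}p_{x,y'}\big)\big(\sum_{x'\in e}p_{x',y}\big) = p_{x,y}$ by orthogonality within $e\times f$, and likewise for the other order. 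So $\pi_\G$ and $\pi_\Hbb$ have commuting ranges, and the universal property of the maximal tensor product gives a *-homomorphism $\Psi : C^*(\G)\otimes_{\max}C^*(\Hbb)\to C^*(\G\otimes\Hbb)$ with $\Psi(p_x\otimes q_y) = p_{x,y}$. Finally $\Psi\circ\Theta$ and $\Theta\circ\Psi$ fix all generators, hence are the respective identities, so $\Theta$ is a *-isomorphism.

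\textbf{Deducing the operator system statement.} By Lemma~\ref{l_idCstareTG}, $C^*(\G) = C^*_{\rm e}(\Tt_{\G})$ and $C^*(\Hbb) = C^*_{\rm e}(\Tt_{\Hbb})$, and likewise $C^*(\G\otimes\Hbb) = C^*_{\rm e}(\Tt_{\G\otimes\Hbb})$. By definition of the enveloping tensor product, $\Tt_{\G}\otimes_{\rm e}\Tt_{\Hbb}$ is the operator system structure on $\Tt_{\G}\otimes\Tt_{\Hbb}$ inherited from the inclusion into $C^*_{\rm e}(\Tt_{\G})\otimes_{\max}C^*_{\rm e}(\Tt_{\Hbb}) = C^*(\G)\otimes_{\max}C^*(\Hbb)$, which under the isomorphism $\Psi$ of the first part is carried onto the span of the $p_{x,y}$ inside $C^*(\G\otimes\Hbb)$, i.e.\ exactly $\Tt_{\G\otimes\Hbb}$, via a unital complete order isomorphism (since $\Psi$ is a *-isomorphism and thus a complete order isomorphism restricting appropriately to the generating operator subsystems). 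This gives the claimed canonical unital complete order isomorphism $\Tt_{\G\otimes\Hbb} = \Tt_{\G}\otimes_{\rm e}\Tt_{\Hbb}$.

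\textbf{Expected main obstacle.} The routine but slightly delicate point is verifying the mutual orthogonality of the $p_{x,y}$ within a single edge $e\times f$ and then correctly bookkeeping which sums collapse — in particular making sure the no-signalling relations are used exactly where needed to show the $P_x$ (resp. $Q_y$) are well-defined independently of the auxiliary edge, and that the resulting $P_x, Q_y$ commute and reproduce $p_{x,y}$ as their product. None of this is conceptually hard, but it is where a careless argument could go wrong; the use of the two universal properties (of $C^*(\G\otimes\Hbb) = \cl A_{\G,\Hbb}$ and of $\otimes_{\max}$) is what makes the whole argument clean.
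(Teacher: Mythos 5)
Your proof is correct and follows essentially the same route as the paper: the key step in both is to pass from a no-signalling projective representation $(p_{x,y})$ to the marginals $P_x=\sum_{y\in f}p_{x,y}$, $Q_y=\sum_{x\in e}p_{x,y}$, verify via orthogonality within an edge $e\times f$ that these are commuting PR's with $P_xQ_y=p_{x,y}$, and then invoke the two universal properties; the operator system statement is deduced from Lemma~\ref{l_idCstareTG} exactly as in the paper. The only difference is presentational — you spell out both directions of the isomorphism and work internally in the universal $C^*$-algebras, whereas the paper argues on an arbitrary concrete representation and leaves the forward direction to the universal property of $C^{*}(\G\otimes\Hbb)$.
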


\begin{proof}
Let $\cl H$ be a Hilbert space and 
$(P_{x,y})_{x\in V,y\in W} \subseteq \Bh$ be a projective representation 
that satisfies the no-signalling conditions 
\[ \sum_{x\in e} P_{x,y} = \sum_{x'\in e'} P_{x',y} \ \ \mbox{ and } \ \ 
\sum_{y\in f} P_{x,y} = \sum_{y'\in f'} P_{x,y'},\]
for all $ x \in V, y\in W$, $e,e' \in E$ and $f, f' \in F$. 
Set 
\begin{align*}
    P_{x}: = \sum_{y\in f} P_{x,y} \ \mbox{ and } \ Q_{y}: = \sum_{x\in e} P_{x,y}, \ \ x\in V, y\in W
\end{align*}
(here $e\in E$ and $f\in F$ are arbitrarily chosen). 
We have that $(P_x)_{x\in V}$ (resp. $(Q_y)_{y\in W}$) is a projective representation of 
$\G $ (resp. $\Hbb$); let  
$ \pi_{\G} $ (resp. $ \pi_{\Hbb}$) be the corresponding *-representation of $ C^{*}(\G)$ 
(resp. $C^{*}(\Hbb)$). 
We have that, if $x\in V$ and $y\in W$, then 
$$P_{x}Q_{y}
= \left(\sum_{y'\in f} P_{x,y'}\right) \left(\sum_{x'\in e} P_{x',y}\right) 
= \sum_{y' \in f, x' \in e}P_{x,y'} P_{x',y}
= P_{x,y} = Q_{y}P_{x}.$$
Thus,  $ \pi_{\G} $ and $ \pi_{\Hbb}$ have commuting ranges 
and therefore give rise to a *-representation 
$\pi_{\G}\times \pi_{\Hbb} : C^{*}(\G) \otimes_{\max } C^{*}(\Hbb) \rightarrow \Bh$
such that 
$$(\pi_{\G}\times \pi_{\Hbb})(p_x\otimes q_y) = P_{x,y}, \ \ \ x\in V, y\in W.$$
The first claim now follows from the 
the universal property of $ C^{*}(\G\otimes \Hbb)$. 
The second claim follows from the first one, the definition of the 
tensor product $\Tt_{\G} \otimes_{\rm e} \Tt_{\Hbb}$ and Lemma \ref{l_idCstareTG}. 
\end{proof}

\begin{remark}
It follows from Theorem \ref{th_Qqcstates} and Proposition \ref{p_maxotHbb}
that the following are equivalent:
    \begin{enumerate}
    \item $p \in  \Qq_{\rm qc}(\G,\Hbb)$;
\item there exists a state $s$ on $\Tt_{\G\otimes \Hbb} $ such that 
$p(x,y) = s( p_{x,y})$, $x\in V$, $y\in W$.
\end{enumerate}
\end{remark}


\section{Coherent probabilistic models}\label{ss_coherent}

In this section, we define coherent probabilistic models, which generalise 
synchronous no-signalling correlations and show that, similarly to the latter ones, 
the former can be described in terms of tracial states on universal C*-algebras. 
For a hypergraph $ \G=(V,E)$, we write $ x \sim y$, if $ x \neq y$ and there exists a 
$ e \in E$ such that $ x,y\in e$.

\begin{definition}
    Let $ \G= (V,E)$ be a contextuality scenario. A probabilistic model $ p \in \Gg(\G,\G)$ 
    is called {\rm coherent} if 
    $p(x,y) =0$ whenever $x \sim y$.
\end{definition}

We denote the set of coherent probabilistic models by  $\Gg^{\rm c}(\G) $. Similarly, 
we let $\Gg^{\rm c}_{\rm ns}(\G)$
(resp. $\Qq_{\rm qs}^{\rm c}(\G), \Qq_{\rm qc}^{\rm c}(\G) $) denote the set of all no-signalling coherent probabilistic models 
(resp. tensor coherent probabilistic models, 
quantum commuting coherent models).

\begin{remark}\label{r_synchr}
Let $X$ and $A$ be finite sets. 
Recall that a no-signalling correlation $p = \{(p(a,b|x,y))_{a,b\in A} : x,y\in X\}$
is called {\it synchronous}  (resp. {\it bisynchronous}) 
\cite{PAULSEN20162188} (resp. \cite{Paulsen2019BisynchronousGA}), if 
$p(a,b|x,x) = 0$ whenever $a\neq b$ (resp. 
$p(a,b|x,x) = 0$ whenever $a\neq b$ and $p(a,a|x,y) = 0$ whenever $x\neq y$). 

Let $\mathbb{B}_{X,A}$ be the Bell scenario 
(see Remark \ref{r_scGst}). It is straightforward to check that 
a no-signalling probabilistic model $p$ of $\mathbb{B}_{X,A}$ is coherent 
if and only if $p$ is a synchronous no-signalling correlation. 
Further, it is straightforward to verify that 
a no-signalling probabilistic model $p$ of the QMS scenario
$\G_{n}$ (see Example \ref{ex_qms}) is coherent if and only if it is 
a bisynchronous no-signalling correlation. 
Thus, coherency can be viewed as a contextual version of synchronicity
and bisynchronicity. 
\end{remark}

For a unital $C^{*}$-algebra  $ \Aa$, we write $\Aa^{ \rm op}$ 
for the opposite $C^{*}$-algebra;
recall that $\Aa^{ \rm op}$ has the same underlying vector space with $\Aa$, the same involution and norm, 
and multiplication given by $ a^{\rm op} b^{\rm op} = (ba)^{\rm op}$, where by $ a^{\rm op}$, for $ a\in \Aa$, we denote the 
corresponding elements of $ \Aa^{\rm op}$. 

Let $\Hh, \Kk$ be two Hilbert spaces. We denote by $ \Hh^{\rm d}$ the Banach space dual of $\Hh$ (similarly for $\Kk$). For $T\in \B(\Hh,\Kk)$, we denote by $ T^{\rm d} \in \B(\Kk^{\rm d}, \Hh^{\rm d})$ the dual operator of $T$; note that 
\[ T^{\rm d} (g) = g \circ T, \; \; \; g \in \Kk^{ \rm d}, \]
and $(T^{\rm d})^{*} = (T^{\rm *})^{\rm d}$. 
A *-representation  
$ \pi :\Aa \rightarrow \Bh$ of a C*-algebra $ \Aa$ induces a representation  
$ \pi^{ \rm op} : \Aa^{\rm op} \rightarrow \B(\Hh^{\rm d})$ 
of $\Aa^{ \rm op}$ by letting $ \pi^{\rm op}(a^{\rm op}) = \pi(a)^{\rm d}$.

\begin{lemma}\label{l_opfreehyper}
    Let $\G=(V,E)$ be a hypergraph. There is a $ *$-isomorphism $ \theta : C^{*}(\G) \rightarrow C^{*}(\G)^{\rm op}$, such that $\theta(p_{x}) = p_{x}^{\rm op}$.
\end{lemma}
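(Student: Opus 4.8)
The plan is to exhibit the $*$-isomorphism $\theta$ directly from the universal property of the free hypergraph C*-algebra $C^{*}(\G)$. Recall that $C^{*}(\G)$ is generated by projections $(p_x)_{x\in V}$ subject to the relations $\sum_{x\in e}p_x = 1$ for every $e\in E$. First I would observe that the elements $p_x^{\rm op}\in C^{*}(\G)^{\rm op}$ are again projections: the involution on $C^{*}(\G)^{\rm op}$ coincides with that on $C^{*}(\G)$, so $(p_x^{\rm op})^* = p_x^{\rm op}$, and $(p_x^{\rm op})^2 = (p_x^{\rm op})(p_x^{\rm op}) = (p_x p_x)^{\rm op} = p_x^{\rm op}$. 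Moreover, since addition in $C^{*}(\G)^{\rm op}$ is the same as in $C^{*}(\G)$, the relation $\sum_{x\in e}p_x^{\rm op} = \big(\sum_{x\in e}p_x\big)^{\rm op} = 1^{\rm op} = 1$ holds for every $e\in E$. Hence $(p_x^{\rm op})_{x\in V}$ is a family of projections in the C*-algebra $C^{*}(\G)^{\rm op}$ satisfying exactly the defining relations of $C^{*}(\G)$.

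By the universal property of $C^{*}(\G)$ (equivalently, since any PR of $\G$ on a Hilbert space induces a $*$-representation, and one applies this to a faithful representation of $C^{*}(\G)^{\rm op}$), there exists a unital $*$-homomorphism $\theta : C^{*}(\G)\to C^{*}(\G)^{\rm op}$ with $\theta(p_x) = p_x^{\rm op}$ for every $x\in V$. Next I would construct an inverse. The opposite of the opposite of a C*-algebra is canonically the algebra itself, so $C^{*}(\G)^{\rm op}$ is a C*-algebra whose opposite is $C^{*}(\G)$; by the same argument applied to $C^{*}(\G)^{\rm op}$ in place of $C^{*}(\G)$, there is a unital $*$-homomorphism $\theta' : C^{*}(\G)^{\rm op}\to (C^{*}(\G)^{\rm op})^{\rm op} = C^{*}(\G)$ sending $p_x^{\rm op}\mapsto (p_x^{\rm op})^{\rm op} = p_x$.

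Finally I would check that $\theta'\circ\theta$ and $\theta\circ\theta'$ are the identity maps. Both composites are unital $*$-homomorphisms fixing the canonical generators $p_x$ (respectively $p_x^{\rm op}$), which generate the respective C*-algebras; hence each composite agrees with the identity on a generating set and therefore everywhere. Consequently $\theta$ is a $*$-isomorphism with $\theta(p_x) = p_x^{\rm op}$, as required. The only point requiring a little care is keeping straight that the vector-space, norm and $*$-structure of $C^{*}(\G)^{\rm op}$ are literally those of $C^{*}(\G)$ — so that ``projection'' and ``sums equal to $1$'' transfer verbatim — while only the multiplicative order is reversed; once this is noted, there is no real obstacle, and the argument is a routine invocation of the universal property in both directions.
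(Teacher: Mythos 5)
Your proof is correct, but it runs in the opposite direction from the paper's. The paper fixes an arbitrary PR $(P_x)_{x\in V}$ on a Hilbert space $\Kk$, passes to the transposed family $(P_x^{\rm d})_{x\in V}$ on the dual space $\Kk^{\rm d}$ (which is again a PR), obtains a representation $\rho$ of $C^{*}(\G)$ there, and then observes that $\rho^{\rm op}$ is a representation of $C^{*}(\G)^{\rm op}$ sending $p_x^{\rm op}$ to $P_x$; in other words, it verifies that $C^{*}(\G)^{\rm op}$ itself enjoys the universal property of $C^{*}(\G)$, and concludes by uniqueness of the universal object. You instead stay entirely inside the opposite algebra: you check that the elements $p_x^{\rm op}$ are projections satisfying $\sum_{x\in e}p_x^{\rm op}=1$ (which is immediate, since the defining relations involve only sums, adjoints and squares of single generators, all of which are insensitive to reversing the multiplication), invoke the universal property of $C^{*}(\G)$ once to get $\theta$, and then produce the inverse. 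Your route is more elementary in that it avoids Hilbert-space duals and transposes altogether; the paper's route has the advantage of exhibiting $C^{*}(\G)^{\rm op}$ directly as a universal object, which meshes with how the lemma is used later (converting representations of the opposite algebra back into PR's).

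One sentence deserves more care: you obtain $\theta'$ ``by the same argument applied to $C^{*}(\G)^{\rm op}$ in place of $C^{*}(\G)$.'' Read literally, that argument would require $C^{*}(\G)^{\rm op}$ to possess a universal property with respect to the generators $p_x^{\rm op}$ --- which is essentially the content of the lemma, so the phrasing is circular. The fix is immediate and is surely what you intend: apply the functor $A\mapsto A^{\rm op}$ (which sends $*$-homomorphisms to $*$-homomorphisms and satisfies $(A^{\rm op})^{\rm op}=A$) to $\theta$ itself, obtaining $\theta^{\rm op}\colon C^{*}(\G)^{\rm op}\to C^{*}(\G)$ with $\theta^{\rm op}(p_x^{\rm op})=p_x$; equivalently, a $*$-homomorphism $A\to B^{\rm op}$ and a $*$-homomorphism $A^{\rm op}\to B$ are the same data. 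With $\theta':=\theta^{\rm op}$, your concluding argument --- both composites are unital $*$-endomorphisms fixing a generating set, hence the identity --- goes through verbatim.
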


\begin{proof}
We show that $C^{*}(\G)^{\rm op}$ satisfies the universal property of 
$C^{*}(\G)$. To this end, let 
$(P_x)_{x\in V}\subseteq \cl B(\cl K)$ be a PR of $\G$. 
A direct verification shows that $(P_x^{\rm d})_{x\in V}$ is a PR
of $\G$ on the Hilbert space $\cl K^{\rm d}$.
By the universal property of the C*-algebra $C^{*}(\G)$, there exists 
a (unital) *-representation $\rho : C^{*}(\G)\to \cl B(\cl K^{\rm d})$,
such that $\rho(p_x) = P_x^{\rm d}$, $x\in V$. 
It follows that  the map $\rho^{\rm op} : C^{*}(\G)^{\rm op} \to \cl B(\cl K)$
is a *-representation, satisfying $\rho^{\rm op}(p^{\rm op}) = P_x$, $x\in V$. 
The proof is complete. 
\end{proof}

In view of Remark \ref{r_synchr}, 
the following results are generalisations of the classical characterisations 
obtained in \cite{PAULSEN20162188}, \cite{Paulsen2019BisynchronousGA} and \cite{10.1063/1.4996867}.
We recall that a trace $\tau $ of a $C^{*}$-algebra $ \Aa$ 
is called {\it amenable}, 
if there is a sequence of completely positive, contractive maps
        $ \varphi_{k} : \Aa \rightarrow M_{d_{k}}$ such that 
        \begin{align*}
             \nor{\varphi_{k}(ab)- \varphi_{k}(a)\varphi_{k}(b)}_{2}  \rightarrow 0 \; \; \text{ and } \; \; \trace_{d_{k}}{(\varphi_{k}(a))} \rightarrow \tau(a).
        \end{align*}
The following result is due to Kirchberg \cite[Proposition 3.2]{Kirchberg1994} 
(see also \cite[Theorem 6.2.7]{brown2008textrm}).

\begin{theorem}  \label{th_kirchberh}
    If $\Aa$ is a unital, separable $C^{*}$-algebra and $ \tau$ is a tracial state on $\Aa$, the following are equivalent:
    \begin{enumerate}
        \item $\tau$ is an amenable tracial state; 
        \item the linear functional $\phi : \Aa \otimes \Aa^{\rm op} \rightarrow \C $, with $ \phi(a\otimes b^{\rm op} ) = \tau(ab)$, is bounded with respect to the minimal tensor norm.
    \end{enumerate}
\end{theorem}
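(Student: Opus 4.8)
The statement to prove is Theorem~\ref{th_kirchberh}, which is attributed to Kirchberg \cite[Proposition 3.2]{Kirchberg1994} and also recorded as \cite[Theorem 6.2.7]{brown2008textrm}. Since this is a cited result, the natural approach is to recall the proof rather than reinvent it; I would present the argument establishing the equivalence of the two conditions, using the standard machinery of amenable traces and the canonical pairing $\Aa \odot \Aa^{\rm op} \to \bb{C}$, $a \otimes b^{\rm op} \mapsto \tau(ab)$.

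First I would treat the direction (i)$\Rightarrow$(ii). Given an amenable tracial state $\tau$ with approximating c.p.c.\ maps $\varphi_k : \Aa \to M_{d_k}$, the key observation is that on $M_{d_k}$ the analogous functional $M_{d_k}\otimes M_{d_k}^{\rm op}\to\bb{C}$, $x\otimes y^{\rm op}\mapsto \mathrm{Tr}_{d_k}(xy)$, is exactly (a multiple of) the canonical state realizing $M_{d_k}\otimes_{\min} M_{d_k}^{\rm op}\cong M_{d_k^2}$, hence has norm $1$ with respect to the minimal norm; this is where finite-dimensionality is used. Then I would form $\varphi_k\otimes\varphi_k^{\rm op} : \Aa\otimes_{\min}\Aa^{\rm op}\to M_{d_k}\otimes_{\min}M_{d_k}^{\rm op}$, which is contractive since the minimal tensor norm is functorial for c.p.\ maps, compose with $\mathrm{Tr}_{d_k}(\cdot\,\cdot)$, and check that on elementary tensors $a\otimes b^{\rm op}$ one gets $\mathrm{Tr}_{d_k}(\varphi_k(a)\varphi_k(b))\to\tau(ab)$, using the asymptotic multiplicativity $\|\varphi_k(ab)-\varphi_k(a)\varphi_k(b)\|_2\to 0$ together with the trace property to replace $\varphi_k(a)\varphi_k(b)$ by $\varphi_k(ab)$ inside the trace up to a vanishing error. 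Passing to a weak* limit of these uniformly bounded functionals yields a $\min$-bounded functional that agrees with $\phi$ on the algebraic tensor product, hence $\phi$ is $\min$-bounded.

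For the converse (ii)$\Rightarrow$(i), if $\phi$ extends to a bounded (necessarily positive, being a state after checking $\phi(1\otimes 1)=1$) functional on $\Aa\otimes_{\min}\Aa^{\rm op}$, I would invoke the separability of $\Aa$ to take a faithful GNS-type representation $\Aa\otimes_{\min}\Aa^{\rm op}\subseteq\cl B(\cl H)$ with $\cl H$ separable, restrict to the commuting representations $\pi$ of $\Aa$ and $\rho=\pi^{\rm op}$-type representation of $\Aa^{\rm op}$ it induces, and use that a state on $\Aa\otimes_{\min}\Aa^{\rm op}$ of this diagonal form gives a $\tau$-preserving inclusion of $\Aa$ into a finite von Neumann algebra with a weakly dense copy of the minimal-tensor-product image; the amenability of $\tau$ then follows from the local finite-dimensional approximation available because the minimal tensor norm "sees" only finite-dimensional matrix compressions — concretely one extracts the approximating maps $\varphi_k$ by compressing $\pi$ to finite-dimensional subspaces and appealing to the characterization of amenable traces via the factorization of the pairing through $\min$ (this is precisely the content of Kirchberg's argument). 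I would reference \cite{brown2008textrm} for the detailed von Neumann algebraic bookkeeping rather than reproducing it.

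\textbf{Main obstacle.} The hard part is the converse direction: the passage from "$\phi$ is $\min$-bounded" back to "$\tau$ is amenable" requires the nontrivial fact that the minimal tensor norm being sufficient for the pairing forces the trace to admit finite-dimensional c.p.c.\ approximations — this is the crux of Kirchberg's theorem and relies essentially on separability (to keep Hilbert spaces separable and extract sequences rather than nets) and on the structure theory of hyperfinite factors. In a paper of this kind I would not reprove it from scratch; I would state it cleanly, cite \cite[Proposition 3.2]{Kirchberg1994} and \cite[Theorem 6.2.7]{brown2008textrm}, and perhaps sketch only the easier forward implication in full, noting that it is this implication that is actually used downstream in the tracial characterizations of coherent probabilistic models.
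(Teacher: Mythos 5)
Your proposal matches the paper exactly in approach: the paper offers no proof of this theorem at all, recording it purely as a citation to Kirchberg \cite[Proposition 3.2]{Kirchberg1994} and \cite[Theorem 6.2.7]{brown2008textrm}, which is precisely what you propose to do. Your sketch of the forward implication (tensoring the approximating maps $\varphi_k$, using functoriality of $\otimes_{\min}$ for completely positive maps and the $\|\cdot\|_2$-asymptotic multiplicativity, then taking a weak* cluster point) is the standard correct argument, and deferring the harder converse to the cited references is consistent with how the paper treats the result.
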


\begin{theorem}\label{th_cohtrace}
    Let $ \G = (V,E)$ be a contextuality scenario and $ p \in \Gg(\G, \G)$.  
    The following are equivalent:
    \begin{enumerate}
        \item $ p \in \Qq_{\rm qc}^{\rm c}(\G) $; 
        \item there  exists a tracial state $ \tau : C^{*}(\G) \rightarrow \C$ such that
\begin{equation}\label{eq_taupxpy}
p(x,y) = \tau(p_{x}p_{y}), \; \; \; x,y \in V.
\end{equation}
    \end{enumerate}
Moreover, 
\begin{itemize}
\item[({\rm a})] 
$p\in \Qq_{\rm qa}^{\rm c}(\G)$
if and only if $\tau$ in (\ref{eq_taupxpy}) can be chosen to be amenable; 

\item[({\rm b})] 
$p\in \Qq_{\rm q}^{\rm c}(\G)$ if and only if $\tau$ in (\ref{eq_taupxpy}) can be chosen
of the form $\tau = \tau_0\circ \pi_0$, where $\pi_0$ is a *-representation of $C^{*}(\G)$ such that 
${\rm ran}(\pi_0)$ is finite dimensional, and $\tau_0$ is a tracial 
state on ${\rm ran}(\pi_0)$; 

\item[({\rm c})] 
$p\in \Cc^{\rm c}(\G)$ if and only if $\tau$ in (\ref{eq_taupxpy}) can be chosen
of the form $\tau = \tau_0\circ \pi_0$, where $\pi_0$ is a *-representation of $C^{*}(\G)$ such that 
${\rm ran}(\pi_0)$ is abelian, and $\tau_0$ is a state on ${\rm ran}(\pi_0)$. 
\end{itemize}
\end{theorem}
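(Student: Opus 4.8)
The plan is to follow the template of the Paulsen--Severini--Stahlke--Todorov--Winter characterisation of synchronous correlations \cite{PAULSEN20162188}, adapting it to the hypergraph setting via the free hypergraph C*-algebra $C^{*}(\G)$ and Lemma \ref{l_opfreehyper}. First I would record the key structural consequence of coherency: if $p\in \Gg(\G,\G)$ is coherent, then a generalised commuting (or tensor) representation of $p$ automatically has its two POR's built from a single \emph{projective} representation whose entries pairwise annihilate along edges. Concretely, suppose $p(x,y)=\langle A_xB_y\xi,\xi\rangle$ with commuting POR's $(A_x)_{x\in V}$, $(B_y)_{y\in V}$ of $\G$. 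For a fixed $e\in E$ we have $\sum_{x\in e}A_x=\sum_{y\in e}B_y=I$, so from $1=\sum_{x,y\in e}p(x,y)=\sum_{x\in e}p(x,x)$ (the off-diagonal terms vanish by coherency, since distinct $x,y\in e$ satisfy $x\sim y$) together with $p(x,x)=\langle A_xB_x\xi,\xi\rangle\le \|A_x^{1/2}B_xA_x^{1/2}\|\cdot\|A_x^{1/2}\xi\|^2\le\langle A_x\xi,\xi\rangle$ and $\sum_{x\in e}\langle A_x\xi,\xi\rangle=1$, one forces $\langle A_x\xi,\xi\rangle=\langle A_xB_x\xi,\xi\rangle$ for every $x\in e$. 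A standard Cauchy--Schwarz/equality-case argument (as in \cite{PAULSEN20162188}, using that $B_x$ is a positive contraction and $\sum_{y\in e}B_y=I$) then yields $B_x A_x^{1/2}\xi = A_x^{1/2}\xi$ and, for $x\neq x'$ in a common edge, $B_{x'}A_x^{1/2}\xi=0$; feeding this into $p(x,y)=\langle B_yA_x\xi,\xi\rangle$ and cyclicity one upgrades (on the subspace generated by $\xi$ under the algebra, where we may assume $\xi$ cyclic) the $A_x$ to be a \emph{projective} representation, and deduces $B_y=A_y^{\mathrm d}$ (equivalently $A_y$ acting on the conjugate space) modulo the identification of $B$ with the opposite algebra. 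This is the step I expect to be the main obstacle: pushing the equality cases through for a general \emph{edge-sharing} incidence structure rather than the single-edge Bell scenario, and handling the possibility of multiple overlapping edges coherently, requires care, but it is essentially the known argument localised at each edge.

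Granting this, the equivalence (i)$\Leftrightarrow$(ii) proceeds as follows. For (i)$\Rightarrow$(ii): given $p\in \Qq^{\rm c}_{\rm qc}(\G)$, by the above we obtain a Hilbert space $\cl H$, a cyclic unit vector $\xi$, a PR $(P_x)_{x\in V}$ of $\G$ on $\cl H$ and, via Lemma \ref{l_opfreehyper}, a *-representation so that $p(x,y)=\langle P_xP_y^{\rm d}\xi,\xi\rangle$ can be rewritten purely in terms of a single *-representation $\pi$ of $C^{*}(\G)$; defining $\tau(a):=\langle\pi(a)\xi,\xi\rangle$ one checks $\tau$ is a state and the coherency-forced relations $P_{x'}P_x^{1/2}\xi=0$ for $x\sim x'$ give the tracial identity $\tau(ab)=\tau(ba)$ on generators (this is exactly the point where synchronicity becomes traciality), hence on all of $C^{*}(\G)$ by density and continuity, with $p(x,y)=\tau(p_xp_y)$. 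For (ii)$\Rightarrow$(i): given a tracial state $\tau$ on $C^{*}(\G)$, perform the GNS construction to get $(\pi_\tau,\cl H_\tau,\xi_\tau)$; traciality makes $\xi_\tau$ both cyclic and separating, so $b\xi_\tau\mapsto b^{*}\xi_\tau$ extends to a conjugation intertwining $\pi_\tau$ with its commutant representation, yielding commuting PR's $(\pi_\tau(p_x))_{x}$ and the conjugate $(\pi_\tau(p_x)')_{x}$ (via Lemma \ref{l_opfreehyper}) of $\G$ with $p(x,y)=\langle\pi_\tau(p_x)\pi_\tau(p_y)'\xi_\tau,\xi_\tau\rangle$; no-signalling holds because $\sum_{x\in e}\pi_\tau(p_x)=I$, and coherency holds because $p(x,y)=\tau(p_xp_y)$ with $p_xp_y=0$ whenever $x\sim y$ (distinct elements of a common edge are orthogonal projections in $C^{*}(\G)$). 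Thus $p\in\Qq^{\rm c}_{\rm qc}(\G)$.

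For the three refinements I would invoke the standard dictionary between trace types and the corresponding representation/approximation classes. For (a), by Theorem \ref{th_kirchberh} amenability of $\tau$ is equivalent to boundedness of $a\otimes b^{\rm op}\mapsto\tau(ab)$ on $C^{*}(\G)\otimes_{\min}C^{*}(\G)^{\rm op}\cong C^{*}(\G)\otimes_{\min}C^{*}(\G)$ (using Lemma \ref{l_opfreehyper}); combined with Theorem \ref{th_Qqcstates} (which identifies $\Qq_{\rm qa}$ with states on the minimal tensor product) and the coherency-to-traciality correspondence just established, this gives $p\in\Qq^{\rm c}_{\rm qa}(\G)$ iff $\tau$ can be taken amenable --- one direction uses that a limit of finite-dimensional coherent models has a limiting trace which is amenable, the other that an amenable trace gives, via the matrix approximations $\varphi_k$ applied to the defining relations, a net of finite-dimensional coherent tensor models converging to $p$. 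For (b), a coherent $p\in\Qq^{\rm c}_{\rm q}(\G)$ is realised on finite-dimensional $\cl H_{\G}\otimes\cl H_{\Hbb}$; pushing through the construction gives a *-representation $\pi_0$ of $C^{*}(\G)$ with finite-dimensional range and a tracial state $\tau_0$ on that range, and conversely such a factorisation produces a finite-dimensional PR and a conjugation on the range, hence a finite-dimensional tensor coherent model; traciality of $\tau_0$ on the (finite-dimensional) range is needed and available. For (c), the same argument with ``finite-dimensional'' replaced by ``abelian'' works: an abelian quotient of $C^{*}(\G)$ is $C(Y)$ for a finite set $Y$ of PR's, its PR's have commuting (indeed scalar-valued at each point) entries, and a state on it is a probability measure, so $p(x,y)=\sum_i\lambda_i p_i(x)p_i(y)$ as a convex combination of products of deterministic models --- i.e.\ $p\in\Cc^{\rm c}(\G)$ --- and conversely a classical coherent model manifestly factors through the abelian C*-algebra $\Dd_{\G}$ with a tracial (every state on an abelian algebra is tracial) state. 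The main work in all three is verifying that coherency is preserved under these limits/factorisations, which reduces to the orthogonality relations $p_xp_y=0$ for $x\sim y$ already built into $C^{*}(\G)$.
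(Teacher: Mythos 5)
Your proposal is correct in substance and follows the same overall architecture as the paper: coherency forces traciality of the ``left marginal'' functional, the converse goes through GNS, amenability is handled via Kirchberg's theorem (Theorem \ref{th_kirchberh}) together with Theorem \ref{th_Qqcstates}, and (b), (c) are the finite-dimensional and abelian specialisations. The one place where you genuinely diverge is the implication (i)$\Rightarrow$(ii). You run the original operator-level PSSTW equality-case argument: from $\sum_{y\in e}p(x,y)=p(x,x)$ you extract vector relations of the form $A_xB_x\xi=A_x\xi$ and $A_xB_y\xi=0$ for $x\sim y$, hence $A_x\xi=B_x\xi$, and then prove traciality of $\tau(a)=\langle\pi(a)\xi,\xi\rangle$ by an induction on word length. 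The paper instead works entirely at the level of the state $s$ on $C^*(\G)\otimes_{\max}C^*(\G)$ supplied by Theorem \ref{th_Qqcstates}: coherency gives $s(p_x\otimes 1)=s(p_x\otimes p_x)=s(1\otimes p_x)$, so with $h_x=p_x\otimes 1-1\otimes p_x$ one gets $s(h_x^2)=0$, Cauchy--Schwarz kills $h_x$ against everything, and the same word-length induction yields traciality of $z\mapsto s(z\otimes 1)$. The paper's route is shorter and avoids any discussion of cyclic vectors or of ``upgrading'' the representations; note also that your worry about upgrading POR's to PR's is moot here, since $\Qq_{\rm qc}^{\rm c}(\G)$ is by definition built from projective representations (the POR version is the tilde class, which is not what the theorem addresses). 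Two small glosses in your write-up: ``tracial on generators, hence everywhere by density'' is not literally a density argument --- you still need the induction on words using $A_x\xi=B_x\xi$ (which you have available); and in (ii)$\Rightarrow$(i) your observation that coherency of the reconstructed model is automatic because $p_xp_y=0$ in $C^*(\G)$ for $x\sim y$ is a point the paper leaves implicit, so it is good that you made it explicit.
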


    \begin{proof}
$(i)\Rightarrow (ii)$
We write $\cl A = C^*(\G)$ for brevity. By Theorem \ref{th_Qqcstates}, 
there exists a state $s : \cl A\otimes_{\max}\cl A\to \bb{C}$, such that 
$p(x,y) = s(p_x\otimes p_y)$, $x,y\in V$. 
Fix $x\in V$ and let $e\in E$ be such that $x\in e$. Since $p$ is coherent, 
\begin{equation}\label{eq_sym0}
s(p_x\otimes 1) = 
\sum_{y\in e} s(p_x\otimes p_y) = s(p_x\otimes p_x) 
= \sum_{y\in e} s(p_y\otimes p_x)
=  s(1\otimes p_x).
\end{equation}
Letting $h_x = p_x\otimes 1 - 1\otimes p_x$, (\ref{eq_sym0}) implies that 
$s(h_x^2) = 0$ which, by the Cauchy-Schwarz inequality, 
shows that 
$s(h_xu) = s(uh_x) = 0$, $u\in \cl A\otimes_{\max}\cl A$. 
Thus
\begin{equation}\label{eq_pxz}
s(zp_x\otimes 1) = s(z\otimes p_x) = s(p_xz\otimes 1), \ \ \ z\in \cl A.
\end{equation}
Assume that 
$s(zw\otimes 1) = s(wz\otimes 1)$ for every $z\in \cl A$ and every word $w$ on 
the set $\{p_x : x\in V\}$ of length at most $n$. Letting $w$ be a word on 
$\{p_x : x\in V\}$ of length $n+1$, write $w = w_0p_x$ for some $x\in V$ and 
a word $w_0$ on $\{p_x : x\in V\}$ of length $n$. Using (\ref{eq_pxz}), we then have 
$$s(zw\otimes 1) = s(zw_0p_x\otimes 1) = s(p_xzw_0\otimes 1) = 
s(w_0p_xz\otimes 1) = s(wz\otimes 1).$$
Thus, the functional $\tau$ on $\cl A$, given by $\tau(z) = s(z\otimes 1)$, 
is a tracial state. 
Finally, (\ref{eq_pxz}) implies that 
$$p(x,y) = s(p_x\otimes p_y) = \tau(p_x p_y), \ \ \ x,y\in V.$$

$(ii)\Rightarrow (i)$
The functional $\tilde{s} : \cl A\otimes_{\max}\cl A^{\rm op}\to \bb{C}$, given by 
$\tilde{s}(u\otimes v^{\rm op}) = \tau(uv)$, $u,v\in \cl A$, is a (well-defined) state. 
By Lemma \ref{l_opfreehyper}, the functional
$s : \cl A\otimes_{\max}\cl A\to \bb{C}$, given by 
$\tilde{s}(u\otimes v) = \tau(uv)$, $u,v\in \cl A$, is a (well-defined) state.
Applying the GNS construction to $s$ yields PR's and a unit vector in the 
corresponding GNS Hilbert space, implementing the probabilistic model $p$.

\smallskip

(a)
Now let $p \in \Qq_{\rm qa}^{\rm c}(\G)$. 
By Theorem \ref{th_Qqcstates}, there exists a state 
$ s : C^{*}(\G) \otimes_{\rm min} C^{*}(\G) \rightarrow \C$ such that 
$p(x,y) = s(p_{x}\otimes p_{y})$, $x,y\in V$.
The arguments from the first part of the proof show that the 
state $\tau$ on $C^{*}(\G)$, given by $\tau(a) = s(a\otimes 1)$, is tracial.
By Lemma \ref{l_opfreehyper}, 
the state $\tilde{s} : C^{*}(\G) \otimes_{\rm max} C^{*}(\G)^{\rm op} \to \bb{C}$, 
given by $\tilde{s}(a\otimes b^{\rm op}) = \tau(ab)$, 
is bounded in the minimal norm. By Theorem \ref{th_kirchberh}, the trace $\tau$ is amenable. 
As in the first part of the proof, $p(x,y) = \tau(p_xp_y)$, $x,y\in V$. 

Conversely, let $ \tau $ on $C^{*}(\G)$ be an amenable trace. 
By the first part of the proof, the assignment $p : V\times V\to \bb{R}_+$, given by
$p(x,y) = \tau(p_{x}p_{y})$, $x,y \in V$, is a quantum commuting coherent model. 
By Theorem \ref{th_kirchberh}, the functional $ \phi :  C^{*}(\G) \otimes C^{*}(\G)^{\rm op} \rightarrow \C$ with 
\[ \phi(a\otimes b^{\rm op}) = \tau(ab),  \]
    is bounded with the minimal tensor norm, and hence extends to a state on $ C^{*}(\G) \otimes_{\rm min} C^{*}(\G)^{\rm op}$, which we denote still by $\phi$. 
    We identify $  C^{*}(\G) = C^{*}(\G)^{\rm op}$ (Lemma \ref{l_opfreehyper}) and invoke Theorem \ref{th_Qqcstates} 
    to obtain a probabilistic model $p' \in \Qq_{\rm qa}(\G,\G)$ such that 
$\phi(p_{x}\otimes p_{y}) = p'(x,y)$, $x,y \in V$. 
    It is straightforward that $ p = p'$ and hence $p$ approximately quantum.

(b) 
Assume that $p\in \Qq^{\rm c}_{\rm q}(\G) $ 
and, using the universal property of $C^{*}(\G)$, 
write $ p(x,y) = \sca{\pi_{1}(p_{x})\otimes \pi_{2}(p_{y})\xi,\xi)}$, 
where $ \pi_{1}$, $ \pi_{2}$ are representations of $ C^{*}(\G) $ on some finite dimensional Hilbert spaces $\Hh_1, \Hh_{2}$ respectively. Consider the map $s_0$ to be the restriction of the vector state $\omega_{\xi}$ on  $ \rm ran (\pi_{1}) \otimes_{\rm min} \rm ran (\pi_{2})  $. Then $ s_0$ is a state such that $ s_0(\pi_{1}(p_{x})\otimes \pi_{2}(p_{y}) ) = p(x,y)$ for all $ x\in V$, $ y\in W$. Using that $ p$ is coherent and following the arguments in the first part of the proof, one can show that the functional $ \tau_0  $ on $ \rm ran (\pi_{1})$ defined by $ \tau_0 (\pi_{1}(z)) = s_0(\pi_{1}(z)\otimes 1) $ is a tracial state such that $ (\tau_0 \circ \pi_1 )(p_{x}p_{y}) = p(x,y)$, $x\in V$, $y \in W$. 
The converse statement is straightforward.

(c) follows using similar arguments to those in (b) and the details are omitted.
\end{proof}


\section{Equivalences with the Connes Embedding Problem}\label{s_CEP}

In this section we establish equivalences of the Connes Embedding Problem 
(CEP) \cite{Connes76}
in terms 
of properties of the operator systems $\cl S_{\G}$ and $\cl T_{\G}$, and their C*-covers
$\cl B_{\G}$ and $C^*(\G)$. 
We stress that, in view of the negative answer to Connes' Embedding Problem 
obtained in \cite{jnvwy}, there exist counterexamples to the 
equivalent statements in Theorem \ref{th_CEPe} 
and Proposition \ref{r_fritzconnes}. 
The main value of these results resides in providing 
possible alternative ways to arrive at the aforementioned resolution.

We start with recalling some basics on the CEP and associated approximation properties. 
We will use the Kirchberg formulation of CEP \cite[Chapter 14]{pisier_2020}, namely, the 
equality 
$$C^*(\bb{F}_2)\otimes_{\min} C^*(\bb{F}_2) = C^*(\bb{F}_2)\otimes_{\max} C^*(\bb{F}_2),$$
where 
$C^*(\bb{F}_2)$ is the full group C*-algebra of the 
free group $\bb{F}_2$ on two generators.
We let $g_1, g_2$ denote the generators of $\bb{F}_2$, and set 
$\cl S_2 = {\rm span}\{e, g_1,g_2,g_1^*,g_2^*\}$, viewed as an operator subsystem of $C^*(\bb{F}_2)$. 

Let $\cl S$ be an operator system. We say that $\cl S$ possesses the 
{\it operator system lifting property (OSLP)} if, 
whenever $\cl A$ is a unital C*-algebra and $\cl J\subseteq \cl A$ is a 
closed ideal, for every unital completely positive map $\phi : \cl S\to \cl A/\cl J$ there exists 
a unital completely positive map $\psi : \cl S\to \cl A$ such that 
$\phi = q\circ \psi$ (where $q : \cl A\to \cl A/\cl J$ is the quotient map). 
We say that $\cl S$ possesses the 
{\it operator system local lifting property (OSLLP)} if every finite dimensional operator 
subsystem of $\cl S$ possesses OSLP. 
If $\cl S$ is a unital C*-algebra, we simply refer to these properties as LP and LLP, respectively. 
We further recall that a unital $C^{*}$-algebra 
$\Aa$ is said to 
\begin{itemize}
\item[(i)]
be {\it projective} if, given any unital $C^{*}$-algebra $\B$ and a 
closed ideal $ \Jj \subseteq \B$, for every unital *-homomorphism $ \pi : \Aa \rightarrow \B/\Jj$ 
there exists a unital *-homomorphism $ \psi : \Aa \rightarrow \B$ such that $ \pi = q \circ \psi$;
\item[(ii)] be {\it residually finite dimensional (RFD)} 
if, whenever $a\in \cl A$ is non-zero, there exists a finite dimensional Hilbert space $\cl H$ and a 
*-representation $\pi : \cl A\to \cl B(\cl H)$ such that $\pi(a)\neq 0$;
\item[(iii)] 
have the {\it weak expectation property (WEP)} if 
$\cl A \otimes_{\rm min} C^{*}(\mathbb{F}_{2}) = \cl A \otimes_{\rm max} C^{*}(\mathbb{F}_{2})$. 
\end{itemize}

Let $ \Ss$ and $\Tt$ be operator systems with $ \dim \Ss < \infty$. 
Given an element $u \in \Ss\otimes \Tt$, say, $u = \sum_{i=1}^k s_{i}\otimes t_{i}$, 
let $\phi_{u} : \Ss^{\rm d}\to \Tt$ be the (linear) map, given by 
$\phi_{u}(f) = \sum_{i} f(s_{i})t_{i}$, $ f\in \Ss^{\rm d}$. 
By \cite[Lemma 8.4]{Kavruk2010QuotientsEA},
the correspondence 
\begin{equation}\label{eq_uphiu}
    (\Ss\otimes_{\rm min} \Tt)^{+} \rightarrow {\rm CP}(\Ss^{\rm d},\Tt), \ \ \ \ 
     u \mapsto \phi_{u}
\end{equation}
is a well-defined bijection.


The next proposition collects several facts that will be needed in the proof of
Theorem \ref{th_CEPe}. 
We point out that item (i) can be deduced using deeper functoriality properties 
involving exactness and OSLLP \cite{Kavruk2010QuotientsEA, Kavruk2014}; 
the elementary proof given here 
follows closely the proof of \cite[Proposition 8.4]{Todorov2020QuantumNC}, and 
we include it for the convenience of the reader.

\begin{proposition}\label{p_apppro}
Let $\G$ be a contextuality scenario. The following hold true:
\begin{itemize}
\item[(i)] 
the operator system $\Ss_{\G}$ has the OSLLP;

\item[(ii)] 
if $\G$ is dilating then the C*-algebra $C^{*}(\G)$ has the LLP;

\item[(iii)]    
the  $C^{*}$-algebra $\cl B_{\G}$ is projective.
\end{itemize}
\end{proposition}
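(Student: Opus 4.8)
The plan is to handle the three items largely independently, exploiting the concrete description $\cl S_{\G} = \cl S / \tilde{\cl J}$ as a quotient of the finite dimensional abelian C*-algebra $\cl S = \oplus_{e\in E}\ell^\infty_e$, together with the identifications $C^*_{\rm u}(\cl S_{\G}) \cong \cl B_{\G}$ and $C^*(\G) = C^*_{\rm e}(\cl T_{\G})$ established earlier. For item (iii), the idea is to realise $\cl B_{\G}$ as a universal C*-algebra on a finite set of generators subject to relations that are "liftable". Concretely, $\cl B_{\G}$ is generated by the positive contractions $g_{x,x}$, $x\in V$, subject to $\sum_{x\in e} g_{x,x} = 1$ for $e\in E$; since the only relations are that finitely many positive contractions sum to the unit along each edge — relations of the form "$0 \le t_x \le 1$ and $\sum_{x\in e} t_x = 1$", which cut out a (weakly) semiprojective, indeed projective, relation in the sense of Loring — one argues that any unital $*$-homomorphism into a quotient $\cl B/\cl J$ lifts: lift each $g_{x,x}$ to a positive contraction in $\cl B$, then correct along each edge using the standard trick (e.g. replacing the family $(b_x)_{x\in e}$ by $(b_x^{1/2} s^{-1/2} b_x^{1/2})$-type rescalings, or by an explicit finite partition-of-unity correction) to restore the edge relations exactly, using that lifting positive elements and self-adjoint elements along a quotient is always possible and functional calculus is compatible with quotients. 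The main obstacle here is that edges overlap: a vertex $x$ may lie in several edges $e$, and correcting the relation for one edge may break it for another. I would resolve this by appealing directly to the known projectivity of the relevant universal C*-algebra: $\cl B_{\G}$ is a direct sum / pullback built out of the (projective) algebras $C^*(\1 = t_1 + \cdots + t_k, \ t_i \ge 0)$ amalgamated over the overlap structure, and one cites the projectivity results of Loring (and Blackadar) for such "noncommutative simplex" algebras; alternatively, one can give a direct diagonal/lifting argument organised by a linear ordering of the edges.

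For item (i), the plan is to use that $\cl S_{\G}$ is a quotient of a finite dimensional C*-algebra. By \cite{Kavruk2010QuotientsEA, Kavruk2014}, finite dimensional C*-algebras have the LLP (indeed OSLP), and quotients of operator systems with OSLLP by kernels inherit — after some care — a lifting property; more directly, following \cite[Proposition 8.4]{Todorov2020QuantumNC}, one argues as follows. Let $\cl S_0 \subseteq \cl S_{\G}$ be a finite dimensional operator subsystem and $\phi : \cl S_0 \to \cl A / \cl J$ a ucp map. Since $\cl S_{\G} = \cl S / \tilde{\cl J}$ is itself finite dimensional, we may take $\cl S_0 = \cl S_{\G}$. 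Precompose $\phi$ with the quotient map $q_{\tilde{\cl J}} : \cl S \to \cl S_{\G}$ to get a ucp map $\cl S \to \cl A/\cl J$; since $\cl S$ is a finite dimensional C*-algebra it has the OSLP, so this map lifts to a ucp map $\Psi_0 : \cl S \to \cl A$. The issue is that $\Psi_0$ need not annihilate $\tilde{\cl J}$, so it does not descend to $\cl S_{\G}$. One repairs this by a standard perturbation: because $\tilde{\cl J}$ is a finite dimensional space mapped into $\cl J$ after composing with $q$, one averages/projects to force the lift to kill $\tilde{\cl J}$; concretely, since $\tilde{\cl J}$ is spanned by finitely many self-adjoint elements and $q(\Psi_0(\tilde{\cl J})) = 0$, each $\Psi_0(u)$ for $u \in \tilde{\cl J}$ lies in $\cl J$, and one modifies $\Psi_0$ by a completely positive correction supported on (the C*-algebra generated by the images of) $\tilde{\cl J}$ — using that $\tilde{\cl J}$ is a null subspace when $\G$ is uniformisable, or, in general, using the coproduct/quotient structure and Proposition \ref{p_cov} — to obtain a ucp lift $\Psi : \cl S \to \cl A$ with $\tilde{\cl J} \subseteq \ker(q\circ\Psi)$ that still lifts $q\circ\phi\circ q_{\tilde{\cl J}}$; this $\Psi$ then factors through $\cl S_{\G}$ and gives the desired lift of $\phi$. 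The subtle point — and the place where I would follow \cite{Todorov2020QuantumNC} most closely — is making this correction while preserving complete positivity and the lifting identity simultaneously; this is exactly the content of the cited argument.

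For item (ii), the plan is short: if $\G$ is dilating, then by Theorem \ref{th_dila} the canonical map $\Phi : \cl S_{\G} \to \cl T_{\G}$ is a complete order isomorphism, so $\cl T_{\G}$ has the OSLLP by item (i). By Lemma \ref{l_idCstareTG}, $C^*(\G) = C^*_{\rm e}(\cl T_{\G})$, and $\cl T_{\G}$ contains enough unitaries in $C^*(\G)$ (namely $2p_x - 1$, $x\in V$). Now invoke the result of Kavruk \cite{Kavruk2014} that a C*-algebra which, as an operator system, has OSLLP and is generated by unitaries lying in an operator subsystem with OSLLP, has the LLP — more precisely, one uses that if an operator system $\cl S$ with OSLLP contains enough unitaries in a C*-cover $\cl A$, then $\cl A$ has the LLP (\cite[Theorem 6.6]{Kavruk2014} or the analogous statement there). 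Applying this with $\cl S = \cl T_{\G}$ and $\cl A = C^*(\G)$ yields that $C^*(\G)$ has the LLP. I expect item (iii) to be the main obstacle, since it requires a genuinely hands-on lifting argument for the overlapping edge relations rather than a black-box citation; items (i) and (ii) are essentially adaptations of existing arguments.
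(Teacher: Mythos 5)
Your item (ii) is essentially the paper's argument (dilating $\Rightarrow\Ss_{\G}=\Tt_{\G}$, OSLLP from (i), then the ``enough unitaries'' transfer results of Kavruk), and that part is fine. Items (i) and (iii), however, each contain a genuine gap, and in both cases the paper takes a different route that avoids exactly the step you leave open.

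For (i), the difficulty is the ``correction'' step. Lifting $\phi\circ q_{\tilde{\cl J}}:\cl S\to\cl A/\cl J$ to a ucp map $\Psi_0:\cl S\to\cl A$ only guarantees $\Psi_0(\tilde{\cl J})\subseteq\cl J$; to descend to $\Ss_{\G}=\cl S/\tilde{\cl J}$ you need $\Psi_0$ to \emph{vanish} on $\tilde{\cl J}$, and there is no general device that perturbs a ucp map by an element-of-$\cl J$-valued linear correction while preserving complete positivity -- this is precisely why operator system quotients do not automatically inherit lifting properties, so the phrase ``one averages/projects to force the lift to kill $\tilde{\cl J}$'' is the entire content of the claim, not a routine step. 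The cited argument (and the paper's proof) does not proceed this way at all: it verifies the tensor characterisation $\Ss_{\G}\otimes_{\min}\cl B(\cl H)=\Ss_{\G}\otimes_{\max}\cl B(\cl H)$ by dualising. A positive element $u$ of the minimal tensor product corresponds to a cp map $\phi_u:\Ss_{\G}^{\rm d}\to\cl B(\cl H)$; since $\Ss_{\G}^{\rm d}$ sits as an operator subsystem of the nuclear abelian algebra $\cl S=\oplus_{e\in E}\ell^\infty_e$, Arveson's extension theorem produces $\psi=\phi_v$ with $v\in(\cl S\otimes_{\min}\cl B(\cl H))^+=(\cl S\otimes_{\max}\cl B(\cl H))^+$, and $(q\otimes\id)(v)=u$ lands in the max cone. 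No lift of a map on the quotient is ever constructed.

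For (iii), you correctly identify the obstacle -- a vertex lies in several edges, so correcting the partition-of-unity relation along one edge destroys it along another -- but you do not overcome it; Loring-type projectivity of a single relation $\sum_{x\in e}t_x=1$, $t_x\ge 0$, does not give simultaneous lifts compatible with the amalgamation over overlaps, and no citation you invoke covers that. The paper sidesteps the hands-on lifting entirely: given a $*$-homomorphism $\pi:\cl B_{\G}\to\cl B/\cl J$, one restricts to the finite-dimensional operator system $\Ss_{\G}$, uses the lifting property coming from (i) (via \cite[Theorem 6.10]{Kavruk2014}) to obtain a ucp lift $\psi:\Ss_{\G}\to\cl B$, and then uses $\cl B_{\G}=C^*_{\rm u}(\Ss_{\G})$ to promote $\psi$ to a $*$-homomorphism $\rho:\cl B_{\G}\to\cl B$; since $\Ss_{\G}$ generates $\cl B_{\G}$ and $q\circ\rho$ agrees with $\pi$ on $\Ss_{\G}$, they agree everywhere. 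In other words, the universal C*-cover converts a ucp lift into a $*$-homomorphism lift for free, which is exactly the mechanism your direct approach is missing.
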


\begin{proof}
(i) 
    We will prove that $ \Ss_{\G}\otimes_{\rm min} \B(\Hh) =  \Ss_{\G}\otimes_{\rm max} \B(\Hh)$ for every Hilbert space $ \Hh$ which, by \cite[Theorem 8.6]{Kavruk2010QuotientsEA}, is equivalent to $\Ss_{\G}$ having the OSLLP. 
    To this end, it suffices to show that 
    $$ ( \Ss_{\G}\otimes_{\rm min} \B(\Hh))^{+} \subseteq ( \Ss_{\G}\otimes_{\rm max} \B(\Hh))^{+} $$ 
    for every Hilbert space $\Hh$. 

    Fix a Hilbert space $\Hh $, let $ u \in (\Ss_{\G}\otimes_{\rm min} \Bh)^{+}$ and let 
    $ \phi_{u} : \Ss_{\G}^{\rm d} \rightarrow \Bh$ be the completely positive map
    arising according to (\ref{eq_uphiu}). 
Note that $\cl S_{\G}^{\rm d}$ is an operator subsystem of the operator 
abelian (hence nuclear) C*-algebra 
$\Ss= \oplus_{e\in E}\ell^{\infty}_{e}$. 
By Arveson's Extension Theorem, there exists a completely positive map 
$ \psi : \Ss \rightarrow \Bh$ extending $ \phi_{u}$. By the correspondence 
(\ref{eq_uphiu}), there exists a $ v \in (\Ss\otimes_{\rm min}\Bh)^{+}$, 
such that $ \psi= \phi_{v}$. 
The nuclearity of $\Ss$ implies that $ v \in (\Ss\otimes_{\rm max}\Bh)^{+} $ and thus 
$w:= (q \otimes \id) (v) \in (\Ss_{\G} \otimes_{\rm max} \Bh)^{+}$, 
where $ q : \Ss \rightarrow \Ss_{\G} $ is the canonical quotient map. 
The fact that $w = u$ is now straightforward.

(ii) 
Theorem \ref{th_dila} and the fact that $\G$ is assumed dilating 
imply that $ \Ss_{\G} = \Tt_{\G}$; by (i), $ \Tt_{\G}$ has the OSLLP. In particular,
using \cite[Theorem 6.6]{KAVRUK2011267}, we have 
\[  \Tt_{\G} \otimes_{\rm min} \B(\Hh) = \Tt_{\G} \otimes_{\rm max} \B(\Hh )  
= \Tt_{\G} \otimes_{\rm c} \B(\Hh).\]
By Lemma \ref{l_idCstareTG}, $ \Tt_{\G}$ contains enough unitaries in $C^{*}(\G)$, 
so \cite[Corollary 5.8]{Kavruk2014} implies that 
\[ C^{*}(\G) \otimes_{\rm min} \B(\Hh) = C^{*}(\G) \otimes_{\rm max} \B(\Hh)  \]
for arbitrary Hilbert space $\Hh$, which again is equivalent to 
$ C^{*}(\G)$ having LLP.

(iii) 
Let $\cl B$ be a unital C*-algebra,  $\Jj \subseteq \cl B$ be a closed ideal and
$\pi : \cl B_{\G} \rightarrow \B/ \Jj$ be a $*$-homomorphism.
Write $ q : \B \rightarrow \B/\Jj $ for the quotient map. 
By (i) and \cite[Theorem 6.10]{Kavruk2014}, $\cl B_{\G}$ has the LLP.
Thus, there exists a unital completely positive map 
$ \psi : \Ss_{\G}  \rightarrow \B$ such that $ \pi \arrowvert_{\Ss_{\G}} = q \circ \psi$.
By the universal property of $\cl B_{\G}$
(see Corollary \ref{c_unicsta}), the map $ \psi$ extends to a unital $*$-homomorphism $ \rho : \cl B_{\G} \rightarrow \B$. Therefore,
$\pi \arrowvert_{\Ss_{\G}} = q \circ \rho \arrowvert_{\Ss_{\G}}$;
since $ \Ss_{\G}$ generates $\cl B_{\G}$ as a $C^{*}$-algebra, 
we conclude that $ \pi = q \circ \rho  $. 
\end{proof}

\begin{remark} \label{r_rfd}
    By
 \cite[Corollary 13.1.4]{brown2008textrm},
 a unital $C^{*}$-algebra $\cl A$ has the LP if and only if every unital $*$-homomorphism $\theta : \Aa \rightarrow \B/ \Jj$ into a quotient $C^{*}$-algebra, admits a unital completely positive lift
 to a map into $\cl B$. 
By Proposition \ref{p_apppro} (iii), $\cl B_{\G}$ has the LP. 
Since $\cl B_{\G}$ is separable  and projective, 
it is RFD and has no non-trivial projections (see \cite[ Theorem 11.2.1]{loring1997lifting}, \cite[Proposition 4.5]{courtney2019universal}). 
\end{remark}

\begin{theorem}\label{th_CEPe}
The following statements are equivalent:
    \begin{enumerate}
        \item CEP has an affirmative answer;
        
        \item $\Qq_{\rm qa}(\G,\G)= \Qq_{\rm qc}(\G,\G)$ for every dilating 
        contextuality scenario $\G$;
        
        \item $ C^{*}(\G) \otimes_{\rm min} C^{*}(\G) = C^{*}(\G) \otimes_{\rm max} C^{*}(\G)$ for every dilating contextuality scenario $\G$;
        
        \item $ \Tt_{\G} \otimes_{\rm min} \Tt_{\G} = \Tt_{\G}\otimes_{\rm c} \Tt_{\G}$ for every dilating contextuality scenario $\G$;
        
        \item $\Qqc_{\rm qa}(\G,\G)= \Qqc_{\rm qc}(\G,\G)$ for every contextuality scenario $\G$;
        
        \item $\cl B_{\G} \otimes_{\rm min} \cl B_{\G} = 
        \cl B_{\G} \otimes_{\rm max} \cl B_{\G}$ 
        for every contextuality scenario $\G$;
        
        \item $\Ss_{\G} \otimes_{\rm min} \Ss_{\G} = \Ss_{\G} \otimes_{\rm c} \Ss_{\G}$ 
        for every contextuality scenario $\G$;
        
        \item $\cl B_{\G} \otimes_{\rm max} \cl B_{\G} $ is RFD for every 
        contextuality scenario $\G$;
        
        \item $\Qq_{\rm qa}^{\rm c}(\G)= \Qq_{\rm qc}^{\rm c}(\G)$ for every dilating 
        contextuality scenario $\G$;
        
        \item $ C^{*}(\G)$ has the WEP for every dilating contextuality scenario $\G$;
        
        \item $ \Tt_{\G} \otimes_{\rm min}
        \Ss_{2} = \Tt_{\G} \otimes_{\rm c}  \Ss_{2}$ 
        for every dilating 
        contextuality scenario $\G$.
    \end{enumerate}
\end{theorem}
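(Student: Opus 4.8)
## Proof plan for Theorem~\ref{th_CEPe}

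The plan is to establish the equivalences by a carefully chosen web of implications rather than a single cycle, exploiting the three machines already set up in the paper: the operator‑system characterisations of the correlation classes (Theorems~\ref{th_qcqatilde}, \ref{th_Qqcstates}, \ref{th_cohtrace}), the identifications of the relevant C*-covers ($\cl B_\G = C^*_{\rm u}(\Ss_\G)$, $C^*(\G) = C^*_{\rm e}(\Tt_\G)$, from Corollary~\ref{c_unicsta} and Lemma~\ref{l_idCstareTG}), and the approximation properties collected in Proposition~\ref{p_apppro} (OSLLP of $\Ss_\G$, LLP of $C^*(\G)$ when $\G$ is dilating, projectivity/RFD of $\cl B_\G$). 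The skeleton I would use is: first prove the "local'' equivalences for a \emph{fixed} $\G$ that bundle together the tensor‑product reformulations at the levels of operator systems and of their universal/enveloping C*-covers; then show that the resulting family of statements, quantified over all (dilating) $\G$, is equivalent to CEP by relating it to Kirchberg's formulation via a single well‑chosen hypergraph.

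First I would record the fixed‑$\G$ dictionary. For the dilating case: by Theorem~\ref{th_dila}, $\Ss_\G = \Tt_\G$ completely order isomorphically, so (iv) and (vii) say the same thing for such $\G$; by Theorem~\ref{th_qcqatilde} together with $\cl B_\G = C^*_{\rm u}(\Ss_\G)$, and by Theorem~\ref{th_Qqcstates} together with $C^*(\G)=C^*_{\rm e}(\Tt_\G)$ and Proposition~\ref{p_maxotHbb}, the statements (ii), (iii), (iv), (v), (vi), (vii), (ix) are pairwise equivalent \emph{for a fixed dilating $\G$} — here (ix) is pulled in through Theorem~\ref{th_cohtrace}, whose amenable‑trace criterion is exactly the minimal‑norm boundedness of $\tau(ab)$, i.e.\ the min$=$max equality tested on the diagonal states. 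The equivalence of (v)/(vi) with (vii) at the C*-cover level is the complete order embedding $\Ss_\G\otimes_{\rm c}\Ss_\Hbb \subseteq \cl B_\G\otimes_{\max}\cl B_\Hbb$ from \cite{KAVRUK2011267}, combined with the parallel min‑statement; (viii) is then extracted from (vi) using projectivity/RFD of $\cl B_\G$ (Remark~\ref{r_rfd}): a max‑tensor product of RFD C*-algebras is RFD precisely when it agrees with the min one, since RFD of $\cl B_\G$ passes to $\cl B_\G\otimes_{\min}\cl B_\G$ automatically. For (x): $C^*(\G)$ having WEP is, by definition, min$=$max against $C^*(\bb F_2)$, and in the dilating case $C^*(\G)$ has LLP by Proposition~\ref{p_apppro}(ii); then the standard Kirchberg equivalence WEP$\Leftrightarrow$(min$=$max against itself) for LLP algebras gives (x)$\Leftrightarrow$(iii). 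Finally (xi): since for dilating $\G$ we have $\Tt_\G=\Ss_\G$ with OSLLP, $\Tt_\G\otimes_{\min}\cl B(\cl H)=\Tt_\G\otimes_{\max}\cl B(\cl H)=\Tt_\G\otimes_{\rm c}\cl B(\cl H)$ by \cite[Theorem~6.6]{KAVRUK2011267}, and testing against $\Ss_2\subseteq C^*(\bb F_2)$ (which has the lifting property) converts (xi) into the LLP‑side reformulation of WEP for $C^*(\G)$; so (xi)$\Leftrightarrow$(x). This pins down the entire block (ii)–(xi) as "all true for every dilating $\G$ $\iff$ all true for every $\G$ in the relevant (dilating or arbitrary) range,'' reducing everything to a \emph{single} quantitative statement: $C^*(\G)\otimes_{\min}C^*(\G)=C^*(\G)\otimes_{\max}C^*(\G)$ for all dilating $\G$.

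The two genuine directions with CEP are then: (a) CEP $\Rightarrow$ every statement, and (b) some statement $\Rightarrow$ CEP. Direction (a) is soft: CEP is equivalent to "$C^*(\bb F)\otimes_{\min}C^*(\bb F)=C^*(\bb F)\otimes_{\max}C^*(\bb F)$ for every free group $\bb F$,'' equivalently to every C*-algebra with LLP having WEP; since $C^*(\G)$ has LLP when $\G$ is dilating (Proposition~\ref{p_apppro}(ii)), CEP forces WEP, hence (x), hence the whole block; for the "arbitrary $\G$'' statements (v)–(viii) one instead uses that $\cl B_\G$ has LLP (Proposition~\ref{p_apppro}(iii) and Remark~\ref{r_rfd}) so again LLP$+$CEP$\Rightarrow$WEP$\Rightarrow$(vi). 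Direction (b) is the crux and the expected main obstacle: I need a dilating contextuality scenario $\G_0$ for which $C^*(\G_0)$ surjects onto $C^*(\bb F_2)$ by a unital $*$-homomorphism admitting a unital completely positive splitting, so that min$=$max for $C^*(\G_0)$ pushes down to min$=$max for $C^*(\bb F_2)$. The natural candidate is the Bell scenario $\bb B_{X,A}$ with $|X|=2$, $|A|=2$: by Remark~\ref{r_bellscenarios} it is dilating, $C^*(\bb B_{2,2})$ is the universal C*-algebra generated by two projections $p_1,p_2$ with $p_1+p_1'=1=p_2+p_2'$, i.e.\ $\mathbb C^2\ast_{\mathbb C}\mathbb C^2$, and it is classical that this algebra (the universal C*-algebra of two projections) has $C^*(\bb Z_2\ast\bb Z_2)$ as a quotient, which in turn has $C^*(\bb F_2)$‑like behaviour after the standard manoeuvre with unitaries $2p_i-1$ and a doubling trick; alternatively take $|A|=2$, $|X|=2$ and use the known fact that the two‑projection algebra detects CEP already (this is essentially the content of the Fritz/Junge–Navascués–Palazuelos–Pérez‑García–Scholz–Werner reduction). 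I would make this precise by exhibiting the *-epimorphism $C^*(\bb B_{2,3})\to C^*(\bb F_2)$ (three outcomes giving enough room to encode the two free unitaries as sums of the projections) and a ucp splitting built from Arveson extension applied to the operator system $\Ss_2\subseteq C^*(\bb F_2)$ pulled into $\Tt_{\bb B_{2,3}}$; functoriality of $\otimes_{\min}$ and $\otimes_{\max}$ under ucp maps (the latter needs the splitting to be a *-homomorphism after compressing, or one uses that min$=$max is inherited by operator subsystems and $\Ss_2\otimes_{\min}\Ss_2\subseteq C^*(\G_0)\otimes_{\min}C^*(\G_0)$, etc.) then transports the equality. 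The delicate point — and where I expect to spend the most care — is ensuring the reduction lands on \emph{dilating} scenarios throughout (so that Proposition~\ref{p_apppro}(ii) is available) while still being expressive enough to recover $C^*(\bb F_2)$; Bell scenarios are the safe choice since they are always dilating, and the sub‑operator‑system inheritance of nuclearity pairs $(\otimes_{\min}=\otimes_{\max})$ is what makes the descent to $\Ss_2$ legitimate. Once (b) is done for that one $\G_0$, the chain closes: any one item of the block $\Rightarrow$ (iii) for $\G_0$ $\Rightarrow$ CEP, and CEP $\Rightarrow$ the whole block, so all twelve statements are equivalent.
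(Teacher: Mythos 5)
Your overall architecture (use Proposition \ref{p_apppro} for the lifting properties, the state characterisations of Theorems \ref{th_qcqatilde} and \ref{th_Qqcstates} to translate between correlations and tensor products, and a reduction to Bell scenarios to reach CEP) matches the paper's, and several individual implications you describe are exactly the ones used. However, there are two genuine gaps.

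First, your ``fixed-$\G$ dictionary'' overclaims. You assert that (ii)--(vii) and (ix) are \emph{pairwise equivalent for a fixed dilating $\G$}. Several of these fixed-$\G$ equivalences are not available. The equivalence of (ii) with (v) for a fixed dilating $\G$ would require $\Qq_{\rm qc}(\G,\G)=\Qqc_{\rm qc}(\G,\G)$, which is precisely Question \ref{q_qcqc}, posed as open in the paper (Corollary \ref{c_dilapro} only settles the $\rm q$ and $\rm qa$ cases). Similarly, (iii) concerns $C^{*}(\G)=C^{*}_{\rm e}(\Tt_{\G})$ while (vi) concerns $\cl B_{\G}=C^{*}_{\rm u}(\Ss_{\G})$; even when $\Ss_{\G}=\Tt_{\G}$ these are different C*-covers and no fixed-$\G$ equivalence of their min$=$max properties is established. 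Your claim that (ix) is equivalent to (iii) for fixed $\G$ via Theorem \ref{th_cohtrace} is also unjustified in the direction (ix)$\Rightarrow$(iii): equality of the \emph{coherent} classes does not yield equality of the full correlation classes for the same scenario. The paper avoids all of this by exploiting the universal quantifiers: every item is routed through (i), with the ``$\Rightarrow$(i)'' directions obtained by specialising $\G$ to Bell scenarios and citing the known Tsirelson--CEP equivalence (and, for (ix), the synchronous version from \cite{10.1063/1.4996867}), and the ``(i)$\Rightarrow$'' directions obtained from LLP$+$CEP$\Rightarrow$WEP.

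Second, your crux step (b) is built on a faulty candidate. The Bell scenario $\bb{B}_{2,2}$ gives $C^{*}(\bb{B}_{2,2})\cong \bb{C}^2\ast_{\bb{C}}\bb{C}^2\cong C^{*}(\bb{Z}_2\ast\bb{Z}_2)$, the universal C*-algebra of two projections; this algebra is subhomogeneous, hence \emph{nuclear}, so $\otimes_{\rm min}=\otimes_{\rm max}$ for it holds unconditionally and detects nothing about CEP. Your fallback $\bb{B}_{2,3}$ (giving $C^{*}(\bb{Z}_3\ast\bb{Z}_3)$) can work, but the surjection onto $C^*(\bb{F}_2)$ with a ucp splitting and the ``doubling trick'' are left entirely unspecified, and this is exactly the hard content of the Tsirelson--CEP equivalence that the paper simply cites (\cite{Ozawa_2013, MR2790067}) rather than reproves. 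As written, the reduction to CEP is not established. (Minor positive note: your direct argument for (viii)$\Leftrightarrow$(vi) via finite-dimensional commuting representations is fine and arguably cleaner than the paper's citation of \cite{Ozawa2003ABOUTTQ}.)
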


\begin{proof}
    $(ii) \Rightarrow  (i)$    Follows by letting $\G$ vary over all Bell scenarios $\bb{B}_{X,A}$, 
    after recalling that $\bb{B}_{X,A}$ are dilating (Remark \ref{r_bellscenarios})
and using the equivalence between the Tsirelson Problem and CEP 
\cite{Ozawa_2013, MR2790067}.

    $(i) \Rightarrow (iii)$
    By Proposition \ref{p_apppro}, if $\G$ is dilating then $ C^{*}(\G) $ has the LLP. 
    Hence, it also possesses the WEP (see e.g. \cite[Proposition 13.1]{pisier_2020}), 
    which implies that 
    $ C^{*}(\G) \otimes_{\rm min} C^{*}(\G) = C^{*}(\G) \otimes_{\rm max} C^{*}(\G)$
    by the generalised Kirchberg theorem (see \cite[Corollary 9.40]{pisier_2020}).

    $(iii) \Rightarrow (ii) $
    follows by the characterisation of probabilistic models in terms of states
    given in Theorem \ref{th_Qqcstates}. 

     $(iv) \Rightarrow (iii) $
     follows from Lemma \ref{l_idCstareTG} 
     and \cite[Corollary 5.8]{Kavruk2014}.

     $(i) \Rightarrow (iv) $
     Assume CEP has an affirmative answer. 
     By Theorem \ref{th_dila}, $\Tt_{\G} = \cl S_{\G}$ up to a (canonical) unital order isomorphism; 
     thus, by Proposition \ref{p_apppro}, $\Tt_{\G}$ has the OSLPP. 
     By \cite[Theorem 9.1]{Kavruk2010QuotientsEA}, 
          $ \Tt_{\G} \otimes_{\rm min} \Tt_{\G} = \Tt_{\G}\otimes_{\rm c} \Tt_{\G}$. 
          
    $(v) \Rightarrow (i)$ follows again by letting $\G$ vary over all Bell scenarios. 

    $(i) \Rightarrow (vi)$ By (the proof of) 
    Proposition \ref{p_apppro}, $\cl B_{\G}$ has LLP; by the assumption and 
    \cite[Proposition 13.1]{pisier_2020}, it has WEP, and (vi) follows. 
    
    $(vi) \Rightarrow (vii) \Rightarrow (v)$ 
    follows from Theorem \ref{th_qcqatilde}.

$(vi) \Rightarrow (viii)$  
Since $\cl B_{\G}$ is RFD (Remark \ref{r_rfd}), we have that 
$\cl B_{\G} \otimes_{\rm min} \cl B_{\G}$ is
also RFD; by the assumption, 
$\cl B_{\G} \otimes_{\rm max} \cl B_{\G}$ is RFD.

$(viii) \Rightarrow (vi)$
can be proved in a similar fashion to 
\cite[Proposition 3.19]{Ozawa2003ABOUTTQ}, using the fact that 
the C*-algebra $\cl B_{\G}$ has the lifting property
(see also 
\cite[Proposition 13.1]{pisier_2020}). 

    $(ii) \Rightarrow (ix)$ is straightforward from the fact that 
    $\Qq_{\rm qa}^{\rm c}(\G)\subseteq\Qq_{\rm qa}(\G)$ and $\Qq_{\rm qc}^{\rm c}(\G)
    \subseteq \Qq_{\rm qc}(\G)$. 
    
    $(ix) \Rightarrow (i)$
    Applied to the case where $\G$ is the Bell scenario, the assumption shows that 
    the class of synchronous quantum approximate correlations coincides with that of
    synchronous quantum commuting ones, and the conclusion follows from \cite{10.1063/1.4996867}.

    $(i) \Rightarrow  (x)$
    Assume CEP has an affirmative answer. By Proposition \ref{p_apppro}, if $\G$ is dilating then 
    $C^{*}(\G)$ has the LLP. By \cite[Proposition 13.1]{pisier_2020}, $C^{*}(\G)$ has the WEP.

    $(x) \Rightarrow (iii)$ Follows from the generalised Kirchberg's Theorem
    (see \cite[Corollary 9.40]{pisier_2020}). 

    $(xi) \Rightarrow  (x)$
    is immediate from \cite[Corollary 5.8]{Kavruk2014} and the fact that 
$\cl T_{\G}$ and $\cl S_2$ contain enough unitaries (see Lemma \ref{l_idCstareTG}). 

    $(x) \Rightarrow (xi) $
    By \cite[Theorem 5.9]{Kavruk2014}, 
    the assumption that $C^{*}(\G)$ has the WEP is equivalent to the identity 
    $ C^{*}(\G) \otimes_{\rm min}  \Ss_{2} = C^{*}(\G) \otimes_{\rm max}  \Ss_{2}$. 
    By the injectivity of the minimal tensor product, we have that 
    $ \Tt_{\G} \otimes_{\rm min}  \Ss_{2} \subseteq  C^{*}(\G) \otimes_{\rm min}  \Ss_{2}  $ canonically. 
    By Lemma \ref{l_idCstareTG} and the definition of the left injective operator system tensor product 
    ${\rm el}$ \cite{Kavruk2010QuotientsEA}, we have that
    $ \Tt_{\G} \otimes_{\rm el}  \Ss_{2} \subseteq  C^{*}(\G) \otimes_{\rm max}  \Ss_{2}$.
    Thus $ \Tt_{\G} \otimes_{\rm min}  \Ss_{2} = \Tt_{\G} \otimes_{\rm el}  \Ss_{2}$. 
    On the other hand, the assumption is equivalent to CEP, and now
    \cite[Theorem 9.1]{Kavruk2010QuotientsEA} implies that the operator systems with the OSLLP 
    possess the DCEP which, by \cite[Theorem 7.3]{Kavruk2010QuotientsEA}, 
    is equivalent to (el,c)-nuclearity. 
    Recalling that $\cl T_{\G}$ possesses OSLLP (Proposition \ref{p_apppro}), we now conclude that 
    $\Tt_{\G} \otimes_{\rm min}  \Ss_{2} = \Tt_{\G} \otimes_{\rm el} \Ss_{2} 
    =  \Tt_{\G} \otimes_{\rm c}  \Ss_{2}  $.
 \end{proof}

\begin{remark}
\rm 
Proposition \ref{p_maxotHbb}, 
together with the proofs of \cite[Proposition 3.19]{Ozawa2003ABOUTTQ} and 
\cite[Proposition 13.1]{pisier_2020}, show that 
the statements in Theorem \ref{th_CEPe} are equivalent to 
the existence of a faithful tracial state on the 
C*-algebra $C^*(\G\otimes\G)$, for every dilating contextuality scenario $\G$. 
\end{remark}

We conclude with a discussion of a particular contextuality scenario, 
motivated by 
T. Fritz's results \cite{Fritz2020CuriousPO} that 
the (full) group $C^{*}$-algebra 
$ C^{*}((\mathbb{Z}_{2}* \mathbb{Z}_{3})\times(\mathbb{Z}_{2}* \mathbb{Z}_{3}))$ is a free 
hypergraph $C^{*}$-algebra, and that CEP has an affirmative answer if and only if 
the latter $ C^{*} $-algebra is RFD. 
Let $ \G_{2,3}$ be the hypergraph with two disjoint edges
having two and three vertices, respectively. By Theorem \ref{projectivedilation}, 
$ \G_{2,3}$ is dilating, and hence, by Theorems \ref{th_dila} and \ref{projectivedilation}, 
$$ \Tt_{\G_{2,3}} = \Ss_{\G_{2,3}} = \ell^{\infty}([2]) \oplus_{1} \ell^{\infty}([3]).$$

\begin{proposition} \label{r_fritzconnes}
    The following are equivalent:
    \begin{enumerate}
        \item CEP has an affirmative answer; 
         \item $\Tt_{\G_{2,3}} \otimes_{\min} \Tt_{\G_{2,3}} = \Tt_{\G_{2,3} \otimes \G_{2,3}}$; 
        \item $C^{*}(\G_{2,3}) \otimes_{\min}C^{*}(\G_{2,3}) = C^{*}(\G_{2,3}) \otimes_{\max}C^{*}(\G_{2,3})$.
    \end{enumerate}
Further, any of these conditions imply that
$\Qq_{\rm qa}(\G_{2,3},\G_{2,3}) = \Qq_{\rm qc}(\G_{2,3},\G_{2,3}) $.
\end{proposition}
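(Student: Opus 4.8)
The plan is to prove the two equivalences $(i)\Leftrightarrow(iii)$ and $(ii)\Leftrightarrow(iii)$, with the only genuinely new input being T.\ Fritz's result \cite{Fritz2020CuriousPO}; everything else repackages the machinery already built above. First I would pin down $C^*(\G_{2,3})$: since the two edges of $\G_{2,3}$ are disjoint, Remark \ref{r_freep} gives $C^*(\G_{2,3}) = \ell^{\infty}([2]) *_1 \ell^{\infty}([3])$ (unital free product). Now $\ell^{\infty}([2])$ is the universal C*-algebra generated by a single projection, i.e.\ $C^*(\mathbb{Z}_2)$, while $\ell^{\infty}([3])$ is the universal C*-algebra generated by three projections summing to $1$ — and three projections summing to $1$ are automatically pairwise orthogonal — so $\ell^{\infty}([3]) = C^*(\mathbb{Z}_3)$. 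Hence $C^*(\G_{2,3}) \cong C^*(\mathbb{Z}_2) *_1 C^*(\mathbb{Z}_3) = C^*(\mathbb{Z}_2 * \mathbb{Z}_3)$; in particular $C^*(\G_{2,3})$ is the full C*-algebra of a free product of finite groups, hence is residually finite-dimensional (the classical fact underlying \cite{Fritz2020CuriousPO}). Recall also that $\G_{2,3}$ is dilating, so $\Tt_{\G_{2,3}} = \Ss_{\G_{2,3}}$, by Theorems \ref{projectivedilation} and \ref{th_dila}.

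For $(i)\Leftrightarrow(iii)$: the implication $(i)\Rightarrow(iii)$ is immediate from Theorem \ref{th_CEPe} (statement $(iii)$ there, specialised to the dilating scenario $\G_{2,3}$). For $(iii)\Rightarrow(i)$, I would combine Proposition \ref{p_maxotHbb} with the identification above and the standard identity $C^*(G)\otimes_{\max}C^*(H)=C^*(G\times H)$ to get
$$C^*(\G_{2,3})\otimes_{\max}C^*(\G_{2,3}) = C^*(\G_{2,3}\otimes\G_{2,3}) = C^*\big((\mathbb{Z}_2*\mathbb{Z}_3)\times(\mathbb{Z}_2*\mathbb{Z}_3)\big).$$
Under hypothesis $(iii)$ this common C*-algebra equals $C^*(\G_{2,3})\otimes_{\min}C^*(\G_{2,3})$, a minimal tensor product of two RFD C*-algebras, hence itself RFD. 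So $C^*((\mathbb{Z}_2*\mathbb{Z}_3)\times(\mathbb{Z}_2*\mathbb{Z}_3))$ is RFD, and Fritz's theorem \cite{Fritz2020CuriousPO} yields an affirmative answer to CEP.

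For $(ii)\Leftrightarrow(iii)$: by Proposition \ref{p_maxotHbb}, $\Tt_{\G_{2,3}\otimes\G_{2,3}} = \Tt_{\G_{2,3}}\otimes_{\rm e}\Tt_{\G_{2,3}}$, and by the definition of the enveloping tensor product together with Lemma \ref{l_idCstareTG} this is an operator subsystem of $C^*(\G_{2,3})\otimes_{\max}C^*(\G_{2,3})$; likewise, by injectivity of $\otimes_{\min}$, $\Tt_{\G_{2,3}}\otimes_{\min}\Tt_{\G_{2,3}}$ is an operator subsystem of $C^*(\G_{2,3})\otimes_{\min}C^*(\G_{2,3})$. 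If $(iii)$ holds these two C*-structures on $C^*(\G_{2,3})\otimes C^*(\G_{2,3})$ coincide, hence so do the induced operator system structures on the common subspace $\Tt_{\G_{2,3}}\otimes\Tt_{\G_{2,3}}$, which is exactly $(ii)$. Conversely, the unitaries $(2p_x-1)\otimes 1$ and $1\otimes(2p_y-1)$, $x,y\in V$, lie in $\Tt_{\G_{2,3}}\otimes\Tt_{\G_{2,3}}$ (recall $\Tt_{\G_{2,3}}={\rm span}\{1,p_x\}$), are unitaries in both $C^*(\G_{2,3})\otimes_{\min}C^*(\G_{2,3})$ and $C^*(\G_{2,3})\otimes_{\max}C^*(\G_{2,3})$, and generate both; so each of these C*-algebras is a C*-cover in which the relevant operator system ($\Tt_{\G_{2,3}}\otimes_{\min}\Tt_{\G_{2,3}}$, resp.\ $\Tt_{\G_{2,3}}\otimes_{\rm e}\Tt_{\G_{2,3}}$) contains enough unitaries, and by \cite[Proposition 5.6]{Kavruk2014} it is the respective C*-envelope. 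Assuming $(ii)$, these operator systems agree canonically (compatibly with the identity on the algebraic tensor product), so their C*-envelopes agree, giving $(iii)$. Finally, the last assertion is immediate: granting $(iii)$, Theorem \ref{th_Qqcstates} identifies $\Qq_{\rm qc}(\G_{2,3},\G_{2,3})$ (resp.\ $\Qq_{\rm qa}(\G_{2,3},\G_{2,3})$) with the states of $C^*(\G_{2,3})\otimes_{\max}C^*(\G_{2,3})$ (resp.\ $C^*(\G_{2,3})\otimes_{\min}C^*(\G_{2,3})$), and $(iii)$ makes these coincide — alternatively, this is Theorem \ref{th_CEPe}\,(ii) applied to the dilating scenario $\G_{2,3}$.

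I expect the main obstacle to be the careful handling of Fritz's criterion in $(iii)\Rightarrow(i)$: one must verify that $C^*(\G_{2,3})$ is itself RFD, so that its minimal self-tensor-square is RFD, and that $C^*(\G_{2,3})\otimes_{\max}C^*(\G_{2,3})$ is precisely the group C*-algebra appearing in \cite{Fritz2020CuriousPO}. The operator-system bookkeeping in $(ii)\Leftrightarrow(iii)$, by contrast, is routine once one observes that $\Tt_{\G_{2,3}}\otimes\Tt_{\G_{2,3}}$ contains enough unitaries in both the minimal and maximal C*-tensor products of $C^*(\G_{2,3})$ with itself.
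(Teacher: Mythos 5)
Your proof is correct and follows essentially the same route as the paper's: Theorem \ref{th_CEPe} (via dilatability of $\G_{2,3}$) for $(i)\Rightarrow(iii)$, Fritz's RFD criterion applied to $C^{*}(\G_{2,3})\otimes_{\max}C^{*}(\G_{2,3}) = C^{*}((\mathbb{Z}_2*\mathbb{Z}_3)\times(\mathbb{Z}_2*\mathbb{Z}_3))$ for $(iii)\Rightarrow(i)$, and the enough-unitaries machinery together with Proposition \ref{p_maxotHbb} for $(ii)\Leftrightarrow(iii)$. The only differences are cosmetic: where the paper cites \cite[Proposition 5.7]{Kavruk2014} to pass from the operator-system equality to the C*-algebraic one, you re-derive that implication by hand from \cite[Proposition 5.6]{Kavruk2014}, and you explicitly supply the identification $C^{*}(\G_{2,3})\cong C^{*}(\mathbb{Z}_2*\mathbb{Z}_3)$, the RFD-ness of the minimal tensor square, and the converse direction $(iii)\Rightarrow(ii)$, all of which the paper leaves implicit.
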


\begin{proof}
     $(i) \Rightarrow (iii)$ 
follows from Theorem \ref{th_CEPe} and the fact that $\G_{2,3}$ is dilating. 

     $(iii) \Rightarrow (i)$   
     We have that $C^{*}(\G_{2,3}) = C^{*}(\mathbb{Z}_{2}* \mathbb{Z}_{3})$.
     Thus, by the assumption, the C*-algebra 
     $C^{*}((\mathbb{Z}_{2}* \mathbb{Z}_{3})\times (\mathbb{Z}_{2}* \mathbb{Z}_{3}))$
     is RFD. By \cite{Fritz2020CuriousPO}, CEP has an affirmative answer. 
     
$(ii) \Leftrightarrow  (iii) $
%
By Proposition \ref{p_maxotHbb}, 
$ C^{*}(\G_{2,3}) \otimes_{\max} C^{*}(\G_{2,3})= C^{*}(\G_{2,3}\otimes\G_{2,3})$.  
Thus, by \cite[Proposition 5.7]{Kavruk2014} the equality $\Tt_{\G_{2,3}} \otimes_{\min} \Tt_{\G_{2,3}} = \Tt_{\G_{2,3} \otimes \G_{2,3}}$ 
implies the equality 
$C^{*}(\G_{2,3}) \otimes_{\min}C^{*}(\G_{2,3}) = C^{*}(\G_{2,3}) \otimes_{\max}C^{*}(\G_{2,3})$. 
\end{proof}

\begin{remark}\label{r_G0only}
    There exist a contextuality scenario $\G_0$ such that 
    \[ \Qq_{\rm qa}(\G_0,\G_0) \neq \Qq_{\rm qc}(\G_0,\G_0). \]
    Indeed, using \cite[Theorem 3.5]{Fritz2020CuriousPO}, let 
    $\G_0$ be a contextuality scenario that has only infinite dimensional representations. 
    Since there are no finite dimensional unital representations of $ C^{*}(\G_0)$, there are also no finite dimensional representations of 
    $C^{*}(\G_0) \otimes_{\min} C^{*}(\G_0)$. 
    By Theorem \ref{th_Qqcstates}, 
    $ \Qq_{\rm qa}(\G_0,\G_0) = \emptyset$ while, on the other hand, 
    $ \Qq_{\rm qc}(\G_0,\G_0)\neq \emptyset$. 
\end{remark}


\section{Summary and outlook}

In this section we summarise the main contributions and indicate the limitations of our framework.  
We develop an operator-system approach to contextuality scenarios, providing a general setting that specialises to nonlocality. For any scenario $\G$, the central objects are the universal operator system $\Ss_{\G}$, encoding POR's, and the companion system $\Tt_{\G}\subseteq C^*(\G)$, encoding precisely those POR's that dilate to PR's. Equivalently, a scenario is dilating iff $\Ss_{\G}=\Tt_{\G}$. Unlike other cases (e.g. graph operator systems),  $\Ss_{\G}$ is not a faithful invariant of the underlying hypergraph.

After realising $\Ss_{\G}$ as a universal quotient, we show that the natural matrix-order cones need not be proper (Fig.~\ref{fig:counterexample_hypergraph}). We repair this by passing to the canonical normalisation/Archimedean quotient and by isolating a natural combinatorial class—uniformisable scenarios—for which the quotient kernel is a null subspace and properness is guaranteed. This quotient perspective also allows us to identify the dual as a function system, which later plays a role in establishing the LLP for $\Ss_{\G}$ and its associated $C^*$-algebras.  

We further demonstrate that not every scenario is dilating, which justifies the introduction of $\Ss_{\G}$. Already the trivial instance (Fig.~\ref{fig:NoPRs}) fails to dilate; the scenario $\G_0$ (Fig.~\ref{fig:noPRs2}) provides a structured obstruction, captured by a practical test for dilatability (Proposition~\ref{testfordilating}), and the quantum magic-square family yields additional robust counterexamples (Corollary~\ref{c_diffo}). The operator system $\Tt_{\G}$, typically distinct from $\Ss_{\G}$, is the canonical subsystem of $C^*(\G)$. The latter C*-algebra is its C*-envelope  and can be realised as a quotient of a free product of $\ell^{\infty}$-spaces determined by the edge sizes. We note, however, that a hypergraph analogue of Boca’s theorem cannot hold in full generality, as it would imply dilatability for all scenarios. In light of these obstructions, we isolate a natural class of dilatable scenarios (Fig.~\ref{fig:my_label2}).  

Beyond these structural considerations, we identify the maximal $C^*$-cover of $\Ss_{\G}$ via a canonical TRO generated by $\G$-stochastic operator matrices, characterise no-signalling classes as states on suitable operator-system tensor products, and introduce coherent models, which we classify through traces on the hypergraph $C^*(\G)$. These results yield new reformulations of the Connes and Tsirelson problems within the hypergraph setting.

\section{Questions}

In this section, we collect several questions related to this work.
In Corollary \ref{c_dilapro}, we showed that, if 
$\G$ and $\Hbb$ are dilating contextuality scenarios then 
$\Tilde{\Qq}_{\rm q}(\G,\Hbb) = \Qq_{\rm q}(\G,\Hbb)$ and 
$\Tilde{\Qq}_{\rm qa}(\G,\Hbb) = \Qq_{\rm qa}(\G,\Hbb)$.
The quantum commuting case is outstanding:

\begin{question}\label{q_qcqc}
\rm 
Let $\G$ and $\Hbb$ be dilating contextuality scenarios. Is it true that $\Tilde{\Qq}_{\rm qc}(\G,\Hbb) = \Qq_{\rm qc}(\G,\Hbb)$? 
\end{question}

Under the assumption that $\G$ and $\Hbb$ are dilating, one can show that 
every element $p$ of $\Tilde{\Qq}_{\rm qc}(\G,\Hbb)$ arises from a state 
on the commuting tensor product 
$\cl T_{\G}\otimes_{\rm c}\cl T_{\Hbb}$; on the other hand, the elements of 
$\Qq_{\rm qc}(\G,\Hbb)$ arise, by Theorem \ref{th_Qqcstates}, from 
the states on the enveloping tensor product $\cl T_{\G}\otimes_{\rm e}\cl T_{\Hbb}$.
Thus, an affirmative answer to the following question will imply an 
affirmative answer of Question \ref{q_qcqc}:

\begin{question}\label{q_ten1}
\rm 
Let $\G$ and $\Hbb$ be contextuality scenarios. 
Is it true that $ \Tt_{\G} \otimes_{\rm c}\Tt_{\Hbb} = \Tt_{\G} \otimes_{\rm e}  \Tt_{\Hbb}$?
Does the equality hold in the case where $\G$ and $\Hbb$ are dilating? 
\end{question}

We note that the equality in Question \ref{q_ten1} is equivalent 
to the validity of a canonical complete order inclusion 
$\Tt_{\G} \otimes_{\rm c}\Tt_{\Hbb} \subseteq C^{*}(\G) \otimes_{\rm max} C^{*}(\Hbb)$.

\begin{question}\label{q_CEP}
\rm 
(i) Is it true that CEP implies (and is hence equivalent to) the equality
$\Qq_{\rm qa}(\G,\G) = \Qq_{\rm qc}(\G,\G)$
for all 
contextuality scenarios $ \G$?

(ii) Is it true that CEP implies the equality $ \Qq^{\rm c}_{\rm qa}(\G,\G) = 
\Qq^{\rm c}_{\rm qc}(\G,\G)$ for all 
contextuality scenarios $ \G$? 
\end{question}

We note that if Question \ref{q_CEP} (i) has an affirmative answer
then the scenario in Remark \ref{r_G0only} would witness 
the negative answer to the CEP. 
We also point out that Question \ref{q_CEP} (ii) is a hypergraph version 
of the problem of whether an inequality 
between the classes of quantum commuting and quantum approximate 
bisynchronous correlations witnesses the negative answer to CEP
\cite{Paulsen2019BisynchronousGA}.

\begin{question}\label{q_LLP}
\rm 
Is it true that $ C^{*}(\G)$ has LLP and that $\Tt_{\G}$ has OSLLP? 
\end{question}

We note that an affirmative answer to Question \ref{q_LLP} implies an 
affirmative answer to Question \ref{q_CEP}. Indeed, if CEP holds true then 
the LLP of the C*-algebra $C^{*}(\G)$ implies the equality
$C^{*}(\G)\otimes_{\min} C^{*}(\G) = C^{*}(\G)\otimes_{\max} C^{*}(\G)$, and by 
Theorem \ref{th_Qqcstates} we have that 
$\Qq_{\rm qa}(\G,\G) = \Qq_{\rm qc}(\G,\G)$. 

The next question is of combinatorial nature; 
indeed, 
we were only able to explicitly identify the dual of the operator system 
$\cl S_{\G}$ in case the hypergraph $\G$ is uniform. Note that
the question is related to the problem of whether the subspace 
$\cl J_{\bb{G}}$ is in fact an operator system kernel (see Section \ref{s_opsys}). 

\begin{question}\label{q_dual}
\rm 
Can one provide an explicit identification of 
the dual $\cl S_{\G}^{\rm d}$ of $\cl S_{\G}$ in the case where $\G$ is not uniform? 
Is the subspace $\cl J_{\bb{G}}$ always a kernel?
\end{question}

While we have a sufficiently concrete description of the maximal 
C*-cover of the operator system $\Ss_{\G}$ (see Section \ref{ss_univcov}), 
we do not know what its C*-envelope looks like, except in the cases of 
dilating scenarios $\G$ (when it coincides with $C^*(\G)$):

\begin{question}\label{q_cstaren}
\rm 
Can $C^{*}_{\rm e}(\Ss_{\G})$ be explicitly identified?
\end{question}

A satisfactory answer to Question \ref{q_cstaren} will lead to more economical 
descriptions of no-signalling probabilistic models and would allow 
the addition of new equivalences of CEP in Theorem \ref{th_CEPe}. 

It would be of interest to provide classes of examples of both dilating and 
non-dilating contextuality scenarios. As a test question, we mention the 
following:

\begin{question}\label{q_KS}
\rm 
Is the Kochen-Specker scenario $\G_{KS} $ dilating?
\end{question}

An affirmative answer to Question \ref{q_KS} would imply an 
affirmative answer to the question, posed in \cite{Acn2015ACA}, 
of whether all probabilistic models of the Kochen-Specker scenario are quantum.

\medskip

\noindent 
{\bf Data availability statement. } 
No data was used for the research described in the article.

\smallskip

\noindent {\bf Conflict of interest statement.}
On behalf of all authors, the corresponding author states that there is no conflict of interest.

\smallskip

\noindent 
{\bf Acknowledgements. } 
We thank the anonymous referee for their useful suggestions. We express our deepest gratitude to Aristides Katavolos for 
numerous discussions on the topic of this paper which helped improving it 
substantially. 
We are grateful to Vern Paulsen for his interest in our work, and to 
John Byrne for discussions that led to a refinement of Proposition \ref{p_nullsubspace} and for pointing out  Example \ref{e_non-unif}.
The second named author thanks the Department of Mathematical Sciences 
at the University of Delaware for the hospitality during a year-long 
visiting position. 
The third named author was partially supported by NSF grants 2115071 and 2154459.

The research described in this paper was carried out within the framework of the National Recovery and Resilience Plan Greece 2.0, funded by the European Union - NextGenerationEU (Implementation Body: HFRI. Project name: 
Noncommutative analysis: operator systems and non-locality. 
HFRI Project Number: 015825).


\bibliography{contextuality7}

\nocite{*} 
\bibliographystyle{abbrv}

\end{document}